\numberwithin{equation}{section}
\newtheorem{proposition}{Proposition}[section]
\newtheorem{lemma}[proposition]{Lemma}
\newtheorem{theorem}[proposition]{Theorem}
\newtheorem{corollary}[proposition]{Corollary}
\theoremstyle{definition}
\newtheorem{remark}[proposition]{Remark}
\newtheorem{definition}[proposition]{Definition}
\DeclareMathOperator{\End}{End}
\DeclareMathOperator{\Id}{Id}
\DeclareMathOperator{\tr}{tr}
\DeclareMathOperator{\GL}{GL}
\DeclareMathOperator{\Aut}{Aut}
\DeclareMathOperator{\Hom}{Hom}
\newcommand{\dbar}{\bar\partial}
\newcommand{\scG}{\mathscr{G}}
\newcommand{\scGC}{\mathscr{G}^{\mathbb{C}}}
\newcommand{\scH}{\mathscr{H}}
\newcommand{\scN}{\mathcal{N}}
\newcommand{\scP}{\mathcal{P}}
\newcommand{\scQ}{\mathscr{Q}}
\newcommand{\scR}{\mathcal{R}}
\newcommand{\scV}{\mathcal{V}}
\newcommand{\rk}{\mathrm{rank}}
\newcommand{\Vol}{\mathrm{Vol}}
\def\R{\mathbb{R}}
\def\Q{\mathbb{Q}}
\def\Z{\mathbb{Z}}
\def\N{\mathbb{N}}
\def\P{\mathbb{P}}
\def\C{\mathbb{C}}
\def\cF{\mathcal{F}}
\def\cE{\mathcal{E}}
\def\cO{\mathcal{O}}
\def\cG{\mathcal{G}}
\def\cH{\mathcal{H}}
\def\cV{\mathcal{V}}
\def\cQ{\mathcal{Q}}
\def\cC{\mathcal{C}}
\def\Gr{\mathrm{Gr}}
\def\delb{\overline{\partial}}
\def\rank{\mathrm{rank}}
\def\trace{\mathrm{trace}}
\def\Lie{\mathrm{Lie}}
\def\k{\mathfrak{k}}
\def\g{\mathfrak{g}}
\def\aut{\mathfrak{aut}}
\def\Om{\Omega}
\def\om{\omega}
\def\ep{\varepsilon}
\def\>{\rangle}
\def\<{\langle}
\def\>{\rangle}
\title[Hermitian Yang--Mills connections on pullback bundles]{Hermitian Yang--Mills connections on pullback bundles}
\author[Lars Martin Sektnan and Carl Tipler]{Lars Martin Sektnan and Carl Tipler}
\address{Lars Martin Sektnan, Institut for Matematik, Aarhus University, 8000, Aarhus C, Denmark}
\address{Department of Mathematical Sciences, University of Gothenburg, 412 96 Gothenburg, Sweden}
\email{sektnan@chalmers.se}
\address{Carl Tipler, Univ Brest, UMR CNRS 6205, Laboratoire de Math\'ematiques de Bretagne Atlantique, France}
\email{Carl.Tipler@univ-brest.fr}
\begin{document}

\begin{abstract}
We investigate hermitian Yang--Mills connections on pullback bundles with respect to adiabatic classes on the total space of holomorphic submersions with connected fibres. Under some technical assumptions on the graded object of a Jordan--H\"older filtration, we obtain a necessary and sufficient criterion for when the pullback of a strictly semistable vector bundle will carry an hermitian Yang-Mills connection, in terms of intersection numbers on the base of the submersion. Together with the classical Donaldson--Uhlenbeck--Yau correspondence, we deduce that the pullback of a stable (resp. unstable) bundle remains stable (resp. unstable) for adiabatic classes, and settle the semi-stable case. 
\end{abstract}

\maketitle

\section{Introduction}
\label{sec:intro}
Established in the 80's, the Hitchin--Kobayashi correspondence builds a bridge between gauge theory and moduli problems for vector bundles \cite{NaraSesha65,skobayashi82,lubke83,donaldson87,uhlenbeckyau86}. The content of the Donaldson--Uhlenbeck--Yau theorem is that the hermitian Yang--Mills equations, that originated in physics, can be solved precisely on polystable vector bundles in the sense of Mumford and Takemoto \cite{Mum62,Tak72}. Together with their uniqueness property, hermitian Yang--Mills connections are canonically attached to holomorphic vector bundles and play a key role in the study of their moduli over K\"ahler manifolds.

It is then natural to try to understand how hermitian Yang--Mills connections, or polystable bundles, relate to natural maps between complex polarised manifolds such as immersions or submersions. A celebrated result of Metha and Ramanathan implies that the restriction of a slope stable bundle $E$ on a projective variety $X$ to a general complete intersection $Z\subset X$ of sufficiently high degree is again slope stable \cite{MeRa}. On the other hand, to the knowledge of the authors, it seems that there is no similar general result for pullbacks of stable bundles along submersions. In this paper, we will give a novel construction of hermitian Yang--Mills connections, giving an answer to the question of stability of pullback bundles on submersions with connected fibres for so-called adiabatic classes in certain circumstances.

More precisely, let $\pi: (X,H) \to (B,L)$ be a holomorphic submersion with connected
fibres between polarised connected compact complex manifolds such that $L$ is an ample line bundle on $B$, and $H$ is a relatively ample line bundle on $X$.  
In particular, all the fibres $X_b = \pi^{-1} (b)$ of $\pi$ are smooth. The bundle  $L_k = H \otimes \pi^* L^k  $ is then ample on $X$ for $k \gg 0$. In this article, we study the existence of hermitian Yang--Mills connections (HYM for short) on bundles $\pi^* E$ pulled back from $B$ with respect to the classes $L_k$, for $k\gg 0$. More generally, the work applies to not necessarily projective K\"ahler manifolds, replacing $c_1(L)$ by a K\"ahler class on $B$ and $c_1(H)$ by a relatively K\"ahler class on $X$. However, we will use the line bundle notation throughout.

We first show that if one can solve the HYM equation for a simple bundle on $B$, one can also solve it for the pulled back bundle on $X$. We also relate the solutions on $X$ to the pullback of the solution on $B$.
\begin{theorem}
\label{thm:stablecase}
Let $\pi: (X,H) \to (B,L)$ be a holomorphic submersion with connected
fibres. Suppose that $E \to B$ is a simple  holomorphic vector bundle admitting an hermitian Yang--Mills connection $A$ with respect to $\omega_B \in c_1(L).$ Then for any $\omega_X \in c_1 (H)$ there are connections $A_k$ on $\pi^* E$ which are hermitian Yang--Mills with respect to $\omega_X + k \pi^* \omega_B$ for all $k \gg 0$. Moreover, the sequence $(A_k)$ converges to $\pi^*A$ in any Sobolev norm.
\end{theorem}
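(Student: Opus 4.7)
My plan is to work at the level of Hermitian metrics rather than connections: fixing the holomorphic structure of $\pi^*E$, I seek a metric $h_k = \pi^*h\cdot e^{s_k}$, with $h$ the given HYM metric on $E$ and $s_k$ a Hermitian endomorphism of $\pi^*E$ to be determined. Writing $\omega_k = \omega_X + k\pi^*\omega_B$, the Chern connection $A_k$ of $h_k$ will be HYM provided $s_k$ solves $i\Lambda_{\omega_k}F_{h_k}=\lambda_k\Id$, where $\lambda_k$ is the topological constant associated to $L_k$. The starting point is $s_k=0$: expanding $\omega_k^n$ and $\omega_k^{n-1}$ in $k$ and using that $\pi^*\beta\wedge \pi^*\omega_B^m=0$ for any $(1,1)$-form $\beta$ on $B$ (with $m=\dim B$), a direct computation yields
\[
i\Lambda_{\omega_k}\pi^*\beta \;=\; \tfrac{1}{k}\,\pi^*(i\Lambda_{\omega_B}\beta) + O(k^{-2}).
\]
Applying this to $\beta=F_A$ and comparing with the expansion of $\lambda_k$ in $1/k$, the metric $\pi^*h$ already satisfies the $\omega_k$-HYM equation modulo an error of order $k^{-2}$.

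The main body of the argument is to improve $\pi^*h$ iteratively by a formal series $s_k=\sum_{j\ge 1}k^{-j}s_j$, cancelling the error at each order in $1/k$. The linearisation of the HYM operator at $\pi^*h$ is, to leading order, $k^{-1}$ times the rough Laplacian $\Delta_{\omega_k,\pi^*h}$ acting on sections of $\End(\pi^*E)$. Its spectrum is controlled by the adiabatic splitting of sections into horizontal modes (pulled back from $B$) and vertical modes (of fibrewise mean zero): horizontally the operator is asymptotic to $k^{-1}\pi^*\Delta_{\omega_B,h}$, while vertically it enjoys a uniformly bounded spectral gap coming from the fibrewise Laplacian. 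Since $h$ is HYM, the kernel of $\Delta_{\omega_B,h}$ on $\End(E)$ consists of parallel endomorphisms, a finite-dimensional subspace, and at each step of the iteration the error can be shown to lie in its $L^2$-orthogonal complement, so that the linear equation is solvable. Iterating, for every $N\ge 1$ one obtains an approximate HYM metric $h_k^{(N)}=\pi^*h\cdot e^{s_k^{(N)}}$ with error of order $k^{-N-1}$ in a H\"older norm.

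The final step is an implicit function theorem argument applied to the HYM operator in a suitable $k$-weighted H\"older space of Hermitian endomorphisms of $\pi^*E$. The hard part, and the principal technical obstacle, is a uniform estimate for the inverse of the linearisation: owing to the $k^{-1}$ scaling on horizontal modes, the operator norm of the inverse blows up like $k$, and this growth must be balanced against the quadratic nonlinearity of the HYM operator. This forces $N$ in the construction above to be taken sufficiently large; for such $N$, a contraction mapping argument produces a genuine solution $s_k$ for all $k\gg 0$, whose Chern connection is the desired $A_k$.
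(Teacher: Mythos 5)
Your overall scheme coincides with the paper's: show the pulled-back HYM metric solves the equation to order $k^{-2}$, improve order by order in $k^{-1}$ using the vertical/horizontal splitting of the linearised operator, then close with a quantitative implicit function theorem against a right inverse growing like $k$. (The paper fixes the metric and moves the holomorphic structure through hermitian gauge transformations; your fixed-$\dbar$, varying-metric formulation is an equivalent viewpoint, not a different route.)

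The genuine gap is in your treatment of the kernel and cokernel of the linearisation, which is exactly where the hypothesis must enter. You assert that at each step of the iteration the error "can be shown to lie in the $L^2$-orthogonal complement" of $\ker\Delta_{\omega_B,h}$ (the parallel endomorphisms), but you never prove this and never use stability or simplicity of $E$ anywhere. In the generality you allow — a HYM connection exists, i.e. $E$ is merely polystable — the claim is false: if $E=\bigoplus_i E_i$ with summands of equal $L$-slope but different subleading "adiabatic" slopes (different pairings $\Theta\cdot c_1(E_i)/\rk(E_i)$, cf. Lemma \ref{lem:slopeexpansion}), the error acquires, at the first discrepant order, a nonzero component along the $\Id_{E_i}$-directions of the kernel which no choice of $s_j$ can remove; indeed $\pi^*E$ is then slope unstable for $L_k$ (Proposition \ref{prop:semistable unstable case}), so no argument can succeed. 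The construction really requires $E$ stable, hence simple, so that $\ker\Delta_{\omega_B,h}$ on hermitian endomorphisms is $\R\cdot\Id_E$ and the kernel component of each error is a constant multiple of the identity, matched against the expansion of the topological constant (Chern--Weil); this needs to be said and checked, and it is the step your proposal glosses over.

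A second, related omission concerns the final perturbation. To apply the implicit function theorem you need a right inverse of the linearised operator at the approximate solutions on $X$, i.e. you must know that the kernel of the $\omega_k$-Laplacian on $\Gamma(X,\End_H(\pi^*E,h))$ is only $\R\cdot\Id$. This does not follow from your horizontal/vertical mode description; it requires proving that $\pi^*E$ is simple (the paper's Lemma \ref{lem:pullbackissimple}: a holomorphic endomorphism of $\pi^*E$ is constant on the fibres, hence pulled back, hence a multiple of the identity when $E$ is simple). Without it the operator has extra cokernel and the contraction cannot be set up as you state. Finally, the bound $\| \scP_{j,k}\|=O(k)$, which you correctly identify as the principal technical obstacle, is only asserted; the paper derives it from a Poincar\'e-type inequality comparing $\omega_k$ with a product metric (Lemma \ref{lemma:poincareineq}), and some such estimate, together with an approximate solution of sufficiently high order (here $j\geq 3$), must be supplied before the argument closes.
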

This result is quite natural, as for $k$ large enough, the global geometry of $(X,L_k)$ is governed by that of $(B,L^k)$. From the Hitchin--Kobayashi correspondence it follows that pullback preserves stability for adiabatic classes.
\begin{corollary}
\label{cor:stablepullback}
Suppose $E$ is a slope stable vector bundle on $(B,L)$. Then for all $k \gg 0$, the pullback bundle $\pi^*E \to X$ is slope stable with respect to $L_k$.
\end{corollary}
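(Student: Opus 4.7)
The plan is to combine Theorem \ref{thm:stablecase} with both directions of the Donaldson--Uhlenbeck--Yau correspondence, and then to upgrade polystability to stability by a simplicity argument.

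First, since $E$ is slope stable on $(B,L)$, the DUY theorem guarantees a HYM connection on $E$ with respect to any chosen $\omega_B \in c_1(L)$. Pick some $\omega_X \in c_1(H)$, and invoke Theorem \ref{thm:stablecase} to produce connections $A_k$ on $\pi^*E$ that are HYM with respect to $\omega_X + k\pi^*\omega_B \in c_1(L_k)$ for all $k \gg 0$. The converse direction of the DUY correspondence (existence of a HYM connection implies slope polystability) then immediately tells us that $\pi^*E$ is slope polystable with respect to $L_k$ for such $k$.

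The main point that remains, and the only real subtlety, is to show that polystability here is actually stability. For this I would argue via simplicity: a polystable bundle is stable if and only if its endomorphism bundle has only scalar global sections. Since $E$ is stable it is simple, so $H^0(B,\End(E)) = \C \cdot \Id$. Because the fibres of $\pi$ are compact, connected and smooth, we have $\pi_* \cO_X = \cO_B$, hence by the projection formula
\[
H^0(X, \End(\pi^*E)) = H^0(X, \pi^*\End(E)) = H^0(B, \End(E) \otimes \pi_*\cO_X) = H^0(B, \End(E)) = \C.
\]
Thus $\pi^*E$ is simple, and being polystable it must be stable.

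The step I would expect to require the most thought is this last simplicity upgrade, since nothing in the HYM construction by itself distinguishes stable from strictly polystable; one needs to use a property of $\pi$ (namely $\pi_*\cO_X = \cO_B$) in an essential way. Everything else is a direct application of Theorem \ref{thm:stablecase} and the Hitchin--Kobayashi correspondence.
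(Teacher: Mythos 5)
Your proposal is correct and follows essentially the same route as the paper: Theorem \ref{thm:stablecase} plus both directions of the Hitchin--Kobayashi correspondence give polystability of $\pi^*E$ with respect to $L_k$, and simplicity of the pullback upgrades this to stability. The only cosmetic difference is that you establish simplicity of $\pi^*E$ via $\pi_*\cO_X=\cO_B$ and the projection formula, whereas the paper's Lemma \ref{lem:pullbackissimple} proves the same fact by restricting a holomorphic endomorphism to the compact fibres and noting it must be constant there; the two arguments are equivalent.
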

On the other hand, the leading order of the slope of a pullback bundle on $(X, L_k)$ is its slope on $(B,L)$. The following is then straightforward. 
\begin{proposition}
 \label{prop:unstablepullbackintro}
 Suppose $E$ is a strictly slope unstable vector bundle on $(B,L)$. Then for all $k \gg 0$, the pullback bundle $\pi^*E \to X$ is strictly slope unstable with respect to $L_k$.
\end{proposition}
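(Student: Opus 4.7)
The plan is to destabilise $\pi^*E$ directly by pulling back a destabilising subsheaf of $E$, and to check that the destabilising slope inequality survives the change of polarisation by an asymptotic expansion in $k$. Since the assertion is only about slopes and not about HYM connections, no gauge theoretic input should be required.

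By assumption, there is a coherent subsheaf $F\subset E$ with $0<\rk F<\rk E$ and $\mu_L(F)>\mu_L(E)$. As $\pi$ is a holomorphic submersion it is flat, so the pullback $\pi^*F$ injects as a coherent subsheaf of $\pi^*E$ of rank $\rk F$, with $c_1(\pi^*F)=\pi^*c_1(F)$. Thus $\pi^*F$ is a candidate destabilising subsheaf of $\pi^*E$ for the class $L_k$, and it suffices to compute the leading behaviour in $k$ of $\mu_{L_k}(\pi^*F)-\mu_{L_k}(\pi^*E)$.

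Setting $n=\dim X$ and $m=\dim B$, the binomial theorem gives
\[
c_1(L_k)^{n-1}=\sum_{j=0}^{n-1}\binom{n-1}{j}k^j\,(\pi^*c_1(L))^j\,c_1(H)^{n-1-j}.
\]
Multiplying by $\pi^*c_1(F)$ and integrating over $X$, the projection formula
\[
\int_X \pi^*\bigl(c_1(F)\cdot c_1(L)^j\bigr)\cdot c_1(H)^{n-1-j}=\int_B c_1(F)\cdot c_1(L)^j\cdot \pi_*\!\left(c_1(H)^{n-1-j}\right)
\]
shows that only the terms with $j\le m-1$ contribute (the class $c_1(F)\cdot c_1(L)^j$ vanishes for $j\ge m$ for dimensional reasons). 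The leading contribution comes from $j=m-1$, where $\pi_*(c_1(H)^{n-m})=V$ is the (constant, positive) volume of a fibre with respect to the relatively ample class $H$. Hence
\[
\mu_{L_k}(\pi^*F)=\binom{n-1}{m-1}V\,k^{m-1}\mu_L(F)+O(k^{m-2}),
\]
and exactly the same expansion holds with $F$ replaced by $E$. Subtracting,
\[
\mu_{L_k}(\pi^*F)-\mu_{L_k}(\pi^*E)=\binom{n-1}{m-1}V\,k^{m-1}\bigl(\mu_L(F)-\mu_L(E)\bigr)+O(k^{m-2}).
\]
Since $V>0$ and the bracketed factor is strictly positive by hypothesis, this difference is positive for all $k\gg 0$, so $\pi^*F$ destabilises $\pi^*E$ with respect to $L_k$.

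The only real point to watch is the flatness/exactness ensuring that $\pi^*F\hookrightarrow\pi^*E$ remains an injection of coherent sheaves (so that $\pi^*F$ is a legitimate destabilising subsheaf), and the positivity of the leading coefficient $\binom{n-1}{m-1}V$; both are immediate from the standing hypotheses that $\pi$ is a submersion and $H$ is relatively ample. Everything else is a routine asymptotic expansion, which matches the paper's remark that the statement is straightforward.
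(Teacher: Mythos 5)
Your proposal is correct and takes essentially the same route as the paper: Proposition \ref{prop:unstable case} also destabilises $\pi^*E$ by the pullback of a destabilising subsheaf of $E$, with the asymptotic slope comparison supplied by Lemma \ref{lem:slopeexpansion}, which you simply rederive in place (note you swap the paper's conventions, using $n=\dim X$ and $m=\dim B$). No further comment is needed.
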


The main result of this paper deals with the more subtle situation of pulling back a slope semistable vector bundle. Such a bundle $E \to B$ admits a Jordan--H\"older filtration by subsheaves
$0 = \cF_0 \subset \cF_1 \subset \hdots \subset \cF_\ell = E$
with corresponding stable quotients $\cG_i = \frac{\cF_i}{\cF_{i-1}}$ for $i=1, \hdots, \ell$ of slope $\mu_L(\cG_i)=\mu_L(E)$. In particular, the graded object of this filtration $\Gr(E):=\bigoplus_{i=1}^\ell \cG_i$ is polystable. 
For technical reasons, we will assume $\Gr(E)$ to be locally free, and identify it with the associated vector bundle. From Theorem \ref{thm:stablecase}, it follows that its pullback to $(X,L_k)$, for $k$ large, is a direct sum of stable bundles, possibly of different slopes. The next theorem shows that, under some technical assumptions, the various stable components of $\Gr(E)$ and their {\it adiabatic slopes} govern the stability of $\pi^*E$. 

To give a precise statement, we need to introduce some notations. First, for any pair of torsion free sheaves $\cF$ and $\cE$ on $B$, letting $\mu_k(\pi^*\cE)$ be the slope of $\pi^*\cE$ with respect to $L_k$, we will use the notation $\mu_\infty(\cF)< \mu_\infty(\cE)$ (resp. $\mu_\infty(\cE)=\mu_\infty(\cF)$) when the leading order term in the $k$-expansion of $\mu_k(\pi^*\cE)-\mu_k(\pi^*\cF)$ is positive (resp. when these expansions are equal); a non-strict inequality meaning that one of these two conditions occurs. Then, we will introduce the following set of hypothesis:
\begin{enumerate}
 \item[$(H1)$] The Jordan-H\"older filtration $(\cF_i)_{0\leq i \leq \ell}$ of $E$ is unique;
 \item[$(H2)$] the stable components $\cG_i:=\cF_i/\cF_{i-1}$ of $\Gr(E)$ are pairwise non isomorphic;
 \item[$(H3)$] for all $i$, $\cG_i$ is locally free.
\end{enumerate}

We then have :
\begin{theorem}
\label{thm:ssthmintro}
Let $\pi: (X,H) \to (B,L)$ be a holomorphic submersion with connected
fibres.
Suppose $E$ is a slope semistable vector bundle on $(B,L)$ satisfying $(H1), (H2)$ and $(H3)$.
Assume that for all $i\in [\![ 1, \ell-1 ]\!]$, $\mu_\infty (\cF_i) < \mu_\infty(E)$.
Then for any $\omega_B \in c_1(L)$ and $\omega_X \in c_1 (H)$ there are connections $A_k$ on $\pi^* E$ which are hermitian Yang--Mills with respect to $\omega_X + k \pi^* \omega_B$ for all $k \gg 0$. Moreover, there is an hermitian Yang--Mills connection $A$ on $\Gr(E)$ with respect to $\om_B$ such that $(A_k)$ converges to $\pi^*A$ in any Sobolev norm.
\end{theorem}
Note that this result gives more information than the stability of $\pi^*E$ for adiabatic polarisation, as it provides information on the convergence of the associated hermitian Yang--Mills connections.
\begin{remark}
The assumptions $(H1)$ and $(H2)$ imply that $E$ is simple, which is a necessary condition for the conclusion of Theorem \ref{thm:ssthmintro} to hold true (see Lemma \ref{lem:H1H2implysimple}). On the other hand, assuming $E$ to be simple, the hypothesis $(H1)-(H3)$ seem purely technical. Hypothesis $(H3)$ enables us to see the initial bundle as a small deformation of its graded object (see e.g. \cite{BuSchu}), allowing us to use the differential geometric approach to deformation theory \cite[Section 7]{skobayashi87}. This assumption was already made in the related work \cite{leung97}. It seems likely that one could relax this hypothesis, asking only for a reflexive graded object, by using resolutions of singularities and {\it admissible} Hermite--Einstein metrics as introduced by Bando and Siu (see \cite{sibley15} and the references therein).
 Hypothesis $(H2)$ ensures that the kernel of the linearisation of the HYM equation on $\Gr(E)$ is precisely the direct sum of the kernels of each stable piece $\cG_i$. Finally, together with $(H2)$, $(H1)$ imposes a certain shape on the deformation from $\Gr(E)$ to $E$, see Lemma \ref{lem:gammaijnonzero}. This will simplify the control on the various deformation rates that will appear in the argument. It would be interesting to remove these three hypothesis.
 \end{remark}

Still relying on the Hitchin--Kobayashi correspondence, by gathering Proposition \ref{prop:semistable unstable case}, Theorem \ref{thm:semistablemain} and Corollaries \ref{cor:semistablecor} and \ref{cor:JHfiltrationpullback}, we obtain the following.
\begin{corollary}
\label{cor:sscorintro}
Suppose $E$ is a slope semistable vector bundle on $(B,L)$ satisfying $(H1)-(H3)$. Then, on $(X,L_k)$, when $k\gg 0$, $\pi^*E$ is
\begin{itemize}
 \item unstable if and only if there exists $i\in [\![ 1, \ell-1 ]\!]$ with $\mu_\infty(\cF_i)>\mu_\infty(E)$,
  \item semistable if and only if for all $i\in [\![ 1, \ell-1 ]\!]$, $\mu_\infty(\cF_i)\leq \mu_\infty(E)$,
  \item stable if and only if for all $i\in [\![ 1, \ell-1 ]\!]$, $\mu_\infty(\cF_i)< \mu_\infty(E)$.
\end{itemize}
Moreover, in the semistable case, for $k\gg 0$, a Jordan--H\"older filtration of $\pi^*E$ is given by
$$
0\subset \pi^*\cF_{i_1}\subset \ldots \subset \pi^*\cF_{i_\ell}=\pi^*E
$$
where the $i_j$'s are precisely the indices with $\mu_\infty(\cF_{i_j})=\mu_\infty(E)$.
\end{corollary}
\begin{remark}
Note that for a given vector bundle $E$ on $B$, the number of non-zero terms in the expansion of the slope $\mu_k(\pi^*E)$ is bounded by $\mathrm{dim}_\C(B)$, and that these terms can be computed from intersection numbers on $B$ depending on the geometry of the map $\pi: (X,H) \to (B,L)$. For example, up to multiplicative constants, the leading order term of $\mu_k(\pi^*E)$ is the slope $\mu_L(E)$ of $E$ on $(B,L)$ while the second order term is given by the Hodge--Riemann pairing of $\frac{c_1(\cE)}{\rank{E}} $ with the class $\Theta:=\pi_*(c_1(H)^{m+1})$ (see Lemma \ref{lem:slopeexpansion}). This shows that our criterion is quite explicit, and can be used in various situations to produce stable vector bundles. In Section \ref{sec:examples}, we discuss some examples, in particular when $B$ is a complex curve or a surface, or when the space $X$ is the projectivisation of a vector bundle on $B$.
\end{remark}
In order to produce the connections in Theorems \ref{thm:stablecase} and \ref{thm:ssthmintro}, we use a perturbative argument. This was inspired by, and built on techniques from, analogous questions for constructing extremal (or constant scalar curvature) K\"ahler metrics in adiabatic classes on the total spaces of holomorphic submersions with connected fibres, see among others \cite{hong98,fine04,bronnle15,luseyyedali14,dervansektnan20}.

This perturbative technique fits into a vast array of problems of such a nature in geometric analysis (e.g. gluings, deformations, smoothings, adiabatic constructions), where the common feature is to start with a solution of a given geometric PDE on a given geometric object that we perturb, trying then to solve the same PDE on the perturbed object. A specific feature for HYM connections and cscK metrics is that the solutions of the geometric PDE correspond to zeros of a moment map \cite{donaldson87,Fuj92,donaldson97}, and are related to a Geometric Invariant Theory (GIT) stability notion. 

 In all of the previously cited perturbation results, one starts with a (poly)-stable object. An algebro-geometric asymptotic expansion argument shows that one has to start with at least a semistable object to hope to produce a stable object in the perturbed situation, see for example \cite[Theorem 13]{stoppa10} for such a result for the problem of producing cscK metrics on blowups. This is also the case in our setting, see Proposition \ref{prop:unstablepullbackintro}. The main novel achievement of the present work is to deal with a strictly semistable situation, giving a criterion for when a strictly semistable bundle pulls back to a stable one for adiabatic classes. The arguments get significantly more complicated in this setting. The introduction to Section \ref{sec:semistable} contains a discussion of the main new hurdles to overcome, compared to the stable case.
 
 \begin{remark}
  We should point out that the method we use in the semistable situation is similar to the one developed in \cite{leung97}. While in Leung's work, the perturbation is on the equations, in the present work the perturbation is on the geometry of the manifold, leading to extra complications in the arguments, see Remark \ref{rem:differencewithLeung}.
 \end{remark}

Finally, we focus on a very special case where our results apply, namely when the fibres are reduced to a point, that is when $X=B$. In that setting, the perturbation comes from the change of polarisation on $B$ from $L$ to $L^k\otimes H$. Equivalently, we consider how semistable bundles on $(B,L)$ behave with respect to nearby $\Q$-polarisations $L + \frac{1}{k} H$. Note that while in GIT, (semi)stability is an open condition, understanding the variations of GIT quotients under modifications of the polarisation is a much more difficult problem (see \cite{DolHu,Thad}). We refer to \cite[Chapter 4, Section C]{HuLe} and the references therein for results on variations of moduli spaces of stable bundles on surfaces related to wall-crossing phenomena of polarisations. At a much more humble level, when restricting to a single semistable bundle $E$, our result provides an effective criterion on variations of the polarisation that send $E$ to the stable or unstable loci (see Section \ref{sec:XisBcase}). It seems reasonable to expect that our results extend locally around a semistable bundle. However, obtaining more global results, such as descriptions of variations of the moduli space of stable bundles, would require substantial work, and in particular uniform control on our estimates.

\subsection*{Outline:} In Section \ref{sec:setup}, we gather some of the results we need on the hermitian Yang--Mills equation, slope stability, and the structure one obtains on sections and tensors from a holomorphic submersion with connected fibres. In Section \ref{sec:adiabaticslopes}, we introduce the notion of adiabatic slopes and discuss when the pullback of a strictly unstable bundle is strictly unstable for adiabatic polarisations. Section \ref{sec:stable} proves the main result in the stable case via constructing approximate solutions to the HYM equation in Section \ref{sec:stableapprox}, before showing in Section \ref{sec:nonlinear} how this can be perturbed to a genuine solution. Section \ref{sec:semistable} deals with the more technical semistable case. Finally, in Section \ref{sec:examples} we investigate the consequences of the main results in some examples.

\subsection*{Acknowledgments:}  We thank Ruadha\'i Dervan for helpful discussions and for pointing out the reference \cite{leung97}. We also thank the anonymous referee for their advices that improved the exposition and the content of this paper. The authors benefited from visits to LMBA and Aarhus University; they would like to thank these institutions for providing stimulating work environments, as well as LABSTIC for providing financial support during LMS's visit to LMBA. LMS's postdoctoral position at Aarhus University was supported by Villum Fonden, grant 0019098. The article was updated after LMS joined the University of Gothenburg, funded by a Marie Sk\l{}odowska-Curie Individual Fellowship funded from the European Union's Horizon 2020 research and innovation programme under grant agreement No 101028041. CT is partially supported by the grants MARGE ANR-21-CE40-0011 and BRIDGES ANR--FAPESP ANR-21-CE40-0017. 


\section{The hermitian Yang--Mills equation, and holomorphic submersions}
\label{sec:setup}
In Sections \ref{sec:HYM} and \ref{sec:slopestability} we introduce the notions of HYM connections and slope stability, together with some general results. From Section \ref{sec:decompositions}, we will start to specialise the discussion to holomorphic submersions with connected fibres. In all the paper, the notation $\Gamma$ will be used to denote smooth sections.

\subsection{The hermitian Yang--Mills equation}
\label{sec:HYM}

Let $E \to X$ be a holomorphic vector bundle of rank $r$ over a compact K\"ahler manifold $X$. An hermitian metric on $E$ is \textit{Hermite--Einstein} with respect to a K\"ahler metric with K\"ahler form $\omega$ if the curvature $F_h \in \Omega^2 \left(X, \End E \right)$ of the corresponding Chern connection satisfies
\begin{align}\label{HEeqn} \Lambda_{\om} \left( iF_h \right) = c \Id_E
\end{align}
for a (uniquely determined, real) constant $c$. Here $\Lambda_{\omega}$ is the contraction operator of $\omega$, determined by
$$
 \Lambda_{\om} \left( \alpha \right) \om^m = m \alpha \wedge \om^{m-1},
$$
where $m$ is the dimension of $X$.

A different point of view is to vary the connection, rather than the metric. A connection $A$ on $E$ is said to be \textit{hermitian Yang--Mills} if
\begin{align*} F_A^{0,2} &= 0, \\
\Lambda_{\om} \left( i F_A \right) &= c \Id_E.
\end{align*}
The former says the $(0,1)$-part of $A$ determines a holomorphic structure on $E$.

The \textit{complex gauge group} is 
$$ 
\scGC(E) = \Gamma \left( \GL \left(E, \mathbb{C} \right) \right).
$$ 
Note that if $\dbar$ is the Dolbeault operator defining the holomorphic structure on $E$, then for each $f \in \scGC(E)$, $ f^{-1} \circ \dbar \circ f$ defines a biholomorphic complex structure on $E$. Let $d_A = \partial_A + \dbar_A$ be the Chern connection of some hermitian metric $h$ on $E$ with respect to the original complex structure (so $\dbar_A = \dbar$). Then the Chern connection $A^f$ of $h$ with respect to the new complex structure induced by $ f^{-1} \circ \dbar \circ f$ is given by 
$$
d_{A^f} = f^* \circ \partial_A \circ (f^*)^{-1} + f^{-1} \circ \dbar \circ f.
$$
In this article we will take the point of view of fixing the hermitian metric and moving the complex structure through the complex gauge group.

The action of any $f \in \scGC(E)$ such that $f^* = f^{-1}$ preserves solutions of the HYM equation. It will therefore be important to gauge fix the equation. Thus we will not move the complex structure through the full complex gauge group $\scGC(E)$, but rather through \begin{align} \scH(E,h) = \scGC(E) \cap \Gamma \left( \End_H (E, h) \right).
\end{align}
Here $\End_H (E,h)$ denotes the hermitian endomorphisms of $(E,h)$, where $h$ is some fixed hermitian metric on $E$. This, for example, ensures that the linearisation of the Equation will be the Laplacian -- a Fredholm operator. See Lemma \ref{lem:linop} below.

For a connection $A$ on $E$, define the Laplace operator $\Delta_{A}$ by
\begin{align}\label{laplaceop}  \Delta_A =  i \Lambda_{\omega} \left( \partial_{A} \bar \partial_{A}  - \bar \partial_{A}  \partial_{A} \right).
\end{align}
Since $\omega$ is K\"ahler, it follows from the Nakano identities (see e.g. \cite[Lemma 5.2.3]{huybrechts05}) that $\Delta_{A} = - d_A^* d_A$ when $A$ is the Chern connection.

Letting $\scQ$ denote the tangent space to $\scH(E,h)$ at the identity, we have that solving the hermitian Yang--Mills equation is equivalent to solving $ \Phi (f) = c \Id_E$, where $\Phi : \scH(E,h) \to \scQ$ is given by $$\Phi (f) = i \Lambda_{\omega} \left( F_{A^f} \right).$$ Equivalently, we want to solve $\Psi (s) = c \Id_E$, where $\Psi : \scQ \to \scQ$ is $\Psi = \Phi \circ \exp.$ It will be more convenient to work with this map, as $\scQ$ is a vector space.

The importance of the Laplace operator for us comes from its relation to the linearisation of $\Psi$. Let $A_{\End E}$ denote the connection induced by $A$ on $\End E$.
\begin{lemma}\label{lem:linop} The differential of $\Psi$  at the identity is $$ d \Psi_{\Id_E} = \Delta_{A_{\End E}}.$$
\end{lemma}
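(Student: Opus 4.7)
The plan is to differentiate the path $f_t = \exp(ts)$ in $\scH(E,h)$, so that $\frac{d}{dt}\big|_{t=0}\Psi(ts) = d\Phi_{\Id_E}(s)$, and trace how this derivative propagates through $\Phi(f) = i\Lambda_\omega(F_{A^f})$.

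The first step is to expand $f_t$, $f_t^{-1}$, $f_t^*$ and $(f_t^*)^{-1}$ to first order in $t$. Since $s$ is hermitian, $f_t^* = f_t$, so all four reduce to $\Id_E \pm ts + O(t^2)$. Substituting into the formula
\[ d_{A^{f_t}} = f_t^* \circ \partial_A \circ (f_t^*)^{-1} + f_t^{-1}\circ \dbar \circ f_t \]
and using Leibniz in the forms $\dbar \circ s = (\dbar_{\End E} s) + s\circ \dbar$ and $\partial_A \circ s = (\partial_{A_{\End E}} s) + s\circ \partial_A$, the terms involving $s\circ\dbar$ and $s\circ\partial_A$ cancel against their counterparts. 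This yields the tangent vector to the path of connections
\[ a := \frac{d}{dt}\Big|_{t=0} A^{f_t} = \dbar_{\End E}s - \partial_{A_{\End E}}s, \]
which is an $\End E$-valued 1-form with $(0,1)$-part $\dbar_{\End E}s$ and $(1,0)$-part $-\partial_{A_{\End E}}s$.

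Next I apply the standard formula for the first variation of curvature along a path of connections, $\dot F|_{t=0} = d_{A_{\End E}} a$. Splitting into bidegrees and using that $A$ is the Chern connection of the hermitian metric $h$, one has $F_A^{0,2} = F_A^{2,0} = 0$, which forces $\dbar_{\End E}^2 = 0$ and $\partial_{A_{\End E}}^2 = 0$ when acting on sections of $\End E$. The pure-type pieces $\dbar_{\End E}^2 s$ and $\partial_{A_{\End E}}^2 s$ therefore vanish, leaving
\[ \dot F|_{t=0} = \partial_{A_{\End E}} \dbar_{\End E}s - \dbar_{\End E}\partial_{A_{\End E}}s. \]

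Finally, applying $i\Lambda_\omega$ and comparing with the definition \eqref{laplaceop} of $\Delta_{A_{\End E}}$ gives $d\Phi_{\Id_E}(s) = \Delta_{A_{\End E}}s$, hence $d\Psi_{\Id_E} = \Delta_{A_{\End E}}$ as claimed. The only real obstacle is careful bookkeeping of first-order expansions; the key simplifying inputs are the hermiticity of $s$, which identifies $f_t^*$ with $f_t$, and the Chern-connection identities $F_A^{2,0}=F_A^{0,2}=0$, which remove the would-be $\partial^2$ and $\dbar^2$ contributions.
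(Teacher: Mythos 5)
Your computation is correct: the first-order expansion of $f_t=\exp(ts)$ with $f_t^*=f_t$ gives $\dot A=\dbar_{\End E}s-\partial_{A_{\End E}}s$, the variation formula $\dot F=d_{A_{\End E}}\dot A$ together with $F_A^{2,0}=F_A^{0,2}=0$ kills the pure-type terms, and contracting with $i\Lambda_\omega$ yields exactly the operator in \eqref{laplaceop}. The paper states this lemma without proof (it is the standard linearisation computation, cf. \cite[Section 7.4]{skobayashi87}), and your argument is precisely that standard computation, so there is nothing to add.
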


It is crucial for the linear theory to understand the (co-)kernel of the linearised operator to the equation. For simple bundles, the kernel of the Laplace operator is well understood
\begin{lemma} If $E$ is a simple bundle over a compact base, then $$\ker \Delta_{A_{\End E}} = \mathbb{C} \cdot \Id_E$$ on $\Gamma \left( X, \End E \right).$
\end{lemma}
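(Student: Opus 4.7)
The plan is to reduce the kernel equation to a holomorphicity condition and then invoke simplicity. First I would use the identity $\Delta_{A_{\End E}} = -d_{A_{\End E}}^* d_{A_{\End E}}$ recorded just before the statement, which holds because $A$ is the Chern connection and $\om$ is K\"ahler. Pairing $\Delta_{A_{\End E}} s = 0$ with $s$ in the $L^2$ inner product and integrating by parts on the compact manifold $X$ gives
$$ 0 = \int_X \langle \Delta_{A_{\End E}} s, s\rangle \, \frac{\om^n}{n!} = - \int_X |d_{A_{\End E}} s|^2 \, \frac{\om^n}{n!}, $$
so every $s \in \ker \Delta_{A_{\End E}}$ is parallel for the induced Chern connection on $\End E$.

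Next I would extract only the $(0,1)$-part of the parallel condition: $\dbar_{A_{\End E}} s = 0$ says that $s$ is a global holomorphic section of $\End E$, i.e.\ a holomorphic endomorphism of $E$. By the very definition of simplicity, $H^0(X, \End E) = \C \cdot \Id_E$, so $s$ must be a scalar multiple of $\Id_E$.

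For the reverse inclusion, $\Id_E$ is parallel for any connection on $\End E$ induced from a connection on $E$: the Leibniz rule yields $(d_{A_{\End E}} \Id_E)(\sigma) = d_A \sigma - \Id_E(d_A \sigma) = 0$ for every local section $\sigma$ of $E$. Hence $\C \cdot \Id_E \subseteq \ker \Delta_{A_{\End E}}$, and combined with the previous step one obtains equality.

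I do not expect any real obstacle here: the argument is simply the combination of the Nakano identity (already recorded in the excerpt), integration by parts on a closed K\"ahler manifold, and the definition of simplicity. The only small point requiring care is the sign convention in the Nakano formula, which is already pinned down by the normalisation of $\Delta_A$ chosen just above the lemma; no compactness beyond that of $X$ and no stability hypothesis on $E$ is needed, so the lemma is robust enough to be applied in the perturbative arguments to follow.
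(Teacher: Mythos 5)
Your argument is correct and is exactly the standard one the paper implicitly relies on (the lemma is stated there without proof, as a well-known fact): the Kähler identity $\Delta_{A_{\End E}}=-d_{A_{\End E}}^*d_{A_{\End E}}$ plus integration by parts shows kernel elements are parallel, hence $\dbar$-holomorphic, and simplicity then forces them to be multiples of $\Id_E$, which is itself parallel. No gaps; the only point needing care — that the identity $\Delta_A=-d_A^*d_A$ is applied on sections (0-forms), where the cross terms $\partial_A^*\dbar_A$ and $\dbar_A^*\partial_A$ vanish — is already covered by the normalisation recorded before the lemma.
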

\begin{remark} In the context of Lemma \ref{lem:linop}, we are restricting $\Delta_{A_{\End E}}$ to sections of  $ \End_H ( E, h) $. Only real multiples of the identity are hermitian endomorphisms, thus the kernel is then $ \mathbb{R} \cdot \Id_E.$
\end{remark}

\subsection{Slope stability}
\label{sec:slopestability}
The notion of slope stability originated in Mumford's study of the moduli spaces of vector bundles on Riemann surfaces \cite{Mum62} and has been adapted to higher dimensional varieties \cite{Tak72}. This stability notion can be stated for more general coherent sheaves (see \cite{HuLe} and the references therein) and in this paper we will restrict ourselves to the torsion-free ones. Let $(X,L)$ be a compact polarised K\"ahler manifold of dimension $n$.
\begin{definition}
 \label{def:slope}
Let $\cE$ be a torsion-free coherent sheaf on $X$. The slope $\mu_L(\cE)\in\Q$ of $\cE$ with respect to $L$ is given by the intersection formula
 \begin{equation}
  \label{eq:slope}
  \mu_L(\cE)=\frac{\deg_L(\cE)}{\rank(\cE)}.
 \end{equation}
 In this formula, $\rank(\cE)$ denotes the rank of $\cE$ while $\deg_L(\cE)=c_1(\cE)\cdot L^{n-1}$ stands for its degree, and
 the first Chern class of $\cE$ is taken to be the first Chern class of the reflexive hull of its determinant $((\Lambda^{\rank(\cE)}\cE)^\vee)^\vee$.
\end{definition}
\begin{definition}
\label{def:stability}
A torsion-free coherent sheaf $\mathcal{E} $ is said to be \emph{slope semistable} with respect to $L$ if for any coherent subsheaf $\mathcal{F}$ of $\mathcal{E}$ with $0<\rank( \cF)<\rank(\cE)$, one has
\begin{eqnarray*}
\mu_L(\mathcal{F})\leq \mu_L(\mathcal{E}).
\end{eqnarray*}
When strict inequality always holds, we say that $\mathcal{E}$ is \emph{slope stable}. Finally, $\cE$ is said to be \emph{slope polystable} if it is the direct sum of slope stable subsheaves of the same slope. If $\cE$ is slope semistable, but not slope stable, we will say that $\cE$ is \textit{strictly slope semistable}. We say $\mathcal{E}$ is \emph{strictly slope unstable} if $\mathcal{E} $ is not slope semistable.
\end{definition}
As slope stability will be the only stability notion for sheaves discussed in this paper, we will often omit ``slope'', and simply refer to stability of a sheaf. When the situation is clear enough, we will also omit to refer to the polarisation in the stability notions. We will make the standard identification of a holomorphic vector bundle with its sheaf of sections, and thus talk about slope stability notions for vector bundles as well. In that case, not only is slope stability a tool for building moduli, but it also relates nicely to differential geometry via the Hitchin--Kobayashi correspondence.
\begin{theorem}
\label{thm:HKcorrespondence}
Let $E \to X$ be a holomorphic vector bundle, and let $\omega \in c_1(L)$ be a K\"ahler form. Then there exists an Hermite--Einstein metric on $E$ with respect to $\omega$ if and only if $E$ is polystable with respect to $L$.
\end{theorem}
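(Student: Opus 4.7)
My plan is to prove the two implications separately, following the strategy of Donaldson--Uhlenbeck--Yau. For the easier direction, assume $E$ admits a Hermite--Einstein metric $h$ and let $\cF \subset E$ be a torsion-free subsheaf with $0 < \rank(\cF) < \rank(E)$. Off a closed analytic subset of codimension $\geq 2$, $\cF$ realises as a holomorphic subbundle equipped with the induced metric from $h$, and a standard second-fundamental-form computation yields
\begin{equation*}
  \deg_L(\cF) \;=\; \rank(\cF)\,\mu_L(E) \;-\; \frac{1}{(n-1)!}\int_X |\beta|^2_\omega\,\omega^n,
\end{equation*}
where $\beta$ is the second fundamental form of $\cF$ in $E$. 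Hence $\mu_L(\cF) \leq \mu_L(E)$, with equality precisely when $\beta \equiv 0$, in which case $\cF$ extends to a holomorphic orthogonal direct summand of $E$. Iterating this splitting yields the polystable decomposition.

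For the converse, I would first reduce to the case of stable $E$: a Hermite--Einstein metric on a polystable bundle is assembled by taking the direct sum of Hermite--Einstein metrics on the stable components, whose HE constants coincide because the slopes do. For stable $E$, I would apply the Uhlenbeck--Yau continuity method. Fix a background hermitian metric $h_0$, write $h = h_0 \exp(s)$ with $s \in \Gamma(\End_H(E,h_0))$, and consider
\begin{equation*}
  L_{\epsilon}(s) \;:=\; i\Lambda_\omega F_{h_0 e^s} \;-\; c\,\Id_E \;+\; \epsilon\, s \;=\; 0, \qquad \epsilon \in [0,1].
\end{equation*}
A solution exists for $\epsilon$ close to $1$ by a sub/super-solution maximum-principle argument: the $\epsilon s$ term renders the nonlinear equation coercive. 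Openness in $\epsilon$ then follows from the implicit function theorem applied to the linearisation $\Delta_{A_{\End E}} + \epsilon\,\mathrm{Id}$, which is invertible on trace-free hermitian endomorphisms for $\epsilon > 0$, and whose kernel at $\epsilon = 0$ reduces to $\R \cdot \Id_E$ by Lemma \ref{lem:linop} together with the simplicity of a stable bundle.

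The core difficulty, and the step I expect to be the main obstacle, is closedness at $\epsilon = 0$: one needs a priori $C^0$ bounds on $s_\epsilon$ as $\epsilon \searrow 0$. The principal analytic tool is the Donaldson functional $M(h_0,\cdot)$, which is convex along geodesics in the space of hermitian metrics and whose Euler--Lagrange equation is the Hermite--Einstein equation; differentiating along the continuity path yields a Jensen-type inequality controlling $\sup_X |s_\epsilon|$ in terms of $\|s_\epsilon\|_{L^1}$. If $\|s_\epsilon\|_{L^2}$ stays bounded, the $C^0$ estimate follows, higher-order estimates come from elliptic bootstrapping, and the limit produces the desired HE metric. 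If instead $\|s_\epsilon\|_{L^2} \to \infty$, one rescales $\widetilde{s}_\epsilon := s_\epsilon / \|s_\epsilon\|_{L^2}$ and extracts a nonzero weak limit $s_\infty$; spectral projections of $s_\infty$ onto eigenvalues below its essential supremum produce a weakly holomorphic projection. The deepest input is then the Uhlenbeck--Yau regularity theorem, which upgrades this weak object to a coherent torsion-free subsheaf $\cF \subsetneq E$, and a slope computation weighted by the destabilising eigenvalue shows $\mu_L(\cF) \geq \mu_L(E)$, contradicting stability. This contradiction forces the $C^0$ bound, closing the continuity argument and producing the Hermite--Einstein metric at $\epsilon = 0$.
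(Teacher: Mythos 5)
The paper does not actually prove this statement: Theorem \ref{thm:HKcorrespondence} is quoted as background, with the direction ``Hermite--Einstein $\Rightarrow$ polystable'' attributed to Kobayashi and L\"ubke and the converse to Uhlenbeck--Yau and Donaldson. So there is nothing in the paper to compare against line by line; what you have written is an outline of the original proofs from the literature, and as such it is essentially the right roadmap. The easy direction via the second fundamental form and the equality case giving a holomorphic orthogonal splitting, the reduction of the polystable case to the stable case by summing Hermite--Einstein metrics with equal Einstein constants, and the Uhlenbeck--Yau continuity method $i\Lambda_\omega F_{h_0e^s}-c\,\Id_E+\epsilon s=0$ with openness from invertibility of $\Delta_{A_{\End E}}+\epsilon$ and closedness reduced to a $C^0$ bound, the failure of which produces, after rescaling, a weakly holomorphic projection and hence (by the Uhlenbeck--Yau regularity theorem) a destabilising subsheaf --- this is exactly the classical strategy.

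Be aware, though, of what your sketch compresses into single sentences. In the easy direction you need that the degree of a saturated subsheaf is computed by the Chern--Weil integral of the induced metric off the singular set (convergence of that integral is a real point, cf.\ Kobayashi's book), and in the equality case that the orthogonal splitting extends across the codimension-two locus. In the hard direction, solvability at the starting point $\epsilon=1$ of the continuity path is not a routine sub/supersolution argument as stated; in Uhlenbeck--Yau (and in L\"ubke--Teleman's exposition) it requires its own argument after a suitable normalisation of $h_0$. You also interleave two distinct proofs: the Donaldson functional and its convexity belong to Donaldson's approach, whereas the Uhlenbeck--Yau closedness argument runs through direct $C^0$ estimates ($\sup|s_\epsilon|$ controlled by $\|s_\epsilon\|_{L^1}$ via a differential inequality) and the weak-limit/regularity dichotomy; either route works, but the two should not be conflated as if they were one argument. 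Finally, the deepest input --- that the weak limit's spectral projections define a coherent subsheaf and that its slope is at least $\mu_L(E)$ --- is precisely the hard core of Uhlenbeck--Yau and is invoked here as a black box. None of this is a flaw for the purpose at hand, since the paper itself treats the theorem as a known result; but as a self-contained proof your text is a summary of the known arguments rather than a proof.
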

It was shown in \cite[Theorem 2.4]{skobayashi82} and \cite{lubke83} that if an Hermite--Einstein metric exists, then the bundle is polystable. The converse was proved in \cite[Theorem 4.1]{uhlenbeckyau86} and \cite[Proposition 1]{donaldson87}.

Being a Geometric Invariant Theory notion, slope stability enjoys many nice features (see e.g. \cite[Sections 1 and 4]{HuLe}). In particular, any semistable vector bundle $E$ on $X$ admits a degeneration to a unique polystable object:
\begin{definition}
 \label{def:JHfiltration}
 Let $\cE$ be a coherent torsion-free sheaf on $(X,L)$. A \emph{Jordan--H\"older filtration} for $\cE$ is a filtration by saturated coherent subsheaves:
 \begin{align}
\label{eq:JHfiltration}
0 = \cF_0 \subset \cF_1 \subset \hdots \subset \cF_l = \cE,
\end{align}
such that the corresponding quotients,
\begin{align}
\label{eq:JHquots}
\cG_i = \frac{\cF_i}{\cF_{i-1}},
\end{align}
 for $i=1, \hdots, l$, are stable with slope $\mu_L(\cG_i)=\mu_L(\cE)$.
 In particular, the graded object of this filtration
 \begin{equation}
  \label{eq:JHgraded}
  \Gr(\cE):=\bigoplus_{i=1}^l \cG_i
 \end{equation}
is polystable.
 \end{definition}
 From \cite[Section 1]{HuLe}, we have the standard existence and uniqueness result:
\begin{proposition}
 \label{prop:JHfiltration}
 Any semistable coherent torsion-free sheaf $\cE$ on $(X,L)$ admits a Jordan--H\"older filtration. Such a filtration may not be unique, but the reflexive hull $\Gr(\cE)^{**} $ of the graded object $\Gr(\cE)$ of a Jordan--H\"older filtration is unique up to isomorphism.
\end{proposition}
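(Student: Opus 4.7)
The proof would proceed in two parts: existence of a Jordan--Hölder filtration, and uniqueness of the graded object up to isomorphism. This is a classical fact in the spirit of the Jordan--Hölder theorem for modules, and I would follow the approach laid out in \cite[Section 1]{HuLe}.

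For existence, the plan is induction on the rank $r = \rank(\cE)$. If $r=1$, then $\cE$ is automatically stable and the trivial filtration $0 \subset \cE$ works. For $r > 1$, if $\cE$ is already stable we are done. Otherwise, consider the set $\mathscr{S}$ of coherent subsheaves $\cF \subsetneq \cE$ with $0 < \rank(\cF) < r$ and $\mu_L(\cF) = \mu_L(\cE)$; this set is nonempty since $\cE$ is strictly semistable. Choose $\cF_1 \in \mathscr{S}$ of minimal rank. I claim $\cF_1$ is stable: any proper nonzero subsheaf $\cF \subset \cF_1$ satisfies $\mu_L(\cF) \leq \mu_L(\cE) = \mu_L(\cF_1)$ by semistability of $\cE$, and equality would contradict the minimality of $\rank(\cF_1)$. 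The quotient $\cE/\cF_1$ is torsion-free (by a standard saturation argument, or after replacing $\cF_1$ by its saturation which has the same slope and rank), of strictly smaller rank, and semistable of slope $\mu_L(\cE)$ (this follows because a destabilising subsheaf of $\cE/\cF_1$ would pull back to one of $\cE$). The induction hypothesis then produces a Jordan--Hölder filtration of $\cE/\cF_1$, and taking preimages under $\cE \to \cE/\cF_1$ yields the desired filtration of $\cE$.

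For uniqueness of the graded object, I would work in the full subcategory $\scS_\mu$ of torsion-free coherent sheaves of slope $\mu := \mu_L(\cE)$ that are semistable (together with the zero sheaf); this is an abelian subcategory of $\mathrm{Coh}(X)$ because kernels, cokernels and extensions of semistables of slope $\mu$ remain semistable of slope $\mu$. In this category the stable sheaves of slope $\mu$ are precisely the simple objects: any nonzero morphism $\varphi : \cG \to \cG'$ between stable sheaves of slope $\mu$ has $\ker(\varphi)$ and $\mathrm{coker}(\varphi)$ lying in $\scS_\mu$, and stability forces both to vanish, so $\varphi$ is an isomorphism. Given two Jordan--Hölder filtrations of $\cE$, the classical Schreier refinement / Zassenhaus lemma in $\scS_\mu$ then shows that they admit equivalent refinements, and hence their graded objects agree up to permutation and isomorphism.

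The main obstacle is the uniqueness argument, and in particular verifying carefully that $\scS_\mu$ really is an abelian category inside $\mathrm{Coh}(X)$: one must check that if $\varphi: \cE \to \cE'$ is a morphism of semistables of slope $\mu$, then $\mathrm{im}(\varphi)$, $\ker(\varphi)$ and $\mathrm{coker}(\varphi)$ are again torsion-free (or torsion is controlled) and semistable of slope $\mu$. This is standard but requires attention to the distinction between sheaf-theoretic and saturated subsheaves, and to the convention for $c_1$ via the reflexive hull of the determinant. Once this categorical framework is in place, the Jordan--Hölder theorem is essentially formal. Because the statement is entirely classical and cited verbatim from \cite{HuLe}, I would not reproduce the details in the paper but simply refer to the reference.
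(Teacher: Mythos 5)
The paper offers no proof of this proposition: it is quoted as a standard fact with the reference \cite[Section 1]{HuLe}, so your closing decision to cite rather than reprove matches what the authors do, and your existence argument (a saturated subsheaf of minimal rank among those of slope $\mu_L(\cE)$ is stable, its quotient is torsion-free and semistable of the same slope, then induct on the rank) is correct.

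The uniqueness half of your sketch, however, has a genuine gap, and it is exactly the point you flag but then dismiss as a routine verification. For \emph{slope} semistability the category you call $\mathscr{S}_\mu$ is not an abelian subcategory of $\mathrm{Coh}(X)$, and stable sheaves of slope $\mu$ are not simple objects in it: already on a polarised surface the inclusion $\cI_p \hookrightarrow \cO_X$ is a nonzero, non-invertible morphism between $\mu$-stable sheaves of slope $0$ whose cokernel is torsion supported in codimension two. This cannot be repaired by bookkeeping with saturations, because with Definition \ref{def:JHfiltration} as stated the graded object can genuinely change from one filtration to another: for $\cE=\cI_p\oplus\cI_q$ with $p\neq q$ one has the obvious filtration with graded $\cI_p\oplus\cI_q$, but also the filtration $0\subset\cI_{\{p,q\}}\subset\cE$ coming from the exact sequence $0\to\cI_{\{p,q\}}\to\cI_p\oplus\cI_q\to\cO_X\to 0$, $(x,y)\mapsto x-y$, whose graded object $\cI_{\{p,q\}}\oplus\cO_X$ is not isomorphic to the first (it has a nonzero global section, the first does not). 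Consequently the Schreier--Zassenhaus argument you propose can only be run after passing to the quotient of $\mathrm{Coh}(X)$ by sheaves supported in codimension $\geq 2$, where $\mu$-stable sheaves do become simple, and it then yields uniqueness of $\Gr(\cE)$ only up to isomorphism in codimension one; uniqueness up to honest isomorphism is the corresponding statement for Gieseker stability, which is the form in which \cite{HuLe} prove it. This subtlety is invisible in the body of the paper, which assumes $\Gr(E)$ locally free in all applications, but as written your proof of the second assertion does not close, and the assertion itself has to be read with the codimension-one (or Gieseker) caveat.
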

The graded object $\Gr(E)$ of a Jordan--H\"older filtration will be used in Section \ref{sec:semistable}, when studying the lift of a semistable vector bundle $E$ to a holomorphic submersion with connected fibres. In general, the sheaf $\Gr(E)$ is not the sheaf of sections of a vector bundle. For technical reasons, we will assume $\Gr(E)$ to be locally free, and denote by $\Gr(E)$ as well the associated vector bundle. In that context, from \cite{BuSchu}, $E$ is a complex deformation of $\Gr(E)$.

\subsection{Decomposition of tensors and sections on holomorphic submersions with connected fibres}
\label{sec:decompositions}
We now specialise the discussion to the case of a holomorphic submersion $\pi:(X, H) \to (B,L)$ of polarised K\"ahler manifolds, with a vector bundle $E \to B$ inducing the pullback vector bundle $\pi^* E \to X$. For a given $b\in B$, we will denote by $X_b$ the fibre of $\pi$ over $b$. We set $n=\dim_\C(B)$ and $n+m=\dim_\C(X)$.

Given $\omega_X \in c_1 (H)$ a $(1,1)$-form inducing a fibrewise K\"ahler metric, one obtains a corresponding decomposition of tensors and functions on $X$. Indeed, let $\scV = \ker \pi_* \subset TX$ and let $\cH \cong \pi^* TB$ be the bundle whose fibre at $x\in X$ is the $\omega_{X}$-orthogonal complement to $\scV_x$. This gives a \textit{smooth} splitting of the exact sequence of holomorphic vector bundles $$ 0 \to \scV \to TX \to \pi^* TB \to 0.$$ For functions, one splits $C^{\infty} (X) = C^{\infty}_0 (X) \oplus \pi^* C^{\infty} (B)$, where $C_0^{\infty}(X)$ consists of fibrewise average $0$ functions.

Similarly, one decomposes sections of $\pi^* E$. This is just a higher-dimensional version of the above for functions. Locally, in a trivialisation of $E \to B$, a section of $\pi^* E$ is a map $s: \pi^{-1}(U) \to \mathbb{C}^r,$ where $U \subseteq B$. Denote by $F = X_b$ the fibre of $\pi$ at $b$, let $\om_F=(\om_X)_{\vert F}$ and let $\Vol(F) = \int_F \om_F^n$ be the volume of the fibre $F$ (which is independent of $b$). If $s^i$ denotes the $i^{\textnormal{th}}$ coordinate of $s$, we define the base-like component $s_B$ at $b$ by
$$
s_B^i  = \frac{\int_F s^i \omega_F^m}{\Vol(F)}.
$$ 
This gives a well-defined section $s_B$ of $E$. For in another trivialisation, $s$ is represented by $\hat s : \pi^{-1}(U)  \to \mathbb{C}^r$, say, where $$\hat s^i = \sum_j \psi^i_j s^j,$$ for some functions $\psi_j^i : U \to \mathbb{C}$. Since $\psi_j^i$ does not depend on the point in $F$, $$\int_F \hat s^i \omega_F^m = \sum_j \psi^i_j  \int_F  s^j \omega_F^m = \sum_j \psi^i_j s_B^j, $$ showing $s_B$ is well-defined. We let $s_F = s - \pi^* s_B$ and then this gives a smooth splitting which we denote $$\Gamma \left( X, \pi^*E \right) = \Gamma_0 \left( X, \pi^* E \right) \oplus \pi^* \Gamma \left( B, E \right) .$$
We will refer to the elements in $\Gamma_0 \left( X, \pi^* E \right)$ as {\it vertical} and to the elements in $\pi^* \Gamma \left( B, E \right)$ as {\it basic}.

Since the Laplacian is a crucial operator for us, one particular instance where we will use the decomposition $TX = \scV \oplus \cH$ is for the contraction $\Lambda_k = \Lambda_{\omega_k}$ on $2$-forms, where $\omega_k = \om_X + k \pi^* \omega_B$, $k\in \N^*$. The above decomposition implies that we have a similar splitting
\begin{align}
\label{eqn:2formsplitting}
\Lambda^2TX^*=\Lambda^2\cH^*\oplus \Lambda^1\cV^*\wedge\Lambda^1\cH^* \oplus\Lambda^2\cV^*.
\end{align} For a given $2$-form $\alpha$ on $X$, we denote by $\alpha_\cV$ (resp. $\alpha_\cH$) its purely vertical (resp. horizontal) component in this decomposition.

Being non-degenerate on $\scV$, note that $\omega_{\scV}$ defines a \textit{vertical contraction} operator $\Lambda_{\scV} : \Omega^{1,1} (X, \pi^*G ) \to \Gamma (X,\pi^* G)$ for the pullback of any bundle $G$ from $B$. This is only non-zero on the vertical component in the splitting \eqref{eqn:2formsplitting}. Similarly, $\pi^* \omega_B$ defines a metric on the horizontal part, which induces a horizontal contraction operator $$ \Lambda_{\cH} : \Omega^{1,1} (X, \pi^*G ) \to \Gamma (X,\pi^* G).$$ Note that if $\alpha = \pi^* \beta $ is the pullback of $ \beta  \in \Omega^{1,1} ( B,G ) $, then $\Lambda_{\cH} \alpha = \pi^* \Lambda_{\omega_B} (\beta).$ However, even though $\Lambda_{\cH}$ is non-zero only on the horizontal component of $\Omega^{1,1}(X)$, its image is not contained in the horizontal subspace $\pi^* \Gamma ( B, G)$ of $\Gamma \left( X, \pi^* G \right)$. Finally, note also that as $\omega_B$ gives an inner product on $\cH$, it induces an inner product $\langle \cdot , \cdot \rangle_{\omega_B}$ on $\Lambda^2 \cH^*.$ We then have the following lemma:
\begin{lemma}
 \label{lem:expansion contraction}
 The contraction operator $\Lambda_k = \Lambda_{\omega_k}$ admits the following expansion, for $\alpha\in\Om^{1,1}(X)$:
 \begin{equation}
  \label{eq:expansion contraction}
  \Lambda_k \alpha = \Lambda_\cV \alpha_\cV + k^{-1} \Lambda_\cH \alpha_\cH - k^{-2} \langle \alpha_\cH, \om_\cH \rangle_{\omega_B} + \cO(k^{-3}),
 \end{equation}
where $\omega_{\cV} = \left( \omega_X \right)_{\cV}$ and $\omega_{\cH} = \left( \omega_X \right)_{\cH}$ denotes the vertical and horizontal components of $\omega_X$, respectively. 
\end{lemma}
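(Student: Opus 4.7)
The plan is to work pointwise and exploit the $\om_X$-orthogonality of the splitting $TX=\scV\oplus\cH$ in order to reduce the statement to a scalar Neumann series expansion on the horizontal part. By definition of $\cH$, the form $\om_X$ has no mixed component in the decomposition \eqref{eqn:2formsplitting}, so $\om_X=\om_\scV+\om_\cH$. Adding $k\pi^*\om_B$, which is purely horizontal, yields $\om_k=\om_\scV+(\om_\cH+k\pi^*\om_B)$, so the corresponding Hermitian metric $g_k$ on $TX$ is block-diagonal with respect to $\scV\oplus\cH$. Its inverse is block-diagonal too, hence the contraction operator splits as
\[
\Lambda_k\alpha=\Lambda_k^{\scV}\alpha_\scV+\Lambda_k^{\cH}\alpha_\cH,
\]
the mixed component of $\alpha$ giving no contribution, and $\Lambda_k^{\scV}=\Lambda_\scV$ is independent of $k$ since the vertical block of $g_k$ equals the one coming from $\om_\scV$.

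For the horizontal contribution, I would fix an arbitrary point $p\in X$ and choose holomorphic coordinates $(y_1,\dots,y_n,z_1,\dots,z_m)$ adapted to the fibration which simultaneously diagonalise $\pi^*\om_B$ and $\om_\cH$ at $p$: $\pi^*\om_B=i\sum_j dy^j\w d\bar y^j$ and $\om_\cH=i\sum_j\lambda_j\,dy^j\w d\bar y^j$. Writing $\alpha_\cH=i\sum_{j,l}\alpha_{j\bar l}\,dy^j\w d\bar y^l$, the horizontal block of $g_k$ at $p$ is diagonal with entries $\lambda_j+k$, and expanding each summand as a geometric series $\frac{1}{\lambda_j+k}=\frac{1}{k}-\frac{\lambda_j}{k^2}+\cO(k^{-3})$ gives
\[
\Lambda_k^{\cH}\alpha_\cH=\sum_j\frac{\alpha_{j\bar j}}{\lambda_j+k}=\frac{1}{k}\sum_j\alpha_{j\bar j}-\frac{1}{k^2}\sum_j\lambda_j\alpha_{j\bar j}+\cO(k^{-3}).
\]
The first sum is $\Lambda_\cH\alpha_\cH$ by the very definition of the horizontal contraction with $\pi^*\om_B$, while the second is the pairing $\langle\alpha_\cH,\om_\cH\rangle_{\om_B}$ evaluated in this frame. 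Combining with the vertical piece yields \eqref{eq:expansion contraction}.

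The only step requiring care, rather than a genuine obstacle, is the identification of the coefficient of $k^{-2}$ with the intrinsic Hermitian pairing on $(1,1)$-forms induced by $\om_B$; this is a convention check, tautological in the diagonalising frame, which then propagates by intrinsicness of both sides. Uniformity of the $\cO(k^{-3})$ remainder is automatic since the eigenvalues $\lambda_j(p)$ are uniformly bounded on the compact manifold $X$, so the geometric series converges uniformly once $k$ is taken larger than $\sup_{X,j}\lambda_j(p)$.
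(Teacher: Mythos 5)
Your proof is correct, and it reaches the expansion by a slightly different computational route than the paper. Both arguments rest on the same structural fact — $\omega_k=\omega_{\scV}+(k\pi^*\omega_B+\omega_{\cH})$ is block-diagonal for the splitting $TX=\scV\oplus\cH$, so mixed components of $\alpha$ are invisible and the vertical contraction is $k$-independent — but the paper then proceeds frame-free, expanding $\alpha\wedge\omega_k^{m+n-1}$ against $\omega_k^{m+n}=\binom{m+n}{n}\omega_{\scV}^m\wedge(k\omega_B+\omega_{\cH})^n$ to exhibit the horizontal term as $k^{-1}\Lambda_{\omega_B+k^{-1}\omega_{\cH}}(\alpha_{\cH})$, and extracts the $k^{-2}$ coefficient from the identity \eqref{eq:product on basic forms}. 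You instead diagonalise $\omega_{\cH}$ against $\pi^*\omega_B$ pointwise and expand $\frac{1}{k+\lambda_j}$ as a geometric series. Your route is more elementary, yields all orders of the expansion at once, and makes the uniformity of the remainder transparent via compactness; the paper's route avoids choosing a frame and identifies the $k^{-2}$ term directly as the intrinsic pairing without a convention check. One small point of phrasing: since $\cH$ is only a smooth (not holomorphic) distribution, what you really use is pointwise linear algebra on $\cH_p^*$ — after a constant unitary change of base coordinates the coframe of $\cH_p^*$ is $\pi^*dy^j$, which annihilates $\scV$, and $\omega_{\cH}$ at the single point $p$ is a Hermitian form on $\cH_p$ diagonalisable against the positive form $\pi^*\omega_B$ — rather than "holomorphic coordinates which diagonalise" the forms; as stated this is harmless but should be worded as a pointwise statement.
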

\begin{proof}
Note that $\omega_X$ has no mixed terms with respect to the decomposition induced by itself. The lemma now follows from a direct computation using the orthogonal decomposition of $TX$ induced by $\om_X$, noting that
$$ \Lambda_{k} \left( \alpha \right) \omega_k^{m+n} = \binom{m+n}{n} \Lambda_{\omega_k} \left( \alpha \right)  \omega_{\scV}^{m} \wedge \left( k \omega_B + \omega_{\cH} \right)^n $$
and
\begin{align*} \alpha \wedge \omega_k^{m+n-1} =& \binom{m+n-1}{n} \alpha_{\cV} \wedge \omega_{\scV}^{m-1} \wedge \left( k \omega_B + \omega_{\cH} \right)^n  \\
&+ \binom{m+n-1}{n-1}   \omega_{\scV}^{m} \wedge \alpha_{\cH} \wedge \left( k \omega_B + \omega_{\cH} \right)^{n-1} \\
=& \frac{1}{m} \binom{m+n-1}{n} \Lambda_{\cV} \left( \alpha_{\cV} \right) \omega_{\scV}^{m} \wedge \left( k \omega_B + \omega_{\cH} \right)^n  \\
&+ k^{-1} \frac{1}{n} \binom{m+n-1}{n-1}  \Lambda_{\omega_B + k^{-1} \omega_{\cH} } \left( \alpha_{\cH} \right) \omega_{\scV}^{m} \wedge \left( k \omega_B + \omega_{\cH} \right)^n .
\end{align*}
To obtain the $k^{-2}$ term, one uses the identity
\begin{equation}
 \label{eq:product on basic forms}
\langle \alpha , \beta \rangle_{\om_B} \om_B^n = (\Lambda_{\om_B}\alpha\: \Lambda_{\om_B} \beta)\: \om_B^n -n (n-1) \alpha\wedge\beta\wedge \om_B^{n-2},
\end{equation}
see e.g. \cite[Lemma 4.7]{szekelyhidi14book}.
\end{proof}

\begin{remark}
 \label{rem:mixed terms trace vanish}
Note that this expansion does not see mixed terms. Moreover, there is a vertical term to leading order only, and the horizontal terms only appear to order $k^{-1}$ and below.
\end{remark}

We end this section with an important Lemma for the perturbation problem we are considering. To not obtain new obstructions arising in the linear theory when trying to solve the HYM equation on the total space $X$, it is crucial that $\pi^* E$ remains simple if $E$ is. This is established below.
\begin{lemma}
\label{lem:pullbackissimple}
Let $\pi : X \to B$ be a holomorphic submersion with connected fibres and $E \to B$ a simple bundle. Then $\pi^* E \to X$ is simple.
\end{lemma}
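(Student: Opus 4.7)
The plan is to show that any global holomorphic endomorphism $\Phi$ of $\pi^{*}E$ is the pullback of a holomorphic endomorphism of $E$, and then invoke simplicity of $E$ to conclude. Note first that $\End(\pi^{*}E)$ is canonically isomorphic to $\pi^{*}\End(E)$, so it suffices to show $H^{0}(X, \pi^{*}\End(E)) = \C\cdot \Id_{\pi^{*}E}$.

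Given $\Phi \in H^{0}(X, \pi^{*}\End(E))$ and $b \in B$, the restriction of $\pi^{*}\End(E)$ to the fibre $X_{b}$ is canonically the trivial bundle $X_{b} \times \End(E)_{b}$. Since the fibres $X_{b}$ are compact (by properness of $\pi$ from compactness of $X$) and connected (the standard assumption implicit in calling $\pi$ a fibration, and consistent with the setup of the paper), any holomorphic section of a trivial vector bundle over $X_{b}$ must be constant by the maximum principle. Hence $\Phi|_{X_{b}}$ defines a well-defined element $\tilde\Phi(b) \in \End(E)_{b}$.

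The next step is to verify that $b \mapsto \tilde\Phi(b)$ is a holomorphic section of $\End(E) \to B$. Working in a simultaneous local holomorphic trivialisation $U \times F$ of both $\pi$ and $E$ around a point of $B$, the endomorphism $\Phi$ is represented by a matrix-valued holomorphic function of $(b, x_{F})$ which, by the previous step, does not depend on $x_{F}$. It therefore depends holomorphically on $b$ alone, so $\tilde\Phi \in H^{0}(B, \End(E))$. By construction $\pi^{*}\tilde\Phi = \Phi$, and simplicity of $E$ gives $\tilde\Phi = \lambda\, \Id_{E}$ for some $\lambda \in \C$, whence $\Phi = \lambda\, \Id_{\pi^{*}E}$, proving that $\pi^{*}E$ is simple.

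The argument is essentially formal, and I expect no serious obstacle; the only substantive input is the compactness and connectedness of the fibres, which forces holomorphic functions on each fibre to be constant. Equivalently, one could phrase this via the projection formula and the identity $\pi_{*}\cO_{X} = \cO_{B}$ (which relies on exactly the same two properties of the fibres), giving $H^{0}(X, \pi^{*}\End(E)) = H^{0}(B, \End(E) \otimes \pi_{*}\cO_{X}) = H^{0}(B, \End(E)) = \C \cdot \Id_{E}$.
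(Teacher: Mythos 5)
Your proposal is correct and follows essentially the same route as the paper: restrict the endomorphism to each (compact, connected) fibre, where the bundle is trivial, conclude it is fibrewise constant, hence pulled back from a holomorphic endomorphism on $B$, and then invoke simplicity of $E$. The extra remarks (explicit check of holomorphy of $\tilde\Phi$ in a simultaneous trivialisation, and the alternative via $\pi_*\cO_X=\cO_B$) are fine but do not change the argument.
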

\begin{proof} Suppose $s \in H^0(X, \End \pi^* E).$ Then, for any $b\in B$, $s_b = s_{\vert X_b}$ is an element of $H^0 \left( X_b, \End \pi^* E_b \right).$ But $\End \pi^* E_b$ is trivial bundle, so $s_b$ is a holomorphic map from a compact manifold to a vector space -- hence is constant. Thus in the decomposition $s = s_B + s_F$, the vertical component $s_F$ vanishes, and so $s$ is pulled back from $B$. Since $B$ is simple, $s = s_B$ then has to be a multiple of the identity.
\end{proof}

\begin{remark}
\label{rem:pullbacknonisom}
 A direct adaptation of Lemma \ref{lem:pullbackissimple} shows that if $\cG_1$ and $\cG_2$ are two holomorphic vector bundles on $B$ with $H^0(B,\cG_1^*\otimes\cG_2)=0$, then $H^0(X,\pi^*\cG_1^*\otimes\pi^*\cG_2)=0$. This implies in particular that $\pi^*\cG_1$ is not isomorphic to $\pi^*\cG_2$ if $\cG_1$ and $\cG_2$ are not isomorphic and stable of the same slope, a fact that will be used later on.
\end{remark}


\section{Slope stability and adiabatic classes}
\label{sec:adiabaticslopes}
In this section, we calculate slope formulae with respect to adiabatic classes on holomorphic submersions with connected fibres. In particular, we obtain criteria implying unstability or strict semistability for the pullback of a non stable bundle for adiabatic classes. As before, $\pi: (X,H) \to (B,L)$ is a holomorphic submersion of polarised K\"ahler manifolds, with connected fibres.
\subsection{Adiabatic slopes}
\label{sec:adiabatic slope stability}
For a given torsion-free coherent sheaf $\cE$ on $X$, we denote by $\mu_k(\cE)$ the slope of $\cE$ with respect to $L_k:=H+k\pi^* L$. That is
$$
\mu_k(\cE)= \frac{c_1(\cE)\cdot c_1(L_k)^{n+m-1}}{\mathrm{rank}(\cE)},
$$
where we recall that $\dim_{\C} B = n$ and $\dim_{\C} X = m+n.$ Note that we are suppressing the pullback in the notation above. For dimensional reasons, $\mu_k(\cE)$ is a polynomial of degree $n$ in $k$. Define, for all $i\in \lbrace 0, \ldots, n\rbrace$, the following rational numbers $\nu_i(\cE)$ by:
\begin{align}
\label{eq:nuslope}
\mu_k(\cE)=\sum_{i= 0}^n k^{n-i}\nu_i(\cE).
\end{align}
When we consider a pullback sheaf $\pi^*\cE$ from $B$, for simplicity, we will still denote by $(\nu_i(\cE))_{i=0\ldots n}$ the rational numbers in the expansion of $\mu_k(\pi^*\cE)$. In that case, $\nu_0(\cE)=0$, and the rational numbers $(\nu_i(\cE))_{i=1, \ldots , n}$ can be computed from intersection numbers on the base. Setting
\begin{equation}
 \label{eq:first chern class fibration}
\Theta :=(m+1)\left[\int_{X_b}  \om_\cH \wedge \om_X^m\right] = \pi_* \left( c_1(H)^{m+1} \right) \in H^{1,1}(B,\C)\cap H^2(B,\R),
\end{equation}
a direct computation shows:
\begin{lemma}
 \label{lem:slopeexpansion}
 Let $\cE$ be a coherent torsion-free sheaf on $B$. Then, for $k\gg 0$, one has
 $$
 \mu_k(\pi^*\cE)= \nu_1 (\cE)  k^{n-1} + \nu_2 (\cE) k^{n-2} + O(k^{n-3})
 $$
where
\begin{align*} 
\nu_1 (\cE) &=\binom{m+n-1}{n-1} c_1(H_b)^m \cdot \mu_L (\cE) \\
\nu_2 (\cE) &= \binom{n+m-1}{n-2} \frac{\Theta\cdot c_1(\cE) \cdot c_1(L)^{n-2}}{\rk(\cE)},
\end{align*}
and $c_1(H_b)^m$ is the volume for any fibre $X_b$ of $\pi$ with respect to $H_b = H_{\vert X_b}$.
\end{lemma}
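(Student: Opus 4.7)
The plan is a direct calculation based on the binomial expansion of $c_1(L_k)^{n+m-1}$ combined with the projection formula. There is no real obstacle here; the main task is simply keeping track of degrees and recognising the two leading pushforward classes.

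First I would write
$$ c_1(L_k)^{n+m-1} = \sum_{j=0}^{n+m-1} \binom{n+m-1}{j} k^j\, c_1(H)^{n+m-1-j} \wedge \pi^* c_1(L)^j, $$
so that, using $c_1(\pi^*\cE) = \pi^* c_1(\cE)$ and the projection formula,
$$ c_1(\pi^*\cE) \cdot c_1(L_k)^{n+m-1} = \sum_j \binom{n+m-1}{j} k^j\, \pi_*\bigl(c_1(H)^{n+m-1-j}\bigr) \cdot c_1(\cE) \cdot c_1(L)^j $$
as intersection numbers on $B$. The pushforward $\pi_*(c_1(H)^{n+m-1-j})$ lives in $H^{2(n-1-j)}(B)$, so it vanishes whenever $n+m-1-j < m$, i.e.\ for $j \geq n$. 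Thus the only surviving terms are $j = 0, \ldots, n-1$, which after dividing by $k^n$ gives an expansion in non-positive powers of $k$ matching \eqref{eq:nuslope}.

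Next I would identify the two leading pushforwards. When $j = n-1$, $\pi_*(c_1(H)^m)$ is a locally constant function on $B$ whose value at $b$ is the $H_b$-volume of the fibre $X_b$, which I write simply as $c_1(H_b)^m$. This contributes the $k^{n-1}$ term
$$ \nu_1(\cE) = \binom{n+m-1}{n-1} c_1(H_b)^m \cdot \frac{c_1(\cE) \cdot c_1(L)^{n-1}}{\rk(\cE)} = \binom{n+m-1}{n-1} c_1(H_b)^m \cdot \mu_L(\cE), $$
by Definition \ref{def:slope}. When $j = n-2$, the pushforward is exactly the class $\Theta = \pi_*(c_1(H)^{m+1}) \in H^{1,1}(B,\C)\cap H^{2}(B,\R)$ defined in \eqref{eq:first chern class fibration}, giving the $k^{n-2}$ coefficient
$$ \nu_2(\cE) = \binom{n+m-1}{n-2} \frac{\Theta \cdot c_1(\cE) \cdot c_1(L)^{n-2}}{\rk(\cE)}. $$
All remaining terms contribute to $O(k^{n-3})$ in $\mu_k(\pi^*\cE)$, which yields the claimed formula. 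The only subtlety worth double-checking is the range of $j$ for which the pushforward survives, but this is settled by a dimension count on the fibre.
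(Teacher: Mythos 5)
Your proposal is correct and follows essentially the same route as the paper: a binomial expansion of $c_1(L_k)^{n+m-1}$, vanishing of the terms with too many classes pulled back from $B$ (which you phrase via the degree count on the pushforward $\pi_*(c_1(H)^{n+m-1-j})$, equivalent to the paper's remark that products of more than $n$ pulled-back classes vanish), and identification of the $j=n-1$ and $j=n-2$ coefficients with the fibre volume and $\Theta=\pi_*(c_1(H)^{m+1})$ respectively. No gaps.
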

\begin{proof}
We have that
\begin{align*}\mu_k (\pi^* \cE ) =& \frac{c_1(\cE)\cdot  \left( k c_1(L) + c_1 (H) \right)^{n+m-1}}{\rk (\cE)} \\
=&  \binom{n+m-1}{n-1} \frac{c_1(\cE)  c_1(L)^{n-1} c_1 (H)^m }{\rk (\cE)} k^{n-1} \\
&+  \binom{n+m-1}{n-2}  \frac{c_1(\cE) c_1(L)^{n-2} c_1 (H)^{m+1}}{\rk (\cE) } k^{n-2} + O(k^{n-3}),
\end{align*}
where we have used that the wedge product of more than $n$ pulled back classes from $B$ vanishes. The result now follows by noting that
$$
c_1(\cE) c_1(L)^{n-2} c_1 (H)^{m+1}=\int_B c_1(\cE)\wedge\om_B^{n-2}\wedge (m+1)\int_{X_b} \om_\cH\wedge \om_X^m,
$$
 and that the volume of $X_b$ is independent of $b \in B$ for basic topological reasons.
\end{proof}
We now introduce a comparison notion for {\it adiabatic slopes}:
\begin{definition}
 \label{def:adiabaticcomparison}
  Let $\cE$ and $\cE'$ be two coherent torsion-free sheaves on $X$. We will say that the adiabatic slope of $\cE$ is greater than the adiabatic slope of $\cE'$ (with respect to $L_k$), denoted $\mu_\infty(\cE)>\mu_\infty(\cE')$, if the leading order term in the $k$-expansion of $\mu_k(\cE)-\mu_k(\cE')$ is positive. If $\mu_k(\cE)=\mu_k(\cE')$ for all $k$, which we denote $\mu_\infty(\cE)=\mu_\infty(\cE')$, we will say that the adiabatic slopes are equal. Finally, we say the adiabatic slope of $\cE$ is at least that of $\cE'$ if the adiabatic slope of $\cE$ is either greater than or equal to that of $\cE'$. We denote this $\mu_\infty(\cE)\geq \mu_\infty(\cE')$.
\end{definition}

\begin{remark}
 In Definition \ref{def:adiabaticcomparison}, when $\cE$ is pulled back from $B$, we will use the notation $\mu_\infty(\cE)$ to refer to its adiabatic slope. In this case, following the proof of Lemma \ref{lem:slopeexpansion}, $\mu_\infty(\cE)$ only depends on intersection numbers computed on $B$.
\end{remark}
We finish this section with some results that will turn useful when considering pullbacks of semistable sheaves. The proof of the following follows from the additivity of the first Chern class:
\begin{lemma}
 \label{lem:additivity of nui}
 For any subsheaf $\cF \subseteq \cE$ and for any $k$:
  $$
 \rank(\cF )\mu_k(\cF)+\rank(\cE/\cF)\mu_k( \cE / \cF)=\rank(\cE)\mu_k(\cE).
 $$
 Equivalently, for all $i$, we have
 $$
 \rank(\cF )\nu_i(\cF)+\rank(\cE/\cF)\nu_i( \cE / \cF)=\rank(\cE)\nu_i(\cE).
 $$
\end{lemma}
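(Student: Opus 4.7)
The plan is to deduce everything from additivity of the first Chern class and of the rank along a short exact sequence of torsion-free sheaves.

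First, from the inclusion $\cF \subseteq \cE$ we get a short exact sequence
$$
0 \to \cF \to \cE \to \cE/\cF \to 0
$$
of coherent sheaves on $X$. Since $\cE$ and $\cF$ are torsion-free, the quotient $\cE/\cF$ is torsion-free outside a subvariety of codimension at least one, and what matters is that it is a coherent sheaf for which the construction of the first Chern class via the reflexive hull of the determinant applies. Taking determinants and passing to reflexive hulls yields $\det(\cE) = \det(\cF)\otimes\det(\cE/\cF)$ (as line bundles, since they are reflexive of rank one), which after taking $c_1$ gives
$$
c_1(\cE) = c_1(\cF) + c_1(\cE/\cF) \in H^2(X,\R).
$$
Similarly, ranks are additive along the sequence: $\rank(\cE) = \rank(\cF) + \rank(\cE/\cF)$.

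Next, I would intersect the Chern class identity with $c_1(L_k)^{n+m-1}$ to obtain additivity of the degree $\deg_{L_k}$. By the very definition of the slope, $\rank(\cF) \mu_k(\cF) = \deg_{L_k}(\cF)$ and analogously for $\cE$ and $\cE/\cF$. Substituting then gives immediately
$$
\rank(\cF)\mu_k(\cF) + \rank(\cE/\cF)\mu_k(\cE/\cF) = \deg_{L_k}(\cF) + \deg_{L_k}(\cE/\cF) = \deg_{L_k}(\cE) = \rank(\cE)\mu_k(\cE),
$$
which is the first claim.

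Finally, the statement about $\nu_i$ is just the first identity read coefficient by coefficient. Multiplying everything by $k^{-n}$ and plugging in the expansions \eqref{eq:nuslope} for $\mu_k(\cF)$, $\mu_k(\cE/\cF)$ and $\mu_k(\cE)$, the identity becomes an equality of polynomials in $k^{-1}$; identifying coefficients of $k^{-i}$ gives the second formula. There is no real obstacle here; the only point worth paying attention to is the justification that the determinant construction in Definition \ref{def:slope} is additive along short exact sequences of torsion-free sheaves, which is standard (see e.g.\ \cite{HuLe}).
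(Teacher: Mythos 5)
Your proof is correct and follows the same route the paper indicates: the paper derives the lemma precisely from additivity of the first Chern class (and of the rank) along the short exact sequence $0\to\cF\to\cE\to\cE/\cF\to 0$, intersecting with $c_1(L_k)^{n+m-1}$ and matching coefficients in $k$ for the $\nu_i$ version. The only refinement you add is the explicit justification via determinants and reflexive hulls, which is the standard argument the paper leaves implicit.
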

As a corollary, one obtains
\begin{corollary}
 \label{cor:subadditivity of slopes}
 Consider an extension of torsion-free coherent sheaves on $B$:
 $$
 0 \to \cF \to \cE\to \cG \to 0.
 $$
 Then the following statements are equivalent:
 \begin{enumerate}
 	 \item[i)] $\mu_k(\cF)\leq \mu_k(\cE)$,
 	\item[ii)] $\mu_k(\cF)\leq \mu_k(\cG)$,	  
	\item[iii)] $\mu_k(\cE)\leq \mu_k(\cG)$.
 \end{enumerate}
 The same equivalences hold when replacing $\mu_k$ by any $\nu_i$ or by $\mu_\infty$, and for equalities and strict inequalities as well.
\end{corollary}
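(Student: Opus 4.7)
The strategy is to observe that Lemma \ref{lem:additivity of nui} realises $\mu_k(\cE)$ as a \emph{convex combination} of $\mu_k(\cF)$ and $\mu_k(\cG)$ with strictly positive weights that are independent of $k$, and then deduce all the equivalences algebraically from this single identity.

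Set $r_F = \rank(\cF)$, $r_G = \rank(\cG)$, and $r_E = r_F + r_G = \rank(\cE)$. Since $\cG \cong \cE/\cF$, Lemma \ref{lem:additivity of nui} gives
\begin{equation*}
\mu_k(\cE) = \frac{r_F}{r_E}\,\mu_k(\cF) + \frac{r_G}{r_E}\,\mu_k(\cG).
\end{equation*}
Subtracting $\mu_k(\cF)$ and $\mu_k(\cG)$ respectively from both sides yields the two identities
\begin{equation*}
\mu_k(\cE) - \mu_k(\cF) = \frac{r_G}{r_E}\bigl(\mu_k(\cG) - \mu_k(\cF)\bigr),
\qquad
\mu_k(\cG) - \mu_k(\cE) = \frac{r_F}{r_E}\bigl(\mu_k(\cG) - \mu_k(\cF)\bigr).
\end{equation*}
Because $r_F, r_G > 0$ (as $\cF$ is a subsheaf with $0 < \rank(\cF) < \rank(\cE)$), the three expressions $\mu_k(\cE) - \mu_k(\cF)$, $\mu_k(\cG) - \mu_k(\cF)$, and $\mu_k(\cG) - \mu_k(\cE)$ are positive scalar multiples of each other. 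Hence they have the same sign, and in particular i), ii), iii) are equivalent, as are the corresponding strict inequalities and equalities.

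For the statement at the level of each coefficient $\nu_i$, I would apply exactly the same argument using the coefficient-wise version of Lemma \ref{lem:additivity of nui}, which gives
\begin{equation*}
\nu_i(\cE) - \nu_i(\cF) = \frac{r_G}{r_E}\bigl(\nu_i(\cG) - \nu_i(\cF)\bigr),
\end{equation*}
and similarly for the other difference. For $\mu_\infty$, I would note that in the identities above, multiplying by the positive constant $r_G/r_E$ (resp. $r_F/r_E$) does not affect the leading order in the $k$-expansion, so the leading order of $\mu_k(\cE) - \mu_k(\cF)$ has the same sign as that of $\mu_k(\cG) - \mu_k(\cF)$, and vanishes if and only if the latter does; the same holds for $\mu_k(\cG) - \mu_k(\cE)$. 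This proves the equivalences for $\mu_\infty$ and for its strict and equality versions.

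There is no real obstacle here: once the additivity of Lemma \ref{lem:additivity of nui} is in place, the result reduces to the elementary observation that a weighted average with strictly positive weights lies between its two arguments, and that comparing a quantity to such an average is equivalent to comparing the two arguments directly.
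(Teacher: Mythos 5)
Your proof is correct and follows exactly the route the paper intends: the corollary is stated as an immediate consequence of Lemma \ref{lem:additivity of nui}, which exhibits $\mu_k(\cE)$ (and each $\nu_i(\cE)$) as a convex combination of the corresponding quantities for $\cF$ and $\cG$ with positive weights, so the three differences are positive multiples of one another. The only (harmless) implicit assumption is that $\rank(\cF)$ and $\rank(\cG)$ are both positive, which holds since a nonzero torsion-free sheaf has positive rank and the degenerate cases are trivial.
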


\subsection{Adiabatic unstability}
\label{sec:adiabatic unstability}
We immediately get one of the main results of this section.
\begin{proposition}
 \label{prop:unstable case}
 Let $\cE$ be a torsion-free coherent sheaf on $B$. If $\cE$ is strictly unstable with respect to $L$, then $\pi^*\cE$ is strictly unstable on $X$ with respect to adiabatic polarisations $L_k$, for $k \gg 0$.
\end{proposition}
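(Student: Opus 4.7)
The plan is to simply pull back a destabilising subsheaf from $B$ and verify via Lemma \ref{lem:slopeexpansion} that it remains destabilising on $X$ for the adiabatic polarisations.

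Since $\cE$ is unstable on $(B,L)$, there exists a coherent subsheaf $\cF \subset \cE$ with $0<\rank(\cF)<\rank(\cE)$ and $\mu_L(\cF)>\mu_L(\cE)$. The submersion $\pi$ is flat, so pullback preserves injections of coherent sheaves and preserves ranks; thus $\pi^*\cF$ embeds as a coherent subsheaf of $\pi^*\cE$ with $0<\rank(\pi^*\cF)<\rank(\pi^*\cE)$. To conclude that $\pi^*\cF$ destabilises $\pi^*\cE$ with respect to $L_k$, it suffices to show $\mu_k(\pi^*\cF)>\mu_k(\pi^*\cE)$ for all $k\gg 0$, i.e.\ that the adiabatic slope of $\pi^*\cF$ strictly exceeds that of $\pi^*\cE$.

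For this, I would apply Lemma \ref{lem:slopeexpansion} to both $\cF$ and $\cE$. The leading order term in $k^{-(n-1)}(\mu_k(\pi^*\cF)-\mu_k(\pi^*\cE))$ is
\[
\binom{m+n-1}{n-1}\, c_1(H_b)^m \,\bigl(\mu_L(\cF)-\mu_L(\cE)\bigr).
\]
Since $c_1(H_b)^m$ is the volume of a fibre with respect to the relatively ample class $H_b$, it is strictly positive, and by assumption $\mu_L(\cF)-\mu_L(\cE)>0$. Hence the leading coefficient is strictly positive, so $\mu_\infty(\pi^*\cF)>\mu_\infty(\pi^*\cE)$ in the sense of Definition \ref{def:adiabaticcomparison}, and in particular $\mu_k(\pi^*\cF)>\mu_k(\pi^*\cE)$ for all $k$ sufficiently large.

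There is no real obstacle here: the only point that requires a word of care is that pullback along the submersion $\pi$ sends the coherent subsheaf $\cF\hookrightarrow\cE$ to a coherent subsheaf $\pi^*\cF\hookrightarrow\pi^*\cE$ of the correct rank, which follows from flatness of $\pi$. All the analytic work has already been done in Lemma \ref{lem:slopeexpansion}; the present proposition is essentially its immediate corollary.
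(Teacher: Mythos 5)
Your proposal is correct and follows the same route as the paper: the paper's proof likewise pulls back a destabilising subsheaf and notes that, by Lemma \ref{lem:slopeexpansion}, the leading order term of the adiabatic slope is a positive multiple of the slope on the base, so the strict inequality persists for $k \gg 0$. Your additional remarks on flatness and rank preservation are fine but not spelled out in the paper's (one-line) argument.
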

\begin{proof}
Since by Lemma \ref{lem:slopeexpansion} the leading order term of $\mu_k(\pi^*\cE)$ is the slope on the base, the pullback $\pi^* \cF$ of any  strictly destabilising subsheaf $\cF$ of $\cE$ will strictly destabilise $\pi^*\cE$ when $k \gg 0$.
\end{proof}
We turn now to the study of pullbacks of semistable torsion-free coherent sheaves. Let $\cE$ be such a sheaf on $B$ and let $0=\cF_0\subset \cF_1\subset \ldots \subset \cF_l=\cE$ be a Jordan--H\"older filtration for $\cE$. Denote by $\cG_i=\cF_i/\cF_{i-1}$, $i=1\ldots l$, the stable components of its graded object $\Gr(\cE)$. For any $I\subset \lbrace 1, \ldots, l\rbrace$, introduce the torsion-free coherent sheaf on $B$:
\begin{equation}
 \label{eq:cGI}
 \cG_I:= \bigoplus_{i\in I} \cG_i.
\end{equation}
We will be interested in subsheaves of $\pi^*\cE$ coming from the graded object. We thus introduce $\mathbb{I}(\cE)$ to be the collection of non-empty sets of indices $I=\lbrace i_1, \ldots, i_{l'}\rbrace\subsetneq \lbrace 1,\ldots,l\rbrace$ such that there is a nested sequence of coherent sheaves
\begin{equation}
 \label{eq:sequenceEI}
0= \cE_{i_{0}} \subset \cE_{i_1} \subset \ldots \subset \cE_{i_{l'}} \subset\cE
\end{equation}
with $\cE_{i_j}/\cE_{i_{j-1}}=\cG_{i_j}$ for all $j=1\ldots l'$. In that setting, we will denote $\cE_I:=\cE_{i_{l'}}$.
\begin{remark}
\label{rem:muEIandmuGI}
 Note that $\cE_I$ is not uniquely determined by $I$ (for example, two different extensions $0\to\cG_1 \to \cE^i\to \cG_2 \to 0$, $i=1,2$, could be direct summands of $\cE$). However its rank, as well as the quantities $\nu_i(\cE_I)$ and $\mu_k(\pi^*\cE_I)$ only depend on $I$. Indeed, iterating Lemma \ref{lem:additivity of nui}, we see that $\mu_k(\pi^*\cE_I)=\mu_k(\pi^*\cG_I)$ for all $k\geq 1$, while $\rank(\cE_I)=\rank(\cG_I)$ follows by definition. As our arguments will only rely on the rank and the slopes of $\cE_I$, and not $\cE_I$ itself, for any $I\in\mathbb{I}(\cE)$ we can choose any sheaf $\cE_I$ that fits in a sequence as in (\ref{eq:sequenceEI}).
\end{remark}
\begin{proposition}
 \label{prop:semistable unstable case}
 Let $\cE$ be a semistable torsion-free coherent sheaf on $(B,L)$. Assume that there is $I\in \mathbb{I}(\cE)$ such that $\mu_\infty(\cG_I)>\mu_\infty(\cE)$. Then $\pi^*\cE$ is unstable on $X$ with respect to adiabatic polarisations $L_k$, for $k \gg 0$.
\end{proposition}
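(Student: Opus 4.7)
The argument should be almost immediate from the definitions, the key being to produce an explicit destabilising subsheaf of $\pi^*\cE$ from the data of $I\in\mathbb{I}(\cE)$.

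The plan is as follows. First I would unpack the hypothesis $I\in\mathbb{I}(\cE)$: by definition there exists a coherent subsheaf $\cE_I\subseteq \cE$ (obtained as the last term of a nested sequence of the form \eqref{eq:sequenceEI}) with $0<\rank(\cE_I)=\rank(\cG_I)<\rank(\cE)$. Pulling back along the submersion $\pi$, which is flat, preserves injections of coherent sheaves, so $\pi^*\cE_I$ is a coherent subsheaf of $\pi^*\cE$ whose rank lies strictly between $0$ and $\rank(\pi^*\cE)$. Thus $\pi^*\cE_I$ is a valid test subsheaf for the slope stability of $\pi^*\cE$ with respect to $L_k$.

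Next I would use Remark \ref{rem:muEIandmuGI}, which tells us that $\mu_k(\pi^*\cE_I)=\mu_k(\pi^*\cG_I)$ for every $k$, so in particular $\mu_\infty(\cE_I)=\mu_\infty(\cG_I)$. Combined with the assumption $\mu_\infty(\cG_I)>\mu_\infty(\cE)$, this gives
\[
\mu_\infty(\pi^*\cE_I)>\mu_\infty(\pi^*\cE).
\]
By Definition \ref{def:adiabaticcomparison}, this means the leading non-zero term in the polynomial expansion of $\mu_k(\pi^*\cE_I)-\mu_k(\pi^*\cE)$ in $k$ is strictly positive. Consequently, there exists $k_0$ such that for all $k\geq k_0$ the strict inequality $\mu_k(\pi^*\cE_I)>\mu_k(\pi^*\cE)$ holds pointwise in $k$, so $\pi^*\cE_I$ destabilises $\pi^*\cE$ with respect to $L_k$ for $k\gg 0$, proving the proposition.

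The only subtlety to verify carefully is that $\pi^*$ applied to the inclusion $\cE_I\hookrightarrow\cE$ still yields an inclusion of coherent sheaves; this is where flatness of the submersion $\pi$ is used. Beyond that, the statement is a direct consequence of the expansion in Lemma \ref{lem:slopeexpansion} and Remark \ref{rem:muEIandmuGI}, without any analytic input — the main work of the paper is reserved for the converse direction, where a strict inequality has to be promoted to a stability statement via the Hitchin--Kobayashi correspondence.
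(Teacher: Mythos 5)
Your proposal is correct and follows the same route as the paper: pull back the subsheaf $\cE_I$ terminating the nested sequence \eqref{eq:sequenceEI}, use Lemma \ref{lem:additivity of nui} (via Remark \ref{rem:muEIandmuGI}) to identify $\mu_k(\pi^*\cE_I)$ with $\mu_k(\pi^*\cG_I)$, and conclude that $\pi^*\cE_I$ destabilises $\pi^*\cE$ for $k\gg 0$. The flatness remark is a harmless extra precision that the paper leaves implicit.
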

\begin{proof}
Let $I\in \mathbb{I}(\cE)$ such that $\mu_\infty(\cG_I)>\mu_\infty(\cE)$. By assumption, we can order $I=\lbrace i_1, \ldots, i_{l'}\rbrace$ such that there is a nested sequence of coherent sheaves
$$
0= \cE_{i_{0}} \subset \cE_{i_1} \subset \ldots \subset \cE_{i_{l'}} \subset\cE
$$
with $\cE_{i_j}/\cE_{i_{j-1}}=\cG_{i_j}$ for all $j=1\ldots l'$. By Remark \ref{rem:muEIandmuGI},  $\mu_k(\pi^*\cE_{i_{l'}})=\mu_k(\pi^*\cG_I)$.
But then, for adiabatic classes, $\pi^*\cE_{i_{l'}}$ is a destabilising subsheaf for $\pi^*\cE$, hence the result.
\end{proof}

In Section \ref{sec:stable}, we will prove by an analytical argument that the pullback of a stable vector bundle is stable for adiabatic classes (Theorem \ref{thm:stablecase} and Corollary \ref{cor:stablepullback}). This is independent from the results in this section, and this can be used to settle the case of equality, assuming $\cE$ and $\Gr(\cE)$ to be locally free.
\begin{proposition}
 \label{prop:semistable semistable case}
 Let $\cE$ be a strictly semistable locally free sheaf on $(B,L)$ such that $\Gr(\cE)$ is locally free. Assume that for all $I\in \mathbb{I}(\cE)$,  $\mu_\infty(\cG_I)=\mu_\infty(\cE)$. Then $\pi^*\cE$ is strictly semistable on $X$ with respect to adiabatic polarisations $L_k$, for $k \gg 0$.
\end{proposition}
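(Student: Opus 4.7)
The plan is to decompose the claim into two parts: (a) $\pi^*\cE$ is semistable with respect to $L_k$ for $k\gg 0$, and (b) $\pi^*\cE$ is not stable. The proposition is only meaningful when $\cE$ is strictly semistable, i.e.\ $l\geq 2$, so that $\mathbb{I}(\cE)$ is non-empty (for $l=1$ the sheaf $\cE$ is already stable and Corollary \ref{cor:stablepullback} directly gives stability of $\pi^*\cE$, not strict semistability). I will therefore assume $l\geq 2$ throughout.

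Part (b) should come for free from the hypothesis. The index set $I=\{1\}$ lies in $\mathbb{I}(\cE)$ since $\cF_1=\cG_1$ is a subsheaf of $\cE$, so the assumption $\mu_\infty(\cG_1)=\mu_\infty(\cE)$ translates, by Definition \ref{def:adiabaticcomparison}, into the exact equality $\mu_k(\pi^*\cF_1)=\mu_k(\pi^*\cE)$ for every $k$. Since $\pi^*\cF_1$ is a proper non-zero subsheaf of $\pi^*\cE$ realising the same slope, $\pi^*\cE$ cannot be stable.

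For part (a), my plan is to pull back the Jordan--H\"older filtration to obtain $0\subset\pi^*\cF_1\subset\ldots\subset\pi^*\cF_l=\pi^*\cE$ with quotients $\pi^*\cG_i$ (exactness of $\pi^*$ follows from flatness of the submersion), and to verify that each $\pi^*\cG_i$ is semistable of the same $L_k$-slope as $\pi^*\cE$. For the slope equalities, I would first observe that for $1\leq j\leq l-1$ the set $I_j=\{1,\ldots,j\}$ lies in $\mathbb{I}(\cE)$ with $\cE_{I_j}=\cF_j$, hence by hypothesis $\mu_k(\pi^*\cG_{I_j})=\mu_k(\pi^*\cE)$. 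Applying Corollary \ref{cor:subadditivity of slopes} inductively to the split sequences $0\to\pi^*\cG_{I_{j-1}}\to\pi^*\cG_{I_j}\to\pi^*\cG_j\to 0$ then yields $\mu_k(\pi^*\cG_j)=\mu_k(\pi^*\cE)$ for $j\leq l-1$. The last quotient $\cG_l$ is handled by noting that $\Gr(\cE)$ and $\cE$ share the same rank and first Chern class, so $\mu_k(\pi^*\Gr(\cE))=\mu_k(\pi^*\cE)$, after which Lemma \ref{lem:additivity of nui} applied to the decomposition $\Gr(\cE)=\bigoplus_i\cG_i$ pins down $\mu_k(\pi^*\cG_l)=\mu_k(\pi^*\cE)$ as well. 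Stability of each $\pi^*\cG_i$ on $(X,L_k)$ for $k\gg 0$ is then Corollary \ref{cor:stablepullback} applied to the stable bundles $\cG_i$ on $(B,L)$; since there are only finitely many indices, a single threshold for $k$ works uniformly.

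It then remains to assemble these facts via the elementary extension principle: if $0\to\cA\to\cF\to\cB\to 0$ is an extension of semistable sheaves of common slope $s$, then $\cF$ is semistable of slope $s$. Indeed, any subsheaf $\cU\subset\cF$ sits in a short exact sequence $0\to\cU\cap\cA\to\cU\to\cU/(\cU\cap\cA)\to 0$ whose outer terms embed into $\cA$ and $\cB$, so each has slope at most $s$; Lemma \ref{lem:additivity of nui} then forces $\mu_k(\cU)\leq s$. Inducting on the length of the pulled-back filtration proves part (a), and combining with (b) gives strict semistability. The only non-trivial analytic input is Corollary \ref{cor:stablepullback}; the remainder is algebraic bookkeeping, and the only subtlety to be careful with is the asymmetric treatment of the last quotient $\cG_l$, which is not known a priori to sit inside $\cE$.
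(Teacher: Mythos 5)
Your proposal is correct and follows essentially the same route as the paper: pull back the Jordan--H\"older filtration, use the hypothesis together with slope additivity (Lemma \ref{lem:additivity of nui}/Corollary \ref{cor:subadditivity of slopes}) to see that every quotient $\pi^*\cG_i$ has the same adiabatic slope as $\pi^*\cE$, invoke Corollary \ref{cor:stablepullback} to make each $\pi^*\cG_i$ stable for $k\gg 0$, and conclude strict semistability from the equal-slope subsheaf $\pi^*\cF_1$. The only differences are presentational: you spell out the extension principle and treat $\cG_l$ via $\Gr(\cE)$, where the paper handles it through the quotient $\cE/\cF_{l-1}$ and leaves the extension step implicit.
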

\begin{proof}
Note that the assumption on the subsheaves of $\cE$ implies that for all $i\in[\![ 1, l]\!]$, the subsheaf $\cF_i$ from the Jordan--H\"older filtration
$$
0\subset \cF_1 \subset \ldots \subset \cF_l= \cE
$$
satisfies $\mu_\infty(\cF_i)=\mu_\infty(\cE)$. But by Lemma \ref{lem:additivity of nui}, by induction on $i$, we deduce that for all $i$, $\mu_\infty(\cG_i)=\mu_\infty(\cE)$. Then, by Corollary \ref{cor:stablepullback}, $\pi^*\cG_i$ is stable for adiabatic polarisations. As the adiabatic slopes of the direct summands of $\pi^*\Gr(\cE)$ are equal, we deduce that it is polystable, and hence semistable, for adiabatic polarisations. As $\pi^*\cE$ is a small deformation of $\pi^*\Gr(\cE)$ (see e.g. \cite{BuSchu}, or Section \ref{sec:basestructurebundle}), by openness of semistability (\cite[Proposition 2.3.1]{HuLe}), the result follows.
\end{proof}
In Section \ref{sec:semistable}, we will consider the last cases for simple semistable locally free sheaves, that is when for all $I\in \mathbb{I}(\cE)$, we have $\mu_\infty(\cG_I)\leq \mu_\infty(\cE)$. In that situation, assuming $(H1)-(H3)$, we will prove that the pullback $\pi^*\cE$ is semistable for adiabatic classes, with stability achieved if and only if for all $I\in \mathbb{I}(\cE)$, we have a strict inequality $\mu_\infty(\cG_I)< \mu_\infty(\cE)$ (note that under hypothesis $(H1)$ the subsheaves $\cE_I$ for $I\in \mathbb{I}(\cE)$ correspond precisely to the subsheaves $\cF_i$ of the Jordan--H\"older filtration).

\begin{remark}
From the algebraic point of view one could suspect that the pullbacks of the sheaves $\cG_I$ are the crucial subsheaves of $\pi^* \cE$ that will determine the slope (un)stability of $\pi^* \cE$ for adiabatic classes. To obtain an algebraic proof of Theorem \ref{thm:ssthmintro} (and also Theorem \ref{thm:semistablemain}), one needs to check that no other subsheaves of $\pi^* \cE$ destabilise. One also needs to establish some uniformity of the expansion of Lemma \ref{lem:slopeexpansion}. In Section \ref{sec:semistable} we show that the pulled back subsheaves $\pi^*\cG_I$ indeed are the crucial ones to check stability on, but through a rather different approach via a differential-geometric argument.
\end{remark}


\section{Producing HYM connections in the stable case}
\label{sec:stable}

Let $\pi:(X,H)\to (B,L)$ be a holomorphic submersion with connected fibres, and $E\to B$ a holomorphic vector bundle. In this section we prove Theorem \ref{thm:stablecase}. There are two main steps. We first construct approximate solutions to any desired order in Section \ref{sec:stableapprox}, then show that these can be perturbed to genuine solutions when the order is sufficiently good in Section \ref{sec:nonlinear}. From the construction, it will be clear that the sequence of connections that we will produce satisfy the convergence property stated in Theorem \ref{thm:stablecase}. 

We will frequently suppress pullbacks for bundles and endomorphisms from now on, so that e.g. when we speak of the bundle $E$ over $X$, we really mean $\pi^*E \to X$, and the section $\Id_E$ over $X$ is really $\Id_{\pi^* E}$.

\subsection{The approximate solutions}
\label{sec:stableapprox}

When producing the approximate solutions in this section, following the strategy in \cite{fine04}, we will use power series expansions in negative powers of the parameter $k$ related to the adiabatic classes for sections of $\End(\pi^*E)$, and related bundles. In all of Section \ref{sec:stableapprox}, an expression of the form $\sigma(k)=O(k^{-j})$ is to be understood as holding pointwise. Convergence considerations of those expressions with respect to various Sobolev space norms will be addressed in Section \ref{sec:nonlinear}.

Assume that $E$ is stable on $B$, with respect to $c_1(L)$. Then for any $\omega_B \in c_1 \left( L \right)$, there is an hermitian metric $\tilde{h}$ on $E$ which is Hermite--Einstein, i.e. such that $$ \Lambda_{\omega_B} \left( i F_{\tilde{h}} \right) = c \Id_E ,$$ where $F_{\tilde{h}}$ is the curvature of $\tilde{h}$.

We then get a metric $h = \pi^* \tilde{h}$ on $\pi^* E$. Our first step in showing that we can obtain an Hermite--Einstein metric in adiabatic classes on $X$ is to show that $h$ to leading order is a solution to the Hermite--Einstein equation.
\begin{lemma}
\label{lem:pullbackcv}
Let $\omega_k = \omega_X + k \pi^* \omega_B $ be a K\"ahler metric in $c_1 \left( L^{k} \otimes H \right)$ on $X$. Let $\tilde{h}$ be an hermitian metric on $E$ and let $h = \pi^* \tilde{h}$. Then the curvature $F_h$ of the Chern connection of $h$ satisfies
$$ \Lambda_{\omega_k} \left( i F_h \right) = k^{-1} \Lambda_{\omega_B} \left( i \pi^* F_{\tilde{h}}  \right) + O \left( k^{-2} \right).$$
In particular, if $\tilde{h}$ is Hermite--Einstein, so is $h$ to leading order.
\end{lemma}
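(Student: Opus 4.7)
The plan is to combine two observations. Since the Chern connection is functorial under holomorphic pullback of hermitian bundles, the curvature of $h=\pi^*\tilde h$ is itself a pullback: $F_h=\pi^*F_{\tilde h}$. In particular $iF_h$ is a $(1,1)$-form pulled back from $B$, so in the splitting \eqref{eqn:2formsplitting} its vertical component vanishes and its horizontal component equals $iF_h$ itself.

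Granted this, the expansion follows directly from Lemma \ref{lem:expansion contraction} applied with $\alpha=iF_h$. The leading, purely vertical term drops out, and the $k^{-1}$ coefficient becomes $\Lambda_{\cH}(iF_h)$. The identity $\Lambda_{\cH}(\pi^*\beta)=\pi^*\Lambda_{\omega_B}\beta$ recorded just before that lemma then rewrites this as $\pi^*\Lambda_{\omega_B}(iF_{\tilde h})$, giving the claimed formula
$$\Lambda_{\omega_k}(iF_h) = k^{-1}\pi^*\Lambda_{\omega_B}(iF_{\tilde h}) + O(k^{-2}).$$
The last sentence of the lemma then follows at once: if $\tilde h$ is Hermite--Einstein with constant $c$, the right hand side becomes $k^{-1}c\,\Id_{\pi^*E} + O(k^{-2})$, and one checks from Lemma \ref{lem:slopeexpansion} that the topological constant in the Hermite--Einstein equation for $\pi^*E$ with respect to $\omega_k$ is itself of order $k^{-1}$, with leading coefficient $c$.

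There is no real obstacle here, since the computational content has been packaged into Lemma \ref{lem:expansion contraction}; the only thing to recognise is that pulled back forms have no vertical part, so their image under $\Lambda_{\omega_k}$ begins at order $k^{-1}$. This reflects the fact that as $k\to\infty$ the K\"ahler class $\omega_k$ is dominated by the base direction, and the base geometry governs the leading order of the equation on $X$.
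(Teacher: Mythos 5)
Your proof is correct and follows essentially the same route as the paper: apply the expansion of $\Lambda_{\omega_k}$ from Lemma \ref{lem:expansion contraction} after observing that $F_h$ is (up to the vanishing vertical and mixed parts) the pullback of $F_{\tilde h}$, so only the $k^{-1}\Lambda_{\cH}$ term survives. The only difference is cosmetic: you invoke functoriality of the Chern connection to get $F_h=\pi^*F_{\tilde h}$ exactly, whereas the paper verifies the same fact by a computation in a simultaneous local trivialisation (a setup it reuses later), concluding the identity up to mixed terms, which suffices.
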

\begin{proof} From Lemma \ref{lem:expansion contraction}, the contraction operator $\Lambda_{\omega_k}$ satisfies $$ \Lambda_{\omega_k} \left( \alpha \right) = \Lambda_{\scV} \left( \alpha_{\scV} \right) + k^{-1} \Lambda_{\cH} \left( \alpha_{\cH} \right) + O \left( k^{-2} \right),$$
for any $(1,1)$ form $\alpha$. So the result comes from $F_h = \pi^* F_{\tilde{h}}$, which follows by definitions of pullback and Chern connections.
\end{proof}

Next we will show that after a perturbation, this can be improved to arbitrary order. That is, for any $j$ we find $f_{j,k} \in \scGC \left( \pi^* E \right) $ and constants $c_{j,k}$ such that if we let $A_{j,k} = A^{f_{j,k}}$, then for all $k \gg 0$, $$ \Lambda_{\omega_k} \left( i F_{A_{j,k} } \right) = c_{j,k} \Id_E + O \left( k^{-j-1} \right).$$

For this, we will need to understand the linearised operator better, and in particular how it acts on the different components in the splitting $$\Gamma \left( X, \End \pi^*E \right) = \Gamma_0 \left( X, \End \pi^* E \right) \oplus \pi^* \Gamma \left( B, \End E \right) $$ of sections of $\End \pi^* E$. 
The following definition captures the leading order term of this operator.
\begin{definition}\label{def:vertlap} Let $G \to B$ be a vector bundle of rank $r$. The \textit{vertical Laplace operator}, denoted $$\Delta_{\scV, G} : \Gamma \left( X, \pi^*G \right) \to \Gamma \left( X, \pi^* G \right),$$ is the operator 
$$\Delta_{\scV, G} = i \Lambda_{\cV} \big(  \partial_{F} \bar \partial_{F}  - \bar \partial_{F}  \partial_{F} \big),$$ 
where $d_F=\partial_{F} +\bar \partial_{F} $ is the flat connection along the fibres of $\pi$. 
\end{definition}
For $G = \End E$ we will drop the reference to the bundle in the notation, i.e. $$ \Delta_{\scV} = \Delta_{\scV, \End E}.$$ 
Note that $\Delta_{\scV, G}$ vanishes on the $\Gamma \left( B, G \right)$ component of $\Gamma \left( X, \pi^* G \right)$.
Next we define the operator that appears as the subleading order term.
\begin{definition}\label{def:horlap} Let $G \to B$ be a vector bundle of rank $r$ with an hermitian metric $\tilde{h}_G$. The \textit{horizontal Laplace operator}, denoted $$\Delta_{\cH, G} : \Gamma \left( X, \pi^*G \right) \to \Gamma \left( X, \pi^* G \right),$$ is the operator 
$$\Delta_{\cH,G} = i \Lambda_{\cH} \big(  \partial_{A} \bar \partial_{A}  - \bar \partial_{A}  \partial_{A} \big),$$ 
where $A$ is the pullback of the Chern connection of $(G,\tilde{h}_G)$ on $B$.
\end{definition}
As with the vertical operator, when $G = \End E$ we will drop the reference to the bundle in the notation, i.e. $$ \Delta_{\cH} = \Delta_{\cH, \End E}.$$ Note that this then equals $i \Lambda_{\omega_B} \big(  \partial_{A_{\End E}} \bar \partial_{A_{\End E}}  - \bar \partial_{A_{\End E}}  \partial_{A_{\End E}} \big)$.
\begin{remark}
\label{rem:horizontallaplaceandpulback}
If $s$ is pulled back from a section $\tilde{s}$ on $B$, then $\Delta_{\cH, G} (s)$ is the pullback of $\Delta_{\omega_B, G} ( \tilde{s}).$
\end{remark}

The linearised operator then has the following asymptotic behaviour.
\begin{proposition}
\label{prop:expansion linear term}
Let $\Delta_k $ denote the Laplace operator associated to the Chern connection of $h=\pi^*\tilde{h}$ and $\omega_k$. Then
\begin{align*} \Delta_k (s) = \Delta_{\scV} (s) +   k^{-1} \Delta_{\cH} (s)  + O \left( k^{-2}  \right).
\end{align*}
The same expansion also holds at a Chern connection on $\pi^* E \to X$ coming from a complex structure $f_k \cdot \overline \partial_0$ provided $f_k = \Id_E+ s_k$ for some $s_k \in \Gamma \left( X, \End  \pi^* E \right)$ whose base component $s_{B,k}$ satisfies $s_{B,k} = O(k^{-1})$ and whose vertical component $s_{F, k}$ satisfies $s_{F,k} = O(k^{-2})$.
\end{proposition}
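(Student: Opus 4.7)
The plan is to combine the expansion of the contraction operator $\Lambda_k$ from Lemma \ref{lem:expansion contraction} with the decomposition of the pullback Chern connection along the horizontal/vertical splitting. Recall from the proof of Lemma \ref{lem:pullbackcv} that in a simultaneous local trivialisation of $\pi^*E$ and of the fibration, the connection $1$-form of the Chern connection of $\pi^*h$ equals $\pi^*\tilde A$. Consequently $d_A$ decomposes as a componentwise vertical differential $d_F$ (the connection $1$-form has no vertical component) plus a horizontal piece incorporating $\pi^*\tilde A$, and the $(1,1)$-form-valued operator $\partial_A \bar\partial_A - \bar\partial_A \partial_A$ applied to any section $s$ of $\pi^*E$ splits into a purely vertical, a purely horizontal, and mixed components. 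By Remark \ref{rem:mixed terms trace vanish} the mixed components vanish under $\Lambda_k$.

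Applying Lemma \ref{lem:expansion contraction} then yields
\[
\Delta_k s = i\Lambda_{\scV}\bigl((\partial_A\bar\partial_A - \bar\partial_A \partial_A)s\bigr)_{\scV} + k^{-1} i\Lambda_\cH \bigl((\partial_A\bar\partial_A - \bar\partial_A \partial_A)s\bigr)_\cH + O(k^{-2}).
\]
Since the vertical component of $A$ vanishes, the vertical piece above reduces in any local trivialisation to the ordinary scalar $\bar\partial\partial$-Laplacian applied componentwise to $s|_{X_b}$ on each fibre, which is precisely $\Delta_{\scV} s$ by Definition \ref{def:vertlap}. The horizontal piece matches Definition \ref{def:horlap} of $\Delta_\cH s$ by construction. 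This handles the expansion at the unperturbed Chern connection.

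For the perturbed Chern connection $A^{f_k}$ with $f_k = \exp(s_k)$, the identity $d_{A^{f_k}} = f_k^*\partial_A (f_k^*)^{-1} + f_k^{-1}\bar\partial f_k$ expresses $\partial_{A^{f_k}} - \partial_A$ and $\bar\partial_{A^{f_k}} - \bar\partial_A$ as analytic expressions in $s_k$ and its differentials, vanishing at $s_k = 0$. Splitting $s_k = s_{B,k} + s_{F,k}$, the vertical component $s_{F,k} = O(k^{-2})$ contributes a correction to $\Delta_k$ of uniform size $O(k^{-2})$ regardless of the direction of derivatives. The base component $s_{B,k} = O(k^{-1})$ is pulled back from $B$ and is thus constant along fibres, so its $(1,0)$- and $(0,1)$-differentials are \emph{horizontal} $1$-forms of size $O(k^{-1})$; the resulting horizontal $(1,1)$-form perturbations to $\partial_{A^{f_k}}\bar\partial_{A^{f_k}} - \bar\partial_{A^{f_k}}\partial_{A^{f_k}}$ pick up an extra factor of $k^{-1}$ from $\Lambda_k$, producing an overall $O(k^{-2})$ contribution. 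The main obstacle will be the bookkeeping in this last step: one must verify that no contribution from cross terms between $s_{B,k}$ and $s_{F,k}$, or from higher-order terms in the expansion of $\exp$, escapes the $O(k^{-2})$ budget, and that the estimates are uniform in the relevant H\"older or Sobolev norms. This reduces to a finite classification of terms organised by their bidegree and horizontal/vertical type.
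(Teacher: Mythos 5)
Your proposal is correct and follows essentially the same route as the paper: expand $\Lambda_{\omega_k}$ via Lemma \ref{lem:expansion contraction}, use that the pullback Chern connection splits as the trivial fibrewise connection plus the pulled-back connection (so the vertical and horizontal pieces of $\partial_A\bar\partial_A-\bar\partial_A\partial_A$ give $\Delta_{\scV}$ and $\Delta_{\cH}$, with mixed terms invisible to the two leading orders of the contraction), and then observe that an $O(k^{-1})$ base perturbation only affects horizontal terms, which are damped by the extra $k^{-1}$ in $\Lambda_{\cH}$, while the $O(k^{-2})$ vertical perturbation is already within budget. The final bookkeeping you flag is treated at the same level of brevity in the paper's own proof, so no gap relative to it.
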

\begin{proof} Recall from Lemma \ref{lem:expansion contraction} that the contraction operator $\Lambda_{\omega_k}$ expands as $$ \Lambda_{\omega_k} \left( \alpha \right) = \Lambda_{\scV} \left( \alpha_{\scV} \right) + k^{-1} \Lambda_{\cH} \left( \alpha_{\cH} \right) + O \left( k^{-2} \right).$$
Working in local trivialisations, we see that the Chern connection of $h$ is of the form $d_A = d_F + \pi^* d_{\tilde{A}}$, where $d_F$ is the trivial connection on the fibres of $\pi$ and $\tilde A$ the Chern connection of $(E,\tilde h)$. The operator  $ \partial_{A_{\End E}} \bar \partial_{A_{\End E}}  - \bar \partial_{A_{\End E}}  \partial_{A_{\End E}}$ will decompose accordingly as a sum of a purely vertical part induced by $d_F$, a purely horizontal part induced by $\pi^*d_{\tilde A}$, and a mixed vertical-horizontal part induced by those two covariant derivatives. The vertical component of $ \partial_{A_{\End E}} \bar \partial_{A_{\End E}}  - \bar \partial_{A_{\End E}}  \partial_{A_{\End E}}$ will then be the fibrewise operator $\partial_F \bar \partial_F - \bar \partial_F \partial_F$, and the horizontal component will be the pullback of the corresponding operator from the base. The result then immediately follows as, to the leading two orders, one does not see any mixed terms in the expansion of $\Lambda_{\omega_k}$.

Finally, we consider the statement regarding the perturbed Chern connections. Under those perturbations, the vertical part of $ \partial_{A_{\End E}} \bar \partial_{A_{\End E}}  - \bar \partial_{A_{\End E}}  \partial_{A_{\End E}}$ only changes at order $k^{-2}$. Also, while the the horizontal part of $ \partial_{A_{\End E}} \bar \partial_{A_{\End E}}  - \bar \partial_{A_{\End E}}  \partial_{A_{\End E}}$ only changes at order $k^{-1}$, the change in curvature is at order $k^{-2}$, because of the term $k^{-1} \Lambda_{\cH}$ when contracting this part with $\omega_k$.
\end{proof}

Consider the decomposition
\begin{equation}
 \label{eq:decomposition of gauge group}
 \Gamma(X,\End(\pi^*E))=\Gamma_0(X,\End(\pi^*E))\oplus \pi^* \Gamma(B,\End(E)),
\end{equation}
and notice that $\Delta_\scV$ sends $\Gamma_0(X,\End(\pi^*E))$ to itself. We have :

\begin{lemma}
 \label{lem:vertical laplacian invertible}
 The vertical Laplacian $$\Delta_\scV : \Gamma_0(X,\End(\pi^*E)) \to \Gamma_0(X,\End(\pi^*E)) $$ is invertible as a linear map.
\end{lemma}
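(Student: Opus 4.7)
The plan is to work fibrewise. In a local trivialisation $X_{|U}\cong U\times F$ compatible with a trivialisation of $\pi^*\End E$, the operator $\Delta_\scV$ acts on each component of a section as the ordinary K\"ahler Laplacian of $(X_b, (\om_X)_{|X_b})$, applied pointwise in $b$. The transition functions of $\pi^*\End E$ are pulled back from $B$, so they are constant along fibres and commute with this fibrewise Laplacian; hence $\Delta_\scV$ is globally well-defined, is formally self-adjoint with respect to the natural $L^2$ pairing on $X$, and preserves the decomposition \eqref{eq:decomposition of gauge group}.

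\emph{Injectivity.} Suppose $s\in\Gamma_0$ satisfies $\Delta_\scV s=0$. In a local trivialisation each component $s^i_b$ is harmonic on the compact K\"ahler manifold $X_b$, hence constant. Gluing, $s$ is fibrewise constant, i.e.~$s\in\pi^*\Gamma(B,\End E)$. Since $\Gamma_0\cap\pi^*\Gamma(B,\End E)=0$ by the decomposition \eqref{eq:decomposition of gauge group}, we conclude $s=0$.

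\emph{Surjectivity.} Given $\eta\in\Gamma_0$, the restriction $\eta_b$ on each fibre has zero fibrewise average in each component. Since the K\"ahler Laplacian of $X_b$ is self-adjoint with kernel the constants, its image consists precisely of mean-zero functions, and there exists a unique mean-zero solution $s_b$ of $\Delta_{(\om_X)_{|X_b}} s_b=\eta_b$, namely $s_b=G_b(\eta_b)$ where $G_b$ is the fibrewise Green's operator. Gluing gives a section $s$ of $\pi^*\End E$ with fibrewise mean zero. Smoothness of $s$ on $X$ follows from smooth dependence of $G_b$ on $b$: since the fibrewise K\"ahler structures $(\om_X)_{|X_b}$ vary smoothly with $b$, so do the operators $\Delta_{(\om_X)_{|X_b}}$ and their Green's operators.

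\emph{Main obstacle.} The principal technical point is precisely this smooth parametric dependence of $G_b$, since $\Delta_\scV$ is not globally elliptic on $X$, only elliptic along the fibres; one cannot invoke standard global elliptic theory directly. The needed smoothness is nevertheless standard for smoothly varying families of self-adjoint fibrewise elliptic operators possessing a uniform spectral gap. The gap is uniform by continuity in $b$ of the first non-zero eigenvalue of $\Delta_{(\om_X)_{|X_b}}$, together with compactness of $B$ and of the fibres. Combining injectivity and surjectivity gives the claimed invertibility of $\Delta_\scV$ on $\Gamma_0(X,\End(\pi^*E))$.
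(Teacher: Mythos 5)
Your argument is correct and is essentially the paper's: the paper simply invokes the standard Hodge-theoretic fact that the K\"ahler Laplacian on each compact fibre has kernel the constants and image the mean-zero functions, which is exactly the fibrewise injectivity/surjectivity argument you give. Your additional discussion of smooth dependence of the fibrewise Green's operators on $b$ (via the smoothly varying family and uniform spectral gap) fills in a detail the paper leaves implicit, and is the right justification.
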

\begin{proof}
 By restriction to each fibre $X_b=\pi^{-1}(b)$, $\Delta_\scV$ defines a differentiable family of self-adjoint elliptic operators $(\Delta_{\scV,b} :\Gamma_0(X_b,\End(\pi^* E_b)) \to \Gamma_0(X_b,\End(\pi^* E_b)))_{b\in B}$ over $B$ in the sense of Kodaira and Spencer \cite{kodairaspencer60} (be careful that the notations $X$ and $B$ are switched from \cite{kodairaspencer60} to our conventions). Denote by $G_b$ the Green operator of $\Delta_{\scV,b}$ (as in \cite[page 48]{kodairaspencer60}).  By construction, the kernel of each $\Delta_{\scV,b}$ is trivial, and thus by \cite[Theorem 5]{kodairaspencer60}, the associated family of Green operators $(G_b)_{b\in B}$ is a differentiable family. In particular, for each $\sigma \in \Gamma_0(X,\End(\pi^*E))$ the solutions $s_b=G_b(\sigma_{\vert X_b})$ to $\Delta_{\scV,b} s_b = \sigma_{\vert X_b}$ are differentiable in $b$, meaning that there is a smooth section $s\in \Gamma_0(X,\End(\pi^*E))$ such that $s_{\vert X_b} =s_b$ satisfying $\Delta_\scV s = \sigma$. 
\end{proof}

Recall that the (co)-kernel of the Laplacian are the constant multiples of the identity if $E$ is simple. Denote by $\Gamma_{\Id_E^\perp}(B,\End(E))$ the $L^2$ orthogonal complement of $\mathbb{C} \cdot \Id_E$ in $\Gamma(B,\End(E))$ with respect to the natural inner product on sections induced by $\om_B$ and $\tilde h$ (the same used to define the adjoint operator of $d_A$). Then:
\begin{lemma}
 \label{lem:horizontal laplacian invertible}
 Assume $E \to B$ is simple. Then the horizontal Laplacian $$\Delta_\cH : \Gamma_{\Id_E^\perp}(B,\End(E)) \to \Gamma_{\Id_E^\perp}(B, \End(E))$$ is invertible.
\end{lemma}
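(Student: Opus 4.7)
The plan is to reduce the statement to the standard invertibility result for the Chern Laplacian on a simple bundle over the base.

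First I would observe that under the identification $\pi^*\Gamma(B,\End E)\subset \Gamma(X,\pi^*\End E)$ given by pullback, the operator $\Delta_\cH$ agrees with the pullback of the Chern Laplacian $\Delta_{A_{\End E}}$ on $(\End E, h_{\End E})\to (B,\om_B)$; this is precisely the content of the remark following Definition \ref{def:horlap}, which asserts that for $s=\pi^*\tilde s$ one has $\Delta_{\cH,G}(s)=\pi^*\Delta_{\om_B,G}(\tilde s)$. In particular, the restriction of $\Delta_\cH$ to sections coming from $B$ is the pullback of a genuine elliptic, self-adjoint, second-order operator on the compact K\"ahler manifold $(B,\om_B)$.

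Next I would invoke the standard facts about the Chern Laplacian $\Delta_{A_{\End E}}$ on $\End E$: it is formally self-adjoint with respect to the $L^2$ inner product on $\Gamma(B,\End E)$ induced by $\om_B$ and the hermitian metric on $\End E$ inherited from $\tilde h$, and, being elliptic on a compact manifold, it is Fredholm with closed image equal to the $L^2$ orthogonal complement of its kernel. Since by hypothesis $E$ is simple, the lemma recalled just before Lemma \ref{lem:linop} gives $\ker \Delta_{A_{\End E}} = \C\cdot\Id_E$. Therefore $\Delta_{A_{\End E}}$ restricts to an isomorphism on the $L^2$ orthogonal complement of $\C\cdot\Id_E$, which is precisely $\Gamma_0(B,\End E)$.

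Pulling this back to $X$, the same statement then holds for $\Delta_\cH$ viewed as an operator $\Gamma_0(B,\End E)\to \Gamma_0(B,\End E)$ under the pullback identification, which is what the lemma asserts. There is no real obstacle here beyond making sure the inner product used to define $\Gamma_0(B,\End E)$ matches the one for which $\Delta_{A_{\End E}}$ is self-adjoint (which is built into the statement of the lemma), and verifying that the adjoint on $B$ is compatible with $\Delta_\cH$ on pulled back sections — this is automatic from the remark cited above, since $\Delta_\cH$ on pullback sections involves only the horizontal contraction $\Lambda_\cH$ and the pullback connection, and by the definition of $\Lambda_\cH$ on pulled back $(1,1)$-forms it reproduces $\Lambda_{\om_B}$.
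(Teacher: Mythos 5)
Your argument is correct and is exactly the one the paper intends: the paper states this lemma without proof, relying on the remark after Definition \ref{def:horlap} identifying $\Delta_\cH$ on pulled back sections with the Laplacian $\Delta_{A_{\End E}}$ on $(B,\om_B)$, on $\ker\Delta_{A_{\End E}}=\C\cdot\Id_E$ for simple $E$, and on self-adjointness plus elliptic (Hodge) theory to invert on the orthogonal complement $\Gamma_0(B,\End E)$. Nothing is missing.
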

To improve the estimate on the approximate solution to arbitrarily high order in the $k$-expansion, we will also need the following lemma:
\begin{lemma}
 \label{lem:quadratic term is O(k-1)}
 Let $s_B\in \Gamma(B,\End(E))$ and denote by $Q_k$ the quadratic term in the Taylor expansion of the Hermite--Einstein operator, for $\ep \ll 1$:
 \begin{equation}
  \label{eq:taylor expansion HE second term}
  \Lambda_{\omega_k}( i F_{A^{\exp(\ep s_B)}}) = \Lambda_{\om_k}( i F_A) + \ep \Delta_k (s_B) + \ep^2 Q_k(s_B,s_B) + O(\ep^3).
 \end{equation}
Then $Q_k(s_B,s_B)=O(k^{-1})$.
\end{lemma}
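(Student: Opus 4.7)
The plan is to exploit that $s_B$ is pulled back from $B$, so the entire Taylor expansion of the Hermite--Einstein operator in $\ep$ involves only $(1,1)$-forms pulled back from the base, for which the $k^{-1}$ decay of $\Lambda_{\om_k}$ is immediate from Lemma \ref{lem:expansion contraction}.

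First, I would verify that if $f = \pi^*\tilde f$ for a complex gauge transformation $\tilde f$ on $E \to B$, then the Chern connection $A^f$ of $h = \pi^*\tilde h$ with respect to the new complex structure $f^{-1}\bar\partial f$ splits in the same way as the original connection does in the proof of Lemma \ref{lem:pullbackcv}, namely $d_{A^f} = d_F + \pi^* d_{\tilde A^{\tilde f}}$. This is a short local computation in a simultaneous trivialisation: the pullback $f$ commutes with the fibrewise differential $d_F$, so conjugation of $d_F$ by $f$ is trivial, while the horizontal part is the pullback of $d_{\tilde A^{\tilde f}}$ by naturality. Since $d_F$ and $\pi^* d_{\tilde A^{\tilde f}}$ act in orthogonal coordinate directions, the curvature satisfies $F_{A^f} = \pi^* F_{\tilde A^{\tilde f}}$ and in particular has no vertical or mixed components.

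Taking $\tilde f = \exp(\ep s_B)$ and Taylor expanding on $B$, every coefficient of the resulting expansion of $F_{\tilde A^{\exp(\ep s_B)}}$ is a $(1,1)$-form on $B$, so after pullback,
\begin{equation*}
F_{A^{\exp(\ep s_B)}} = F_A + \ep\, \pi^* \tilde F_1(s_B) + \ep^2\, \pi^* \tilde F_2(s_B,s_B) + O(\ep^3),
\end{equation*}
each coefficient is a purely horizontal $(1,1)$-form on $X$. Comparing with \eqref{eq:taylor expansion HE second term} identifies $Q_k(s_B,s_B) = \Lambda_{\om_k}\bigl(i\, \pi^*\tilde F_2(s_B,s_B)\bigr)$, and Lemma \ref{lem:expansion contraction} applied to this purely horizontal form yields
\begin{equation*}
Q_k(s_B,s_B) = k^{-1}\Lambda_\cH\bigl(i\,\pi^*\tilde F_2(s_B,s_B)\bigr) + O(k^{-2}) = O(k^{-1}),
\end{equation*}
as required.

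The only mildly technical step is the pullback identification $d_{A^f} = d_F + \pi^*d_{\tilde A^{\tilde f}}$; once that is in place the estimate is a structural consequence of Lemma \ref{lem:expansion contraction}, which already encodes the fact that purely horizontal $(1,1)$-forms are annihilated at leading order by $\Lambda_{\om_k}$. I do not expect any serious obstacle.
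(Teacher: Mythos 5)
Your argument is correct and is essentially the paper's own proof: the paper simply applies Lemma \ref{lem:pullbackcv} to the gauge-transformed pair $(e^{\ep s_B}\cdot E,\tilde h)$ and its pullback, which packages exactly your observation that the curvature of the perturbed structure is still pulled back from $B$, so every $\ep$-Taylor coefficient contracts to $O(k^{-1})$ by Lemma \ref{lem:expansion contraction}. Your write-up just makes explicit the splitting $d_{A^f}=d_F+\pi^* d_{\tilde A^{\tilde f}}$ that the paper leaves implicit.
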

\begin{proof}
Denote by $e^{\ep s_B}\cdot E$ the holomorphic vector bundle whose underlying complex vector bundle is the same as for $E$, but with holomorphic connection $e^{-\ep s_B}\circ \delb_E \circ e^{\ep s_B}$, for $\delb_E$ the holomorphic connection of $E$.
 Applying Lemma \ref{lem:pullbackcv} to $(e^{\ep s_B}\cdot E, \tilde h)$ and $(\pi^* e^{\ep s_B}\cdot E, \pi^* \tilde h)$, one has 
 $$\Lambda_{\om_k}\left(i F_{A^{\exp(\ep s_B)}}\right)=O(k^{-1}),$$
 hence the result. 
\end{proof}
We then obtain:
\begin{proposition}
 \label{prop:kexpstable}
 Assume that $\tilde h$ is Hermite--Einstein. Then for all $j\in \N^*$, there exist
\begin{itemize} 
\item gauge transformations $f_{j,k}\in\cG^\C(\pi^*E)$,
\item[]
\item constants $c_{j,k}$ ,
\end{itemize}
such that if we let $A_{j,k} = A^{f_{j,k}}$, then for all $k \gg 0$, $$ \Lambda_{\omega_k} \left( i F_{A_{j,k} } \right) = c_{j,k} \Id_E + O \left( k^{-j-1} \right).$$
Moreover, $f_{j,k}$ admits an expansion $f_{j,k}=\Id_E+s_{B,j,k}+s_{F,j,k}$ with base component $s_{B,j,k} = O(k^{-1})$ and vertical component $s_{F,j,k} = O(k^{-2})$.
\end{proposition}

\begin{proof}
The result follows from an inductive argument on $j\in\N^*$. The case $j=1$ follows by taking $f_{1,k}=\Id_E$. Then  $c_{1,k}  \Id_E = k^{-1}\Lambda_{\om_B}(i\pi^*F_{\tilde h})$, which is constant by the assumption on $\tilde h$, from Lemma \ref{lem:pullbackcv}. Note also that the curvature of the connections $A_{1,k}=A$ admit a Taylor expansion in inverse powers of $k$.

Assume now that the result holds up to step $j\geq 1$. That is, there are constants $c_{j,k}$, gauge transformations  $f_{j,k}=\Id_E+s_{B,j,k}+s_{F,j,k}$ with base component $s_{B,j,k} = O(k^{-1})$ and vertical component $s_{F,j,k} = O(k^{-2})$  such that the contracted curvature   $\Lambda_{\omega_k} \left( i F_{A_{j,k} } \right)$ has a Taylor expansion in inverse powers of $k$. Then, there is an element $\sigma^{j+1}\in \Gamma(X,\End_H (\pi^* E,h))$ such that for $k \gg 0$, one has
$$
 \Lambda_{\omega_k} \left( i F_{A_{j,k} } \right) = c_{j,k} \Id_E +\sigma^{j+1} k^{-j-1}+ O \left( k^{-j-2} \right).
$$
Note that by definition of the complex gauge group action, the curvature of any hermitian connection is always skew-hermitian, and so the error term $\sigma^{j+1}$ is an hermitian endomorphism.

Now, by Proposition \ref{prop:expansion linear term}  and Lemma \ref{lem:quadratic term is O(k-1)}, we have that if $s^{j+1}_B\in \pi^* \Gamma_{\Id_E^\perp} (B,\End_H (E, \tilde h) )$, $s_F^{j+1}\in \Gamma_0(X,\End_H (\pi^*E, h) ) $, and if we set
$$
f_{j+1,k}= f_{j,k}\exp( k^{-j} s_B^{j+1})\exp (k^{-j-1} s_F^{j+1} )
$$
then
\begin{align*}
\Lambda_{\om_k} (i F_{A_{j+1,k}}) &= \Lambda_{\om_k} (i F_{A_{j,k}}) + \left( \Delta_\cH s^{j+1}_B + \Delta_\scV s^{j+1}_F \right) k^{-j-1} + O(k^{-j-2}) \\
&=c_{j,k} \Id_E + \left( \sigma^{j+1} +\Delta_\cH s^{j+1}_B + \Delta_\scV s^{j+1}_F \right) k^{-j-1} + O(k^{-j-2}),
\end{align*}
for $k \gg 0$, since $j \geq 1$.
Write
$$
 \sigma^{j+1}=c_{j+1} \Id_E + \sigma^{j+1}_B + \sigma^{j+1}_F
$$
with $c_{j+1}\in \R$, $\sigma^{j+1}_B\in \Gamma_{\Id_E^\perp}(B,\End_H (E, \tilde h))$ and $\sigma_F^{j+1}\in \Gamma_0(X,\End_H (\pi^*E, h) )$. By Lemmas \ref{lem:horizontal laplacian invertible} and \ref{lem:vertical laplacian invertible}, we can find $s_B^{j+1}$ that solves $\Delta_\cH s^{j+1}_B = -\sigma_B^{j+1}$ and $s_F^{j+1}$ that solves $\Delta_\scV s^{j+1}_F=-\sigma^{j+1}_F$. Note also that the changes made to produce $A_{j+1,k}$ from $A_{j,k}$ preserves having a Taylor expansion in inverse powers of $k$ and in particular, we can write
$$
 \Lambda_{\omega_k} \left( i F_{A_{j+1,k} } \right) = c_{j+1,k} \Id_E +\sigma^{j+2} k^{-j-2}+ O \left( k^{-j-3} \right)
$$
for some element $\sigma^{j+2}$ (which depends on the previously chosen $s^i_B$ and $s^i_F$). By construction, $f_{j+1,k}=\Id_E+s_{B,j+1,k}+s_{F,j+1,k}$ with base component $s_{B,j+1,k} = O(k^{-1})$ and vertical component $s_{F,j+1,k} = O(k^{-2})$. The result follows.
\end{proof}

\subsection{Perturbing to a genuine solution}
\label{sec:nonlinear}

We now perturb the approximate solutions constructed above to genuine solutions. The proof relies on a quantitative version of the implicit function theorem, which we now recall.
\begin{theorem}[{\cite[Theorem 4.1]{fine04},\cite[Theorem 25]{bronnle15}}]
\label{thm:quantimpl}
Let $\scR : V \to W$ be a differentiable map of Banach spaces. Suppose the derivative $D \scR$ at $0$ is surjective with right-inverse $\cQ$. Let
\begin{itemize} \item $\delta_1$ be the radius of the closed ball in $V$ where $\scR - D \scR$ is Lipschitz of constant $\frac{1}{2 \| \cQ \|}$;
\item $\delta = \frac{\delta_1}{2 \| \cQ \|}.$
\end{itemize}
Then for all $w \in W$ such that $\| w - \scR (0) \| < \delta$, there exists $v \in V$ such that $\scR(v) = w.$
\end{theorem}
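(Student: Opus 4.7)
This is the standard quantitative form of the inverse function theorem, and I would approach it by the Banach fixed point theorem applied to an appropriate operator. The key move is to reformulate the equation $\scR(v)=w$ as a fixed point problem whose contraction is controlled by the Lipschitz constant of the nonlinear remainder and the size of the right-inverse.

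Concretely, I would set $N(v) = \scR(v) - \scR(0) - D\scR(v)$, so that $N$ is the nonlinear remainder; since $N$ differs from $\scR - D\scR$ only by the constant $\scR(0)$, the hypothesis is equivalent to saying $N$ is Lipschitz with constant $\frac{1}{2\|\cQ\|}$ on the closed ball $\overline{B_{\delta_1}}(0) \subset V$. Rewriting $\scR(v)=w$ as $D\scR(v) = w - \scR(0) - N(v)$ and applying $\cQ$, one obtains the fixed point equation $v = T(v)$ for
\begin{equation*}
T(v) := \cQ\bigl(w - \scR(0) - N(v)\bigr).
\end{equation*}
Any fixed point of $T$ is a genuine solution of $\scR(v) = w$: applying $D\scR$ to $v = T(v)$ and using $D\scR \circ \cQ = \mathrm{Id}_W$ gives $D\scR(v) = w - \scR(0) - N(v)$, which rearranges to $\scR(v) = w$.

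The remaining work is to verify the hypotheses of the Banach fixed point theorem on $\overline{B_{\delta_1}}(0)$. For the contraction property, I would estimate
\begin{equation*}
\|T(v_1) - T(v_2)\| = \|\cQ(N(v_2) - N(v_1))\| \leq \|\cQ\| \cdot \tfrac{1}{2\|\cQ\|}\|v_1-v_2\| = \tfrac{1}{2}\|v_1 - v_2\|,
\end{equation*}
using the Lipschitz hypothesis on $N$. For the stability of the ball, I would use $N(0)=0$ to compute $T(0) = \cQ(w - \scR(0))$, so that $\|T(0)\| \leq \|\cQ\|\cdot\|w-\scR(0)\| < \|\cQ\|\cdot\delta = \delta_1/2$. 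Combining with the contraction estimate yields
\begin{equation*}
\|T(v)\| \leq \|T(v) - T(0)\| + \|T(0)\| < \tfrac{1}{2}\|v\| + \tfrac{\delta_1}{2} \leq \delta_1
\end{equation*}
for $v \in \overline{B_{\delta_1}}(0)$, so $T$ maps this closed ball into itself. The Banach fixed point theorem then produces the desired $v$.

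The main (mild) obstacle is purely bookkeeping: keeping track that the Lipschitz hypothesis on $\scR - D\scR$ really controls the nonlinear remainder $N$, and verifying that the constants $\delta_1$ and $\delta$ in the statement line up so that the self-map estimate closes. There is no deeper analytic difficulty here once the right-inverse $\cQ$ is in hand; the substance of any application is the existence of $\cQ$ with controllable norm, which is precisely what the sections on approximate solutions and linear theory in the paper are designed to supply.
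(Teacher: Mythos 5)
Your argument is correct: the reformulation $v=T(v)$ with $T(v)=\cQ\bigl(w-\scR(0)-N(v)\bigr)$, the $\tfrac12$-contraction estimate from the Lipschitz hypothesis, and the self-map estimate on the closed ball of radius $\delta_1$ all check out, and the Banach fixed point theorem gives the solution. The paper does not prove this statement but quotes it from \cite{fine04} and \cite{bronnle15}, whose proofs are exactly this contraction-mapping argument, so your proposal coincides with the standard route.
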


We now apply this to the hermitian Yang--Mills operator $\Psi_{j,k} $ given by $$ (s, \lambda) \mapsto  i \Lambda_{\omega_k} \left( F_{A_{j,k}^{\exp(s)}} \right)-  \lambda \Id_E.$$ The constants are added so that the linearisation is surjective, and so we seek a zero of this map (rather than some constant multiple of the identity). For a Riemannian metric $g$ on $X$, let $L^2_{d} (g)$ denote the Sobolev space $W^{d,2}(X, g)$ of order $2$ and $d$ derivatives, with respect to the metric $g$. If $g$ is K\"ahler with K\"ahler form $\omega$, we may use $L^2_d (\omega)$ to mean $L^2_d (g)$. When $d=0$, we omit the subscript. For Sobolev spaces associated to $\pi^*E$ or $\End \pi^*E$, we use the metric $h$ (and the metric it induces on $\End \pi^*E$) on the bundle. We will use the mean value theorem to establish the required bound on the radius of the closed ball in $V = L^2_{d+2} \left( \End_H (\pi^* E, h) , \omega_k \right) \times \R $ on which the non-linear part $$\scN_{j,k} = \Psi_{j,k} - D \Psi_{j,k}$$ of the HYM operator has the appropriate Lipschitz constant. Thus we will begin by establishing some bounds on $D \Psi_{j,k} = \Delta_{j,k}$ and its right inverse. Note that this inverse is known to exist by Lemma \ref{lem:pullbackissimple}.

\begin{proposition}
\label{prop:inversebound}
For each $d$, there is a $C>0$, independent of $k$ and $j$, such that the right inverse $\scP_{j,k}$ of the map $L^2_{d+2}(\omega_k) \times \R \to L^2_d(\omega_k)$ given by
$$
(s,c) \mapsto \Delta_{j,k}(s) - c \Id_E
$$ 
satisfies
$$ \| \scP_{j,k} \|_{L^2_{d} \to (L^2_{d+2} \times \R)} \leq C k,$$
 where $\| \cdot \|_{L^2_{d} \to (L^2_{d+2} \times \R)}$ is the operator norm induced from the norms on $L^2_d(\omega_k)$ and $L^2_{d+2}(\omega_k) \times \R$.
\end{proposition}

The proof follows closely the strategy of the analogous \cite[Theorem 6.9]{fine04}. A key step is to establish the following Poincar\'e inequality.  Recall that $A_{j,k}$ denotes the Chern connection constructed in Proposition \ref{prop:kexpstable}. We will use the same notation for the induced connection on $\End \pi^*E$.

\begin{lemma}\label{lemma:poincareineq} For each $j$ there exists a $C>0$ such that for all $k \gg 0$ we have that for any $s \in \Gamma \left(X, \End \pi^*E \right)$ whose trace is of average $0$ with respect to $\omega_k$, we have
\begin{align*} \| A_{j,k} s \|^2_{L^2 (\omega_k)} \geq  C k^{-1} \| s \|^2_{L^2 (\omega_k)}.
\end{align*}
\end{lemma}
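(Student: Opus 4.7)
The plan is to argue by contradiction, exploiting the splitting $\Gamma(X,\End \pi^*E) = \Gamma_0(X,\End \pi^*E)\oplus \pi^*\Gamma(B,\End E)$ together with the asymptotic expansion $\Delta_A = \Delta_\scV + k^{-1}\Delta_\cH + O(k^{-2})$ from Proposition \ref{prop:expansion linear term}, in the style of \cite[Section 5]{fine04}. A useful preliminary observation is that, because $s_B^1 = s_F^1 = 0$ in the inductive construction of Proposition \ref{prop:kexpstable}, we have $f_{j,k} = \Id_E + O(k^{-1})$; consequently $A_{j,k}$ differs from the pullback Chern connection $A$ of $h=\pi^*\tilde h$ only by $O(k^{-1})$, and I would aim to prove the estimate for $A$ modulo negligible errors. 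Assume for contradiction that the estimate fails: there exist sequences $k_\ell\to\infty$ and sections $s_\ell$ of $\omega_{k_\ell}$-average zero with $\|s_\ell\|_{L^2(\omega_{k_\ell})}=1$ but $k_\ell\|A_{j,k_\ell}\,s_\ell\|^2_{L^2(\omega_{k_\ell})}\to 0$. Decompose $s_\ell = s_{B,\ell} + s_{F,\ell}$; this splitting is $L^2(\omega_{k_\ell})$-orthogonal, since over each fibre $s_B$ is constant while $s_F$ has vanishing fibre integral, so $1 = \|s_{B,\ell}\|^2 + \|s_{F,\ell}\|^2$.

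The first step is the vertical estimate. Using the K\"ahler identity $\|A\,s\|^2_{L^2(\omega_k)} = -\langle \Delta_A s, s\rangle_{L^2(\omega_k)}$ and Proposition \ref{prop:expansion linear term}, the dominant term $-\langle \Delta_\scV s_F, s_F\rangle_{L^2(\omega_k)}$ is a fibre integral to which the scalar Poincar\'e inequality applies on each $X_b$; the first nonzero eigenvalue admits a uniform positive lower bound in $b\in B$ by compactness and the smooth dependence of $(X_b,(\omega_X)_{|X_b})$. Integrating over $B$ with the leading expansion $\omega_k^{m+n} = \binom{m+n}{n}k^n\,\omega_X^m \wedge \pi^*\omega_B^n + O(k^{n-1})$ yields $-\langle \Delta_\scV s_F, s_F\rangle_{L^2(\omega_k)} \geq c\,\|s_F\|^2_{L^2(\omega_k)}(1+O(k^{-1}))$. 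Since $\Delta_\scV$ also annihilates $s_B$, the failure hypothesis immediately forces $\|s_{F,\ell}\|\to 0$, hence $\|s_{B,\ell}\|\to 1$.

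The second step is the horizontal estimate for $s_{B,\ell}$. The average-zero condition $\int_X\tr(s_\ell)\,\omega_{k_\ell}^{m+n}=0$ combined with the vanishing of the fibrewise integral of $s_{F,\ell}$ forces $\int_B\tr(\tilde s_{B,\ell})\,\omega_B^n = O(k_\ell^{-1})$, where $\tilde s_{B,\ell}$ denotes $s_{B,\ell}$ viewed as a section of $\End E$. Thus modulo an $L^2(\omega_B)$ error of size $O(k_\ell^{-1})$, $\tilde s_{B,\ell}$ lies in $\Gamma_0(B,\End E)$, where Lemma \ref{lem:horizontal laplacian invertible} provides a strictly positive spectral gap for $\Delta_\cH$. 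Transferring back to $X$ through the $k^{-1}$ weighting of the horizontal part of $\Lambda_{\omega_k}$ from Lemma \ref{lem:expansion contraction} then delivers $-k_\ell^{-1}\langle \Delta_\cH s_{B,\ell}, s_{B,\ell}\rangle_{L^2(\omega_{k_\ell})} \geq c'\,k_\ell^{-1}\,\|s_{B,\ell}\|^2_{L^2(\omega_{k_\ell})}$. With $\|s_{B,\ell}\|^2\to 1$, this contradicts the hypothesis $k_\ell\|A_{j,k_\ell}\,s_\ell\|^2\to 0$. The main difficulty I anticipate is uniformly controlling the cross terms in $\|A_{j,k}\,s\|^2$ coupling $s_B$ and $s_F$, as well as absorbing the $O(k^{-1})$ discrepancy between $A_{j,k}$ and $A$; these should enter only at orders strictly smaller than the dominant $k^{-1}$ scale, but will require careful bookkeeping to confirm that they do not upset the limiting argument.
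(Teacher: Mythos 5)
Your strategy (contradiction argument plus the base/fibre splitting, with a fibrewise Poincar\'e inequality controlling $s_F$ and the spectral gap of $\Delta_{\cH}$ on $\Gamma_0(B,\End E)$ controlling $s_B$) is genuinely different from the paper's proof, which never decomposes $s$: it compares $\omega_k$ with the product metrics $\hat g_k = (g_X)_{\scV}\oplus k g_B$, invokes a single $k$-independent Poincar\'e inequality for $A_0$ (hence for its $O(k^{-1})$ perturbation $A_{j,k}$) with respect to the fixed metric $\hat g_1$, and then obtains the factor $k^{-1}$ purely from the pointwise scaling $|\sigma|_{\hat g_k}\geq k^{-1}|\sigma|_{\hat g_1}$ on $\End(\pi^*E)\otimes\Lambda^1$ together with the volume scaling $\mathrm{Vol}(\hat g_k)=k^n\mathrm{Vol}(\hat g_1)$, adjusting by a multiple of $\Id_E$ to match the two average-zero conditions. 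That route needs no expansion of the Laplacian and no control of cross terms at all.

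As written, your horizontal step has a genuine gap. You pass from $\| A_{j,k}s\|^2_{L^2(\omega_k)} = -\langle \Delta_{j,k}s,s\rangle_{L^2(\omega_k)}$ to the lower bound $-k^{-1}\langle \Delta_{\cH}s_B,s_B\rangle$ by discarding the remaining terms as negligible, but the discarded terms are \emph{differential} in $s$, not zeroth-order, so they are not $O(k^{-2})\|s\|^2_{L^2}$: the $O(k^{-2})$ remainder in Proposition \ref{prop:expansion linear term} is a second-order operator (coming from the $k^{-2}$ term in Lemma \ref{lem:expansion contraction}), and the coupling of $s_B$ with $s_F$ in the horizontal part is essentially $k^{-1}\langle \pi^*d_{\tilde A}\tilde s_B,\nabla_{\cH}s_F\rangle$, where $\nabla_{\cH}s_F$ is in no way controlled by $\|s_F\|_{L^2}$ (the contradiction hypothesis gives no bound on derivatives of $s$ other than through $\|A_{j,k}s\|$ itself). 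So the claim that these contributions ``enter only at orders strictly smaller than the dominant $k^{-1}$ scale'' is unjustified at the crux of the argument. The argument can be repaired, but only by working directly with the Dirichlet form $\|d_{A_{j,k}}s\|^2$ rather than the operator expansion, and by a genuine device for the cross term: for instance, project $(d_{A_{j,k}}s)_{\cH}$ onto pulled-back $1$-forms and use that the fibrewise average of $\nabla_{\cH}s_F$ is bounded by $C\|s_F\|$ (differentiate $\int_{X_b}s_F\,\omega_F^m=0$ in horizontal directions, picking up only the variation of the fibre volume forms), so that $\|\pi^*d_{\tilde A}\tilde s_B\|_{L^2(\omega_B)}$ is small once $\|(d_{A_{j,k}}s)_{\cH}\|$ and $\|s_F\|$ are; similarly the $k^{-2}$ remainders must be shown to be absorbable into $k^{-2}\left(\|d_{A_{j,k}}s\|^2+\|s\|^2\right)$. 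These are exactly the steps your sketch defers, and without them the horizontal estimate does not follow. (The minor points are fine: the $O(k^{-1})$ discrepancy between $A_{j,k}$ and the pullback connection is zeroth-order and harmless, and your observation that the $\omega_k$-average-zero condition makes $\tilde s_B$ orthogonal to $\Id_E$ up to $O(k^{-1})$ is correct.)
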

Note that since $\Delta_{j,k}$ is the self-adjoint operator produced from $A_{j,k}$, the lower bound in Lemma \ref{lemma:poincareineq} is equivalent to the same lower bound for the (absolute value) of the first non-zero eigenvalue of $\Delta_{j,k}$. In particular, it implies the lower bound 
\begin{align*} \| \Delta_{j,k}(s) \|_{L^2 (\omega_k)} \geq  C k^{-1} \| s \|_{L^2 (\omega_k)}
\end{align*}
for all $s$ that are $\omega_k$ orthogonal to $\Id_E$ and, in turn, the upper bound 
\begin{align*} \| \scP_{j,k} (s) \|_{L^2 (\omega_k)} \leq  C k \| s \|_{L^2 (\omega_k)}
\end{align*}
in $L^2$ for any $s$ in the image of $\Delta_{j,k}$, where we have used the same notation for the inverse as an operator $L^2 \to L^2 \cap \langle \Id_E \rangle^{\perp}$.

\begin{proof} In addition to the Riemannian metric $g_k$ whose K\"ahler form is $\omega_k$, we will use the Riemannian metric $\hat g_k$ which is a product of $(g_X)_{\scV}$ and $k g_B $ in the splitting $TX = \scV \oplus \cH.$ The corresponding metrics on any tensor bundles induced by $g_k$ and $\hat g_k$ are then uniformly equivalent (\cite[Lemma 6.2]{fine04}). In the proof, the various $C_i$ appearing are positive constants.

We first note that $A_0 = A_{k,0}$ is independent of $k$, as this is the Chern connection of the pullback of the complex structure on $E$. Moreover, $A_{j,k} = A_0 + O(k^{-1}).$ With respect to the fixed metric $\hat g_1,$ we have the Poincar\'e inequality $$\| A_0 \hat s \|^2_{L^2 (\hat g_1)} \geq C_1 \| \hat s \|^2_{L^2 (\hat g_1)},$$ for any $\hat s$ whose trace has average $0$ with respect to $\hat g_1$. Since $A_{j,k}$ is an $O(k^{-1})$ perturbation of $A_0$, we therefore have a similar inequality
\begin{align}
\label{g1PI}
\| A_{j,k} \hat s \|^2_{L^2 (\hat g_1)} \geq C_2 \| \hat s \|^2_{L^2 (\hat g_1)},
\end{align}
for all $k \gg 0$.

We use this to get the desired inequality. Let $\gamma$ be the constant such that the trace of $s- \gamma \Id_E$ has average $0$ on $X$ with respect to $\hat g_1$. One has
\begin{align*} \| A_{j,k} s \|^2_{L^2(g_k)} & \geq C_3 \|A_{j,k} s \|^2_{L^2 (\hat g_k) } \\
&= C_3 \| A_{j,k} ( s- \gamma  \Id_E ) \|^2_{L^2 (\hat g_k) } \\
&\geq C_3 k^{n-1} \| A_{j,k} ( s- \gamma  \Id_E ) \|^2_{L^2 (\hat g_1) } ,
\end{align*}
using the uniform equivalence of $g_k$ and $\hat g_k$, together with the pointwise estimate  $\vert \sigma \vert_{\hat g_k}^2 \geq k^{-1} \vert \sigma \vert_{\hat g_1}^2$ on sections $\sigma$ of $\End E \otimes \Lambda^{1} (X)$ and the equality $\textnormal{Vol } (\hat g_k ) = k^n \textnormal{Vol } (\hat g_1)$ for the volume forms.

By our choice of $\gamma$, we can combine this with the inequality \eqref{g1PI}, and we get that
\begin{align*}\| A_{j,k} s \|^2_{L^2(g_k)} & \geq C_4 k^{n-1} \| s- \gamma  \Id_E  \|^2_{L^2 (\hat g_1) } .
\end{align*}
Returning to the original metric $g_k$ by reversing the initial argument above (noting that the pointwise metrics now are independent of $k$ as we  have sections of $\End E$, rather than $\End E \otimes \Lambda^1 (X)$), we then get that
\begin{align*}\| A_{j,k} s \|^2_{L^2(g_k)} &\geq C_4 k^{-1} \| s- \gamma \Id_E \|^2_{L^2 (\hat g_k) } \\
&\geq C_5 k^{-1} \| s- \gamma \Id_E \|^2_{L^2 ( g_k) }  \\
&\geq C_5 k^{-1} \| s  \|^2_{L^2 ( g_k) } ,
\end{align*}
where the last line follows because the trace of $s$ has average $0$ on $X$ with respect to $g_k$, and so $\| s- \gamma \Id_E \|^2_{L^2 ( g_k) } = \| s \|^2_{L^2 ( g_k) } + \|  \gamma \Id_E \|^2_{L^2 ( g_k) }.$
\end{proof}

The second key element to proving Proposition \ref{prop:inversebound} is the following Schauder estimate, which is uniform in the parameter $k$. This is analogous to \cite[Lemma 5.9]{fine04}.
\begin{proposition}
\label{prop:schaudernew}
There exists a $C>0$, independent of $k$, such that 
\begin{align}
\label{eq:schaudernew}
\| \sigma \|_{L_{d+2}^2(X, \End \pi^* E)}   \leq C \left( \| \sigma \|_{L^2(X, \End \pi^* E)} + \| \Delta_{j,k} \left( \sigma \right)  \|_{L^2_{d}(X, \End \pi^* E)} \right).
\end{align}
\end{proposition}

We first show an analogue of the above locally on $B$. For an open set $U$ in $B$, we will use the shorthand notation $L_{d}^2(U) = L_{d}^2(\pi^{-1}(U), \End \pi^* (E))$ for the $L^2_{d}$ Sobolev space over $U$, with respect to the Hermitian metric $h$ on $\End \pi^*E$ and the K\"ahler metric $\omega_k$ on $X$. If $\widetilde \omega$ is another K\"ahler metric on $\pi^{-1}(U)$, we will use $L_{d}^2(U, \widetilde \omega)$ to denote the corresponding Sobolev space where we have replaced $\omega_k$ with $\widetilde \omega$.

\begin{lemma}
\label{lem:schauderlocal}
Fix a point $b \in B$ and a coordinate system $U$ centered at $b$. Then for every sufficiently small coordinate disk $D_{2r} \subset U$ of radius $2r$, there exists a constant $C>0$ (that depends on $b$, the coordinates chosen, $r$ and $d$, but \emph{not} $k$), such that for all $\sigma \in L_{d+2}^2(D_{2r})$
\begin{align}
\label{eq:schauderlocal}
\| \sigma \|_{L_{d+2}^2(D_{r})}   \leq C \left( \| \sigma \|_{L^2(D_{2r})} + \| \Delta_{j,k} \left( \sigma \right)  \|_{L^2_{d}(D_{2r})} + k^{-1} \| \sigma \|_{L_{d+2}^2(D_{2r})}   \right)
\end{align}
for all $k \gg 0$.
\end{lemma}
\begin{proof}
We will first establish the bound for the operator 
$$
\widetilde \Delta_k = \Delta_{\scV} (s) +   k^{-1} \Delta_{\cH} (s),
$$ 
which by Proposition \ref{prop:expansion linear term} captures the leading order asymptotic expansion of the linearised operator when $j=0$. More precisely, we begin by showing that there is a $C>0$ which is independent of $k$ such that
\begin{align}
\label{eq:schauderlocal1}
\| \sigma \|_{L_{d+2}^2(D_{r})}   \leq C \left( \| \sigma \|_{L^2(D_{2r})} + \| \widetilde \Delta_k \left( \sigma \right)  \|_{L^2_{d}(D_{2r})} + k^{-1} \| \sigma \|_{L_{d+2}^2(D_{2r})} \right).
\end{align}

 Note that in the smooth category, the fibration is locally a product $F \times D_{2r}$. Moreover, it suffices to establish the estimate in Sobolev spaces defined for a metric that is a product $\widetilde \omega_k = \omega_F + k \omega_B$, since if the radius $r$ is chosen sufficiently small, then the actual metric $\omega_k$ is uniformly equivalent to such a metric, with constant of uniformity independent of $k$ (\cite[Section 5.3]{fine04}). Using this metric allows for a separation of variables argument to establish the estimate.

We will present the argument for $d=0$ -- the argument for higher values of $d$ is exactly the same, only with more notation. Note that with respect to the product metric $\widetilde\omega_k$, the $C^2(D_{r}, \widetilde \omega_k)$ semi-norm $\vert \cdot \vert_{C^2(D_{r}, \widetilde \omega_k)}$ splits \emph{orthogonally} as 
\begin{align*}
\vert \sigma \vert^2_{C^2(D_{r}, \widetilde \omega_k)} = \vert \sigma \vert^2_{C^2_{\cV}(D_{r}, \widetilde \omega_k)} + \vert \sigma \vert^2_{C^2_{\cH}(D_{r}, \widetilde \omega_k)}  + \vert \sigma \vert^2_{C^2_{mixed}(D_{r}, \widetilde \omega_k)},
\end{align*}
where $\vert \sigma \vert_{C^2_{\cV}(D_{r}, \widetilde \omega_k)} $ denotes differentiating just in the fibre direction, $\vert \sigma \vert_{C^2_{\cH}(D_{r}, \widetilde \omega_k)} $ denotes differentiating just in the base direction, and $\vert \sigma \vert_{C^2_{mixed}(D_{r}, \widetilde \omega_k)} $ denotes differentiating exactly once in each of the fibre and base directions. Moreover, since $\widetilde \omega_k$ scales the base direction by $k$,
\begin{align*}
\vert \sigma \vert_{C^2_{\cH}(D_{r}, \widetilde \omega_k)}  =& k^{-2} \vert \sigma \vert_{C^2_{\cH}(D_{r}, \widetilde \omega_1)} \\
 \vert \sigma \vert_{C^2_{mixed}(D_{r}, \widetilde \omega_k)} =& k^{-1} \vert \sigma \vert_{C^2_{mixed}(D_{r}, \widetilde \omega_k)}.
\end{align*}

Now, $\Delta_{\cV}$ restricted to a fibre is elliptic on that fibre. Thus for each $b \in U$, we get the estimate 
\begin{align}
\label{eq:fibreestimate}
\left( \int_{F \times \{b\}} \vert \sigma \vert^2_{C^2_{\cV}(D_{r}, \widetilde \omega_k)}  \omega_F^m \right)^{\frac{1}{2}}  \leq C \left( \| \sigma \|_{L^2(F \times \{b\}, \omega_F)} +  \| \Delta_{\cV} \sigma \|_{L^2(F \times \{b\}, \omega_F)} \right).
\end{align} 
Now, seen as an operator on $F$ for each $b \in U$, $\Delta_{\cV}$ is a smoothly varying elliptic operator with $b$. Moreover, $D_{r}$ is relatively compact in $U$. In particular, the constants of ellipticity of the $\Delta_{\cV}$ can be uniformly bounded from above and below on the whole of $D_{r}$. On a fixed compact manifold with a fixed metric, the Schauder estimates only depend on the constants of ellipticity and the norm of the coefficients of the operator, and so the above implies that we can choose a $C>0$ so that \eqref{eq:fibreestimate} holds uniformly for all $b \in D_r$. Integrating over the base, we therefore get that
\begin{align*}
\left( \int_{\pi^{-1}(D_r)} \vert \sigma \vert^2_{C^2_{\cV}(D_{r}, \widetilde \omega_k)} \widetilde \omega_k^{m+n} \right)^{\frac{1}{2}} &\leq C \left( \| \sigma \|_{L^2(D_r, \widetilde \omega_k)} +  \| \Delta_{\cV} \sigma \|_{L^2(D_r, \widetilde \omega_k)} \right).
\end{align*}
Here we used that since the metric is a product, $\widetilde \omega_k^{m+n} = {m+n \choose n} k^n \omega_F^m \wedge \omega_B^n$. Also, in a completely analogous way, we have a splitting $C^1 = C^1_{\cV} + C^1_{\cH}$ of the $C^1$ semi-norm, and we have the bound 
\begin{align*}
\left( \int_{\pi^{-1}(D_r)} \vert \sigma \vert^2_{C^1_{\cV}(D_{r}, \widetilde \omega_k)} \widetilde \omega_k^{m+n} \right)^{\frac{1}{2}} &\leq C \left( \| \sigma \|_{L^2(D_r, \widetilde \omega_k)} +  \| \Delta_{\cV} \sigma \|_{L^2(D_r, \widetilde \omega_k)} \right).
\end{align*}

Next, we consider the $C^2_{\cH}$ term. First note that because the operator $\Delta_{\cH}$ is elliptic on $U$, we get from the usual local Schauder estimates that there is a $C>0$ such that for each $x \in F$,
\begin{align*}
\| \sigma \|_{L_{2}^2(\{x\} \times D_{r}, \omega_B)}   \leq C \left( \| \sigma \|_{L^2(\{x\} \times D_{2r}, \omega_B)} + \|  \Delta_{\cH} \left( \sigma \right)  \|_{L^2(\{x\} \times D_{2r}, \omega_B)} \right).
\end{align*}
In particular, using the scaling properties of the $C^2_{\cH}$ semi-norm with respect to $\widetilde \omega_k$, we get that 
\begin{align*}
\left( \int_{\pi^{-1}(D_r)} \vert \sigma \vert^2_{C^2_{\cH}(D_{r}, \widetilde \omega_k)} \widetilde \omega_k^{m+n} \right)^{\frac{1}{2}} &\leq C k^{-2} \left( \| \sigma \|_{L^2(D_{2r}, \widetilde \omega_k)} +  \| \Delta_{\cH} \sigma \|_{L^2(D_{2r}, \widetilde \omega_k)} \right) \\
&\leq C k^{-1} \left( \| \sigma \|_{L^2(D_{2r}, \widetilde \omega_k)} +  \| k^{-1} \Delta_{\cH} \sigma \|_{L^2(D_{2r}, \widetilde \omega_k)} \right).
\end{align*}
Similarly, since the $C^1_{\cH}$ semi-norm scales like $k^{-1}$, we get that 
\begin{align*}
\left( \int_{\pi^{-1}(D_r)} \vert \sigma \vert^2_{C^1_{\cH}(D_{r}, \widetilde \omega_k)} \widetilde \omega_k^{m+n} \right)^{\frac{1}{2}} &\leq C k^{-1} \left( \| \sigma \|_{L^2(D_{2r}, \widetilde \omega_k)} +  \| \Delta_{\cH} \sigma \|_{L^2(D_{2r}, \widetilde \omega_k)} \right) \\
&\leq C \left( \| \sigma \|_{L^2(D_{2r}, \widetilde \omega_k)} +  \| k^{-1} \Delta_{\cH} \sigma \|_{L^2(D_{2r}, \widetilde \omega_k)} \right).
\end{align*}

At this point we have established the bound \eqref{eq:schauderlocal1} for the $C^1$-norm as well as the $C^2_{\cV}$ and $C^2_{\cH}$ components of the $C^2$-norm. It remains to establish the bound for the $C^2_{mixed}$ semi-norm. Now, since $\widetilde \Delta_1 = \Delta_{\cV} + \Delta_{\cH}$ is elliptic on $U$, there is a $C>0$ such that 
\begin{align*}
\left( \int_{\pi^{-1}(D_r)} \vert \sigma \vert^2_{C^2_{mixed }(D_{r}, \widetilde \omega_1)} \widetilde \omega_1^{m+n} \right)^{\frac{1}{2}} &\leq C \left( \| \sigma \|_{L^2(D_{2r}, \widetilde \omega_1)} +  \| \widetilde \Delta_1 \sigma \|_{L^2(D_{2r}, \widetilde \omega_1)} \right)
\end{align*}
which from the way $C^2_{mixed}$ scales with $k$ immediately gives that
\begin{align*}
\left( \int_{\pi^{-1}(D_r)} \vert \sigma \vert^2_{C^2_{mixed }(D_{r}, \widetilde \omega_k)} \widetilde \omega_k^{m+n} \right)^{\frac{1}{2}} &\leq C k^{-1} \left( \| \sigma \|_{L^2(D_{2r}, \widetilde \omega_k)} +  \| \widetilde \Delta_1 \sigma \|_{L^2(D_{2r}, \widetilde \omega_k)} \right).
\end{align*}
Now, for $k\geq1$, we then get that
\begin{align*}
&\| \widetilde \Delta_k \sigma \|_{L^2(D_{2r}, \widetilde \omega_k)}^2 - \|  k^{-1} \widetilde \Delta_1 \sigma \|^2_{L^2(D_{2r}, \widetilde \omega_k)} \\
=& (1-\frac{1}{k^2}) \| \Delta_{\cV} (\sigma)\|_{L^2(D_{2r}, \widetilde \omega_k)}^2 - 2 ( \frac{1}{k^2} - \frac{1}{k}) \langle \Delta_{\cV}(\sigma), \Delta_{\cH} (\sigma) \rangle_{L^2(D_{2r}, \widetilde \omega_k)} \\
\geq& 2 ( \frac{1}{k^2} - \frac{1}{k}) \big \vert \langle \Delta_{\cV}(\sigma), \Delta_{\cH} (\sigma) \rangle_{L^2(D_{2r}, \widetilde \omega_k)} \big \vert \\
\geq & - 2 \left( \frac{1}{k} - \frac{1}{k^2} \right)  \| \Delta_{\cV}(\sigma) \|_{L^2(D_{2r}, \widetilde \omega_k)} \| \Delta_{\cH} (\sigma) \|_{L^2(D_{2r}, \widetilde \omega_k)},
\end{align*}
where in the last line, we used the Cauchy--Schwarz inequality. Since $\Delta_{\cV}$ and $\Delta_{\cH}$ are bounded operators, we obtain from this that 
\begin{align*}
\|  k^{-1} \widetilde \Delta_1 \sigma \|_{L^2(D_{2r}, \widetilde \omega_k)} 
\leq \| \widetilde \Delta_k \sigma \|_{L^2(D_{2r}, \widetilde \omega_k)} + C'k^{-1} \|\sigma \|^2_{L^2_2(D_{2r}, \widetilde \omega_k)},
\end{align*}
for some $C'>0$ that is independent of $k$. Thus we have shown that there is a $C>0$ such that
\begin{align*}
&\left( \int_{\pi^{-1}(D_r)} \vert \sigma \vert^2_{C^2_{mixed }(D_{r}, \widetilde \omega_k)} \widetilde \omega_k^{m+n} \right)^{\frac{1}{2}} \\
\leq & C \left( \| \sigma \|_{L^2(D_{2r}, \widetilde \omega_k)} +  \| \widetilde \Delta_k \sigma \|_{L^2(D_{2r}, \widetilde \omega_k)} +  k^{-1} \| \sigma \|_{L_{d+2}^2(D_{2r})}  \right).
\end{align*}
The estimate \eqref{eq:schauderlocal1} then follows from all of the semi-norm estimates above.

Finally, to obtain the inequality \eqref{eq:schauderlocal} from \eqref{eq:schauderlocal1}, we note that 
$$
\| (\Delta_{j,k} - \widetilde \Delta_k) (\sigma)  \|_{L_{d+2}^2(D_{2r})}  \leq k^{-1} \| \sigma \|_{L_{d+2}^2(D_{2r})}.
$$
Indeed, from Proposition \ref{prop:expansion linear term} and the way we have perturbed $\delb$ to $\delb_{j,k}$, the vertical component  and base component of $\Delta_{j,k} - \widetilde \Delta_k$ is $O(k^{-1})$ and $O(k^{-2})$, with respect to fixed fibrewise and base metrics, respectively. Hence this is $O(k^{-1})$ with respect to the metric $\omega_k$. The result follows directly from this and \eqref{eq:schauderlocal1}.
\end{proof}

With this in place, we are ready to prove Proposition \ref{prop:schaudernew}.
\begin{proof}[Proof of Proposition \ref{prop:schaudernew}]
Let $D_{2r_i} (b_i)$ be a finite collection of coordinate disks of radius $2r_i$ about points $b_i \in B$ as in Lemma \ref{lem:schauderlocal} such that the $D_{r_i}(b_i)$ cover $B$ -- here, as in Lemma \ref{lem:schauderlocal}, the radii are computed with respect to the coordinates chosen around $b$.  Let $\tilde \chi_i$ be a partition of unity subordinate to this cover, so that $\chi_i = \pi^* \tilde \chi_i$ is a partition of unity subordinate to the cover $\{ \pi^{-1}(D_{r_i}(b_i)) \}$ of $X$. Then for $\sigma \in L_{d+2}^2 (X, \End \pi^* E)$, we have that 
\begin{align*}
\| \sigma \|_{L_{d+2}^2} =& \| \sum_i \chi_i \sigma \|_{L_{d+2}^2} \\
\leq& \sum_i \| \chi_i \sigma \|_{L_{d+2}^2}  \\
=& \sum_i \| \chi_i \sigma \|_{L_{d+2}^2(D_{r_i}(b_i))} \\
\leq& C \sum_i \left( \| \chi_i \sigma \|_{L^2(D_{2r})} + \| \Delta_{j,k} \left( \chi_i \sigma \right)  \|_{L^2_{d}(D_{2r})} + k^{-1} \| \chi_i \sigma \|_{L_{d+2}^2(D_{2r})}  \right),
\end{align*}
where in the last line, we have used Equation \eqref{eq:schauderlocal}, picking $C$ to be the maximum of the constants associated to disks $D_{r_i}(b_i)$. Note also that by the Leibniz rule, there is a constant $C'>0$ such that $\| \chi_i \sigma \|_{L_{d+2}^2(D_{2r})} \leq C' \| \chi_i \sigma \|_{L_{d+2}^2(X)}$  -- the constant $C'$ depends on the $\chi_i$ and $d$, but not on $\sigma$. Then for all sufficiently large $k$ so that $1-\frac{C C'}{k} \geq \frac{1}{2}$, we therefore get that 
\begin{align*}
\| \sigma \|_{L_{d+2}^2} \leq 2C C' \sum_i \left( \| \chi_i \sigma \|_{L^2(D_{2r})} + \| \Delta_{j,k} \left( \chi_i \sigma \right)  \|_{L^2_{d}(D_{2r})}  \right).
\end{align*}

Now, $ \| \chi_i \sigma \|_{L^2(D_{2r})} \leq  \| \sigma \|_{L^2}$. Moreover, 
\begin{align*}
\| \Delta_{j,k} \left( \chi_i \sigma \right)  \|_{L^2_{d}(D_{2r})} &\leq \| \Delta_{j,k} \left( \chi_i \sigma \right) - \chi_i \Delta_{j,k} \left( \sigma \right)  \|_{L^2_{d}(D_{2r})} + \|  \chi_i \Delta_{j,k} \left( \sigma \right)  \|_{L^2_{d}(D_{2r})} \\
 &\leq \| \Delta_{j,k} \left( \chi_i \sigma \right) - \chi_i \Delta_{j,k} \left( \sigma \right)  \|_{L^2_{d}(D_{2r})} + C' \|  \Delta_{j,k} \left( \sigma \right)  \|_{L^2_{d}},
\end{align*}
where again the second inequality follows from Leibniz rule. The assertion now follows from the bound
\begin{align}
\label{eq:chibound}
\| \Delta_{j,k} \left( \chi_i \sigma \right) - \chi_i \Delta_{j,k} \left( \sigma \right)  \|_{L^2_{d}(D_{2r})}  &\leq C'' k^{-\frac{1}{2}} \| \sigma \|_{L^2_{d+2}}.
\end{align}
Indeed, once such a bound is established, $\| \sigma \|_{L_{d+2}^2} - C'' k^{-\frac{1}{2}} \| \sigma \|_{L^2_{d+2}} \geq \frac{1}{2} \| \sigma \|_{L_{d+2}^2}$ for all sufficiently large $k$ independently of $\sigma$, from which the bound follows immediately.

The key to establishing Equation \eqref{eq:chibound} is that the $\chi_i$ are pulled back from the base and follows as in the work of Fine \cite{fine04}. 
Indeed, by \cite[Lemma 5.6]{fine04}, there exists a constant $C_{a,l}>0$, depending on $a$ and $l$, such that for an order $l$ tensor $\tau \in C^a ( \otimes_{l} T^* X)$ which is pulled from the base, 
\begin{align}
\label{eq:pulledbacktensor}
\| \tau \|_{C^{a}(X, \omega_k)} \leq C_{a,l} k^{-\frac{l}{2}} \| \tau \|_{C^{a}(B, \omega_B)}. 
\end{align}

Now, as only terms where we differentiate the $\chi_i$ contribute to the quantity $\Delta_{j,k} \left( \chi_i \sigma \right) - \chi_i \Delta_{j,k} \left( \sigma \right)$ that we wish to bound, we get similarly to \cite[Lemma 5.5]{fine04} that there is a $C'''>0$ such that
\begin{align*}
\| \Delta_{j,k} \left( \chi_i \sigma \right) - \chi_i \Delta_{j,k} \left( \sigma \right)  \|_{L^2_{d}(D_{2r})}  \leq C''' \sum_{l=1}^{d+2} \| \nabla^l \chi_i \|_{C^0(X, \omega_k)} \| \sigma \|_{L^2_{d+2}(X, \omega_k)}.
\end{align*}
Since the sum above starts at $l=1$, the bound \eqref{eq:chibound} now follows from this by Equation \eqref{eq:pulledbacktensor}. Note that the bound depends on the $\chi_i$, but this is allowed since these are fixed and do not depend on $\sigma$.
\end{proof}

\begin{proof}[Proof of Proposition \ref{prop:inversebound}]
Lemma \ref{lemma:poincareineq} gives the required bound in Proposition \ref{prop:inversebound} as an operator $L^2 \to L^2 \times \R$. We will now show that the required bound as an operator $L^2_d(\omega_k) \to L^2_{d+2}(\omega_k) \times \R$ follows by combining this bound with the Schauder estimate of Proposition \ref{prop:schaudernew}.

Indeed, let
$$
s = c \Id_E + \tilde s \in \Gamma \left( X , \End_H (\pi^*E, h) \right),
$$
where $\tilde s$ is $\omega_k$-orthogonal to $\Id_E$. Then, putting $\sigma = \scP_{j,k} (s)$ (which is possible since by Lemma \ref{lem:pullbackissimple}, $\Delta_{j,k}$ defines an isomorphism between the $\omega_k$-orthogonal complements of $\Id_E$ in $L^2_{d+2}$ and $L^2_d$) in \eqref{eq:schaudernew}, we obtain
\begin{align*}
\| \scP_{j,k} ( s ) \|_{L^2_{d+2} \times \R} &\leq  \vert c  \vert +  \| \scP_{j,k} ( \tilde s ) \|_{L^2_{d+2}} \\
& \leq C_1 \left( \| c \Id_E \|_{L^2_d} + \| \scP_{j,k} ( \tilde s ) \|_{L^2} + \| \tilde s  \|_{L^2_{d}} \right) \\
&\leq C_1 \left( \| c \Id_E \|_{L^2_{d}} + C_2 k \|  \tilde s  \|_{L^2}+ \| \tilde s  \|_{L^2_{d}} \right) \\
&\leq C k  \| s  \|_{L^2_{d}} ,
\end{align*}
which proves Proposition \ref{prop:inversebound}.
\end{proof}

To obtain the required Lipschitz bound, we will also rely on the following, which follows similar arguments as in e.g. \cite[Lemma 7.1]{fine04} or \cite[Lemma 8.18]{szekelyhidi14book}. Recall that $\scN_{j,k}$ denotes the non-linear part of the HYM operator.
\begin{lemma}\label{lem:lipschitz} There exists $c,C > 0$ such that for all $k \gg 0$, we have that if $s_1, s_2 \in L^2_{d+2} \left( \End (\pi^* E, h) , \omega_k \right)$ satisfy $ \| s_i \|_{L^2_{d+2} } \leq c,$ then $$ \| \scN_{j,k} (s_1) - \scN_{j,k}(s_2) \|_{L^2_{d} } \leq C \left( \| s_1 \|_{L^2_{d+2} } + \| s_2 \|_{L^2_{d+2} } \right) \| s_1 - s_2 \|_{L^2_{d+2} }.$$
\end{lemma}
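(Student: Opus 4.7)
The strategy is to apply the mean value theorem twice to reduce the claim to a pointwise, uniform-in-$k$ bound on $D^2 \Psi_{j,k}$, and then to invoke Sobolev multiplication with constants independent of $k$. Since $\scN_{j,k}$ is defined so that its linearisation at $0$ vanishes, writing $s_t = s_2 + t(s_1 - s_2)$ gives
$$
\scN_{j,k}(s_1) - \scN_{j,k}(s_2) = \int_0^1 \bigl( D\Psi_{j,k}|_{s_t} - D\Psi_{j,k}|_0 \bigr)(s_1 - s_2)\, dt,
$$
and a second application of the fundamental theorem yields
$$
D\Psi_{j,k}|_{s_t} - D\Psi_{j,k}|_0 = \int_0^1 D^2\Psi_{j,k}|_{u s_t}(s_t, \,\cdot\,)\, du.
$$
Since $\|s_t\|_{L^2_{d+2}} \leq \|s_1\|_{L^2_{d+2}} + \|s_2\|_{L^2_{d+2}}$, it therefore suffices to establish that for all $\sigma$ with $\|\sigma\|_{L^2_{d+2}(\omega_k)} \leq c$ one has
$$
\|D^2\Psi_{j,k}|_{\sigma}(a, b)\|_{L^2_d(\omega_k)} \leq C \|a\|_{L^2_{d+2}(\omega_k)}\|b\|_{L^2_{d+2}(\omega_k)},
$$
with $C$ independent of $k$ and of $\sigma$ in this range.

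Next, I would analyse $\Psi_{j,k}(s) = i \Lambda_{\omega_k}(F_{A_{j,k}^{\exp(s)}})$ as a polynomial expression in $\exp(\pm s)$ and in the first and second covariant derivatives of $s$ with respect to the Chern connection $A_{j,k}$, all contracted with $\omega_k$. Differentiating twice in $s$, the bilinear form $D^2\Psi_{j,k}|_\sigma(a,b)$ admits a pointwise bound of the form
$$
|D^2\Psi_{j,k}|_\sigma(a, b)|_{g_k} \leq p(\sigma, \nabla \sigma)\sum_{i+j \leq 2} |\nabla^i a|_{g_k}\,|\nabla^j b|_{g_k},
$$
where $p$ depends smoothly on its arguments and, provided $d$ is chosen so that $L^2_{d+2}(\omega_k) \hookrightarrow C^1$ with uniform constants, $p(\sigma, \nabla \sigma)$ is bounded on $\{\|\sigma\|_{L^2_{d+2}} \leq c\}$. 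The key point here is that by Lemma \ref{lem:expansion contraction} the coefficients arising from contracting a $(1,1)$-form with $\omega_k$ are $O(1)$ as $k \to \infty$ (the subleading terms in the expansion only bring extra negative powers of $k$ and are therefore harmless), so this pointwise bound is genuinely uniform in $k$.

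The remaining step is to upgrade the pointwise bound to the required $L^2_d$ bound via the Sobolev multiplication theorem. The main obstacle, and essentially the only nontrivial point, is to ensure that both the Sobolev embedding $L^2_{d+2}(\omega_k) \hookrightarrow C^1$ and the multiplication estimate $L^2_{d+2}\cdot L^2_{d+2} \hookrightarrow L^2_d$ hold with constants independent of $k$. This is achieved by the same patching argument already used in the proof of Lemma \ref{lemma:poincareineq}: in local charts adapted to the splitting $TX = \scV \oplus \cH$ one compares $g_k$ with the rescaled product metric $\hat g_k$ equal to $(g_X)_{\scV} \oplus k\, g_B$, on the scale of which $(X, \hat g_k)$ has uniformly bounded geometry and for which the induced norms on the tensor bundles are uniformly equivalent to those of $g_k$; this is essentially the argument appearing in \cite[Section 5, Theorem 6.9]{fine04}. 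Combining the pointwise bound with Sobolev multiplication, substituting $\sigma = u s_t$, $a = s_t$, $b = s_1 - s_2$ and integrating over $(t,u) \in [0,1]^2$, then produces the stated Lipschitz estimate.
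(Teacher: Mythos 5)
Your proposal is correct and follows essentially the same route as the paper: the paper also reduces the estimate via the mean value theorem to a bound on the change $D\Psi_{j,k}|_{s} - D\Psi_{j,k}|_0$ of the linearised operator, and then obtains the uniformity in $k$ by the bounded-geometry/patching Sobolev estimates of Fine (the paper cites \cite[Lemma 2.10]{fine04}), which is exactly what you invoke. Your extra step of expressing the change of the linearisation as an integral of $D^2\Psi_{j,k}$ is just a more explicit form of the same argument, so there is nothing substantively different to flag.
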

\begin{proof}
By the Mean Value Theorem, there is a $t \in [0,1]$ such that
$$ 
\| \scN_{j,k} (s_1) - \scN_{j,k}(s_2) \|_{L^2_{d} } \leq \| (D \scN_{j,k})_{\exp(t(s_1-s_2))} \| \| s_1 - s_2 \|_{L^2_{d+2} },
$$
where $(D \scN_{j,k})_{\exp(t(s_1-s_2))}$ is the linearisation of $\scN_{j,k}$ at $\exp(t(s_1-s_2)) \cdot A_{j,k}$. But this is nothing but the difference of the two linear operators at $\exp(t(s_1-s_2)) \cdot A_{j,k}$ and $A_{j,k}$. So the estimate boils down to bounding this difference by a positive multiple of $ \| s_1 \|_{L^2_{d+2} } + \| s_2 \|_{L^2_{d+2} }$, which holds if the $s_i$ have sufficiently small norm.
\end{proof}

The final piece we need to invoke Theorem \ref{thm:quantimpl} is that the pointwise estimates we have established also hold in the appropriate Sobolev spaces.
The result below follows as in \cite[Lemma 5.7]{fine04}.
\begin{lemma}
\label{lem:kexpstable}
Let $A_{j,k}$ be the connection of Proposition \ref{prop:kexpstable}. Then for any $d$, one has
$$
\| \Lambda_{\omega_k} \left( i F_{A_{j,k} } \right) - c_{j,k} \Id_E \|_{C^d} = O \left( k^{-j-1} \right)
$$
and
$$
\| \Lambda_{\omega_k} \left( i F_{A_{j,k} } \right) - c_{j,k} \Id_E \|_{L^{2}_{d}} = O \left( k^{-j-\frac{1}{2}} \right).
$$
\end{lemma}

We can now prove Theorem \ref{thm:stablecase}.
\begin{proof}[Proof of Theorem \ref{thm:stablecase}] We wish to show that $\Psi_{j,k}$ has a root for $k \gg 0$, for a suitable choice of $j$. We first note that Lemma \ref{lem:lipschitz} implies that there is a constant $c>0$ such that for all $r>0$ sufficiently small and $k \gg 0$, we have that $\scN_{j,k}$ is Lipschitz of Lipschitz constant $c r$ on the ball of radius $r$. By Proposition \ref{prop:inversebound}, $ \frac{1}{2 \| \scP_{j,k} \|}$ is bounded below by $C k^{-1}$ for some $C > 0$. Combining these two facts we get that there is a $C' > 0$ such that the radius $\delta_1$ on which $\scN_{j,k}$ is Lipschitz of constant $\frac{1}{2 \| \scP_{j,k} \|}$ satisfies $$ \delta_1 \geq C' k^{-1}.$$ Combining this with the bound for $\| \scP_{j,k} \|$, we get that there is a $C''> 0$ such that the corresponding $\delta $ from Theorem \ref{thm:quantimpl} satisfies $$ \delta \geq C'' k^{-2}.$$ Thus we can apply Theorem \ref{thm:quantimpl} to find a root of $\Psi_{j,k}$ provided $ \| \Psi_{j,k} (0) \| \leq C'' k^{-2}.$ By Lemma \ref{lem:kexpstable}, this holds for all $k \gg 0$  if $j \geq 3$, and thus a root can be found in $L^2_{d+2}$. Elliptic regularity theory implies that the solution in fact is smooth if we choose $d$ large enough, and the result follows. Finally, Theorem \ref{thm:quantimpl} implies that the solutions stay sufficiently close to the approximate solutions, so that the convergence result follows from the convergence of the approximate solutions to the pullback of the initial connection.
\end{proof}


\section{The semistable case}
\label{sec:semistable}
We now consider the case when $E$ is a strictly semistable bundle on $(B,L)$. It then has a degeneration to a direct sum of stable sheaves $\Gr(E)$, via a Jordan--H\"older filtration:
$$
 0= \cF_0\subset \cF_1\subset \ldots \subset \cF_\ell= E.
$$
In this section, we will assume further the following hypothesis :
\begin{enumerate}
 \item[$(H1)$] the Jordan-H\"older filtration of $E$ is unique;
 \item[$(H2)$] the stable components $\cG_i:=\cF_i/\cF_{i-1}$ of $\Gr(E)$ are pairwise non isomorphic;
 \item[$(H3)$] for all $i$, $\cG_i$ is locally free.
\end{enumerate}
The aim of this section is to prove Theorem \ref{thm:ssthmintro}, that is:
\begin{theorem}
\label{thm:semistablemain}
Suppose $E$ is a slope semistable vector bundle on $(B,L)$ satisfying $(H1)-(H3)$.
Assume that for all $i\in [\![ 1, \ell-1 ]\!]$, $\mu_\infty (\cF_i) < \mu_\infty(E)$.
Then for any $\omega_B \in c_1(L)$ and $\omega_X \in c_1 (H)$ there are connections $A_k$ on $\pi^* E$ which are hermitian Yang--Mills with respect to $\omega_X + k \pi^* \omega_B$ for all $k \gg 0$. Moreover, there is an hermitian Yang--Mills connection $A$ on $\Gr(E)$ with respect to $\om_B$ such that $(A_k)$ converges to $\pi^*A$ in any Sobolev norm.
\end{theorem}
As in Section \ref{sec:stable}, we will focus on producing the solutions $A_k$, and the statement on convergence towards $\pi^*A$ will clearly follow from the construction. In the sequel, we will use the terminology \emph{asymptotically stable with respect to subbundles induced from the Jordan--H\"older filtration} to mean the condition on the slope of the $\cF_i$'s appearing in the above theorem.

We will also prove the following corollary:
\begin{corollary}
 \label{cor:semistablecor}
 Let $E$ be a semistable vector bundle on $(B,L)$ satisfying $(H1)-(H3)$. Assume that for all $i\in[\![1, l-1]\!]$, one has $\mu_\infty(\cF_i)\leq \mu_\infty(E)$ with at least one equality. Then $\pi^*E$ is strictly semistable on $X$ with respect to adiabatic polarisations $L_k$, for $k \gg 0$.
\end{corollary}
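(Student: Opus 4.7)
The plan is to combine two facts. First, under the hypothesis $\mu_\infty(E_I)\leq \mu_\infty(E)$ for all $I\in\mathbb{I}(E)$, the pullback $\pi^*E$ is semistable with respect to $L_k$ for $k\gg 0$; this is the central output of the adiabatic HYM analysis carried out in the remainder of Section \ref{sec:semistable}, and is in particular established en route to Theorem \ref{thm:semistablemain} before one specialises to strict inequalities to upgrade semistability to stability. Second, strict inequality fails in our setting, giving an explicit witness that destroys stability.

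I would exhibit the witness as follows. Fix $I_0\in\mathbb{I}(E)$ with $\mu_\infty(E_{I_0})=\mu_\infty(E)$, which exists by assumption. By the convention of Definition \ref{def:adiabaticcomparison}, equality of adiabatic slopes means equality of the full polynomials in $k$, so $\mu_k(\pi^*E_{I_0})=\mu_k(\pi^*E)$ for every $k\in\N$. Since $I_0\subsetneq\{1,\dots,l\}$ and $\Gr(E)$ is locally free, the subsheaf $E_{I_0}\subset E$ is proper and non-zero (by Remark \ref{rem:muEIandmuGI} its rank equals $\rank(\cG_{I_0})<\rank(E)$). Hence $\pi^*E_{I_0}$ is a proper non-zero subsheaf of $\pi^*E$ realising equality of slopes with respect to every $L_k$, so $\pi^*E$ cannot be slope stable for any such $L_k$.

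Combining these two observations yields that $\pi^*E$ is semistable but not stable, i.e., strictly semistable, for $k\gg 0$. The main obstacle lies in the first fact: controlling every coherent subsheaf of $\pi^*E$, not just the pullbacks of the $E_I$, requires the analytic machinery of Section \ref{sec:semistable}. In a purely algebraic approach one would have to combine the slope expansion of Lemma \ref{lem:slopeexpansion} with uniform estimates on the sheaves that can appear as potential destabilisers as $k$ varies; in the present differential-geometric approach, semistability is delivered by the adiabatic HYM construction together with the Hitchin--Kobayashi correspondence (Theorem \ref{thm:HKcorrespondence}), which automatically controls all subsheaves at once. Once that semistability is in hand, the witness $\pi^*E_{I_0}$ above closes the argument with essentially no further work.
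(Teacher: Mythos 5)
Your second step --- exhibiting $\pi^*E_{I_0}$ as a proper subsheaf with $\mu_k(\pi^*E_{I_0})=\mu_k(\pi^*E)$ for all $k$, which rules out stability --- is correct and is exactly how the paper concludes strictness. The gap is in your first step. Semistability of $\pi^*E$ under the weak hypothesis is \emph{not} ``established en route'' to Theorem \ref{thm:semistablemain}: every stage of the analytic construction in Section \ref{sec:semistable} uses the strict inequalities. Concretely, in Lemma \ref{lem:discrepancyorder} (and in step 5 of the proof of Proposition \ref{prop:approxsolutionsstable}) the deformation parameter is fixed by $\lambda^2 = \frac{c}{C}\,\frac{\rank(\cG_1)\rank(\cG_2)}{\rank(\cG_1)+\rank(\cG_2)}\left(\nu_q(\cG_2)-\nu_q(\cG_1)\right)$, which requires $\nu_q(\cG_1)<\nu_q(E)$; if equality holds at some $I$ this choice degenerates and no approximate HYM connection on $\pi^*E$ is produced. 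Indeed no such connection can exist: under your hypothesis $\pi^*E$ is simple (Lemma \ref{lem:pullbackissimple}) and not stable (your witness), hence not polystable, so the Hitchin--Kobayashi correspondence cannot deliver semistability of $\pi^*E$ by running the HYM machinery on $\pi^*E$ itself. So the ``central output'' you invoke does not exist in the form you use it, and the hard half of the corollary is assumed rather than proved.

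What the paper actually does to get semistability is a reduction plus an openness argument: choose $I_1\in\mathbb{I}(E)$ with $\mu_\infty(E_{I_1})=\mu_\infty(E)$ and of minimal rank among such, set $G_1=E_{I_1}$, and check (as in the proof of Proposition \ref{prop:approxsolutionsstable}) that $G_1$ is simple, has locally free graded object, and satisfies the \emph{strict} hypothesis $\mu_\infty(E_I)<\mu_\infty(G_1)$ for all $I\in\mathbb{I}(G_1)$; Theorem \ref{thm:semistablemain} then makes $\pi^*G_1$ stable for adiabatic classes, with $\mu_\infty(G_1)=\mu_\infty(E/G_1)=\mu_\infty(E)$. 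Iterating on $E/G_1$ (or its dual) expresses $E$ as a sequence of extensions of such pieces, so $\pi^*E$ is a small complex deformation of a bundle that is polystable with respect to $L_k$ for $k\gg0$, and semistability being open in flat families (\cite[Proposition 2.3.1]{HuLe}) gives semistability of $\pi^*E$. If you want to salvage your outline, you must supply this reduction-and-deformation argument (or an algebraic substitute with uniform control over all potential destabilisers as $k$ varies); as written, the first fact is a citation to a statement the paper never proves.
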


Before embarking on the proof, which becomes a fair bit more involved than the stable case, we explain a crucial underlying difference. The key additional input is that we will need to work with a \textit{sequence} of Dolbeault operators for even the first approximate solution. Indeed, we know from \cite[Theorem 6.10.13]{skobayashi87} or \cite[Theorem 2]{jacob14} that $E$ is semistable with respect to $L$ if and only if it admits an almost Hermite--Einstein metric, i.e. for all $\varepsilon > 0$ there is an hermitian metric $\tilde h_{\varepsilon}$ on $E$ such that $$ \vert\vert \Lambda_{\omega_B} \left(iF_{\tilde h_{\varepsilon}} \right) - c \Id_E \vert\vert_\infty < \varepsilon.$$

When $E$ is semistable, gauging back to the initial metric $\tilde h$, the above theorem allows us to choose a sequence $\delb_k$ of Dolbeault operators on $E\to B$ such that the curvatures $(F_{\tilde h,\delb_k})$ of the associated Chern connections satisfy $$ \vert\vert \Lambda_{\omega_B} \left( iF_{\tilde h,\delb_k} \right) - c \Id_E \vert\vert_\infty < C k^{-d},$$ where $d>0$ is a parameter that we are free to choose. After establishing some uniformity in this family, this gives that one can produce a sequence of connections on $\pi^* E \to (X, L_k)$ which are hermitian Yang--Mills to order $k^{-1}$, by combining the above estimate with Lemma \ref{lem:pullbackcv}. Thus one can achieve the first approximate solution just as in the stable case.

The next difference comes in when one wants to perturb to achieve a better order approximate solution. For this, the linearised operator, which is the Laplacian, is used. We now have a sequence of linearised operators, corresponding to the sequence of connections on $E$. The difference between the stable case and the current one is that for the former, the subleading order term was the Laplacian pulled back from $E$, while for the latter, one sees the Laplacian of the graded object $\Gr(E)$ that $E$ degenerates to. This is no longer a simple bundle, and so the Laplacian has a larger cokernel.

The consequence of this is that we can no longer kill off the base error just with the complex gauge transformations we used before. To be able to deal with the remaining error, we need to also incorporate the way the complex structure changes and carefully match the rates of the change in complex structure with that of the change in polarisation on $X$. We will see that there is a crucial sign that needs to be correct, and this provides the link with the intersection numbers coming from the chosen Jordan--H\"older filtration of $E$.

Finally, a word on our hypothesis $(H1)-(H3)$. By $(H3)$, $\Gr(E)$ is locally free, which allows us to see $E$ as a complex deformation of $\Gr(E)$, seen as a {\it vector bundle}. We will then work with connections defined on the common underlying smooth vector bundle. Hypothesis $(H1)$, together with $(H2)$, will force this deformation to have a particularly simple form (see Lemma  \ref{lem:gammaijnonzero}), and will become crucial in the perturbation argument to have a good control on the rates of change in the complex structure. Finally, Hypothesis $(H2)$ gives a better control on the kernel of the Laplacian of the graded object.

\begin{remark}
 \label{rem:differencewithLeung}
 The method of perturbing the Hermite--Einstein structure on $\Gr(E)$ was already used in \cite{leung97}. In Leung's work, one starts with a {\it Gieseker stable} bundle $E$. This bundle is semistable in the sense of Mumford and Takemoto, and, assuming $\Gr(E)$ to be locally free, is a complex deformation of a bundle with an Hermite--Einstein metric $h$. Then, a perturbative argument shows that $h$ can be deformed to an {\it almost Hermite--Einstein} metric on $E$. The argument does not use quantitative estimates, but these are crucial in our case. For these reasons, our proofs need a finer analysis and to deal with additional technical issues.
\end{remark}

Having explained the main new issue that we have to deal with, we now start to prove Theorem \ref{thm:semistablemain}. We begin with explaining the additional structure we need to use on $E$ in Section \ref{sec:basestructurebundle}. Once this is done we follow the strategy of the stable case, by first constructing approximate solutions, then perturbing using the implicit function theorem. In Section \ref{sec:ssapproximate}, we produce these approximate solutions in the special case when the graded object only has two stable components. This simpler case is used to expose how to match the rates of variations of the complex structures and polarisations and where the asymptotic stability condition comes in. We then proceed to the construction of approximate solutions in full generality in Section \ref{sec:approximategeneral}, focusing on the new technical issues compared to Section \ref{sec:ssapproximate}. Finally, in Section \ref{sec:linop} we perform the perturbation argument, and in Section \ref{sec:consequences} we give the proof of Corollary \ref{cor:semistablecor}.

\subsection{Structures on the base}
\label{sec:basestructurebundle}
We refer to \cite[Section 7.2]{skobayashi87} and \cite{BuSchu} for the deformation theory techniques that will be used. We consider the Jordan--H\"older filtration for a strictly semistable bundle $E$ (cf Section \ref{sec:slopestability}):
\begin{equation}
 \label{eq:JHgeneral}
 0= \cF_0\subset \cF_1\subset \ldots \subset \cF_\ell= E,
\end{equation}
assuming $(H1)-(H3)$. In particular, the graded object $\Gr(E)$ is locally free. The bundle $E$ is then obtained by a sequence of extensions of vector bundles:
\begin{equation}
 \label{eq:extensiongeneral}
 0\to \cF_{i-1} \to \cF_{i} \to \cG_{i} \to 0,
\end{equation}
$i=1,\ldots, \ell$, with $\cF_0=0$ and $\cF_\ell=E$ and the graded object the direct sum bundle
$$
\Gr(E) = \oplus_{i=1}^{\ell} \cG_i,
$$
which is the same underlying smooth vector bundle as $E$, but with a different holomorphic structure. Thus the Dolbeault operator $\delb_E$ on $E$ is of the form 
$$
\delb_E=\delb_0+\gamma
$$
where $\delb_0$ is the Dolbeault operator on $\Gr(E)$ and $\gamma \in\Om^{0,1}(B,\Gr(E)^*\otimes \Gr(E))$ can be written
$$
\gamma=\sum_{i<j} \gamma_{ij}
$$
with (possibly vanishing) $\gamma_{ij} \in\Om^{0,1}(B,\cG_j^* \otimes \cG_i)$. The integrability condition $\delb_E^2=0$ imposes the Maurer-Cartan equation
\begin{equation}
 \label{eq:MCequation}
\delb_0\gamma+\gamma\wedge\gamma=0,
\end{equation}
where we will use the notation $\delb_0$ to denote the induced operator $\delb_{0,\End(E)}$ when no confusion should arise. Note that in the matrix block decomposition induced by the splitting $\Gr(E)=\cG_1\oplus\ldots\oplus\cG_\ell$, the representation of $\gamma$ is upper-diagonal:
\begin{equation*}
 \gamma=\left[
 \begin{array}{cccc}
  0 & \gamma_{1,2} & \ldots & \gamma_{1,\ell}   \\
 \vdots & \ddots& \ddots &  \vdots\\
 \vdots & &\ddots & \gamma_{l-1,\ell}  \\
  0 &\ldots & \ldots& 0
 \end{array}
\right].
\end{equation*}
\begin{lemma}
 \label{lem:gammaijnonzero}
 Hypothesis $(H1)$ together with $(H2)$ implies that for all $i\in[\![1, \ell-1 ]\!]$, $\gamma_{i,i+1}$ is non-zero.
\end{lemma}
\begin{proof}
 Assume that $\gamma_{i,i+1}=0$. Then the smooth vector bundle $\cG_1\oplus \ldots\oplus\cG_{i-1}\oplus\cG_{i+1}$ endowed with the restriction of $\delb_E$ is a holomorphic subbundle of $\cF_{i+1}$, denoted $\cF_i'$. By hypothesis $(H2)$, $\cF_i'$ is not isomorphic to $\cF_i$. But then 
 $$
  \cF_0\subset \ldots \subset \cF_{i-1} \subset \cF_i'\subset \cF_{i+1}\subset \ldots \subset  \cF_l= E
 $$
 is a different Jordan-H\"older filtration for $E$, which contradicts $(H1)$.
\end{proof}
\begin{remark}
 In the other direction, if the $\gamma_{i,i+1}$ are all non-zero, the only holomorphic subbundles of $E$ build out of extensions of stable components of $\Gr(E)$ are the $\cF_i$'s, and the Jordan-H\"older filtration of $E$ is unique.
\end{remark}
The group $G:=\Aut(\Gr(E))$ and its Lie algebra $\g$ will play a central role.  Another consequence of $(H2)$ is: 
 \begin{lemma}
  \label{lem:generalcokernel}
  The Lie algebra $\g$ is given by
  $$
  \g=\bigoplus_{1\leq j\leq  \ell} \C\cdot\Id_{j}
  $$
  where $\Id_{j}: \cG_j \to \cG_{j}$ is the identity.
 \end{lemma}
 Note that by Remark \ref{rem:pullbacknonisom}, the Lie algebra of $\Aut(\pi^*\Gr(E))$ is also isomorphic to $\g=\bigoplus_{1\leq j\leq  \ell} \C\cdot\Id_{j}$, where, abusing notation, $\Id_{j}$ now denotes the identity on $\pi^* \cG_j$.
 \begin{proof}
  The space of holomorphic endomorphisms of $\Gr(E)$ satisfies 
 $$
 H^0(X,\End(\Gr(E)))  =  \bigoplus_{1\leq i,j \leq \ell }H^0(X,\Hom(\cG_i,\cG_j)) .
$$
The $\cG_i$ are slope stable on $(B,L)$, of the same slope $\mu_L(\cG_i)=\mu_L(E)$. When $i\neq j$, we therefore get that
 $$
 H^0(X,\Hom(\cG_i,\cG_j))=H^0(X,\Hom(\cG_j,\cG_i))=0
 $$ 
 since $\cG_i$ is not isomorphic to $\cG_j$ by $(H2)$. On the other hand, when $i=j$ then
 $$
 H^0(X,\End(\cG_i))\simeq  \C,
 $$  
 see e.g. \cite[Proposition 5.7.11, Corollary 5.7.14]{skobayashi87}. 
 \end{proof}

By polystability, there is a product Hermite--Einstein metric $\tilde h=\tilde h_1\oplus \ldots \oplus \tilde h_\ell$ on $\Gr(E)$. We denote by $A_0$ the associated hermitian Yang--Mills connection, so that 
$$
d_{A_0}=\partial_0+\delb_0
$$
with curvature form satisfying
$$
\Lambda_{\om_B}iF_{A_0} = c_0 \cdot \Id.
$$
The following classical result will have interesting consequences. We refer to \cite[Lemma 4.1 and Corollary 4.2]{BuSchu} for a short proof relying on the K\"ahler-type identities \cite[Section 3.2]{skobayashi87}:
\begin{equation}
 \label{eq:Nakanotype}
\begin{array}{ccc}
[\Lambda_{\om_B},\partial_0] & = & i\delb_0^*,\\

[\Lambda_{\om_B},\delb_0] & = & -i\partial_0^*,
\end{array}
\end{equation}
where the adjoint operator $*$ is induced by $\tilde h$ on $\Gr(E)$ and $\om_B$ on $B$. 
\begin{proposition}
\label{prop:holomorphicisparallel}
 Any $\sigma\in \Gamma(B,\End(\Gr(E)))$ that is holomorphic is parallel. That is, the equation $\delb_0 \sigma=0$ implies $d_{A_0}\sigma =0$.
\end{proposition}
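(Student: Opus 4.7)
My plan is to show that any $\delb_0$-holomorphic endomorphism $\sigma$ is automatically $\partial_0$-closed; parallelism with respect to $d_{A_0} = \partial_0 + \delb_0$ follows immediately from $d_{A_0}\sigma = \partial_0\sigma + \delb_0\sigma$. The strategy combines the Kähler-type identities \eqref{eq:Nakanotype} with the Hermite--Einstein condition $\Lambda_{\om_B} iF_{A_0} = c_0 \Id$, which makes the contracted curvature act trivially on $\End(\Gr(E))$ by adjoint.

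The first step is to use that, since $\sigma \in \Gamma(B, \End(\Gr(E)))$ is a $0$-form, $d_{A_0}^2 \sigma = [F_{A_0},\sigma]$, and since $\partial_0^2 = \delb_0^2 = 0$, this gives
$$
(\partial_0 \delb_0 + \delb_0 \partial_0)\sigma = [F_{A_0},\sigma].
$$
With $\delb_0 \sigma = 0$, this simplifies to $\delb_0 \partial_0 \sigma = [F_{A_0},\sigma]$.

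The second step is to apply $\Lambda_{\om_B}$ to both sides. The Kähler identity $[\Lambda_{\om_B},\delb_0] = -i\partial_0^*$, applied to $\partial_0 \sigma \in \Omega^{1,0}(\End(\Gr(E)))$, together with $\Lambda_{\om_B}\partial_0\sigma = 0$ for degree reasons, gives $\Lambda_{\om_B}\delb_0\partial_0 \sigma = -i\partial_0^* \partial_0 \sigma$. Therefore
$$
\partial_0^* \partial_0 \sigma = i\,\Lambda_{\om_B}[F_{A_0},\sigma] = [\,i\Lambda_{\om_B} F_{A_0},\sigma\,] = [\,c_0 \Id,\sigma\,] = 0,
$$
using that $\Lambda_{\om_B}$ commutes with the pointwise commutator action on a $0$-form and that $\tilde h$ is Hermite--Einstein.

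The final step is to pair with $\sigma$ in $L^2$: $\|\partial_0 \sigma\|^2_{L^2} = \langle \partial_0^*\partial_0\sigma,\sigma\rangle_{L^2} = 0$, hence $\partial_0 \sigma = 0$, and thus $d_{A_0}\sigma = 0$. There is really no main obstacle here beyond bookkeeping of degrees; the argument is the classical Bochner-type vanishing on Hermite--Einstein bundles, which is why the result goes back to \cite[Lemma 4.1, Corollary 4.2]{BuSchu} and uses only \eqref{eq:Nakanotype} together with the Hermite--Einstein property of $\tilde h$.
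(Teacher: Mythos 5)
Your argument is correct and is essentially the proof the paper points to: the paper does not reproduce it but cites \cite[Lemma 4.1, Corollary 4.2]{BuSchu}, whose short proof is exactly this Bochner-type computation combining the K\"ahler identities \eqref{eq:Nakanotype} with the Hermite--Einstein condition $\Lambda_{\om_B} iF_{A_0}=c_0\Id$ and an $L^2$ integration by parts on the compact base. Nothing is missing; the steps $\delb_0\partial_0\sigma=[F_{A_0},\sigma]$, contraction, and $\|\partial_0\sigma\|_{L^2}^2=0$ are all valid as written.
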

We will gauge fix $\gamma$ by further imposing that 
\begin{equation}
\label{eq:gaugefixgamma}
\delb_0^*\gamma=0.
\end{equation}
Note that this gauge fixing will only be used in the first step in the upcoming induction process to produce approximate solutions, and not at higher order in the $k$-expansions. We can now describe the automorphism group of $E$.
\begin{lemma}
 \label{lem:H1H2implysimple}
 Under assumptions $(H1)$ and $(H2)$, $E$ is simple.
\end{lemma}
\begin{proof}
 Let $\psi\in\End(E)$ and consider the composition $\cF_{\ell-1}\to E \overset{\psi}{\to} E \to \cG_{\ell}$. If it is not zero, as $\cG_\ell$ is stable, from \cite[Proposition 1.2.7]{HuLe}, this composition must be a surjective morphism $\cF_{\ell-1} \to \cG_\ell$. But this is excluded by $(H2)$, so this composition vanishes. Hence $\psi$  induces morphisms $\cF_{\ell-1}\to \cF_{\ell-1}$ and $\cG_\ell\to \cG_\ell$. By induction, as $\cF_{\ell-1}$ satisfies $(H1)$ and $(H2)$, the first morphism is a constant multiple of the identity. So is the latter by simplicity of $\cG_\ell$. Hence, $\psi$ reads
 $$
 \psi = a\Id_{\cF_{\ell-1}}+b\Id_{\cG_\ell} + \sigma,
 $$
 for $(a,b)\in\C^2$ and $\sigma\in\Gamma(\cG_\ell^*\otimes\cF_{\ell-1})$. Then, using 
 $$\delb_0\psi +[\gamma,\psi]=0,$$
 we obtain
 $$
 \delb_0\sigma +(a-b)\sum_{i<\ell}\gamma_{i\ell}=0.
 $$
 Applying $\delb_0^*$ to this last equality, using (\ref{eq:gaugefixgamma}), we have $\sigma\in \Hom(\cG_\ell,\cF_{\ell-1})$, which by $(H2)$ again implies $\sigma=0$. As by Lemma \ref{lem:gammaijnonzero} the term $\gamma_{\ell-1\ell}$ is not zero, we conclude $a-b=0$, and $\psi=a\Id_E$.
\end{proof}

From Proposition \ref{prop:holomorphicisparallel}, together with the identities \eqref{eq:Nakanotype}, we deduce that there is a natural action of the subgroup of gauge transformations
\begin{equation}
 \label{eq:definitiongroupG}
  G=\Aut(\cG_1)\times \ldots\times\Aut(\cG_\ell)\subset \scG^\C(\Gr(E))
\end{equation}
on elements in
$$
\mathfrak{Def}(\Gr(E)):=\lbrace \beta\in\Om^{0,1}(B,\Gr(E)^*\otimes \Gr(E)),\: \delb_0\beta + \beta\wedge\beta=0,\: \delb_0^*\beta=0 \rbrace
$$
parametrising small complex deformations of $\Gr(E)$. On $\gamma$, the action of an element $g\in G\simeq (\C^*)^\ell$ of the form $g=g_1 \Id_{\cG_1}\times \ldots \times g_\ell \Id_{\cG_\ell}$ is given by
\begin{equation}
 \label{eq:Autactionongamma}
 g^*\gamma= g^{-1}\cdot\gamma \cdot g = \sum_{i<j} g_i^{-1}g_j\gamma_{ij}.
\end{equation}
Then, $\gamma$ is gauge-conjugated to all elements of this form. In order to parameterise a family of Dolbeault operators from $\delb_0$ to $\delb_E$, we can make a change of variables. For any $\lambda=(\lambda_1,\ldots,\lambda_{\ell-1})\in (\C^*)^{\ell-1}$ we can find $g_\lambda\in G$ such that for all $i=1\ldots \ell-1$, $\lambda_i=(g_\lambda)_i^{-1}(g_\lambda)_{i+1}$. Setting
$$
\gamma_\lambda:=g_\lambda^{-1} \cdot \gamma \cdot g_{\lambda},
$$
the family of Dolbeault operators
$$
\delb_\lambda:=\delb_0+\gamma_\lambda
$$
can be extended across $\lambda=0$ and gives a complex family of holomorphic vector bundles $E_\lambda$ with $E_\lambda$ isomorphic to $E$ for $(\lambda_i)\neq 0$ and isomorphic to $\Gr(E)$ for $\lambda=0$. Thus we see that $E$ can be obtained as a complex deformation of $\Gr(E)$.

For {\it any} such family of Dolbeault operators $\lambda \mapsto \delb_\lambda$ parametrising a complex deformation from $\Gr(E)$ to $E$, we have a path of Chern connections $A_\lambda$ associated to the structures $(\tilde h,\delb_\lambda)$. We will be interested in the curvature $F_{A_\lambda}$ and its variations. Consider the unique family of (skew-hermitian) connection $1$-forms $a_\lambda$ such that $A_\lambda=A_0+a_\lambda$, 
given explicitly by
$$
 a_\lambda =  \gamma_\lambda - \gamma_\lambda^*.
$$
Recall, e.g. from \cite[Section 4]{huybrechts05}, that
\begin{equation}
 \label{eq:curvatureintermsofconnection}
 F_{A_\lambda}=F_0+d_{A_0} a_\lambda + a_\lambda\wedge a_\lambda.
\end{equation}
As in Section \ref{sec:stable}, we will need to control the linearisation of the operator $\Phi_\lambda$ given by $f\mapsto \Lambda_{\om_B} iF_{A_\lambda^f}$, and in particular its kernel. Thus we introduce a compact form $K$ of $G$:
\begin{equation}
 \label{eq:introductionK}
 K:=\Aut(\cG_1,\tilde h_1)\times \ldots\times\Aut(\cG_\ell,\tilde h_\ell)\subset \scG^\C(\Gr(E)),
\end{equation}
where $\Aut(\cG_i,\tilde h_i)$ stands for the group of automorphisms of the holomorphic vector bundle $\cG_i$ that preserve $\tilde h_i$. We also introduce its Lie algebra $$\k:=\Lie(K).$$
The hermitian endomorphisms in 
$$
i\k=\bigoplus_{i=1}^\ell \R\cdot \Id_{\cG_i}\subset \Gamma(B, \End_H(E,\tilde h) )
$$
will appear in the kernel of $\Phi_\lambda$ at $\lambda=0$. We will see that this space provides the potential obstruction to solving the HYM equation in the semistable case. We thus introduce the $L^2$ projection 
\begin{equation}
 \label{eq:L2projection}
 \begin{array}{cccc}
  \Pi_{i\k} :& \Gamma(B, \End_H (E, \tilde h) ) & \to & i\k\\
             &      s_B                  & \mapsto &\displaystyle \frac{1}{\Vol(B)}\sum_{i=1}^\ell \frac{1}{\rank(\cG_i)} ( \int_B \trace_{\cG_i}(s_B)\:\om_B^n ) \: \Id_{\cG_i}
 \end{array}
\end{equation}
and the induced orthogonal decomposition:
\begin{equation*}
 \label{eq:L2splittingonB}
 \Gamma(B,\End_H(E,\tilde h)) =i\k \oplus \Gamma_{i\k^\perp}(B,\End_H(E,\tilde h))
\end{equation*}
with respect to the pairing 
$$
(s_1,s_2)\mapsto \int_B \trace(s_1\cdot s_2)\,\om_B^n.
$$

We now gather in the next lemma some straightforward results that will be used in the following sections to control the projection of $\Lambda_{\om_B}iF_{A_\lambda}$ onto $i\k$ and to remove the errors orthogonal to $i\k$ (last item follows from ellipticity and self-adjointness).
\begin{lemma}
 \label{lem:tracesandda}
 We have the following:
 \begin{enumerate}
  \item[(i)] The term $d_{A_0}a_\lambda$ is off-diagonal.
\item[(ii)] The trace of $a_\lambda\wedge a_\lambda$ vanishes.
\item[(iii)] For any non-zero component $\gamma_{jj'}$ of $\gamma$, the following constant is positive:
\begin{equation}
\label{eq:positiveC}
-\int_B \trace_{\cG_j}(\Lambda_{\om_B}i\gamma_{jj'}\wedge\gamma_{jj'}^*)\:\om_B^n >0.
\end{equation}
\item[(iv)] The term $\Lambda_{\om_B}\left(d_{A_0}a_\lambda\right)$ vanishes:
\begin{equation}
 \label{eq:vanishingLambdada}
 \Lambda_{\om_B}\left(d_{A_0}a_\lambda\right)=0.
\end{equation}
\item[(v)] The following operator is invertible : 
$$
\Delta_{\om_B,0} :\Gamma_{i\k^\perp}(B,\End_H(E,\tilde h)) \to \Gamma_{i\k^\perp}(B,\End_H(E,\tilde h))
$$
where $\Delta_{\om_B,0} := i \Lambda_{\om_B} \big(  \partial_{A_0} \bar \partial_{A_0}  - \bar \partial_{A_0}  \partial_{A_0} \big)$.
 \end{enumerate}
\end{lemma}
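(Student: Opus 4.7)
My plan is to handle the four parts largely independently. Statements (i) and (ii) are purely algebraic consequences of the block structure of $\gamma_\lambda$ in the decomposition $\Gr(E)=\bigoplus \cG_i$, while (iii) uses a standard pointwise positivity identity and (iv) follows from the K\"ahler identities combined with the gauge fixing \eqref{eq:gaugefixgamma}. The key observation feeding into all four is that $\gamma_\lambda$ (being a gauge rescaling of $\gamma$) remains strictly upper block-triangular, so $\gamma_\lambda^*$ is strictly lower block-triangular, and the Chern connection $A_0$ of the product metric $\tilde h=\bigoplus \tilde h_i$ is block-diagonal.

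For (i), since $A_0$ splits as a direct sum of the Chern connections of the pairs $(\cG_i,\tilde h_i)$, the induced connection on $\End(\Gr(E))$ preserves the block decomposition. Applying such a connection to the off-diagonal form $a_\lambda=\gamma_\lambda-\gamma_\lambda^*$ produces an off-diagonal result, which is what we want. For (ii), I expand $a_\lambda\wedge a_\lambda$ into four pieces. The terms $\gamma_\lambda\wedge\gamma_\lambda$ and $\gamma_\lambda^*\wedge\gamma_\lambda^*$ are strictly upper (resp. lower) block-triangular matrix-valued $2$-forms, so their traces vanish identically. For the cross terms, the graded-commutativity of the wedge of endomorphism-valued $1$-forms gives $\trace(\gamma_\lambda\wedge\gamma_\lambda^*)=-\trace(\gamma_\lambda^*\wedge\gamma_\lambda)$, so the sum cancels.

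For (iii), I would work in a local $\om_B$-orthonormal frame on $B$, writing $\gamma_{jj'}=\sum_\alpha \eta_\alpha\, d\bar z^\alpha$ with $\eta_\alpha\in\Hom(\cG_{j'},\cG_j)$. A direct pointwise computation (the same one used to identify the $L^2$-norm on $\Om^{0,1}(B,\Hom(\cG_{j'},\cG_j))$ with a curvature-type contraction) yields
$$
-\trace_{\cG_j}\bigl(\Lambda_{\om_B} i\, \gamma_{jj'}\wedge\gamma_{jj'}^*\bigr) \;=\; \sum_\alpha \|\eta_\alpha\|^2_{HS} \;=\; |\gamma_{jj'}|^2_{\tilde h,\om_B}
$$
at each point, and this is non-negative, strictly positive wherever $\gamma_{jj'}\ne 0$. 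Integration against $\om_B^n$ gives the strict positivity of \eqref{eq:positiveC} under the hypothesis $\gamma_{jj'}\not\equiv 0$.

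For (iv), I use the K\"ahler-type identities \eqref{eq:Nakanotype}. Since $\gamma_\lambda$ is a $(0,1)$-form, $\Lambda_{\om_B}\gamma_\lambda=0$, and $\delb_0\gamma_\lambda$ is of bidegree $(0,2)$ so is killed by $\Lambda_{\om_B}$; therefore
$$
\Lambda_{\om_B}\bigl(d_{A_0}\gamma_\lambda\bigr) \;=\; \Lambda_{\om_B}\bigl(\partial_0\gamma_\lambda\bigr) \;=\; i\,\delb_0^*\gamma_\lambda \;=\; 0.
$$
Here the last equality uses the gauge fixing \eqref{eq:gaugefixgamma}; this must first be checked to persist after rescaling to $\gamma_\lambda=g_\lambda^{-1}\gamma g_\lambda$, which is immediate because each block of $g_\lambda$ is a scalar multiple of the identity and thus commutes with $\delb_0^*$. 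The mirror argument applied to $\gamma_\lambda^*$ (using the adjoint identity and the conjugated gauge fixing $\partial_0^*\gamma_\lambda^*=0$) yields $\Lambda_{\om_B}(d_{A_0}\gamma_\lambda^*)=0$, and subtracting gives the claim. The only subtle point in the whole lemma is ensuring that the gauge fixing condition transports to the rescaled family $\gamma_\lambda$; once this is in place, all four statements reduce to elementary manipulations.
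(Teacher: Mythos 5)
Your proposal is correct and follows essentially the same route as the paper: (i) via the block-diagonal product connection, (ii) by an elementary trace computation, (iii) by the pointwise local-coordinate positivity of $-\Lambda_{\om_B}\trace_{\cG_j}(i\gamma_{jj'}\wedge\gamma_{jj'}^*)$ integrated over $B$, and (iv) from the K\"ahler identities \eqref{eq:Nakanotype} together with the gauge fixing \eqref{eq:gaugefixgamma}, whose persistence under the constant block rescaling $\gamma\mapsto\gamma_\lambda$ you rightly verify. Your treatment is in fact slightly more careful than the paper's terse proof, e.g.\ in noting that the pointwise quantity in (iii) is only non-negative and becomes strictly positive after integration because $\gamma_{jj'}\not\equiv 0$.
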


We return now to the holomorphic submersion with connected fibres $\pi:(X,H)\to (B,L)$.
Since (\ref{eq:JHgeneral}) is a Jordan--H\"older filtration, $\Gr(E)$ is polystable.
By Corollary \ref{cor:stablepullback}, each of the pullbacks $\pi^*\cG_i$ of the stable components of the graded object is stable for the polarisation $L_k=H\otimes L^{\otimes k}$, for $k\gg 0$. However, the expansions of the slopes $k^{-n}\mu_k(\pi^*\cG_i)=\sum_{j=1}^n \nu_j(\cG_i) k^{-j}$ (recall Section \ref{sec:adiabaticslopes}) may disagree, and $\pi^*\Gr(E)$ may be unstable. In particular, the results from \cite{BuSchu} describing stable deformations of polystable bundles do not apply, and we need a refined argument to understand stability of $\pi^*E$. The following will play a crucial role in our arguments:
\begin{definition}
 \label{def:discrepancy}
 For $\cF$ and $\cE$ two torsion-free sheaves on $X$, the {\it order of discrepancy} of the adiabatic slopes of $\cF$ and $\cE$ is the leading order of the expansion $\mu_k(\cE)-\mu_k(\cF)$. Given $\cF_i\subset E$, the order of discrepancy of $\cF_i$  will refer to the order of discrepancy of the adiabatic slopes of $\pi^*E$ and $\pi^*\cF_i$. 

The \emph{maximal order of discrepancy} of $E$ is the maximal order of discrepancy among $\cF_i$ for $1\leq i\leq \ell-1$. When $E$ is itself stable on $X$, we use the convention that the maximal order of discrepancy is $1$. 
\end{definition}
Using the forms $\om_X$ and $\pi^*\om_B$, we define a pairing on the space of sections of $\Gamma(X, \End_H (\pi^*E, h))$, where $h=\pi^*\tilde h$ :
$$
(s_1,s_2)\mapsto \int_X \trace(s_1\cdot s_2) \,(\pi^*\om_B)^n\wedge\om_X^m
$$
and we  extend the $L^2$ projection (\ref{eq:L2projection}) as well as the orthogonal decomposition to $\Gamma(X, \End_H (\pi^*E, h))$:
\begin{equation}
 \label{eq:L2splitting}
 \Gamma(X, \End_H (\pi^*E, h))=i\k \oplus \pi^*\Gamma_{i\k^\perp}(B,\End_H(E,\tilde h))  \oplus \Gamma_0(X,\End_H(\pi^*E,h)),
\end{equation}
where we recall that $\Gamma_0(X,\End_H(\pi^*E,h))$ is the space of sections of average zero on each fibre of $\pi$.
Using this $L^2$ inner product on sections, we will consider projections onto various components of $\g$. For $\psi\in\g$, we will denote by  $\Pi_{\langle \psi\rangle}$ the projection onto the subspace spanned by $\psi$. 
By Chern--Weil theory, the constants that one obtains by expanding the $\Pi_{\langle \Id_{\cG_i} \rangle}\Lambda_k(iF_{A_0})$ are (up to multiplicative constants) the $\nu_j(\cG_i)$'s. Thus, these terms give the topological defect of $(\pi^*\Gr(E),h)$ from being Hermite--Einstein. 
\begin{remark}
\label{rem:dAnoncontribution}
 A corollary of Lemma \ref{lem:tracesandda} is that the terms involving $d_{A_0}a_\lambda$ (e.g. $\Lambda_{\omega_B} d_{A_0}a_\lambda$ and $\langle d_{A_0}a_\lambda, \om_\cH \rangle$) will not contribute when computing the projection on $i\mathfrak{k}$ of the perturbed curvature
$$
\Lambda_k i F_{h,f_k^*\delb_\lambda}.
$$
On the other hand, the terms $\Lambda_{\om_B}i a_\lambda\wedge a_\lambda$ will be used to kill off the discrepancies between the $\nu_j(\cG_i)$'s. This will be done by using the $e^{i\k}$-action on $\gamma$. Note that by \cite[Corollary 5.3]{BuSchu}, the $L^2$ projection of  $\Lambda_{\om_B} a_\lambda\wedge a_\lambda$ on $\mathfrak{k}$ can be interpreted as a moment map for the action of $K$ on $\mathfrak{Def}(\Gr(E))$. Thus, we will be essentially looking for zeros of (a $k$-expansion of) this moment map in a $G$-orbit. One should be careful though, the discrepancy orders that are needed to understand the stability of a deformation of $\pi^*\Gr(E)$ {\it depend} on the corresponding element in $\mathfrak{Def}(\Gr(E))$. Thus, one cannot use this moment map interpretation at once on the whole space of small deformations $\mathfrak{Def}(\Gr(E))$.
\end{remark}

\subsection{Approximate solutions: two components case.}
\label{sec:ssapproximate}

The main goal of this section is to construct the approximate solutions to the HYM equation on $X$, to any desired order. As in Section \ref{sec:stableapprox}, we use here (and in the next section) power series expansions in $k^{-1}$. An expression $\sigma(k)=O(k^{-j})$ for $\sigma$ a section of a vector bundle is to be interpreted as holding pointwise. We will also use such expressions for operators, in which case they should be interpreted as holding after the operator acts on a section. Convergence issues will be addressed in Section \ref{sec:linop}.

We will assume here that the graded object associated to $E$ has only two components. We single out this case as its presentation is simpler, and its proof already gathers some of the main ingredients needed in the general construction. Compared to the stable setting of Section \ref{sec:stableapprox}, the added complication is that we will have to relate the rates of convergence of the two parameters $k^{-1}$ and $\lambda$ involved in the semistable case. 

From now on the Jordan--H\"older filtration takes the simple form 
$$ 
0 \subset \cF_1 \subset E,
$$
and the graded object is
$$
\Gr(E)=\cG_1 \oplus \cG_2,
$$
where $\cG_1 = \cF_1$ and $\cG_2 = \frac{E}{\cF_1}.$ Denoting by $\delb_{\cG_i}$ the Dolbeault operators on $\cG_i$, the Dolbeault operator on $E$ is explicitly given by
$$
\delb_E = \delb_0+\gamma= \delb_{\cG_1} + \delb_{\cG_2} + \gamma
$$
for some $\gamma \in\Om^{0,1}(B,\cG_2^* \otimes \cG_1)$. Note that $\gamma$ satisfies by the Maurer--Cartan equation $\delb_0\gamma=0$ and $\delb_0^*\gamma=0$, since when $\ell=2$, $\gamma\wedge \gamma=0$.

For any $t\in\R_{>0}$, $\delb_E$ can be gauge conjugated to
\begin{equation*}
 \label{eq:Dolbeault on E t}
 \delb_t:=g_t^{-1}\cdot \delb_E \cdot g_t=\left[
 \begin{array}{cc}
  \delb_{\cG_1} & t\gamma \\
  0          &   \delb_{\cG_2}
 \end{array}
\right]
\end{equation*}
via the element $g_t$ in $G=\Aut(\cG_1)\times\Aut(\cG_2)\subset \scG^\C(E)$ given by
$$
 g_t=\left[
 \begin{array}{cc}
  1 & 0 \\
  0          &  t
 \end{array}
\right].
$$
As in Section \ref{sec:basestructurebundle}, we will let $\tilde h$ be the product Hermite--Einstein metric on $\Gr(E)$, while we set $A_t=A_0+a_t$ which is the Chern connection associated to the structure $(\tilde h,\delb_t)$. In matrix representation,
$$
a_t = t
 \left[
 \begin{array}{cc}
  0 & \gamma \\
  -\gamma^*          &  0
 \end{array}
\right]
$$
and we let
\begin{equation}
\label{eq:matrixaa}
a\wedge a=- \left[
 \begin{array}{cc}
  \gamma\wedge \gamma^* & 0 \\
  0        &  \gamma^*\wedge\gamma
 \end{array}
\right],
\end{equation}
such that $a_t \wedge a_t = t^2 a \wedge a.$

The following lemma shows that when $\ell=2$, hypothesis $(H1)$ and $(H2)$ can be replaced by simplicity.
\begin{lemma}
\label{lem:Hlemma}
Assume $E$ simple and that $\mu_\infty(\cF_1)<\mu_\infty(E)$. Then
\begin{itemize}
 \item[(i)] The term $\gamma$ is non-zero.
 \item[(ii)] The stable components $\cG_1$ and $\cG_2$ are not isomorphic.
\end{itemize}
\end{lemma}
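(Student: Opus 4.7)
The plan is to derive both statements by contradiction from Hypothesis $(H)$, using the additivity of adiabatic slopes in short exact sequences (Corollary \ref{cor:subadditivity of slopes}) together with the fact that $\mu_k(\pi^*\cE)$ depends on $\cE$ only through $c_1(\cE)/\rk(\cE)$.

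For (i), I would suppose $\gamma=0$. Then $\delb_E=\delb_0$, so $E\cong\Gr(E)=\cG_1\oplus\cG_2$ as holomorphic vector bundles, and both $\cG_1$ and $\cG_2$ embed as holomorphic subbundles of $E$. In particular $\{1\},\{2\}\in\mathbb{I}(E)$, and we have a (split) short exact sequence $0\to\cG_1\to E\to\cG_2\to 0$. By Corollary \ref{cor:subadditivity of slopes} applied to the $\mu_\infty$-version, the inequality $\mu_\infty(\cG_1)<\mu_\infty(E)$ is equivalent to $\mu_\infty(E)<\mu_\infty(\cG_2)$. But Hypothesis $(H)$ supplies both $\mu_\infty(\cG_1)<\mu_\infty(E)$ and $\mu_\infty(\cG_2)<\mu_\infty(E)$, which are incompatible. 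This contradiction proves $\gamma\neq 0$.

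For (ii), I would suppose $\cG_1\cong\cG_2$. Then $\Gr(E)\cong\cG_1^{\oplus 2}$, and since $E$ and $\Gr(E)$ share the same rank and first Chern class (the latter is a topological invariant under complex deformation), we have $c_1(E)=2\,c_1(\cG_1)$ and $\rk(E)=2\,\rk(\cG_1)$. Examining the formula for $\mu_k(\pi^*\cE)$ in (the proof of) Lemma \ref{lem:slopeexpansion}, every coefficient in the $k$-expansion is linear in the ratio $c_1(\cE)/\rk(\cE)$ (with factors depending only on the geometry of $B$ and of the fibration), so
$$
\mu_k(\pi^*E)=\mu_k(\pi^*\cG_1)\qquad\text{for all }k,
$$
and hence $\mu_\infty(E)=\mu_\infty(\cG_1)$. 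On the other hand, $\cF_1\cong\cG_1$ is by construction a holomorphic subbundle of $E$ coming from the Jordan--H\"older filtration, so $\{1\}\in\mathbb{I}(E)$; Hypothesis $(H)$ then forces $\mu_\infty(\cG_1)<\mu_\infty(E)$, contradicting the equality just established.

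Both arguments amount to bookkeeping with the definitions of $\mathbb{I}(E)$, adiabatic slopes, and Hypothesis $(H)$, so I do not anticipate any serious obstacle; the only mild point to notice is that in (ii) one needs the invariance of $\mu_k$ of a pullback under passage to $\Gr$, which follows directly from the linearity of the slope expansion in $c_1/\rk$.
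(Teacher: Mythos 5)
Your proof is correct and follows essentially the same route as the paper: both parts reduce Hypothesis $(H)$ to a contradiction with the additivity/equivalence of adiabatic slopes in the extension $0\to\cG_1\to E\to\cG_2\to 0$ (Corollary \ref{cor:subadditivity of slopes}, equivalently Lemma \ref{lem:additivity of nui}). The only cosmetic difference is that in (ii) you verify $\mu_\infty(E)=\mu_\infty(\cG_1)$ directly from $c_1(E)=2c_1(\cG_1)$ rather than invoking the corollary, which is the same underlying computation.
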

 \begin{proof}
  If $\gamma=0$, then $E=\Gr(E)$ which contradicts simplicity of $E$. For $(ii)$, if $\cG_1\simeq\cG_2$, we have $\mu_\infty(\cG_1)=\mu_\infty(\cG_2)=\mu_\infty(\cF_1)$, which leads to a contradiction by Corollary \ref{cor:subadditivity of slopes}.
 \end{proof}
We now proceed to the construction of approximate solutions. We will assume that the discrepancy order of $\cG_1$ is $q$, so that $\nu_j (\cG_1 ) = \nu_j (\cG_2 ) = \nu_j (E)$ for $ j< q$, and $\nu_q (\cG_1) < \nu_q (E).$ 
We first show that we can construct an approximate solution up to order $q-\frac{1}{2}$, simply by working on the two pieces $\cG_1$ and $\cG_2$ separately. The fractional order appearing from now on comes about because we need to choose the speed rate $t$ of the deformation of the complex structure of $E$  to be of the form $t=\lambda \cdot k^{-\frac{q-1}{2}}$ in order to affect the expansion of the curvature at order $k^{-q}$. This involves a fractional power if $q$ is even. 

The existence of an approximate solution up to order $q-\frac{1}{2}$ follows a very similar strategy as in the stable case. For this we need to understand the leading order term of the expansion of the linearised operator.
\begin{proposition}
 \label{prop:initialexpansion}
Let $\Delta_k $ denote the Laplace operator associated to $\omega_k$ and the Chern connection of $(h, \pi^*\delb_{\lambda k^{-\frac{q-1}{2}}}),$ for some real constant $\lambda >0$ and integer $q  \geq 2$. Then
\begin{align}
\label{eqn:laplaceexpansion}
\Delta_k (s) = \Delta_{\scV} (s) +   k^{-1} \Delta_{\cH, 0} (s) + O \left( k^{-\frac{3}{2}} \right),
\end{align}
where $\Delta_{\cH,0}$ is the horizontal Laplace operator associated to the complex structure $\delb_0$ for $E$ on $B$. The same expansion also holds at a Chern connection on $\pi^* E \to X$ coming from a complex structure $f_k \cdot \pi^*\delb_{\lambda k^{-\frac{q-1}{2}}}$ provided $f_k = \Id_E + s_k$ for some $s_k \in \Gamma \left( X, \End  \pi^* E \right)$  whose base component $s_{B,k}$ satisfies $s_{B,k} = O(k^{-\frac{1}{2}})$ and whose vertical component $s_{F, k}$ satisfies $s_{F,k} = O(k^{-\frac{3}{2}})$.
\end{proposition}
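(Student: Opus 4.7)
My plan is to follow the same strategy as Proposition \ref{prop:expansion linear term}, but to carefully track the $k$-dependence introduced by the family of Dolbeault operators $\delb_t$, with $t = \lambda k^{-(q-1)/2}$. Denoting by $A_t$ the Chern connection of $(\tilde h,\delb_t)$ on $B$, the Chern connection of $(h,\pi^*\delb_t)$ on $\pi^*E$ is, in a simultaneous trivialisation of $\pi^*E$ and $\pi$, given by $d_F + \pi^* d_{A_t}$ up to mixed horizontal-vertical terms (as in the proof of Lemma \ref{lem:pullbackcv}). Consequently, the operator $\partial_A \bar\partial_A - \bar\partial_A \partial_A$ splits into a pure vertical piece (the fibrewise operator $\partial_F \bar\partial_F - \bar\partial_F \partial_F$), a pure horizontal piece (the pullback of the analogous operator on $B$ built from $A_t$), plus mixed terms. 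Combined with the expansion of Lemma \ref{lem:expansion contraction}, which does not see the mixed terms at leading orders, we obtain
$$
\Delta_k = \Delta_{\scV} + k^{-1} \Delta_{\cH,t} + O(k^{-2}),
$$
where $\Delta_{\cH,t}$ denotes the lifted Laplace operator associated to $A_t$.

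The key new ingredient is then to compare $\Delta_{\cH, t}$ with $\Delta_{\cH, 0}$. Since $A_t = A_0 + a_t$ with $a_t = t(\gamma - \gamma^*)$, the operator $\Delta_{\cH, t} - \Delta_{\cH, 0}$ is a first-order differential operator with coefficients $O(t)$. Inserting $t = \lambda k^{-(q-1)/2}$ and using that $q \geq 2$ so that $t = O(k^{-1/2})$, one gets $k^{-1}\Delta_{\cH, t} = k^{-1}\Delta_{\cH, 0} + O(k^{-(q+1)/2}) = k^{-1}\Delta_{\cH, 0} + O(k^{-3/2})$. Together with the display above, this proves the first claim.

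For the perturbation statement, I will check that the change in the Laplacian induced by replacing $\pi^*\delb_t$ with $f_k^{-1} \circ \pi^*\delb_t \circ f_k$, where $f_k = \exp(s_k)$ with $s_k = s_{B,k} + s_{F,k}$, only contributes at order $O(k^{-3/2})$. The infinitesimal change in the Dolbeault operator is controlled by $\pi^*\delb_t(s_k)$. For the vertical component of this change, $\delb_F s_{B,k}$ vanishes since $s_{B,k}$ is pulled back from $B$ and hence fibrewise constant, so only $\delb_F s_{F,k} = O(k^{-3/2})$ contributes, giving a change in $\Delta_\cV$ of order $O(k^{-3/2})$. The horizontal component of the change receives contributions of order $O(k^{-1/2})$ from $s_{B,k}$ and of order $O(k^{-3/2})$ from $s_{F,k}$; after multiplication by the factor $k^{-1}$ coming from $\Lambda_k$ on horizontal $(1,1)$-forms, both become $O(k^{-3/2})$. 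An identical analysis applies to the $\partial_A$-side of the Chern connection, since $f_k$ is hermitian. All together, this shows that the expansion \eqref{eqn:laplaceexpansion} is preserved under the perturbation.

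The main subtlety to watch out for is ensuring that no indirect effect of $s_{B,k} = O(k^{-1/2})$ produces a change in the vertical Laplacian at order worse than $O(k^{-3/2})$; this is precisely why one requires $s_{B,k}$ to be pulled back from the base, so that the vertical derivatives kill it. The rest is a matter of bookkeeping, parallel to the stable case, but using the weaker admissible orders $O(k^{-1/2})$ and $O(k^{-3/2})$ dictated by the fact that the sub-leading error here is $O(k^{-3/2})$ rather than $O(k^{-2})$.
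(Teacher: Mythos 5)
Your proposal is correct and follows essentially the same route as the paper: both exploit that $A_t=A_0+\lambda k^{-\frac{q-1}{2}}(\gamma-\gamma^*)$ differs from $A_0$ by a term pulled back from $B$, so that after contraction (horizontal terms weighted by $k^{-1}$) the correction to the Laplacian enters at order $k^{-\frac{q+1}{2}}\leq k^{-\frac{3}{2}}$ for $q\geq 2$, reducing the leading structure to Proposition \ref{prop:expansion linear term}; and for the gauge-perturbed connection both arguments track that the vertical change is $O(k^{-\frac{3}{2}})$ while the horizontal change is $O(k^{-\frac{1}{2}})$ and gets suppressed by the $k^{-1}$ in $\Lambda_{\omega_k}$. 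The only cosmetic difference is that you first quote the expansion with $\Delta_{\cH,t}$ and then compare it to $\Delta_{\cH,0}$, whereas the paper expands the operator directly in $t$, which makes the uniformity in $k$ of the error terms explicit.
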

\begin{proof}
Let $A_k$ denote the Chern connection of $(h, \pi^*\delb_{\lambda k^{-\frac{q-1}{2}}})$. Then, the induced connection on $\End(E)$ reads 
\begin{equation}
 \label{eq:inducedconnectiongamma}
A_{k,\End(E)} = A_{0,\End(E)}+ \lambda k^{-\frac{q-1}{2}} \left[ \gamma - \gamma^*,\cdot \right],
\end{equation}
where $A_0$ is the Chern connection of $(h, \pi^*\delb_0)$, $[\cdot,\cdot]$ denotes the Lie bracket on sections of $\End(E)$ and where we omit pullbacks. Using Lemma \ref{lem:expansion contraction}, and in particular the fact that the contraction by $\om_k$ of terms pulled back from $B$ is $\cO(k^{-1})$, we obtain
\begin{equation*}
\begin{array}{ccc}
 \Delta_k & = & i\Lambda_k\left( \partial_{A_{0,\End E}} \bar \partial_{A_{0,\End E}}  - \bar \partial_{A_{0,\End E}}  \partial_{A_{0,\End E}}\right) \\
          & + &i\lambda k^{-\frac{q+1}{2}} \:\Lambda_{\om_B} \left(\delb_{A_{0,\End E}} ([\gamma^*,\cdot ])-[\gamma^*,\delb_{A_{0,\End E}}  ]\right)\\
          & + & i\lambda k^{-\frac{q+1}{2}} \: \Lambda_{\om_B} \left(\partial_{A_{0,\End E}}([\gamma,\cdot])-[\gamma,\partial_{A_{0,\End E}} ] \right) + \cO(k^{-\min(q,\frac{q+3}{2})}).
 \end{array}
\end{equation*}
Thus, as $q\geq 2$,
$$
\Delta_k  = \Delta_{0,k} + O \left( k^{-\frac{3}{2}} \right),
$$
where $\Delta_{0,k}$ is the Laplacian of $A_0$ with respect to $\omega_k$. By Proposition \ref{prop:expansion linear term}, $\Delta_{0,k}$, and then $\Delta_k$, satisfy \eqref{eqn:laplaceexpansion}. 

The statement for the perturbed complex structures $f_k \cdot \pi^*\delb_{\lambda k^{-\frac{q-1}{2}}}$, follows by taking an expansion of $\partial_{A^{f_k}_{k, \End E}} \bar \partial_{A^{f_k}_{k, \End E}}  - \bar \partial_{A^{f_k}_{k, \End E}}  \partial_{A^{f_k}_{k,\End E}}$, as in \cite[Lemma 5.3]{dervansektnan19}. With the bounds on $s_{B,k}$ and $s_{F,k}$ given, the base component of 
$$
\left( \partial_{A_{k, \End E}} \bar \partial_{A_{k, \End E}}  - \bar \partial_{A_{k, \End E}}  \partial_{A_{k,\End E}} \right) - \left( \partial_{A^{f_k}_{k, \End E}} \bar \partial_{A^{f_k}_{k, \End E}}  - \bar \partial_{A^{f_k}_{k, \End E}}  \partial_{A^{f_k}_{k,\End E}} \right)
$$
 is $O(k^{-\frac{1}{2}})$ and the vertical component is $O(k^{-\frac{3}{2}})$. Using the asymptotics of $\Lambda_{\omega_k}$, it follows that the Laplacian operators differ by a term that is $O(k^{-\frac{3}{2}})$.
\end{proof}

With this in place, we can construct the approximate solutions at low order. Note that in the case when $q$ is even, the speed rate $t= \lambda \cdot k^{-\frac{q-1}{2}}$ involves a fractional power of $k$. The expansions of the curvature will then involve powers $k^{-j}$ with $j \in \frac{1}{2} \mathbb{Z}_{>0}$. We therefore use $\frac{1}{2} \mathbb{Z}_{>0}$ instead of $ \mathbb{Z}_{>0}$ for the parameter $j$ below.
\begin{lemma} 
\label{lem:loworder}
Let $E$ be a simple semistable bundle with graded object $\cG_1 \oplus \cG_2$ such that $\cG_1$ has discrepancy order $q$. Pick $t= \lambda \cdot k^{-\frac{q-1}{2}},$ for some real constant $\lambda >0$. Then there exist
\begin{itemize} 
\item gauge transformations $f_{q-\frac{1}{2},k}$,
\item[]
\item constants $c_0 , \ldots , c_{q-\frac{1}{2}}$ ,
\end{itemize}
independent of $\lambda,$ such that if we let $A_{q-\frac{1}{2},k} = A_t^{f_{q-\frac{1}{2},k}}$, then for all $k \gg 0$, $$ \Lambda_{\omega_k} \left( i F_{A_{q-\frac{1}{2},k} } \right) = c_{q-\frac{1}{2},k} \Id_E + O \left( k^{-q} \right),$$
where $c_{q-\frac{1}{2}, k} = \sum_{i=0}^{q-\frac{1}{2}} c_i k^{-i}.$ Moreover, $f_{q-\frac{1}{2},k} =\Id_E+ s_{q-\frac{1}{2},k}$ for some $s_{q-\frac{1}{2},k} \in \Gamma \left( X, \End  \pi^* E\right)$ such that 
\begin{itemize} 
\item its base component $s_{B,q-\frac{1}{2},k}$ is $O(k^{-\frac{1}{2}})$ and is diagonal up to, and including, order $k^{-\frac{q-1}{2}}$;
\item its vertical component $s_{F,q-\frac{1}{2},k}$ is $O(k^{-\frac{3}{2}})$ and is diagonal up to, and including, order $k^{-\frac{q+1}{2}}$.
\end{itemize}
\end{lemma}
\begin{proof} By the choice of the speed rate $t$, we have from Equation \eqref{eq:curvatureintermsofconnection} that
$$
 \Lambda_{\omega_k}(i F_{A_t})= \Lambda_{\omega_k}( i F_0)+ \lambda k^{-\frac{q-1}{2}} \Lambda_{\omega_k} (i d_{A_0} a ) + \lambda^2 k^{1-q}  \Lambda_{\omega_k } (i a \wedge a ).
$$
Since $a \wedge a$ is a pulled back term, $\lambda^2 k^{1-q}  \Lambda_{\omega_k } (i a \wedge a )$ is $O(k^{-q})$ and so will not enter the argument at this stage, as we are only interested in terms up to order $k^{-q+\frac{1}{2}}$.

By considering each piece $\cG_1$ and $\cG_2$, both of which are stable, separately, it follows  from the stable case, Proposition \ref{prop:kexpstable}, that we can find
\begin{itemize} 
\item gauge transformations $\hat f_{q-1,k}^1\in\cG^\C(\pi^*\cG_1)$,
\item gauge transformations $\hat f_{q-1,k}^2\in\cG^\C(\pi^*\cG_2)$,
\item constants $c^1_{0}, \ldots, c^1_{q-1} , c^2_{0}, \ldots, c^2_{q-1}$ ,
\end{itemize}
such that  if we let $\hat f_{q-1,k} =\hat f_{q-1,k}^1\times\hat f_{q-1,k}^2 $
and put $\hat A_{q-1,k} = A_0^{\hat f_{q-1,k}}$, then for all $k \gg 0$, 
$$ 
\Lambda_{\omega_k} \left( i F_{\hat A_{q-1,k} } \right) = c^1_{q-1,k} \Id_{\cG_1} + c^2_{q-1,k} \Id_{\cG_2} + O \left( k^{-q} \right),
$$
with $c^j_{q-1,k} = \sum_{i=0}^{q-1} c^j_i k^{-i}$. Note that there are no fractional coefficients here, regardless of the value of $q$. Note also that 
$\hat f_{q-1,k}$ is a diagonal gauge transformation, so $\hat A_{q-1,k}$ is a product connection.

We claim that $c^1_{q-1,k} = c^2_{q-1,k}.$ This follows because we have used a product connection on $\Gr (E)$ above, and so these constants are determined by the order $k^{n- q}$ expansion of the projection
$$
\frac{1}{\rank (\cG_i) } \int_X \Lambda_{\omega_k} \left( i \trace_{\cG_i} F_{\hat A_{q-1,k} } \right) \omega_k^{n+m} .
$$
Since this is a constant multiple of $ \frac{ c_1 ( \cG_i ) c_1(L_k)^{n+m-1}}{\rank (\cG_i)}$, the coefficient to order $k^{n-1-j}$ is a constant multiple of $\nu_j (\cG_i).$ Since these are equal to $\nu_{j} (E)$ for all $j< q$ for both $i=1$ and $2$ by Lemma \ref{lem:additivity of nui}, we get that  $c^1_{q-1,k} = c^2_{q-1,k}.$ 

Next, we consider the actual connection $A_t=A_{\lambda k^{-\frac{q-1}{2}}}$ instead of $A_0$. We set $\hat f_{q-1,k}  A_t:=A_t^{ \hat f_{q-1,k}} $ and compute 
$$
\begin{array}{ccc}
 \Lambda_{\omega_k}(i F_{\hat f_{q-1,k}  A_t}) & = &  \Lambda_{\omega_k}( i F_{\hat A_{q-1,k}})\\
 & +& \lambda k^{-\frac{q-1}{2}} \Lambda_{\omega_k} (i d_{\hat A_{q-1,k} } (\hat f_{q-1,k}\cdot a ))\\
  & +&  \lambda^2 k^{1-q}  \Lambda_{\omega_k } (i (\hat f_{q-1,k}\cdot a) \wedge (\hat f_{q-1,k}\cdot a )).
\end{array}
$$
By the previous discussion, the first term on the right hand side of this equality satisfies
$$
\Lambda_{\omega_k}( i F_{\hat A_{q-1,k}})=c_{q-1,k}  \Id_E + O(k^{-q}),
$$
while the last term is $O(k^{-q})$. As $\hat f_{q-1,k}$ was made purely from diagonal elements, and as $a$ is off-diagonal,  $(\hat f_{q-1,k}\cdot a)$ is off-diagonal. Since $\hat A_{q-1,k}$ is a product connection, we deduce that $\Lambda_{\omega_k} (i d_{\hat A_{q-1,k} } (\hat f_{q-1,k}\cdot a ))$ and the second term are off-diagonal. The lower order contribution in the $k$-expansion of this second term will be given by the first non-zero term in 
$$
\Lambda_{\omega_k} (i d_{A_0} a ) = k^{-1} \Lambda_{\omega_B} (i d_{A_0} a) + \ldots.
$$
By $(iv)$ of Lemma \ref{lem:tracesandda}, $\Lambda_{\omega_B} (i d_{A_0} a)$ vanishes. Thus, the lower order contribution in the $k$-expansion of the second term is basic, meaning is pulled back from $B$, and of order $ k^{-\frac{q+3}{2}}$. That is, we have an expansion
$$
 \Lambda_{\omega_k}(i F_{\hat f_{q-1,k}  A_t})= c_{q-1,k}  \Id_E + k^{-\frac{q+3}{2}} \sum_{r=0}^{\frac{q}{2} - 1} k^{-r} \tilde \sigma_r + O(k^{-q}),
$$
where all the terms $\tilde \sigma_r$ are off-diagonal, and where $\tilde \sigma_0$ is basic. We now want to remove these errors using the linearisation, ensuring that we are not changing the constants $c_{q-1,k}$. Recall that as $\cG_1\neq \cG_2$, the kernel of our linear operator at first order $\Delta_{\cH,0}$ is $i\k$, with 
$$
i\k=\langle \Id_{\cG_1} \rangle \oplus \langle \Id_{\cG_2} \rangle.
$$
In particular, every off-diagonal basic term is in the image of $\Delta_{\cH,0}$, while off-diagonal vertical elements are in the image of $\Delta_{\cV}$.

We first consider the term $k^{-\frac{q+3}{2}} \tilde \sigma_0$. Since this is off-diagonal and basic, there is an off-diagonal $\tilde s_B^0$ such that $\Delta_{\cH,0} (\tilde s_B^0) = - \tilde \sigma_0$. Note that, by Proposition \ref{prop:kexpstable},  $\hat f_{q-1,k}=\Id_E+\hat s_{B,q-1,k}+\hat s_{F,q-1,k}$ with $\hat s_{B,q-1,k}=O(k^{-1})$ and $\hat s_{F,q-1,k}=O(k^{-2})$. Thus the expansion \eqref{eqn:laplaceexpansion} applies to the Laplace operator of $(h,\hat f_{q-1, k} \pi^*\delb_{\lambda k^{-\frac{q-1}{2}}} )$. By Proposition \ref{prop:initialexpansion}, we then have that 
$$
 \Lambda_{\omega_k}(i F_{\exp (k^{- \frac{q+1}{2}} \tilde s_B^0 ) (\hat f_{q-1,k}  A_t)})= c_{q-1,k}  \Id_E + k^{-\frac{q+3}{2}} \sum_{r=\frac{1}{2}}^{\frac{q}{2} - 1} k^{-r} \tilde \sigma'_r + O(k^{-q}),
$$
for some new error terms $\tilde \sigma'_r $.

We now claim that the $\tilde \sigma'_r$ remain off-diagonal. This implies in particular that the projection to $i \k$ of the terms in the curvature of $\exp (k^{- \frac{q+1}{2}} \tilde s_B^0 ) (\hat f_{q-1,k}  A_t)$  up to $O(k^{-q})$ remains $ c_{q-1,k} \Id_E $. To see this, we consider the full change in the curvature
\begin{align*}
F_{A^f} =& f \circ A^{1,0} \circ A^{1,0} \circ f^{-1} + f^{-1} \circ A^{0,1} \circ A^{0,1} \circ f \\
&+ f \circ A^{1,0} \circ f^{-1} \circ f^{-1} \circ A^{0,1} \circ f + f^{-1} \circ A^{0,1} \circ f \circ f \circ A^{1,0} \circ f^{-1},
\end{align*}
when $f$ is an hermitian automorphism, see \cite[Theorem 7.4.20]{skobayashi87}, that we apply to $f=\exp (k^{- \frac{q+1}{2}} \tilde s_B^0)$ and $A=\hat f_{q-1,k}  A_t$. Since $\hat f_{q-1,k}  A_t$ is diagonal to order $ k^{-\frac{q-1}{2}}$, and $f=\exp (k^{- \frac{q+1}{2}} \tilde s_B^0)$ is $\Id_E$ plus an off-diagonal term that occurs at order $k^{- \frac{q+1}{2}}$, we see that 
$$
F_{\exp (k^{- \frac{q+1}{2}} \tilde s_B^0 )( \hat f_{q-1,k}  A_t)}- F_{\hat f_{q-1,k}  A_t}
$$
is off-diagonal to order $k^{- \frac{q+1}{2}-\frac{q-1}{2}} = k^{-q}$. Thus upon contracting, the new diagonal elements in $ \Lambda_{\omega_k}(i F_{\exp (k^{- \frac{q+1}{2}} \tilde s_B^0 ) (\hat f_{q-1,k}  A_t)})$ can only come beginning at order $k^{-q-1}$, since these are base terms. 

We can now proceed in the same manner to find 
\begin{itemize} 
\item $\tilde s_B^{\frac{1}{2}}, \ldots, \tilde s_B^{q-\frac{1}{2}} \in \pi^* \Gamma_{i\k^\perp} ( B, \End_H (E, \tilde h ) )$; 
\item $\tilde s_F^{\frac{1}{2}}, \ldots, \tilde s_F^{q-\frac{1}{2}} \in \Gamma_0 (X, \End_H (\pi^* E, h) ) $,
\end{itemize}
such that if we set 
$$
f_{q-\frac{1}{2},k}=\displaystyle \underset{i}{\Pi}\left( \exp(  k^{-\frac{q+1}{2}}( k^{-i-1} \tilde s_F^i))\circ \exp( k^{-\frac{q+1}{2}} ( k^{-i} \tilde s_B^i  ))\right)\circ \hat f_{q-1,k} 
$$
and put $A_{q-\frac{1}{2},k} = A_t^{f_{q-\frac{1}{2},k}}$,
then
$$
 \Lambda_{\omega_k}(i F_{A_{q-\frac{1}{2},k}})= c_{q-1,k} \Id_E + O(k^{-q}),
$$
 the key being that the new error introduced at the lower orders will remain off-diagonal to $O(k^{-q-1})$, so that the projection to $i \k$ is unchanged, to this order. 
Finally, the statement about the orders of the basic and vertical components of $s_{q-1,k}=f_{q-1,k}-\Id_E$ follows easily by construction.
\end{proof}

 For the further stages of the approximation procedure, we need a more detailed understanding of the expansion of the linearised operator. We will let 
$$
\Id_{\pm} = \frac{1}{\rank (\cG_1)} \Id_{\cG_1} - \frac{1}{\rank (\cG_2)} \Id_{\cG_2}\in i\k,
$$
which is in the kernel of the Laplacian for the complex structure $\delb_0$, but orthogonal to $\Id_E$, that is to the kernel of the Laplacian for the complex structure of $E$.
\begin{proposition}
 \label{prop:fullexpansion}
Let $\Delta_k $ denote the Laplace operator associated to $\omega_k$ and the Chern connection of $(h, \pi^*\delb_{\lambda k^{-\frac{q-1}{2}}})$. Then,
\begin{equation}
 \label{eq:linearDeltaIdpm}
\Delta_k ( \Id_{\pm} )  = \lambda  k^{-\frac{q+3}{2}}\: \sigma_{o.d.}+k^{-q}ic\lambda^2 \:\Lambda_{\om_B}(\gamma\wedge\gamma^*+\gamma^*\wedge\gamma)  +\cO(k^{-q-\frac{1}{2}}) 
\end{equation}
for a constant $c\neq 0$ and where $\sigma_{o.d.}\in \pi^*\Gamma(B,\End_H(E,\tilde h))$ is off-diagonal and depends on $k$, but satisfies $\sigma_{o.d.}=\cO(1)$. In particular,
there is a constant $c' \neq 0$ such that
\begin{equation}
\label{eq:projectionDeltaIdpm}
\Pi_{i\k} \left( \Delta_k ( \Id_{\pm} ) \right) =c' k^{-q}  \lambda^2  \Id_{\pm} + O \left( k^{-q-\frac{1}{2}}  \right).
\end{equation}

The expansion \eqref{eq:linearDeltaIdpm} also holds at a Chern connection on $\pi^* E \to X$ coming from a complex structure $f_k \cdot \pi^*\delb_{\lambda k^{-\frac{q-1}{2}}}$, provided $f_k =\Id_E+ s_k$ for some $s_k \in \Gamma \left( X, \End  \pi^* E\right)$ such that 
\begin{itemize} 
\item the base component $s_{B,k}$ is $O(k^{-\frac{1}{2}})$ and is diagonal up to, and including, order $k^{-\frac{q-1}{2}}$;
\item the vertical component $s_{F,k}$ is $O(k^{-\frac{3}{2}})$ and is diagonal up to, and including, order $k^{-\frac{q+1}{2}}$.
\end{itemize}
\end{proposition}
\begin{proof}
Following the proof of Proposition \ref{prop:initialexpansion}, using \eqref{eq:inducedconnectiongamma}, we consider the expansion in $\lambda$ of the Laplace operator $\Delta_k$ associated to $\omega_k$ and the Chern connection of $(h, \pi^*\delb_{\lambda k^{-\frac{q-1}{2}}})$
to prove \eqref{eq:linearDeltaIdpm}. We specialise to $s = \Id_\pm$, and use the full expression of $\Lambda_k$ together with the full expansion in $\lambda$ to see that 
\begin{equation*}
\begin{array}{ccc}
 \Delta_k\left(\Id_\pm\right) & = & i\lambda k^{-\frac{q-1}{2}} \:\Lambda_k \left([\delb_{A_{0,\End E}} \gamma^*,\Id_\pm ]+[\partial_{A_{0,\End E}}\gamma,\Id_\pm]\right)\\
          & + &  i\lambda^2  k^{-(q-1)} \:\Lambda_k\left( [\gamma,[\gamma^*,\Id_\pm]]-[\gamma^*,[\gamma,\Id_\pm]]\right),
 \end{array}
\end{equation*}
since $\Id_{\pm}$ is in the kernel of $\Delta_{0,k}$ as $A_0$ is a product connection. Consider first the term
\begin{equation}
\label{eq:firsttermlinearisation}
i\lambda k^{-\frac{q-1}{2}} \:\Lambda_k \left([\delb_{A_{0,\End E}} \gamma^*,\Id_\pm ]+[\partial_{A_{0,\End E}}\gamma,\Id_\pm]\right).
\end{equation}
As $A_0$ is a product connection, $\delb_{A_{0,\End E}} \gamma^*$ and $\partial_{A_{0,\End E}}\gamma$ are off-diagonal, and thus (\ref{eq:firsttermlinearisation}) is off-diagonal. Then, using the fact that $\left([\delb_{A_{0,\End E}} \gamma^*,\Id_\pm ]+[\partial_{A_{0,\End E}}\gamma,\Id_\pm]\right)$ is pulled back from $B$, the lower order contribution in the $k$-expansion of (\ref{eq:firsttermlinearisation}) will come from $\Lambda_{\om_B}\left([\delb_{A_{0,\End E}} \gamma^*,\Id_\pm ]+[\partial_{A_{0,\End E}}\gamma,\Id_\pm]\right)$, wich vanishes by $\Lambda_{\om_B}\left(\delb_{A_{0,\End E}} \gamma^*\right)=\Lambda_{\om_B}\left(\partial_{A_{0,\End E}}\gamma\right)=0$. Thus, we see that the term (\ref{eq:firsttermlinearisation})
is off-diagonal, and of order $k^{-\frac{q-1}{2}}k^{-2}=k^{-\frac{q+3}{2}}$. It gives the term $\sigma_{o.d.}$ in \eqref{eq:linearDeltaIdpm}. Finally, the term 
$$
i\lambda^2  k^{-(q-1)} \:\Lambda_k\left( [\gamma,[\gamma^*,\Id_\pm]]-[\gamma^*,[\gamma,\Id_\pm]]\right)
$$
is pulled back from $B$, and a direct computation using Lemma \ref{lem:expansion contraction} provides the expansion \eqref{eq:linearDeltaIdpm}. Formula \eqref{eq:projectionDeltaIdpm} then follows from Lemma \ref{lem:tracesandda}, item $(ii)$, and \eqref{eq:matrixaa}.

The statement for the perturbed complex structure again follows by taking an expansion of the form $\partial_{A^{f_k}_{k, \End E}} \bar \partial_{A^{f_k}_{k, \End E}}  - \bar \partial_{A^{f_k}_{k, \End E}}  \partial_{A^{f_k}_{k,\End E}}.$
Note first that $\Id_{\pm}$ remains in the kernel of the Laplacian of any product connection, and so we are left with analysing the change to the terms 
$$
i\lambda k^{-\frac{q-1}{2}} \:\Lambda_k \left([\delb_{A_{0,\End E}} \gamma^*,\Id_\pm ]+[\partial_{A_{0,\End E}}\gamma,\Id_\pm]\right)
$$
and
$$
i\lambda^2  k^{-(q-1)} \:\Lambda_k\left( [\gamma,[\gamma^*,\Id_\pm]]-[\gamma^*,[\gamma,\Id_\pm]]\right).
$$
 Since we only care about terms up to order $k^{-q}$, the latter is unchanged, as it is already at this critical order. On the other hand, the former changes, but only by diagonal elements acting on off-diagonal elements to leading orders. Thus, as in the proof of Lemma \ref{lem:loworder}, the first potentially diagonal changes happen at the product of the orders where the first off-diagonal changes occur. For the change in the complex structure, this is at order $k^{-\frac{q-1}{2}}$,  and for $f_k$, this is at order $k^{-\frac{q}{2}}$ for basic terms and $k^{-\frac{q}{2}-1}$ for the vertical ones, by assumption. Thus the first potentially diagonal change to $\partial_{A^{f_k}_{k, \End E}} \bar \partial_{A^{f_k}_{k, \End E}}  - \bar \partial_{A^{f_k}_{k, \End E}}  \partial_{A^{f_k}_{k,\End E}}$ applied to $\Id_{\pm}$ occurs at order $k^{-q+\frac{1}{2}}$ for basic terms and order $k^{-q-\frac{1}{2}}$ for vertical terms, and so upon contracting the first potentially diagonal change is at order $k^{-q-\frac{1}{2}}$. Hence the required expansion \eqref{eq:linearDeltaIdpm} holds.
\end{proof}
Next, we deal with the crucial $q^{\textnormal{th}}$ stage. Note that Proposition \ref{prop:fullexpansion} implies that it is only \emph{after} this stage that we can remove all errors in $i\k$ via the linearisation. Thus we rely on the sign condition on $\nu_q$ and the freedom to choose $\lambda$ via the $e^{i\k}$-action to construct the approximate solution. 
\begin{lemma} 
\label{lem:discrepancyorder}
Let $E$ be a simple semistable bundle with graded object $\cG_1 \oplus \cG_2$ such that $\cG_1$ has discrepancy $q$, and such that $\nu_q (\cG_1 ) < \nu_q (E)$. Then there exists $\lambda$ such that if $t= \lambda k^{-\frac{q-1}{2}},$  there are 
\begin{itemize} 
\item gauge transformations $f_{q,k}$,
\item[]
\item constants $c_0 , \ldots , c_{q}$ ,
\end{itemize}
such that if we let 
$A_{q,k} = A_t^{f_{q,k}}$, then for all $k \gg 0$, $$ \Lambda_{\omega_k} \left( i F_{A_{q,k} } \right) = c_{q,k} \Id_E + O \left( k^{-q - \frac{1}{2}} \right),$$
where $c_{q, k} = \sum_{i=0}^{q} c_i k^{-i}.$  Moreover, the same conclusion as in Lemma \ref{lem:loworder} for $s_{q-\frac{1}{2},k}$ hold for $s_{q,k}=f_{q,k}-\Id_E$.
\end{lemma}
\begin{proof}  Let $f_{q-\frac{1}{2}, k}$ be the gauge transformation as in Lemma \ref{lem:loworder}. The curvature $F_k$ of $(h,f_{q-\frac{1}{2}, k} \pi^*\delb_{\lambda k^{-\frac{q-1}{2}}} )$ satisfies
 \begin{align*}
 \Lambda_{\omega_k} \left(iF_k\right) &= c_{q-1,k} \Id_E + k^{-q} \left(\sigma_{i\k} + \sigma_{B} +  \sigma_F   \right) + \cO(k^{-q-\frac{1}{2}}) ,
 \end{align*}
where its components $\sigma_{i\k} \in \langle \Id_{\cG_1} \rangle \oplus \langle \Id_{\cG_2} \rangle$, $\sigma_{B} \in \pi^* \Gamma_{i\k^\perp} ( B, \End_H (E, \tilde h ) )$ and $\sigma_F \in  \Gamma_0 (X, \End_H (\pi^* E, h) ) $ depend on $\lambda$. 

Write $\sigma_{i\k}  = \hat \sigma_{i\k} + \lambda^2\Pi_{i\k} \Lambda_{\om_B} ( i a\wedge a )$. We will first show that we can choose $\lambda$ such that 
\begin{align}
\label{eqn:solveV}
\hat \sigma_{i\k} + \lambda^2 \Pi_{i\k}\Lambda_{\om_B} ( i a\wedge a )  \in \langle \Id_E \rangle.
\end{align}
In the proof of Lemma \ref{lem:loworder}, we saw that the only potential change in the diagonal direction was $\cO(k^{-q-\frac{1}{2}})$ after producing the change $ f_{q-1,k} A_t$. 
Thus there is no change to the projection to $i \k$ at order $k^{-q}$, and so $\sigma_{i\k}$ is the sum of the following two terms. The first is the term $\lambda^2 \Pi_{i\k} \Lambda_{\om_B} ( i a\wedge a ) $ coming as the first non-zero term in the expansion of $\lambda^2 k^{1-q} \Pi_{i\k} \Lambda_{\omega_k } (i a \wedge a )$. The second term $\hat \sigma_{i\k}$ is coming from the product connection $\hat f_{q-1,k} A_0$. Therefore $\hat \sigma_{i \k}$ is given by the $k^{n-q}$ coefficient in the projection of $ \Lambda_{\omega_k} ( iF_{A_{0,k}} )$, which, as in the proof of Lemma \ref{lem:loworder}, is a positive multiple of
$$
\sum_{i=1}^2 \nu_q (\cG_i)  \cdot \Id_{\cG_i}.
$$
On the other hand, the projection of $\lambda^2 \Lambda_{\om_B} ( i a\wedge a ) $ to $i\k$ is a positive multiple of
$$
\sum_{i=1}^2  \frac{1}{\rank ( \cG_i) } \int_X  \left(  \trace_{\cG_i} \lambda^2 \Lambda_{\om_B} ( i a\wedge a )  \right) \omega_X^m \wedge \omega_B^n \cdot \Id_{\cG_i} .
$$
Using item $(ii)$ of Lemma \ref{lem:tracesandda}, we have that $\trace_{\cG_1} a\wedge a = - \trace_{\cG_2}  a\wedge a$, and so the above equals 
$$
C \lambda^2 \left( \frac{ 1 }{\rank{(\cG_1)}}  \cdot \Id_{\cG_1} - \frac{ 1}{\rank{(\cG_2})}  \cdot \Id_{\cG_2} \right)  ,
$$
where $C = \int_X \Lambda_{\omega_B} \trace_{\cG_1} i(a \wedge a)\: \omega_X^m \wedge \omega_B^n$. 

So solving Equation \eqref{eqn:solveV}, boils down to solving 
\begin{align*}
c \nu_q (\cG_1) + C \frac{\lambda^2}{\rank (\cG_1)} =& \tau \\
c \nu_q (\cG_2) -  C \frac{\lambda^2}{\rank (\cG_2)} =& \tau
\end{align*}
for $\lambda$ and $\tau$, where $c$ is a positive constant. Thus what we need is to be able to pick $\lambda$ such that 
\begin{align*}
\lambda^2 =& \frac{c}{C} \frac{\rank (\cG_1) \rank (\cG_2) }{\rank (\cG_1) + \rank (\cG_2) } \left( \nu_q (\cG_2 )-  \nu_q ( \cG_1 ) \right),
\end{align*}
which we can do as $C>0$ by Lemma \ref{lem:tracesandda}, Equation (\ref{eq:positiveC}), and as $ \nu_q (\cG_2 ) > \nu_q ( \cG_1 )$ by hypothesis and Corollary \ref{cor:subadditivity of slopes}.  Thus for this choice of $\lambda$ there is a constant $c_q$ such that 
 \begin{align*}
 \Lambda_{\omega_k}( iF_k) &= (c_{q-1,k} + k^{-q} c_q) \Id_E + k^{-q} \left( \sigma_F + \sigma_{B} \right) + \cO(k^{-q-\frac{1}{2}}) .
 \end{align*}

The errors $\sigma_F$ and $\sigma_{B}$ will be removed via the linearisation using Proposition \ref{prop:initialexpansion}. Since by Remark \ref{rem:horizontallaplaceandpulback} and item $(v)$ of Lemma \ref{lem:tracesandda} the image of $\Delta_{\cH, 0} $ contains $\pi^*\Gamma_{i\k^\perp} ( B, \End_H (E, \tilde h ) )$ (recall decomposition (\ref{eq:L2splitting})), there exists $s_B^{q}$ such that $\Delta_{\cH, 0} ( s_B^q ) = - \sigma_{B}$, and similarly, using Lemma \ref{lem:vertical laplacian invertible}, there exists $s_F^q$ such that $\Delta_{\scV} (s_F^q ) = - \sigma_F.$ Thus the curvature of the connection
\begin{align*}
A_{q,k} &= \exp \left( s_F^q k^{-q} + s_B^i k^{1-q} \right) \cdot A_{q-1,k} \\
&= A_t^{f_{q,k}},
\end{align*}
where 
$f_{q,k} = \exp \left( s_F^q k^{-q} + s_B^i k^{1-q} \right)\circ f_{q-\frac{1}{2}, k} $, satisfies
$$
\Lambda_{\omega_k} \left( i F_{A_{q,k} } \right) = c_{q,k} \Id_E + O \left( k^{-q- \frac{1}{2}} \right),
$$
as required.
\end{proof}

We are now ready to prove that we can obtain approximate solutions to arbitrary higher order, via an induction argument. The key is that from now on we can remove any term orthogonal to $\Id_E$, using Proposition \ref{prop:fullexpansion}. Note that we are again summing over $\frac{1}{2} \mathbb{Z}_{>0}$ rather than the integers.
\begin{proposition} 
\label{prop:highorder}
Let $E$ be a simple semistable bundle with graded object $\cG_1 \oplus \cG_2$ such that $\cG_1$ has discrepancy $q$, and such that $\nu_q (\cG_1 ) < \nu_q (E)$. Then there exists $\lambda$ such that if $t= \lambda k^{-\frac{q}{2}},$ then for each $j \in \frac{1}{2} \mathbb{Z}_{>0}$, there are 
\begin{itemize}
\item gauge transformations $f_{j,k}$,
\item[]
\item constants $\lambda_{q+\frac{1}{2}} , \ldots , \lambda_{j}$,
\item[]
\item constants $c_0 , \ldots , c_{j}$ ,
\end{itemize}
such that if we let $A_{j,k} = A_t^{f_{j,k}}$, then for all $k \gg 0$, $$ \Lambda_{\omega_k} \left( i F_{A_{j,k} } \right) = c_{j,k} \Id_E + O \left( k^{-j-\frac{1}{2}} \right).$$
\end{proposition}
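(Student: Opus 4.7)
The strategy is to argue by induction on $j \in \frac{1}{2}\Z_{\geq q}$, with base case $j = q$ given by Lemma \ref{lem:discrepancyorder} (with $\lambda$ chosen there, and all constants $\lambda_i$ vacuous for $i \leq q$). Suppose the result holds at step $j \geq q$, producing $A_{j,k}$ with
$$
\Lambda_{\omega_k}(iF_{A_{j,k}}) = c_{j,k}\,\Id_E + k^{-j-\frac{1}{2}}\sigma + O(k^{-j-1}),
$$
and decompose the error term along the orthogonal splitting
$$
\Gamma(X,\End_H(\pi^*E,h)) = i\k \oplus \pi^*\Gamma_{i\k^\perp}(B,\End_H(E,\tilde h)) \oplus \Gamma_0(X,\End_H(\pi^*E,h))
$$
as $\sigma = \sigma_{i\k} + \sigma_B + \sigma_F$, and further write $\sigma_{i\k} = c_{j+\frac{1}{2}}\,\Id_E + \tilde\sigma_{i\k}$ with $\tilde\sigma_{i\k}\in\R\cdot\Id_\pm$. (The fact that $\Id_E$ and $\Id_\pm$ span $i\k$ and are mutually orthogonal follows from $\trace(\Id_{\cG_i}\Id_{\cG_j}) = \rank(\cG_i)\delta_{ij}$.) The constant $c_{j+\frac{1}{2}}$ is absorbed into the expansion of $c_{j+\frac{1}{2},k}$.

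The vertical error $\sigma_F$ is killed by Lemma \ref{lem:vertical laplacian invertible}, choosing $s_F^{j+\frac{1}{2}}\in\Gamma_0$ with $\Delta_\scV s_F^{j+\frac{1}{2}} = -\sigma_F$. The base error $\sigma_B$ is killed by Lemma \ref{lem:horizontal laplacian invertible} applied to the graded object $\Gr(E)$ (whose holomorphic endomorphisms outside $\C\cdot\Id_E$ are generated by $\Id_\pm$, so that $\Delta_{\cH,0}$ is invertible on $\Gamma_{i\k^\perp}$), choosing $s_B^{j+\frac{1}{2}}\in\pi^*\Gamma_{i\k^\perp}(B,\End_H(E,\tilde h))$ with $\Delta_{\cH,0}s_B^{j+\frac{1}{2}} = -\sigma_B$. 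The crucial new ingredient, which has no analogue in the stable case, is the treatment of $\tilde\sigma_{i\k}$: by Equation \eqref{eq:projectionDeltaIdpm} of Proposition \ref{prop:fullexpansion}, inserting the factor $\exp(\lambda_{j+\frac{1}{2}}\,\Id_\pm\, k^{q-j-\frac{1}{2}})$ into $f_{j,k}$ contributes, at order $k^{-j-\frac{1}{2}}$, an $i\k$-term equal to $c'\lambda^2\,\lambda_{j+\frac{1}{2}}\,\Id_\pm$ with $c'\neq 0$. Since $\lambda$ was already fixed at the $q$-th stage, $\lambda_{j+\frac{1}{2}}$ is uniquely determined to cancel $\tilde\sigma_{i\k}$.

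The main obstacle, and what requires the most care, is the bookkeeping ensuring that these three corrections together affect the curvature only at order $k^{-j-\frac{1}{2}}$ and below, without reintroducing errors at lower orders. For $s_F^{j+\frac{1}{2}}$ and $s_B^{j+\frac{1}{2}}$, this is handled as in the stable case via Proposition \ref{prop:initialexpansion} and an analogue of Lemma \ref{lem:quadratic term is O(k-1)}, since these corrections enter at order $k^{-j-\frac{1}{2}}$ or smaller and the quadratic part of the Hermite--Einstein operator gains a factor $k^{-1}$. For the $\Id_\pm$-correction, Equation \eqref{eq:linearDeltaIdpm} shows that the off-diagonal piece $\sigma_{o.d.}$ appears in $\Delta_k(\Id_\pm)$ at order $k^{-(q+3)/2}$, so the resulting off-diagonal perturbation of the curvature occurs at order $k^{q-j-\frac{1}{2}}\cdot k^{-(q+3)/2} = k^{-j-2}$, strictly below the order being corrected; quadratic and higher contributions of the exponential land at still higher orders because $k^{q-j-\frac{1}{2}}$ is small. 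Finally, one checks that the hypotheses of Proposition \ref{prop:fullexpansion} on the perturbation $f_k$ (diagonal up to order $k^{-(q-1)/2}$, base part $O(k^{-1/2})$, vertical part $O(k^{-3/2})$) are preserved throughout the induction, since all corrections introduced beyond step $q$ enter at order $k^{-q-\frac{1}{2}}$ or smaller. This closes the induction and yields the desired expansion to arbitrary order.
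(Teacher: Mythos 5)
Your overall strategy is the same as the paper's: induct over half-integer orders, split the error along $i\k \oplus \pi^*\Gamma_{i\k^\perp} \oplus \Gamma_0$, kill the vertical and base pieces with $\Delta_\scV$ and $\Delta_{\cH,0}$, and kill the $\Id_\pm$-component by inserting $\exp(\lambda_i\,\Id_\pm\,k^{q-i})$ and using Equation \eqref{eq:projectionDeltaIdpm}. However, there is a genuine gap precisely at the step you describe as "bookkeeping". You claim the off-diagonal piece of $\Delta_k(\Id_\pm)$, which enters at order $k^{-\frac{q+3}{2}}$, produces a curvature error at order $k^{q-j-\frac{1}{2}}\cdot k^{-\frac{q+3}{2}} = k^{-j-2}$; the correct exponent is $q-j-\tfrac{1}{2}-\tfrac{q+3}{2} = \tfrac{q}{2}-j-2$, not $-j-2$. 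For $q\geq 3$ this sits at or \emph{above} the order $k^{-j-\frac{1}{2}}$ you are correcting, and for $q\geq 4$ it strictly pollutes orders that were already cleaned at earlier stages of the induction (and in fact, by \eqref{eq:linearDeltaIdpm}, one gets a whole string of off-diagonal errors at half-integer orders ranging from $k^{\frac{q-3}{2}-j}$ down to $k^{-j-\frac{1}{2}}$ in the paper's indexing). So the $\Id_\pm$-insertion does \emph{not} leave the lower-order terms untouched, and your induction as stated breaks for $q\geq 3$.

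Handling this re-introduced error is the main content of the paper's proof of the proposition: one must go back and re-adjust the base corrections $s_B^i$ for $i\in\{j-\frac{q-3}{2},\ldots,j-\frac{1}{2}\}$ to remove the new off-diagonal terms (they lie in the image of $\Delta_{\cH,0}$), and then verify two compatibility facts: (i) these re-adjustments do not change the projection onto $\Id_\pm$ up to order $k^{-j-\frac{1}{2}}$, because $\Delta_k(s_B)$ is orthogonal to $\Id_\pm$ up to order $k^{-\frac{q+1}{2}}$ (the connection being a product connection to that order), so the relevant products land at order $k^{-j-1}$; and (ii) the nonlinear terms in the curvature expansion generated by these re-adjustments enter at order $2\bigl(1+\frac{q-3}{2}-j\bigr) = q-2j-1 < -j-1$ since $j>q$, so only the linear contributions matter. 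Without this cascade of corrections and the accompanying orthogonality/quadratic estimates, the inductive hypothesis is not preserved, so your argument as written does not close.
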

\begin{proof}
The proof is via induction on $j$, noting that Lemma \ref{lem:loworder} and \ref{lem:discrepancyorder} settle the case $j \leq q$. We now assume that $j > q $. We will use the asymptotics of the linearisation given by Propositions \ref{prop:initialexpansion} and \ref{prop:fullexpansion}.

Write the expansion as 
$$
\Lambda_{\omega_k} \left( i F_{A_{j-\frac{1}{2}, k} } \right) = c_{j-\frac{1}{2},k} \Id_E + k^{-j} \left( \sigma^j_{i\k} + \sigma^j_{B} + \sigma^j_F \right) + O(k^{-j-\frac{1}{2}}),
$$
where $\sigma^j_{i \k } = c_j \Id_E + d_j \Id_{\pm}$ for some constants $c_j, d_j$. From Equation \eqref{eq:projectionDeltaIdpm}, we have that for a suitable choice of $\lambda_j$
\begin{align*}
\Lambda_{\omega_k} \left( i F_{\exp ( k^{q-j} \lambda_j \Id_{\pm} ) A_{j-\frac{1}{2}, k} } \right) =& c_{j-\frac{1}{2},k} \Id_E + k^{-j} \left( \sigma_F + \sigma_{B} + c_j \Id_E \right) \\
&+ k^{q-j-\frac{q+3}{2}} \sum_{r=0}^{q-2} k^{-\frac{r}{2}} \tilde \sigma_{r} +  O(k^{-j-\frac{1}{2}}),
\end{align*}
for some off-diagonal terms $\tilde \sigma_{r}$ that depend on $\lambda_j$ and $\lambda$. Note that the exponent occurring in the second line equals $\frac{q-3}{2} - j.$

We now have to remove this error re-introduced to the previous steps. The sections $\tilde \sigma_r$ have a basic and a vertical contribution, and we'll deal with the basic ones, the argument for vertical ones being similar (using $\Delta_\cV$ instead). Note that basic (and vertical) components of an off-diagonal endomorphism are off-diagonal, as can be seen in local trivialisations as in Section \ref{sec:decompositions}.  Then, the basic contributions of the $\tilde \sigma_r$ being off-diagonal, they are in the image of $\Delta_{\cH,0}$. Thus if we perturb by $k^{1 + \frac{q-3}{2} - j} s_B$ for some $s_B$, we can remove the error at the $k^{ \frac{q-3}{2} - j} $ stage. While this changes the lower order terms, note that $ \Delta_{k, j-\frac{1}{2}}(s_B)$  is orthogonal to $\Id_{\pm}$ up to the power $k^{-\frac{q+1}{2}}$, as the connection is a product connection up to this order. Thus $\Delta_{k, j-\frac{1}{2} } (k^{1 + \frac{q-3}{2} - j} s_B)$ is orthogonal to $\Id_{\pm}$ up to order $k^{1 + \frac{q-3}{2} - j -\frac{q+1}{2}} = k^{- j - 1}.$ Moreover, the higher order terms in the Taylor expansion of the new curvature only act on terms of at most order $2(1+ \frac{q-3}{2} -j) = q-2j -1 < -j-1$, as $j>q$. Hence only the linear terms contribute to the change in the $\tilde \sigma_r$.

From this, we see that can remove the error $k^{q-j-\frac{q+3}{2}}  \tilde \sigma_{0}$ (for $r=0$) so that the new lower order error terms are still in the image of $\Delta_{\cH,0}$ (or $\Delta_\cV$) up to order $k^{-j- \frac{1}{2}}$. In other words, the $\tilde \sigma_r$ are perturbed to sections that still are orthogonal to $\Id_{\pm}$. We can therefore continue like this, to remove all the newly introduced errors, until we end up with an error $k^{-j} \left( \hat \sigma^j_F + \hat \sigma^j_{B} \right)$, say, which can now be removed via an element of the form $k^{-j} s_F + k^{1-j} s_{B}.$
\end{proof}


\subsection{Approximate solutions: general case}
\label{sec:approximategeneral}
We turn back now to the general setting of Section \ref{sec:basestructurebundle}, with a graded object that has $\ell$ stable components. Relying on Section \ref{sec:ssapproximate}, we will focus on the main differences that appear in the construction of the approximate solutions when more components are involved. The strategy is to first build the approximate solutions to order $q$ by considering the projections of the contracted curvatures onto each $\End(\cF_j)$, inductively on $j$, and then improve to any order by using the linearisation on the full space $\End(E)$. The main new features come from the fact that the various subbundles $\cF_j\subset E$ may come with different discrepancy orders, so that one has to make sure that the perturbations used to improve the $\End(\cF_j)$ component of the solutions don't introduce bad terms on its $\End(\cF_{j+1})$ projection. 

Denote by $q_i$ the discrepancy order of $\cF_i$ for each $i\in[\![ 1, \ell-1]\!]$. Assuming $E$ to be asymptotically stable with respect to subbundles coming from the Jordan--H\"older filtration, we have $\nu_p(\cF_i)<\nu_p(E)$ for $p\leq q_i-1$ and $\nu_{q_i}(\cF_i)<\nu_{q_i}(E)$, for each $i\in[\![ 1, \ell-1]\!]$. As in the two components case, we will introduce a gauge transformation in $\Aut(\Gr(E))$ to control the orders at which the various $\gamma_{ij}$'s will appear in the perturbation from $\delb_0$ to $\delb_E$. We define ${\bf g}_{\underline{\lambda}, \underline{m}}\in G$ by 
\begin{equation}
 \label{eq:initialgaugetransformation}
{\bf g}_{\underline{\lambda}, \underline{m}} = 
\Id_{\cG_1} + \lambda_1 k^{-m_1} \Id_{\cG_2} + \lambda_1\lambda_2k^{-m_1-m_2}\Id_{\cG_3} +\ldots + (\Pi_{i=1}^{\ell-1}\lambda_i) k^{-m_1 -\ldots-m_{\ell-1}} \Id_{\cG_\ell},
\end{equation}
where $\underline{m}=(m_i)_{1\leq i \leq \ell-1}$ is chosen so that for all $i$,
$$
2m_i + 1 = q_i,
$$ 
and the constants $\underline{\lambda}=(\lambda_i)_{1\leq i \leq \ell-1}\in(\R^*)^{\ell-1}$ will be determined later on. Denote by 
$$\gamma_k:={\bf g}_{\underline{\lambda}, \underline{m}}\cdot \gamma. 
$$
Then, the operators $\delb_E$ and $\delb_0+\gamma_k$ are gauge equivalent for any $\underline{\lambda}$, and we will use the latter on (the smooth bundle underlying) $E$ as our starting point in the construction of approximate solutions. We also denote by $A_k$ the Chern connection on $(\pi^*E, \pi^*(\delb_0+\gamma_k), h)$ and by $\Delta_k$ its associated Laplacian with respect to $\omega_k$. We will first derive some estimates for the action of $\Delta_k$ before constructing the approximate solutions.

\subsubsection{The linear operator and its action}
Proposition \ref{prop:initialexpansion} admits the following straightforward generalisation:
\begin{proposition}
\label{prop:linleadingorder}
Under the above setup, there is an expansion
\begin{equation}\label{eqn:laplaceexpansiongeneral}
\Delta_{k} = \Delta_{\Gr(E),k} + O(k^{-\frac{3}{2}}) =  \Delta_{\scV} +   k^{-1} \Delta_{\cH,0} + O(k^{-\frac{3}{2}}) ,
\end{equation}
where $\Delta_{\Gr(E),k}$ is the Laplacian of the graded object with respect to the initial complex structure $\pi^* \dbar_0$, and $ \Delta_{\cH,0} $ is the horizontal Laplacian with respect to that complex structure. The same expansion also holds at a Chern connection on $\pi^* E \to X$ coming from a complex structure $f_k \cdot \pi^*(\delb_0+\gamma_k)$ provided $f_k = \Id_E+ s_k$ for some $s_k \in \Gamma \left( X, \End  \pi^* E \right)$  whose base component $s_{B,k}$ satisfies $s_{B,k} = O(k^{-\frac{1}{2}})$ and whose vertical component $s_{F, k}$ satisfies $s_{F,k} = O(k^{-\frac{3}{2}})$.
\end{proposition}
Hence, the cokernel of the truncation of $\Delta_k$ to order $k^{-1}$ is given by $i\k\subset \g$. We will then need to know precisely the orders of the projections onto $i\k$ of the various terms involved in $\Lambda_{\om_k}(iF_{A_k})$ and its variations. Recall that
\begin{equation}
 \label{eq:expansioncurvaturegeneral}
\Lambda_{\om_k}(iF_{A_k})=\Lambda_{\om_k}(iF_0)+\Lambda_{\om_k}(d_{A_0} a_k) + \Lambda_{\om_k}(ia_k\wedge a_k)
\end{equation}
where $a_k$ is defined by $A_k=A_0+a_k$ or equivalently by $a_k=\gamma_k-\gamma_k^*$. Note then that the Laplacian of $\pi^*(\delb_0+\gamma_k)$ takes the following form:
\begin{equation}
\label{eq:fullexpansionlaplacian}
\begin{array}{ccc}
 \Delta_k & = & \Delta_{\Gr(E),k}\\
          & + &i k^{-1} \:\Lambda_{\om_B} \left(\delb_{A_{0}^{\End E}} ([\gamma_{k}^*,\cdot ])-[\gamma_{k}^*,\delb_{A_{0}^{\End E}}  ]\right)\\
          & + & i k^{-1} \: \Lambda_{\om_B} \left(\partial_{A_{0}^{\End E}}([\gamma_{k},\cdot])-[\gamma_{k},\partial_{A_{0}^{\End E}} ] \right) \\
& + & ik^{-1} \Lambda_{\omega_B} ( [\gamma_{k}, [\gamma_{k}^*, \cdot ] ] - [\gamma_{k}^*, [\gamma_{k}, \cdot ] ] ) \\
& + &  \cO(k^{-\nu} ),
 \end{array}
\end{equation}
where 
$$
\begin{array}{ccc}
 \cO(k^{-\nu})  & = &i(\Lambda_{\omega_k} - k^{-1} \:\Lambda_{\om_B} )\left(\delb_{A_{0}^{\End E}} ([\gamma_{k}^*,\cdot ])-[\gamma_{k}^*,\delb_{A_{0}^{\End E}}  ]\right)\\
          & + & i(\Lambda_{\omega_k}- k^{-1} \: \Lambda_{\om_B} )\left(\partial_{A_{0}^{\End E}}([\gamma_{k},\cdot])-[\gamma_{k},\partial_{A_{0}^{\End E}} ] \right) \\
& + & i(\Lambda_{\omega_k}- k^{-1} \: \Lambda_{\om_B} ) ( [\gamma_{k}, [\gamma_{k}^*, \cdot ] ] - [\gamma_{k}^*, [\gamma_{k}, \cdot ] ] ) 
 \end{array}
$$
stands for higher order terms.
 The following straightforward lemma gives the orders of the various terms involving $\gamma_k$ (recall that $2m_i+1=q_i$, and that applying $\Lambda_{\om_k}$ to a pulled back form is $\cO(k^{-1})$):
\begin{lemma}
 \label{lem:ordergammas}
 Let $(i,p)\in [\![ 1, \ell ]\!]^2$, $i< p$. Then
 $$
 \displaystyle (\gamma_k)_{ip}=(\Pi_{l=i}^{p-1}\lambda_l) k^{-m_i-m_{i+1}\ldots-m_{p-1}}\gamma_{ip}
 $$
 and
 $$
 (\gamma_k)_{ip}\wedge (\gamma_k^*)_{pi}=(\Pi_{l=i}^{p-1}\vert \lambda_l \vert^2) k^{-(q_i-1)-(q_{i+1}-1)\ldots-(q_{p-1}-1)}\gamma_{ip}\wedge \gamma^*_{pi}.
 $$
 Hence,
  $$
\Lambda_{\omega_k}( (\gamma_k)_{ip}\wedge (\gamma_k^*)_{pi})=(\Pi_{l=i}^{p-1}\vert \lambda_l \vert^2)\:\cO( k^{-(\max(q_i,\ldots,q_{p-1})}).
 $$
\end{lemma}
From these estimates, we will deduce the action of the perturbed Laplacian on sections. First, for sections on the diagonal we have (recall that $\Pi_{\langle\Id_{\cF_j}\rangle}$ denotes the $\langle\Id_{\cF_j}\rangle$ component using the projection (\ref{eq:L2splitting})):
\begin{lemma}
 \label{lem:linearisationdiagonals}
 Let $s\in \End(\cG_i)$ and $j\in[\![ 1, \ell ]\!]$. Then
 $$
 \Pi_{\langle\Id_{\cF_j}\rangle} \Delta_k(s) = \cO(k^{-q_j}).
 $$
 If we set $\delb_k=\pi^*(\delb_0+\gamma_k)$ and $\partial_k=\pi^*(\partial_0+\gamma_k^*) $, we also have
 $$
 \Pi_{\langle\Id_{\cF_j}\rangle} \Lambda_{\om_k}\left((\partial_k s-\bar \partial_k s)\wedge (\partial_k s-\bar \partial_k s) \right)=\cO(k^{-q_j}).
 $$
\end{lemma}

\begin{proof}
We first deal with the projection of $\Delta_k(s)$.
 Consider the various terms in the expansion of the Laplacian given by Equation (\ref{eq:fullexpansionlaplacian}). By construction, $\Delta_{\Gr(E),k}$ is self-adjoint and $\Id_{\cF_j}$ is in its cokernel. Thus $\Pi_{\langle\Id_{\cF_j}\rangle}\Delta_{\Gr(E),k}s=0$.  The terms coming from first order partial derivatives of $[\gamma_k,s]$ or $[\gamma_k^*,s]$ are off-diagonal, hence orthogonal to $\Id_{\cF_j}$ as well. So the highest order terms we are after will come from terms ``quadratic in $\gamma$'' of the form $[\gamma_{k}, [\gamma_{k}^*, s ] ]$. We will use the notation $\Lambda_k$ for $\Lambda_{\om_k}$, and argue depending on two cases:
 
\textbf{Case 1:  $i\leq j$.}

 The orthogonal projection of $\Lambda_k[\gamma_{k}, [\gamma_{k}^*, s ] ]$ onto $\langle\Id_{\cF_j}\rangle$ is given by taking the trace of its $\cF_j$ component. As $\mathfrak{sl}(\cF_j)$ is an ideal for the bracket, the projection of $\Lambda_k[\Pi_{\End(\cF_j)}\gamma_{k}, [\Pi_{\End(\cF_j)}\gamma_{k}^*, s ] ] $ to $\langle\Id_{\cF_j}\rangle$ will vanish. Hence, the only term that will contribute to $\Pi_{\langle\Id_{\cF_j}\rangle} \Lambda_k[\gamma_{k}, [\gamma_{k}^*, s ] ]$ will be of the form $\Lambda_k ((\gamma_k)_{ip}\wedge(\gamma_k^*)_{pi}) s $, with $p>j$. By Lemma \ref{lem:ordergammas}, these terms are $\cO(k^{-q_j})$ and the result follows.
 
 \textbf{Case 2 :  $i> j$.}
 
 As we project onto $\langle\Id_{\cF_j}\rangle$, assuming $j<i$, the terms of the form $(\gamma_k)_{ip}\wedge(\gamma_k^*)_{pi} s$ or $(\gamma_k^*)_{ip}\wedge(\gamma_k)_{pi} s$, etc will not contribute. We are left with the terms of the form $\Lambda_k((\gamma_k^*)_{pi}\wedge( s (\gamma_k)_{ip}))$ or $\Lambda_k((\gamma_k)_{pi}\wedge (s (\gamma_k^*)_{ip}))$, with $p\leq j$. The first terms vanish as $p\leq j < i$ implies $(\gamma_k)_{ip}=0$. The second are $\cO(k^{-q_j})$ because $p\leq j < i$ and using Lemma \ref{lem:ordergammas}.  The proof for the estimate of $\Pi_{\langle\Id_{\cF_j}\rangle} \Delta_k(s)$ is complete. 
 
 For the other estimate, we have
 $$
  \Lambda_{k}\left((\partial_k s-\bar \partial_k s)\wedge (\partial_k s-\bar \partial_k s) \right)  =  -\Lambda_k(\partial_k s\wedge \delb_k s)-\Lambda_k(\delb_k s \wedge \partial_k s)
 $$
 and we can expand (omitting pullbacks to ease notations)
 $$
 \begin{array}{ccc}
   \Lambda_k(\partial_k s\wedge \delb_k s) & = & \Lambda_k(\partial_0 s\wedge \bar \partial_0 s) \\
    & + & \Lambda_k  (\partial_0 s \wedge [\gamma_k,s]) +\Lambda_k([\gamma_k^*, s] \wedge \delb_0 s) \\
    & + & \Lambda_k([\gamma_k^*,s]\wedge[\gamma_k,s]).
 \end{array}
 $$
 Each of the three terms from the right hand side of this last equality can be dealt with separately. For the first, if $j<i$, as $A_0$ is a product connection, we deduce $\partial_0 s\wedge \bar \partial_0 s\in\Om^{1,1}(X,\End(\cG_i))$ and thus
 $$
 \Pi_{\langle\Id_{\cF_j}\rangle}\Lambda_k(\partial_0 s\wedge \bar \partial_0 s)=0.
 $$
 If $i\leq j$, the full expansion for the curvature $F_{\exp(s)\cdot A_0}$ of the Chern connection of $\exp(s)\cdot \pi^*\delb_0$ is given by  (see e.g. \cite[Section 1]{BuSchu}) :
 $$
 F_{\exp(s)\cdot A_0}=F_{A_0} + (\bar \partial_0\partial_0-\partial_0\bar \partial_0)s+\left((\partial_0 s-\bar \partial_0 s)\wedge (\partial_0 s-\bar \partial_0 s) \right).
 $$
 By Chern--Weil theory, the $\Id_{\cF_j}$-components of the contractions of $F_{\exp(s)\cdot A_0}$ and $F_{A_0}$ agree. As $\Id_{\cF_j}$ lies in the kernel of $\Delta_{\Gr(E),k}=\Lambda_k(\delb_0\partial_0-\partial_0\delb_0)$, we conclude that
 $$
  \Pi_{\langle\Id_{\cF_j}\rangle}(\Lambda_k(\partial_0 s\wedge \delb_0 s)-\Lambda_k(\delb_0 s \wedge \partial_0 s))=0.
 $$
 For the second term, using that $s$ is diagonal and $(\gamma_k)_{ii}=0$, we obtain 
 $$
 \Pi_{\langle\Id_{\cF_j}\rangle}\Lambda_k  (\partial_0 s \wedge [\gamma_k,s])=\Pi_{\langle\Id_{\cF_j}\rangle}\Lambda_k([\gamma_k^*, s] \wedge \delb_0 s) =0.
 $$
 Finally, the last term $\Pi_{\langle\Id_{\cF_j}\rangle} \Lambda_k([\gamma_k^*,s]\wedge[\gamma_k,s])$ is quadratic in $\gamma$, and can be dealt with as for the similar quadratic terms in $\Delta_k(s)$.
\end{proof}
We prove next a similar result for off-diagonal gauge transformations. As in the two components case, these gauge transformations will first be introduced to remove error terms of the form $\Lambda_kd_{A_0} a_k$ in (\ref{eq:expansioncurvaturegeneral}). Let then $-\nu(i,p)$ be the order of the $(i,p)$ component of $d_{A_0}a_k$ (that is $\nu(i,p)=m_i+\ldots+m_{p-1}$ by Lemma \ref{lem:ordergammas}). We will then use the action of $\Delta_k$ on off-diagonal sections of the form $k^{-\nu(i,p)-\frac{1}{2}}s$ (the extra $-\frac{1}{2}$ will come from $(iv)$ Lemma \ref{lem:tracesandda} ): 
\begin{lemma}
 \label{lem:linearisationoffdiagonals}
 Let $s\in \Hom(\cG_p,\cG_i)$, with $i<p$, and $j\in[\![ 1, \ell ]\!]$. Then
 $$
 \Pi_{\langle\Id_{\cF_j}\rangle} \Delta_k(k^{-\nu(i,p)-\frac{1}{2}}s) =\cO(k^{-q_j-\frac{1}{2}})
 $$
 and
  $$
 \Pi_{\langle\Id_{\cF_j}\rangle} \Lambda_{\om_k}\left(k^{-2\nu(i,p)-1}\left((\partial_k s-\bar \partial_k s)\wedge (\partial_k s-\bar \partial_k s) \right)\right)=\cO(k^{-q_j-\frac{1}{2}}).
 $$
 A similar statement holds when $p< i$. 
\end{lemma}
\begin{proof}
We give the proof assuming $i<p$, as the case $p<i$ is similar. We start estimating $\Pi_{\langle\Id_{\cF_j}\rangle} \Delta_k(k^{-\nu(i,p)-\frac{1}{2}}s)$. The proof follows the one of Lemma \ref{lem:linearisationdiagonals}, but this time the highest order contributions come from terms ``linear in $\gamma$'' of the form $[\gamma_k, s]$ (or $ [\gamma_k^*, s]$). To simplify exposition we will neglect the action of $\partial_{A_{0}^{\End E}}$ or $\delb_{A_{0}^{\End E}}$ in the argument as these operators are of order zero and preserve the matrix block decomposition.

\textbf{Case 1 :  $i\leq j$.}

  Arguing as in the proof of Lemma \ref{lem:linearisationdiagonals}, Case $1$, the only non-zero contributions that we will obtain when projecting onto $\langle\Id_{\cF_j}\rangle$ are coming from terms of the form $ (\gamma_k^*)_{pi} s$ with $p>j$, which are of order 
 $k^{-\nu(i,p)}$. Tracing with $\om_k$ will add an extra $k^{-1}$ contribution. Then, as $i\leq j <p$ and $q_j=2m_j+1$, we have
 $$
 (k^{-1}(\gamma_k^*)_{pi} )(k^{-\nu(i,p)-\frac{1}{2}}s)=\cO(k^{-q_j-\frac{1}{2}}),
 $$
 and the result follows.
 
 \textbf{Case 2 :  $i> j$.}
 
 As $i<p$, $(\gamma_k)_{pi}=0$. Then, as $j<i<p$, the projection onto $\langle\Id_{\cF_j}\rangle$ of terms coming from $(\gamma_k^*)_{pi} s$ or $s (\gamma_k^*)_{pi}$ vanish. Hence, there are no contributions from linear terms in $\gamma_k$. Then, arguing as in the proof of Lemma \ref{lem:linearisationdiagonals}, we see that the contributions from quadratic terms in $\gamma_k$ are $\cO(k^{-q_j-\frac{1}{2}})$ and the proof of the first estimate is complete.
 
 The second estimate can be obtained with similar considerations, noting that this time the highest order contribution comes from terms linear in $\gamma$ as $$\Pi_{\langle\Id_{\cF_j}\rangle} \Lambda_k(\partial_0 s\wedge \bar \partial_0 s)=0$$ from the fact that $s\in\Hom(\cG_p,\cG_i) $.
\end{proof}
\begin{remark}
 \label{rem:pertubationODE}
 The Lemmas \ref{lem:linearisationdiagonals} and \ref{lem:linearisationoffdiagonals} ensure that we will not affect the $\langle\Id_{\cF_j}\rangle$ component of the contracted curvature to order $q_j$ while killing off errors orthogonal to $\g$ (note that we use sections $s\in \End(\cG_i)$ that are $\cO(k^{-\frac{1}{2}})$). Indeed, the full expansion for the curvature  of $\exp(s)\cdot A_k$ reads  (see e.g. \cite[Section 1]{BuSchu}) :
 $$
 F_{\exp(s)\cdot A_k}=F_{A_k} + (\bar \partial_k\partial_k-\partial_k\bar \partial_k)s+\left((\partial_k s-\bar \partial_k s)\wedge (\partial_k s-\bar \partial_k s) \right).
 $$
 Note that the linear part of $\Lambda_{\om_k}F_{\exp(s)\cdot A_k}$ is $\Delta_k$.
 Then, from Lemmas \ref{lem:linearisationdiagonals} and \ref{lem:linearisationoffdiagonals}, for appropriate $\nu$ (that is $\nu\geq \frac{1}{2}$ if $s$ is diagonal, and $\nu\geq \nu(i,p)+\frac{1}{2}$ if $s$ is off-diagonal in position $(p,i)$), we will have 
 $$
 \Pi_{\langle\Id_{\cF_j}\rangle}\Lambda_{\om_k}( F_{\exp(k^{-\nu}s)\cdot A_k})=\Pi_{\langle\Id_{\cF_j}\rangle} (\Lambda_{\om_k} F_{A_k})+\cO(k^{-q_j-\frac{1}{2}}).
 $$
\end{remark}
Finally, we compute the Laplacian on sections of the form $\Id_i:=\Id_{\cG_i}$. They will be used to remove errors in $\g=\aut(\Gr(E))$. We introduce, for $(p,l)\in [\![1,\ell]\!]^2$:
 $$
 \Id_{pl}=(\frac{1}{\rk\,\cG_p}\Id_{\cG_p}-\frac{1}{\rk\,\cG_{l}}\Id_{\cG_{l}}).
 $$
\begin{lemma}
 \label{lem:linearisationIdis}
  For all $j\in [\![ 1, \ell-1]\!]$, there is a negative constant $a_{j,j+1}$ such that
\begin{equation}
 \label{eq:deltaactionIdisonautg}
\Pi_{\g}\Delta_k(\Id_{\cF_j}) =  a_{j,j+1}k^{-q_j}\Id_{j,j+1}+\cO(k^{-q_j-1}).
\end{equation}
\end{lemma}
The constants $a_{j,j+1}$, independent of $k$, will actually depend on $\underline{\lambda}$, but will play a role only after fixing $\underline\lambda$, so this dependence will not affect the argument.
\begin{proof}
As in the proof of Lemma \ref{lem:linearisationdiagonals}, the only contributions will come from quadratic terms in $\gamma_k$. To ease notations, we will omit subscript $k$ for the moment.
As we need more than simply the orders of the terms $\Pi_\g\Delta(\Id_i)$,
 we start by recalling the precise formula for the differential of the term 
 $$
 (a\wedge a)^{1,1}=((\gamma-\gamma^*)\wedge(\gamma-\gamma^*))^{1,1}=-(\gamma\wedge \gamma^*+\gamma^*\wedge \gamma)$$
 with respect to an infinitesimal action of some gauge transformation $\exp(s)$. This gives (recall we consider a right action) :
 $$
 -([\gamma,s]\wedge \gamma^* + \gamma\wedge [\gamma,s]^*+ [\gamma,s]^*\wedge \gamma + \gamma^*\wedge [\gamma,s]).
 $$
 Using $[\gamma,s]=\gamma s - s\gamma$ and $[\gamma,s]^*=[s^*, \gamma^*]$, this equals:
 $$
 \begin{array}{c}
 s (\gamma \wedge \gamma^*) -\gamma\wedge( s \gamma^*) + (\gamma\wedge \gamma^*)  s^* - \gamma \wedge( s^* \gamma^*)\\
 + \gamma^*\wedge (s^*  \gamma) - s^* (\gamma^* \wedge \gamma) + \gamma^*\wedge( s  \gamma) -( \gamma^* \wedge \gamma) s.
 \end{array}
 $$
 Assuming $s$ to be an hermitian endomorphism we obtain :
 $$
  s (\gamma \wedge \gamma^* )-2\gamma\wedge( s \gamma^*) + (\gamma\wedge \gamma^*) s +2 \gamma^*\wedge( s  \gamma )- s( \gamma^* \wedge \gamma)  - (\gamma^* \wedge \gamma) s.
 $$
 From now on we set $s=\Id_i$ for $i\in[\![1, \ell]\!]$. We first compute the $\End(\cG_1)\oplus \ldots \oplus \End(\cG_\ell)$ component of the contraction of last formula. After re-ordering terms, and using $\gamma_{ij}=0$ for $j\leq i$ we obtain:
$$
2\Lambda (\gamma \wedge \gamma^*)_{ii} -2\Lambda(\gamma^* \wedge \gamma)_{ii}-2\sum_{p<i}\Lambda (\gamma\wedge \gamma^*)_{pp}  +2 \sum_{i<p}\Lambda(\gamma^* \wedge \gamma)_{pp}.
$$
 We now compute the $\g$-component, which we denote $(\Delta(\Id_i))_\g$. Denoting by $\tr_{\cG_q}$ the trace of the $\langle\Id_{\cG_q}\rangle$ component, we obtain :
 $$
 \begin{array}{ccc}
 (\Delta(\Id_i))_\g & = & 2\left(\displaystyle \sum_{i<p} \tr_{\cG_i}\Lambda( \gamma_{ip}\wedge\gamma_{pi}^*) \frac{\Id_{\cG_i}}{\rank(\cG_i)} + \tr_{\cG_p}\Lambda (\gamma_{pi}^*\wedge\gamma_{ip}) \frac{\Id_{\cG_p}}{\rank(\cG_p)} \right) \\
  & & \\
  & & - 2\left(\displaystyle \sum_{p<i} \tr_{\cG_i}\Lambda (\gamma_{ip}^*\wedge\gamma_{pi}) \frac{\Id_{\cG_i}}{\rank(\cG_i)} + \tr_{\cG_p}\Lambda (\gamma_{pi}\wedge\gamma_{ip}^*) \frac{\Id_{\cG_p}}{\rank(\cG_p)} \right) \\
  & & \\
  & = &  2(\displaystyle \sum_{i<p} A_{ip}\Id_{ip})  - 2(\displaystyle \sum_{p<i} A_{pi} \Id_{pi} )
 \end{array}
 $$
 where we set $A_{ip}=\tr_{\cG_i}\Lambda( \gamma_{ip}\wedge\gamma_{pi}^*)=-\tr_{\cG_p}\Lambda (\gamma_{pi}^*\wedge\gamma_{ip})$. Note that $A_{ip}$ depends on $k$ and $\underline\lambda$. Using now Lemma \ref{lem:ordergammas}, and the fact that $\gamma_{i,i+1}\neq 0$ (recall Lemma \ref{lem:gammaijnonzero} and Lemma \ref{lem:tracesandda}), we obtain by integration over $X$ :
 \begin{equation}
  \label{eq:pigdeltaIdiprecise}
 \begin{array}{ccc}
 \Pi_{\g} \Delta_k(\Id_i) & = & a_{i,i+1} k^{-q_i}\Id_{i,i+1} -  a_{i-1,i} k^{-q_{i-1}}\Id_{i-1,i} \\
  &  & \\
                 & + & \displaystyle \sum_{i+2\leq p } a_{i,p} k^{-q_i-(q_{i+1}-1)-\ldots-(q_{p-1}-1)} \Id_{ip} \\
                 & & \\
                  & - & \displaystyle  \sum_{p\leq i-2 } a_{p,i} k^{-q_{i-1}-(q_{i-2}-1)-\ldots-(q_{p}-1)} \Id_{pi} +\cO(k^{-\nu})
 \end{array}
 \end{equation}
 for constants $a_{i,i+1}<0$ and $a_{i-1,i}<0$ (setting $a_{0,1}=a_{\ell,\ell+1}=0$ by convention), and constants $a_{p,l}\in \R_-$ for $\vert p-l\vert\geq 2$, and where $\cO(k^{-\nu})$ stands for higher order terms, where the order depends on the position in matrix block decomposition. The result follows by summing over $i$ from $1$ to $j$.
\end{proof}

\begin{remark}
\label{rem:samelemmasperturbedconnections}
Using the same argument as in Proposition \ref{prop:initialexpansion}, we have that the conclusions of Lemma \ref{lem:linearisationdiagonals}, Lemma \ref{lem:linearisationoffdiagonals} and Lemma \ref{lem:linearisationIdis} also hold when applying the Laplacian of a complex structure $f_k \cdot \pi^*(\delb_0+\gamma_k)$ provided $f_k = \Id_E +s_k$ for some $s_k \in \Gamma \left( X, \End \pi^* E \right)$  whose base component $s_{B,k}$ satisfies $s_{B,k} = O(k^{-\frac{1}{2}})$ and whose vertical component $s_{F, k}$ satisfies $s_{F,k} = O(k^{-\frac{3}{2}})$.
\end{remark}

\subsubsection{The inductive argument}
\label{sec:inductiveapproximate}
We now proceed to the construction of the approximate solutions. We begin by solving the equation up to the maximal discrepancy order of $E$. In order to do so, we first need to fix the constants $\underline{\lambda}$. This is where the choice of $\underline{m}=(m_1,\ldots,m_{\ell-1})$ in (\ref{eq:initialgaugetransformation}) to define ${\bf g}_{\underline{\lambda}, \underline{m}}$ turns useful, and also where we use asymptotic stability with respect to subbundles coming from the Jordan--H\"older filtration.
\begin{proposition}
 \label{prop:fixingthe lambdas}
 There exists $\underline{\lambda}=(\lambda_1,\ldots,\lambda_{\ell-1})$ independent of $k$ such that for all $j\in [\![ 1, \ell-1 ]\!]$,
\begin{align*}
\Pi_{\langle \Id_{\cF_j}\rangle }  \Lambda_{\om_k} \left( i F_{A_k} \right) = \mathcal{C}\,k^{-n}\mu_k (E)\Id_{\cF_j} + O(k^{-q_j-\frac{1}{2}}),
\end{align*}
where
$$
\cC=\frac{2\pi(m+n)}{\Vol(F)\Vol(B)}.
$$
\end{proposition}
\begin{proof}
Recall the definition of $\mu_k$ at the beginning of Section \ref{sec:adiabatic slope stability}, and the definition of the $\nu_i$'s in Equation (\ref{eq:nuslope}).
Let $j\in[\![1, \ell-1 ]\!]$.
 From Equation (\ref{eq:expansioncurvaturegeneral}), and by $(i)$ of Lemma \ref{lem:tracesandda},
 $$
\Pi_{\langle \Id_{\cF_j}\rangle } \Lambda_{\om_k}(iF_{A_k})=\Pi_{\langle \Id_{\cF_j}\rangle }\Lambda_{\om_k}(iF_0)+\Pi_{\langle \Id_{\cF_j}\rangle }\Lambda_{\om_k}(ia_k\wedge a_k).
 $$
 We compute
 $$
 \Pi_{\langle \Id_{\cF_j}\rangle }\Lambda_{\om_k}(iF_0)=\left(\frac{1}{\Vol(B)\Vol(F)}\int_X\tr_{\cF_j}(\Lambda_{\om_k}(iF_0))\;\om_X^m\wedge(\pi^*\om_B)^n\right)\frac{\Id_{\cF_j}}{\rk\;\cF_j}.
 $$
 To higher order in the $k$-expansion, the term
 $$
 \int_X\tr_{\cF_j}(\Lambda_{\om_k}(iF_0))\;\om_X^m\wedge(\pi^*\om_B)^n
 $$
 equals
 $$
 \begin{array}{ccc}
 \displaystyle k^{-n}\int_X\tr_{\cF_j}(\Lambda_{\om_k}(iF_0))\;(\om_k)^{m+n} & = & \displaystyle (m+n)k^{-n}\int_X\tr_{\cF_j}(iF_0)\wedge\om_k^{m+n-1} \\

 & =  & 2\pi (m+n)k^{-n}\mu_k(\cF_j)\rk(\cF_j),
\end{array}
$$
where we used Chern--Weil theory in the last equality.
 A direct computation, using Lemma \ref{lem:ordergammas}, gives 
 $$
 \tr_{\cF_j}(\Lambda_{\om_k}(ia_k\wedge a_k)) = -\vert \lambda_j\vert^2\tr_{\cG_j}\left( \Lambda_{\om_B} (i\gamma_{j,j+1}\wedge \gamma^*_{j+1,j})\right)k^{-q_j} + \cO(k^{-q_j-\frac{1}{2}}).
 $$
 Finally, by asymptotic stability with respect to the $\cF_j$'s, for $p<q_j$ we have $\nu_p(E)=\nu_p(\cF_j)$.
Then, the equation we want to solve is :
\begin{equation}
 \label{eq:solvingthelambdaj}
 \begin{array}{c}
 \cC (k^{-n}\mu_k(E) -  k^{-n}\mu_k(\cF_j))  =  \\
 -k^{-q_j}\displaystyle\frac{\vert \lambda_j\vert^2}{\rank(\cF_j)\Vol(B)} \int_B \tr_{\cG_j}\left( \Lambda_{\om_B} (i\gamma_{j,j+1}\wedge \gamma^*_{j+1,j})\right)\,\om_B^n
                + \cO(k^{-q_j-\frac{1}{2}}).
 \end{array}
\end{equation}
As 
$$k^{-n}(\mu_k(E) -  \mu_k(\cF_j))=k^{-q_j}(\nu_{q_j}(E)-\nu_{q_j}(\cF_j))+\cO(k^{-q_j-\frac{1}{2}}),$$
the above equation reduces to solve
\begin{equation}
 \label{eq:solvinglambdajdetail}
\cC(\nu_{q_j}(E)-\nu_{q_j}(\cF_j))=-\displaystyle\frac{\vert \lambda_j\vert^2}{\rank(\cF_j)\Vol(B)} \int_B \tr_{\cG_j}\left( \Lambda_{\om_B} (i\gamma_{j,j+1}\wedge \gamma^*_{j+1,j})\right)\,\om_B^n.
\end{equation}
Again by asymptotic stability, we have $\nu_{q_j}(E)>\nu_{q_j}(\cF_j)$.  Using $(iii)$ of Lemma \ref{lem:tracesandda}, we see that the signs of Equation (\ref{eq:solvinglambdajdetail}) are compatible at order $q_j$, so that this is solvable in $\lambda_j$. The result follows.
\end{proof}

We then choose $\underline{\lambda}$ as in Proposition \ref{prop:fixingthe lambdas}. We now perturb so it is not only the projection that satisfies the required equation. First of all, simply from the mapping properties of $\Delta_{\Gr(E),k}$, and by Proposition \ref{prop:linleadingorder}, we can perturb $A_k$ as in Section \ref{sec:ssapproximate} so that all the terms orthogonal to $\g=\oplus_i \langle \Id_{\cG_i} \rangle$ to any desired order actually vanish. Moreover, we can do this without changing the projections of the contracted curvature to $\langle \Id_{\cF_j}\rangle$ until, and including, order $q_j$, for $1\leq j \leq \ell-1$. This can be seen as follows. From Equation \ref{eq:expansioncurvaturegeneral}, there are three types of errors to be corrected:
\begin{itemize}
 \item[$(1)$] errors coming from $\Lambda_{\om_k}iF_{0}$,
 \item[$(2)$] errors coming from $\Lambda_{\om_k}d_{A_0}(a_k)$,
 \item[$(3)$] errors coming from $\Lambda_{\om_k}(ia_k\wedge a_k)$.
\end{itemize}
The type $(1)$ errors are diagonal and so can be dealt with on each $\End(\cG_i)$ independently. They are at least $\cO(k^{-\frac{1}{2}})$, so using Lemma \ref{lem:linearisationdiagonals},  removing them won't affect the $\langle \Id_{\cF_j}\rangle$ projection to order $q_j$, see Remarks \ref{rem:pertubationODE} and \ref{rem:samelemmasperturbedconnections}. These perturbations might introduce new off-diagonal errors, but from Equation (\ref{eq:fullexpansionlaplacian}), we see that these errors will be of higher order than type $(2)$ errors above, and so will be dealt with as the type $(2)$ ones.
By Lemma \ref{lem:tracesandda}, type $(2)$ errors are off-diagonal and of order $k^{-\nu(i,p)-\frac{3}{2}}$ on position $(i,p)$. By Lemma \ref{lem:linearisationoffdiagonals}, we can remove them without affecting the $\langle \Id_{\cF_j} \rangle$ projections to the desired orders, still using Remarks \ref{rem:pertubationODE} and \ref{rem:samelemmasperturbedconnections}. By Equation \ref{eq:fullexpansionlaplacian}, we see that these perturbations will introduce new diagonal errors but to higher orders, and those can be removed using Lemma \ref{lem:linearisationdiagonals} again.
 Finally, the type $(3)$ errors can be diagonal and off-diagonal, but always of higher order than the previous ones, and can be removed in the same way.  
 
 We will then assume we have done this first set of perturbations up to and including order $q$, the maximum discrepancy order of $E$, and keep the notation $A_k$ for those new perturbed connections. Now, by our choice of $\underline \lambda$ as in Proposition \ref{prop:fixingthe lambdas}, we therefore have for all $j\in [\![1, \ell-1]\!]$ :
 \begin{equation}
 \label{eq:goodprojectionFj}
\Pi_{ \End(\cF_j)}  \Lambda_{\om_k} \left( i F_{A_k} \right) =\cC k^{-n}\mu_k (E)\Id_{\cF_j} + O(k^{-q_j-\frac{1}{2}}).
\end{equation}

The next step is to improve each of the above equalities to order $q$, by induction on $j$.
We expand the first of these as 
\begin{align*}
\Pi_{ \End (\cF_1)}  \Lambda_k \left( i F_{A_k} \right) = & \,\cC\,k^{-n}\mu_k (E)\Id_{\cF_1}  + k^{-q_1 - \frac{1}{2}} c \cdot \Id_{\cF_1}  \\
&+ k^{-q_1 - \frac{1}{2}} \sigma  + O(k^{-q_1 - 1}),
\end{align*} 
where $\sigma \in \Gamma (\End \cF_1)$ is orthogonal to $\Id_{\cF_1}$.
Now, using Lemma \ref{lem:linearisationIdis}, we have
$$
\Pi_{\g}\Delta_k(k^{-\frac{1}{2}}\Id_{\cF_1}) =  a_{1,2}k^{-q_1-\frac{1}{2}}\Id_{1,2}+\cO(k^{-q_1-\frac{3}{2}})
$$
and thus there is a non-zero constant $a_{1,2}'$ such that
$$
\Pi_{\langle \Id_{\cF_1} \rangle}\Delta_k(k^{-\frac{1}{2}}\Id_{\cF_1}) =  a'_{1,2}k^{-q_1-\frac{1}{2}}\Id_{\cF_1}+\cO(k^{-q_1-\frac{3}{2}}).
$$
Hence, if we set
$$
\hat A^1_k = \exp(k^{-\frac{1}{2}}r_1 \cdot \Id_{\cF_1} ) \cdot A_{k}
$$
for a suitable constant $r_1$, then
\begin{align*}
\Pi_{ \End (\cF_1)}  \Lambda_k \left( i \hat F^1_{k} \right) = &\, \cC\,k^{-n}\mu_k (E)\Id_{\cF_1}   + k^{-q_1 - \frac{1}{2}} \sigma  + O(k^{-q_1 - 1}),
\end{align*} 
where $\hat F^1_{k} $ is the curvature of $\hat A^1_k$. Further acting by an endomorphism of $\cF_1$ that is orthogonal to $\Id_{\cF_1}$, we can also remove the $\sigma$ term, so that the curvature is 
\begin{align*}
\Pi_{ \End (\cF_1)} \Lambda_k \left( i \hat F^1_{k} \right) =& \,\cC\, k^{-n}\mu_k (E)\Id_{\cF_1}  + O(k^{-q_1 - 1}).
\end{align*}
Continuing like this, we eventually produce a connection $A^1_k$ with curvature $F^1_k$ such that
\begin{equation}
\label{eq:goodprojectionF1}
\Pi_{ \End (\cF_1)}  \Lambda_k \left( i F^1_{k} \right) = \,\cC\,k^{-n}\mu_k (E)\Id_{\cF_1}+ O(k^{-q -\frac{1}{2}}),
\end{equation}
where $q = \max \{ q_1,\ldots, q_{\ell-1} \}$ is the maximal discrepancy order of $E$. 

By Lemma \ref{lem:linearisationdiagonals}, these perturbations do not affect the $\langle \Id_{\cF_j}\rangle$ components of the contracted curvature to order $q_j$, for $j\geq 2$. Note also that we might introduce new diagonal or off-diagonal errors in the process, but those can be removed along the way as previously discussed for type $(1)$ and type $(2)$ errors. Therefore, we can assume that $A^1_k$ satisfies Equations (\ref{eq:goodprojectionFj}) for $j\geq 2$.

Next, we move on to perturb the $\Gamma (\End \cF_2)$ piece. By construction, we still have
 \begin{align*}
\Pi_{ \End(\cF_2)}  \Lambda_k \left( i F^1_{k} \right) = \cC\,k^{-n}\mu_k (E)\Id_{\cF_2} + O(k^{-q_2-\frac{1}{2}}).
\end{align*}
This means that we can write
\begin{align*}
\Pi_{\End(\cF_2)}  \Lambda_k \left( i F^1_{k} \right) =&\,\cC\, k^{-n}\mu_k (E)\Id_{\cF_2} + k^{-q_2 - \frac{1}{2}} \left( c_1 \cdot \Id_{\cG_1} + c_2 \cdot \Id_{\cG_2} \right) \\
&+ k^{-q_2 - \frac{1}{2}} \sigma + O(k^{-q_2 -1}),
\end{align*}
where now $\sigma \in \Gamma (\End \cF_2)$ is orthogonal to $\Id_{\cG_i}$ for $i=1,2,$ and $c_1,c_2$ are constants. 
Now, if $q_2 = q$, then we have solved the equation on the $\cF_2$-component up to and including the maximum discrepancy order, which is our goal at this stage. On the other hand, if $q_2<q$, then $c_1 = 0$, since by (\ref{eq:goodprojectionF1}) the $\End (\cG_1)$-component of $\Lambda_k \left( i F^1_{k} \right)$ is $\cC\,k^{-n}\mu_k (E)\Id_{\cG_1}$ up to and including order $k^{-q}$ (recall $\cF_1=\cG_1$). Thus the expansion is
\begin{align*}
\Pi_{\End(\cF_2)}  \Lambda_k \left( i F^1_{k} \right) =&\,\cC\, k^{-n}\mu_k (E)\Id_{\cF_2} + k^{-q_2 - \frac{1}{2}}  c_2 \cdot \Id_{\cG_2}  \\
&+ k^{-q_2 - \frac{1}{2}} \sigma + O(k^{-q_2 -1}).
\end{align*}
We can then proceed as in the $\cF_1$-case: we first perturb using a constant multiple of $\Id_{\cF_2}$, which can then remove the term $k^{-q_2 - \frac{1}{2}}  c_2 \cdot \Id_{\cG_2}$, since by Equation (\ref{eq:deltaactionIdisonautg}):
$$
\Pi_{\g}\Delta_k(k^{-\frac{1}{2}}\Id_{\cF_2}) =  a_{2,3}k^{-q_2-\frac{1}{2}}\Id_{2,3}+\cO(k^{-q_2-\frac{3}{2}}).
$$
After this, we then remove the $k^{-q_2 - \frac{1}{2}} \sigma$-term using just the properties of the linearisation of the initial connection on $\Gr(E)$, which captures the leading order contribution of the linearisation at our current connection too, by Proposition \ref{prop:linleadingorder}. We then proceed until we reach the order $k^{-q}$, which we can do since the contribution in the $\Id_{\cG_1}$ component is $\cC\,k^{-n}\mu_k (E)\Id_{\cG_1}$ up to and including order $k^{-q}$. The upshot is that, up to removing new errors in $\g^\perp$ introduced along the way, we get a connection $A^2_k$ on $\pi^*E$ with curvature $F^2_k$ satisfying
\begin{align*}
\Pi_{\End(\cF_2)}  \Lambda_k \left( i F^2_{k} \right) =&\,\cC\, k^{-n}\mu_k (E)\Id_{\cF_2} + O(k^{-q -\frac{1}{2}})
\end{align*}
and such that (\ref{eq:goodprojectionFj}) still holds for $j\geq 3$.

We then proceed in a similar way to build inductively connections $A^j_k$ on $\pi^*E$ with curvatures $F^j_k$ such that 
\begin{align*}
\Pi_{\End(\cF_j)}  \Lambda_k \left( i F^j_{k} \right) =&\,\cC\, k^{-n}\mu_k (E)\Id_{\cF_j} + O(k^{-q -\frac{1}{2}}).
\end{align*}
At each step the connection $A^{j+1}_k$ is obtained from $A^j_k$ by removing the errors in the previously obtained expansion
\begin{align*}
\Pi_{\End(\cF_{j+1})}  \Lambda_k \left( i F^j_{k} \right) =&\,\cC\, k^{-n}\mu_k (E)\Id_{\cF_{j+1}} + k^{-q_{j+1} - \frac{1}{2}}  c_{j+1} \cdot \Id_{\cG_{j+1}}  \\
&+ k^{-q_{j+1} - \frac{1}{2}} \sigma + O(k^{-q_{j+1} -1}).
\end{align*}
The connection $A_k^{\ell -1}$ satisfies
\begin{align*}
\Pi_{\End(\cF_{\ell-1})}  \Lambda_k \left( i F^{\ell-1}_{k} \right) =&\,\cC\, k^{-n}\mu_k (E)\Id_{\cF_{\ell-1}} + O(k^{-q -\frac{1}{2}}),
\end{align*}
but also by Chern--Weil theory:
\begin{align*}
\Pi_{\oplus_{i=1}^\ell \langle \Id_{\cG_i} \rangle}  \Lambda_k \left( i F^{\ell-1}_{k} \right) =&\,\cC\, k^{-n}\mu_k (E)\Id_{E} + O(k^{-q -\frac{1}{2}}).
\end{align*}
Thus all remaining errors in $\End(E)$ to solve the HYM equation up to and including order $k^{-q}$ are orthogonal to $\oplus_{i=1}^\ell \langle \Id_{\cG_i} \rangle$. Hence we can proceed as in the previous steps to deduce that there is a connection $A^\ell_k$ on $\pi^*E$ with curvature $F^\ell_k$ such that
\begin{align*}
\Lambda_k \left( i F^\ell_{k} \right) =& \cC\, k^{-n}\mu_k (E)\Id_{E}+ O(k^{-q -\frac{1}{2}}).
\end{align*}
Thus we have solved the HYM equation up to and including order $k^{-q}$.

We now proceed to go beyond the maximal discrepancy order of $E$, to produce connections that solve the HYM equation to any desired order. We have that
\begin{align*}
 \Lambda_k \left( i F^\ell_{k} \right) =& \,\cC\, k^{-n}\mu_k (E)\Id_{E} \\
&+ k^{-q -\frac{1}{2}} \left( \frac{c_1}{\rk \cG_1} \Id_{\cG_1} + \ldots + \frac{c_\ell}{\rk \cG_\ell}\Id_{\cG_\ell} \right) \\
&+ k^{-q -\frac{1}{2}} \sigma + O(k^{-q -1}),
\end{align*}
for constants $c_1,\ldots,c_\ell$ and a $\sigma$ that is orthogonal to $\oplus_{i=1}^\ell \langle \Id_{\cG_i} \rangle$. Since we know by Chern--Weil theory that the projection of $ \Lambda_k \left( i F^\ell_{k} \right)$ to $\Id_E$ is $\cC\, k^{-n}\mu_k (E)\Id_{E}$, we even have
$$
c_1 + \ldots + c_\ell = 0.
$$
This means that we can rewrite the above as 
\begin{align*}
 \Lambda_k \left( i F^\ell_{k} \right) =& \,\cC\, k^{-n}\mu_k (E)\Id_{E}+ k^{-q -\frac{1}{2}} \left( d_1 \Id_{1,2} +\ldots+ d_{\ell-1} \Id_{\ell-1,\ell} \right) \\
&+ k^{-q -\frac{1}{2}} \sigma + O(k^{-q -1}),
\end{align*}
for constants $(d_i)_{1\leq i \leq \ell-1}$. Perturbing first by $$
d_1' k^{q_1-q-\frac{1}{2}} \Id_{\cF_1}+\ldots + d_{\ell-1}' k^{q_{\ell-1}-q-\frac{1}{2}} \Id_{\cF_{\ell-1}}
$$
for some suitable constants $(d_i')_{1\leq i\leq \ell-1}$, using Lemma \ref{lem:linearisationIdis}, we can remove the term 
$$
 k^{-q -\frac{1}{2}}\left( d_1 \Id_{1,2} +\ldots+ d_{\ell-1} \Id_{\ell-1,\ell} \right).
 $$
If we then perturb by an element orthogonal to $\oplus_{i=1}^\ell \langle \Id_{\cG_i} \rangle$, we can also remove the $\sigma$-term. After correcting the extra errors introduced in the process, we thus obtain a connection $\tilde A_k$ with curvature $\tilde F_k$ satisfying 
\begin{align*}
 \Lambda_k \left( i \tilde F_{k} \right) =& \,\cC\, k^{-n}\mu_k (E)\Id_{E}+ O(k^{-q -1}).
\end{align*}
Proceeding in exactly the same way, we obtain connections on $E$ solving the HYM equation to any desired order in $k$.
\begin{remark}
While the above statements are all pointwise estimates, it follows exactly in the same manner as in Lemma \ref{lem:kexpstable} that these pointwise statements actually give the corresponding statements in any desired Sobolev space.
\end{remark}

\subsection{Perturbation argument}
\label{sec:linop}
We end the proof of Theorem \ref{thm:semistablemain} in this section, by perturbing our approximate solution built in Section \ref{sec:inductiveapproximate} to a genuine solution, following the method of Section \ref{sec:nonlinear}. In order to apply the quantitative implicit function theorem, we will need an estimate on the Laplacian of the approximate solutions.
\subsubsection{Estimating $\Delta_k$}
We first derive an estimate for the Laplacian $\Delta_k$ of the connection $A_k$ associated to $(\pi^*E, \pi^*(\delb_0+\gamma_k), h)$. We will then argue that this estimate also holds for the perturbed connections built in Section \ref{sec:inductiveapproximate}. Our main goal in this section is to prove:
\begin{proposition}
\label{prop:lindiagsbound}                                                    
There exists $C> 0$ such that for all $\sigma\in \Gamma(\End E)$ orthogonal to $\Id_E$,
\begin{align}
\label{eqn:laplacianbound}
\| \Delta_k (\sigma) \|_{L^2_{d}(\omega_k)} \geq C k^{-q} \| \sigma \|_{L^2_{d+2}(\omega_k)}.
\end{align}
\end{proposition}
The proof of Proposition \ref{prop:lindiagsbound} will rely on the expansion (\ref{eq:fullexpansionlaplacian}):
\begin{equation*}
\begin{array}{ccc}
 \Delta_k & = & \Delta_{\Gr(E),k}\\
          & + &i k^{-1} \:\Lambda_{\om_B} \left(\delb_{A_{0}^{\End E}} ([\gamma_{k}^*,\cdot ])-[\gamma_{k}^*,\delb_{A_{0}^{\End E}}  ]\right)\\
          & + & i k^{-1} \: \Lambda_{\om_B} \left(\partial_{A_{0}^{\End E}}([\gamma_{k},\cdot])-[\gamma_{k},\partial_{A_{0}^{\End E}} ] \right) \\
& + & ik^{-1} \Lambda_{\omega_B} ( [\gamma_{k}, [\gamma_{k}^*, \cdot ] ] - [\gamma_{k}^*, [\gamma_{k}, \cdot ] ] ) \\
& + &  \cO(k^{-\nu} ),
 \end{array}
\end{equation*}
together with the estimates in Lemma \ref{lem:ordergammas}.
We first show that orthogonal to $\g$, we preserve the bounds we have for $\Delta_{\Gr(E),k}$ when going to the extension.
\begin{lemma}
\label{lem:oldboundsextend}
There exists $C> 0$ such that for all sufficiently large $k$ and for all $\sigma\in \Gamma(\End E)$ orthogonal to $\g$,
\begin{align*}
\| \Delta_k (\sigma) \|_{L^2_{d}(\omega_k)} \geq C k^{-1} \| \sigma \|_{L^2_{d+2}(\omega_k)}.
\end{align*}
\end{lemma}
\begin{proof}
We first show that we have the bound 
\begin{align}
\label{eq:gradedobjbound}
\| \Delta_{\Gr(E),k} (\sigma) \|_{L^2_{d}(\omega_k)}  \geq C k^{-1} \| \sigma \|_{L^2_{d+2}(\omega_k)},
\end{align}
for all $\sigma \in \Gamma(\End E) \cap \left( \oplus_i \langle \Id_{\cG_i} \rangle \right)^{\perp}$.
On the diagonal elements, this follows from the individual bounds on the $\cG_i$, which are obtained from Proposition \ref{prop:inversebound}. On the off-diagonal elements, it uses that the estimate is equivalent to a Poincar\'e inequality for the induced connection on the $\Hom(\cG_i, \cG_j)$. Recall that $\g=\oplus_i \langle \Id_{\cG_i} \rangle$ since the $\cG_i$ are all non-isomorphic, and that by Remark \ref{rem:pullbacknonisom}, we also have that $\Lie(\Aut(\pi^*\Gr(E)))=\oplus_i \langle \Id_{\pi^* \cG_i} \rangle$. Then for all $\sigma \in \Gamma(\End E) \cap \g^{\perp}$, we have the bound (\ref{eq:gradedobjbound})
or equivalently by self-adjointness and the fact that the Laplacian only has non-positive eigenvalues,
$$
\vert \langle \Delta_{\Gr(E),k} (\sigma), \sigma \rangle \vert \geq C k^{- 1} \| \sigma \|^2.
$$

First suppose $\sigma = \sigma_i \in \Gamma(\End \cG_i)$. From Equation \eqref{eq:fullexpansionlaplacian}, we deduce that 
$$
\Pi_{\Gamma(\End \cG_i)} \Delta_{k} (\sigma_i) = \Delta_{\cG_i,k} (\sigma_i) + O(k^{-q_i} \| \sigma_i \|),
$$
since the only potential additional diagonal contribution comparing $\Delta_{k}$ to the product Laplacian comes from the term 
$$
k^{-1} \Lambda_{\omega_B} ( [\gamma_{k}, [\gamma_{k}^*, \cdot ] ] - [\gamma_{k}^*, [\gamma_{k}, \cdot ] ] )
$$
in the expansion. Using Lemma \ref{lem:ordergammas}, this term is $O(k^{-q_i})$. Since $q_i\geq 2$, we therefore get  from the bound on the graded object that
\begin{align*}
\vert \langle \Delta_{k} (\sigma_i), \sigma_i \rangle \vert  =& \vert \langle \Delta_{\cG_i, k} (\sigma_i), \sigma_i \rangle \vert  + O(k^{-q_i}) \\
\geq& C k^{- 1} \| \sigma_i \|^2
\end{align*}
too, for a potentially different constant $C$. This settles the case when $\sigma$ lies in $\Gamma(\End \cG_i)$ for some $i$.

The case when $\sigma$ is off-diagonal, say $\sigma \in \Gamma(\cG_j^* \otimes \cG_i)$ for some $i,j$ is the same. Again, we have 
$$
\| \Delta_{\Gr (E),k} (\sigma) \| \geq C k^{-1} \| \sigma \|,
$$
from the estimate \eqref{eq:gradedobjbound}.  From Equation \eqref{eq:fullexpansionlaplacian}, we see that the only additional contribution that lies in $\Gamma(\cG_j^* \otimes \cG_i)$ comes from 
$$
k^{-1} \Lambda_{\omega_B} ( [\gamma_{k}, [\gamma_{k}^*, \cdot ] ] - [\gamma_{k}^*, [\gamma_{k}, \cdot ] ] ).
$$
Again, this decays to a higher order than $k^{-1}$, giving us the required bound.

At this point, we have shown that the estimate holds for $\sigma \in \Gamma(\cG_j^* \otimes \cG_i)$ for any $i,j$. In general, $\sigma$ is a sum of such terms, and we need to ensure that the estimate holds also for such sums. To ease notation, we will consider only the case when 
$$
\sigma = \sigma_i + \sigma_j
$$
for $i<j$, where $\sigma_i \in \Gamma(\End \cG_i)$ and $\sigma_j \in \Gamma (\End \cG_j)$. The case when one or both are off-diagonal, or when there are more than two such terms, is similar.

We need to estimate 
\begin{align*}
- \langle \Delta_{k} (\sigma), \sigma \rangle  =& - \langle \Delta_{k} (\sigma_i), \sigma_i \rangle - \langle \Delta_{k} (\sigma_j), \sigma_j \rangle \\
&- \langle \Delta_{k} (\sigma_i), \sigma_j \rangle - \langle \Delta_{k} (\sigma_j), \sigma_i \rangle 
\end{align*}
from below. From the estimates we have already considered, we need to understand
$$
- \langle \Delta_{k} (\sigma_i) , \sigma_j \rangle
$$
and
$$
- \langle \Delta_{k} (\sigma_j) , \sigma_i \rangle.
$$
We do this through the expansion of $\Delta_{k}$ given in Equation \eqref{eq:fullexpansionlaplacian}. In computing $\Delta_{k} (\sigma_i)$, the leading order term that can give a potential $\End(\cG_j)$ component comes from
$$
k^{-1} \Lambda_{\omega_B} ( [\gamma_k, [\gamma_k^*, \sigma_i ] ] - [\gamma_k^*, [\gamma_k, \sigma_i ] ] ),
$$
since $\Delta_{\Gr(E),k} (\sigma_i)$ is a section of $\End (\cG_i)$ and the two remaining terms are off-diagonal. The relevant term is then
$$
k^{-1} \Lambda_{\omega_B} ( (\gamma_k)^*_{ji} \wedge \sigma_i \wedge (\gamma_k)_{ij}).
$$
Recalling the expansion in Lemma \ref{lem:ordergammas} of $\gamma_k$, we see that there is a positive constant $C$ only depending on $i,j$, not the choice of $\sigma_i, \sigma_j$, such that 
$$
\vert \langle \Delta_{k} (\sigma_i) , \sigma_j \rangle \vert \leq  C k^{-2} \| \sigma_i \| \| \sigma_j \|,
$$
since each $q_l \geq 2.$ From Young's inequality with exponent 2, we therefore get that 
$$
\vert \langle \Delta_{k} (\sigma_i) , \sigma_j \rangle \vert \leq  C k^{-2} (\| \sigma_i \|^2 +  \| \sigma_j \|^2)
$$
and thus
\begin{align*}
- \langle \Delta_{k} (\sigma_i) , \sigma_j \rangle \geq - C k^{-2} (\| \sigma_i \|^2 +  \| \sigma_j \|^2).
\end{align*}
While this is a negative lower bound, it is order $k^{-2}$, and so can be compensated for when $k\gg0$ by the $O(k^{-1})$ bound for $-\langle \Delta_{k} (\sigma_i), \sigma_i \rangle$ and $- \langle \Delta_{k} (\sigma_j), \sigma_j \rangle$. The case of $\langle \Delta_{k} (\sigma_j) , \sigma_i \rangle $ gives a similar bound with the same method. Thus there is a potentially different constant $C>0$ such that
\begin{align*}
-\langle \Delta_{k} (\sigma), \sigma \rangle \geq& C k^{-1} \| \sigma \|^2,
\end{align*}
which is what we wanted to show.
\end{proof}
We now begin to extend this bound to every endomorphism orthogonal to $\Id_E$.
\begin{lemma}
\label{lem:diagonalautoms}
There exists $C> 0$ such that for all $\sigma \in \g\cap\langle \Id_E \rangle^\perp$,
\begin{align*}
\| \Delta_k (\sigma) \|_{L^2_{d}(\omega_k)} \geq C k^{-q} \| \sigma \|_{L^2_{d+2}(\omega_k)}.
\end{align*}
\end{lemma}
\begin{proof}
By self-adjointness of the Laplace operator and the fact that it has only non-positive eigenvalues, it suffices to establish the estimate
$$
- \langle \Delta_k (\sigma) , \sigma \rangle \geq C k^{-q} \| \sigma \|^2
$$
for all 
$$
\sigma = \sum_{i=1}^{\ell} \frac{c_i}{\rk (\cG_i)} \Id_{\cG_i}
$$
with $\sum_i c_i = 0$. In the proof, $C$ will stand for a positive constant, whose value might vary along the argument.
To simplify notations, set $r_i=\rk(\cG_i)$ and $\tilde c_i=\frac{c_i}{r_i}$. Let then $\sigma = \sum_{i=1}^{\ell} \tilde c_i \Id_{\cG_i}$ with $\sum_i r_i \tilde c_i =0$.
To conclude the proof it will be enough to obtain an estimate of the form:
$$
- \langle \Delta_k (\sigma) , \sigma \rangle \geq Ck^{-q}\displaystyle\sum_{i=1}^\ell \tilde c_i^2.
$$
Using formula (\ref{eq:pigdeltaIdiprecise}) in the proof of Lemma \ref{lem:linearisationIdis}, we obtain :
$$
\begin{array}{ccc}
\langle \Delta_k (\sigma) , \sigma \rangle & = &  \displaystyle\sum_{i=1}^\ell \tilde c_i \sum_{j=1}^\ell \tilde c_j \langle \Pi_\g \Delta_k \Id_i, \Id_j \rangle \\
 & & \\
 & = & \displaystyle \sum_{i=1}^\ell \tilde c_i \left( \tilde c_i (a_{i,i+1} k^{-q_i} + a_{i-1,i}k^{-q_{i-1}})\right)\\
  &  &  \\
  & & \displaystyle -\sum_{i=1}^\ell \tilde c_i \left(\tilde c_{i-1} a_{i-1,i}k^{-q_{i-1}} + \tilde c_{i+1} a_{i,i+1}k^{-q_i}\right) -\sigma_h 
\end{array}
$$
where $\sigma_h$ stands for higher order terms:
$$
\sigma_h =\displaystyle \sum_{i=1}^\ell \tilde c_i \left(\sum_{j\leq i-2} \tilde c_j a_{ji}k^{-q_j-\ldots-(q_{i-1}-1)}+\sum_{j\geq i+2} \tilde c_j a_{ij}k^{-q_i-\ldots-(q_{j-1}-1)} \right).
$$
Rearranging terms according to their order in $k$ this gives:
\begin{equation}
 \label{eq:Deltaksigmainlieg}
\begin{array}{ccc}
\langle \Delta_k (\sigma) , \sigma \rangle & = & \displaystyle\sum_{i=1}^{\ell-1} a_{i,i+1} k^{-q_i} (\tilde c_i -\tilde c_{i+1})^2 -\sigma_h.
\end{array}
\end{equation}
At this point it is useful to remember that for all $i,j\in[\![ 1, \ell ]\!]$, $a_{ij}\leq 0$, with $a_{i,i+1}<0$ for $i\leq \ell-1$. 

We deal with the terms in $\sigma_h$ as in the proof of Lemma \ref{lem:oldboundsextend} by completing the squares. Indeed, consider for example $j\leq i-2$:
$$
\begin{array}{ccc}
 \tilde c_i\tilde c_j a_{ji}k^{-q_j-\ldots -(q_{i-1}-1)} & = & \displaystyle\frac{a_{ji}}{2}k^{-q_j-\ldots -(q_{i-1}-1)}\left((\tilde c_i + \tilde c_j)^2 -\tilde c_i^2 -\tilde c_j^2 \right)\\
  & & \\
 & \geq & -Ck^{-q_j-\ldots -(q_{i-1}-1)}(\tilde c_i^2 +\tilde c_j^2)\\
 & & \\
 & \geq & -C( k^{-q_j-1}\tilde c_j^2 + k^{-q_{i-1}-1}\tilde c_i^2)
\end{array}
$$
for a constant $C>0$. As in the expression of $\langle \Delta_k (\sigma) , \sigma \rangle$, the term $\tilde c_i^2$ appears at orders $k^{-q_i}$ and $k^{-q_{i-1}}$ and so we can compensate the potentially negative contributions that will be coming from $\sigma_h$ at strictly higher orders. 

It remains to estimate
$$
 - \displaystyle\sum_{i=1}^{\ell-1} a_{i,i+1} k^{-q_i} (\tilde c_i -\tilde c_{i+1})^2.
$$
We thus need to prove that there is $C>0$ such that
$$
- \displaystyle\sum_{i=1}^{\ell-1} a_{i,i+1} k^{-q_i} (\tilde c_i -\tilde c_{i+1})^2\geq Ck^{-q}\sum_{i=1}^\ell \tilde c_i^2.
$$
We clearly have $C>0$ satisfying
$$
- \displaystyle\sum_{i=1}^{\ell-1} a_{i,i+1} k^{-q_i} (\tilde c_i -\tilde c_{i+1})^2\geq Ck^{-q}Q(\tilde c_i),
$$
where $Q$ is the real quadratic form in $\ell$-variables defined by
$$
Q(x)=\displaystyle\sum_{i=1}^{\ell-1} (x_i -x_{i+1})^2.
$$
The kernel of $Q$ is given by the vector space spanned by ${\bf 1}=(1,\ldots, 1)$. Denote by $\phi : \R^\ell\to \R$ the map $\phi(x)=\sum_i r_i x_i$. Then, the subspace spanned by ${\bf 1}$ and $\ker \phi$ are complementary. Thus the restriction of $Q$ to $\ker \phi$ is a positive definite quadratic form, hence its associated bilinear form has only strictly positive eigenvalues, and there is a constant $C>0$ such that for all $x\in \ker \phi$ 
$$
Q(x)\geq C \sum_{i=1}^\ell x_i^2.
$$
As $(\tilde c_i)\in \ker \phi$, the result follows.
\end{proof}

We are now ready to prove the main result of this subsection, which gives a lower bound for the first eigenvalue of the linearised operator, in the family of complex structures we are considering.
\begin{proof}[Proof of Proposition \ref{prop:lindiagsbound}]
 From Lemmas \ref{lem:oldboundsextend} and \ref{lem:diagonalautoms}, we know the result on $\g^{\perp}$ and $\g \cap \langle \Id_E \rangle^{\perp}$ individually. What is left is therefore to make sure that no mixed terms interfere with the estimate. We only outline the argument as it is similar to the corresponding part of the proof of Lemma \ref{lem:oldboundsextend}. Suppose $\sigma_1\in\g^\perp$ and that $\sigma_2$ is a diagonal automorphism of $\oplus_i \cG_i$, orthogonal to $\Id_E$, satisfying the bounds 
\begin{equation}
 \label{eq:lowerboundsigma1}
\|  \Delta_{k} \sigma_1 \| \geq C k^{-1} \|  \sigma_1 \|
\end{equation}
and
\begin{equation}
 \label{eq:lowerboundsigma2}
\|  \Delta_{k} \sigma_2 \| \geq C k^{-q} \|  \sigma_2 \|.
\end{equation}
Then, to extend the estimate to $\sigma_1+\sigma_2$, by self-adjointness and non-positiveness of the eigenvalues of the Laplacian, we only need to consider the term $-\langle  \Delta_{k} \sigma_2 , \sigma_1 \rangle$. Write $\sigma_2 = \sum_i \tilde c_i \Id_{\cG_i}$ with $\sum_i \rk(\cG_i) \tilde c_i=0$ and $\sigma_1=\sum_{j,l}\sigma_{jl}$ with $\sigma_{jl}\in\Gamma(\cG_l^*\otimes\cG_j)\cap\g^\perp$. We thus need to control 
$$
-\langle \Delta_k (c_i \Id_{\cG_i}), \sigma_{jl}\rangle
$$
in  terms of $\vert \tilde c_i \vert^2$ and $\vert\vert\sigma_{jl}\vert\vert^2$ for all $i,j,l$. We will give the argument for the ``worst case scenario'' when $j=i$ and $l=i+1$, the other cases being similar. 

By Equation \eqref{eq:fullexpansionlaplacian} giving the expansion of $\Delta_k$, we see that the higher order contribution of 
$$
\Pi_{\Gamma(\cG_{i+1}^*\otimes\cG_i)}\Delta_k(\tilde c_i\Id_{\cG_i})
$$
will come from the terms in
$$ 
\Pi_{\Gamma(\cG_{i+1}^*\otimes\cG_i)}\left(k^{-1} \Lambda_{\om_B}\left(\: \delb_{A_{0}^{\End E}}([\gamma_{k}^*,\tilde c_i \Id_{\cG_i} ]) + \partial_{A_{0}^{\End E}}([\gamma_{k},\tilde c_i\Id_{\cG_i}])\right)\right).
$$
By Lemmas \ref{lem:tracesandda} and \ref{lem:ordergammas}, this term is $\cO(k^{-m_i-1-\frac{1}{2}})$. Thus, there is a constant $C>0$ independent of $\sigma_1$ and $\sigma_2$ such that
$$
\begin{array}{ccc}
- \langle \Delta_k (c_i \Id_{\cG_i}), \sigma_{i,i+1}\rangle  & \geq & - C  k^{-m_i-1-\frac{1}{2}} \vert \tilde c_i \vert \cdot \vert\vert \sigma_{i,i+1} \vert\vert\\
 & & \\
 &\geq &  -C   \vert  k^{-m_i-\frac{1}{2}-\frac{1}{4}} \tilde c_i \vert \cdot \vert\vert k^{-\frac{1}{2}-\frac{1}{4}}\sigma_{i,i+1} \vert\vert \\
 & & \\
 & \geq &  -  C ( k^{-q_i -\frac{1}{2}} \vert \tilde c_i \vert^2 + k^{-1-\frac{1}{2}} \vert\vert \sigma_{i,i+1} \vert\vert^2)
\end{array}
$$
where last inequality is obtained by using Young's inequality as in the proof of Lemma \ref{lem:oldboundsextend} and using that $q_i=2m_i+1$. From Lemma \ref{lem:oldboundsextend}, the contribution of $ \vert\vert\sigma_{i,i+1}\vert\vert^2$ in the lower bound (\ref{eq:lowerboundsigma1}) is $\cO(k^{-1})$. On the other hand, from Equation (\ref{eq:Deltaksigmainlieg}) in the proof of Lemma \ref{lem:diagonalautoms}, the contribution of $\vert \tilde c_i \vert^2$ is $\cO(k^{-\min(q_{i-1},q_i)})$. Hence, the positive contributions coming from the term $\langle \Delta_k (\tilde c_i \Id_{\cG_i}), \sigma_{i,i+1}\rangle $ will only affect the negative ones coming from the terms $\langle \Delta_k \tilde c_i \Id_{\cG_i}, \tilde c_i \Id_{\cG_i} \rangle$ and $\langle \Delta_k \sigma_{i,i+1}, \sigma_{i,i+1} \rangle$ at strictly higher orders and thus won't spoil the estimate.  The argument is similar for the other components, ultimately giving the desired inequality.
\end{proof}

The above bounds hold for our initial approximate solution to the HYM equation on $\pi^* E$. In Section \ref{sec:inductiveapproximate}, we perturbed this connection and we need the same result also for these perturbed connections.
\begin{proposition}\label{prop:lindiagsbound2}
Let $\tilde A_k$ be a connection constructed from $A_k$ as in Section \ref{sec:inductiveapproximate}. Then there is a constant $C>0$, depending on $\tilde A_k$, such that for all $\sigma\in \Gamma(\End E)$ orthogonal to $\Id_E$,
\begin{align}
\label{eqn:laplacebound2}
\| \Delta_k (\sigma) \|_{L^2_{d}(\omega_k)} \geq C k^{-q} \| \sigma \|_{L^2_{d+2}(\omega_k)}.
\end{align}
\end{proposition}
\begin{proof}
Each such $\tilde A_k$ will have an associated expansion
$$
\tilde A_k^{0,1} = \delb_0 + \tilde \gamma_k
$$
analogously to $\gamma_k={\bf g}_{\underline{\lambda}, \underline{m}}\cdot \gamma$ for $A_k^{0,1}$. The point is then that under the changes made from $A_k$ in producing $\tilde A_k$, the relevant leading order contributions in $\gamma_k$ and $\tilde \gamma_k$ agree. The subleading order terms are different, but these only affect the constant $C$, not the rate. Thus, using exactly the same method of proof as for $A_k$, there is for each such $\tilde A_k$ a constant $C$ such that the bound \eqref{eqn:laplacebound2} holds.
\end{proof}

\subsubsection{Conclusion}
\label{sec:conclusionproofss}
The proof of Theorem \ref{thm:semistablemain} now follows by the same method as in the stable case in Section \ref{sec:nonlinear}. The only difference is that since the bound in Proposition \ref{prop:lindiagsbound2} has a higher order of $k$, we need a higher order approximate solution to apply the quantitative implicit function Theorem \ref{thm:quantimpl}. Following the same argument as in the proof of Theorem \ref{thm:stablecase}, we see that the required approximate solutions are of order greater than $2q+1.$ But from Section \ref{sec:inductiveapproximate}, we know that we can reach any order. This completes the proof of Theorem \ref{thm:semistablemain}.

\subsection{Some consequences}
\label{sec:consequences}
In this section, we gather some consequences of our main result and its proof. The first is the proof of Corollary \ref{cor:semistablecor}. We will need for this an auxiliary result.  Assume we have an extension 
$$
0 \to E_1 \to E \to E_2 \to 0,
$$
where $E_1=\cF_j$ for some $1\leq j\leq \ell-1$. Denote by $\delb_{i}$ the Dolbeault operator of $E_i$, for $i\in\lbrace 1,  2 \rbrace$, and set the diagonal operator $\delb_{1+2}=\delb_{1}+\delb_{2}$ on $E_1\oplus E_2$. Note in particular that there is $\gamma_i\in\Om^{0,1}(B,\End(E_i))$ such that $\delb_i=(\delb_0)_{\vert E_i}+\gamma_i$ and $\delb_0^*\gamma_i=0$ for $i\in\lbrace 1,2\rbrace$. 
From \cite[Proposition 4.5]{BuSchu}, we extract the following result that will enable us to compare symmetries for $E_1\oplus E_2$ to symmetries for $\Gr(E)$.
\begin{proposition}
\label{prop:comparisonsymmetries}
 There exists $\epsilon >0$ depending on $d_{A_0}$ such that if $\vert\vert \gamma_i\vert\vert_{\infty} < \epsilon$ for $i\in \lbrace 1, 2\rbrace$, then any element $\sigma\in H^0(B,\End(E_1\oplus E_2))$ satisfies $\delb_0 \sigma=0$.
\end{proposition}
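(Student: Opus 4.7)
The plan is to reduce the claim to a Poincar\'e-type inequality for $\delb_0$ on the $L^2$-orthogonal complement of its kernel, combined with the observation that $[\gamma,\tau]$ is $\delb_0^*$-closed for every parallel $\tau$. Smoothly identifying $E_1 \oplus E_2$ with $\Gr(E)$, the Dolbeault operator on $E_1 \oplus E_2$ takes the form $\delb_{1+2} = \delb_0 + [\gamma, \cdot\,]$ with $\gamma := \gamma_1 \oplus \gamma_2$; block-diagonality together with the hypotheses $\delb_0^* \gamma_i = 0$ imply $\delb_0^* \gamma = 0$. By Proposition~\ref{prop:holomorphicisparallel} and Lemma~\ref{lem:generalcokernel}, $\ker \delb_0 = \aut(\Gr(E))$ inside $\Gamma(B, \End(\Gr(E)))$. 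For $\sigma \in H^0(B, \End(E_1 \oplus E_2))$, I would write $\sigma = \tau + \eta$ with $\tau := P\sigma \in \aut(\Gr(E))$ the $L^2$-projection onto the kernel and $\eta \in (\ker \delb_0)^\perp$. The equation $\delb_{1+2} \sigma = 0$ then reads $\delb_0 \eta = -[\gamma, \tau] - [\gamma, \eta]$.

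The next step is to establish $\delb_0^* [\gamma, \tau] = 0$. Using the K\"ahler identity $\delb_0^* = -i [\Lambda_{\om_B}, \partial_0]$ from~\eqref{eq:Nakanotype}, the vanishing $\Lambda_{\om_B} \gamma = 0$ (since $\gamma$ is a $(0,1)$-form), and $\partial_0 \tau = 0$ (Proposition~\ref{prop:holomorphicisparallel} again), one computes $\delb_0^*(\gamma \tau) = -i (\Lambda_{\om_B} \partial_0 \gamma)\, \tau$, which vanishes because $\Lambda_{\om_B} \partial_0 \gamma = i \delb_0^* \gamma = 0$ by the gauge condition; an analogous computation treats $\delb_0^*(\tau \gamma)$. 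Hence $[\gamma, \tau]$ is $L^2$-orthogonal to $\mathrm{Im}(\delb_0)$, and pairing the previous equation with $[\gamma, \tau]$ yields
\begin{equation*}
0 = \langle \delb_0 \eta, [\gamma, \tau] \rangle = -\|[\gamma, \tau]\|^2 - \langle [\gamma, \eta], [\gamma, \tau] \rangle,
\end{equation*}
so by Cauchy--Schwarz $\|[\gamma, \tau]\| \leq \|[\gamma, \eta]\| \leq 2 \|\gamma\|_\infty \|\eta\|$.

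Substituting back, $\|\delb_0 \eta\|_{L^2} = \|[\gamma, \tau] + [\gamma, \eta]\|_{L^2} \leq 4 \|\gamma\|_\infty \|\eta\|_{L^2}$. Combining with the Poincar\'e inequality $\|\eta\|_{L^2} \leq C_P \|\delb_0 \eta\|_{L^2}$ on $(\ker \delb_0)^\perp$, whose constant $C_P$ depends only on $d_{A_0}$, gives $\|\eta\|_{L^2} \leq 4 C_P \|\gamma\|_\infty \|\eta\|_{L^2}$. Choosing $\epsilon := 1/(8 C_P)$ forces $\eta = 0$ whenever $\|\gamma_i\|_\infty < \epsilon$ for $i \in \{1, 2\}$, so $\sigma = \tau \in \aut(\Gr(E))$ and $\delb_0 \sigma = 0$, as required. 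The main obstacle is identifying the $\delb_0^*$-closedness of $[\gamma, \tau]$ via the K\"ahler identities and the gauge condition; once this orthogonality is secured, the Poincar\'e inequality and the smallness hypothesis on the $\gamma_i$ close the argument in one shot, without any bootstrap or Kuranishi-type dimension count.
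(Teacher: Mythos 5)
Your argument is correct and follows essentially the same route as the paper's proof: decompose $\sigma$ against $\ker\delb_0$, use Proposition \ref{prop:holomorphicisparallel}, the K\"ahler identities \eqref{eq:Nakanotype} and the gauge condition $\delb_0^*\gamma_i=0$ to see that the kernel part contributes a $\delb_0^*$-closed term, and then combine a Poincar\'e/H\"ormander inequality on $(\ker\delb_0)^\perp$ with the smallness of $\Vert\gamma_i\Vert_\infty$ to force the orthogonal component to vanish. Your pairing with $[\gamma,\tau]$ before substituting back is just a minor rearrangement of the paper's step of applying $\delb_0^*$ to the holomorphicity equation and pairing with $\sigma_1$.
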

Note that in the above statement the norms $\vert\vert \gamma_i\vert\vert_{\infty}$ are determined by $\om_B$ and $h$, and hence are fixed throughout the argument that follows.
\begin{proof}
 The result follows from \cite[Lemma 4.1, Corollary 4.2 and Proposition 4.5]{BuSchu}. We recall the proof for a more self contained exposition. Let $\sigma$ be a holomorphic section in $H^0(B,\End(E_1\oplus E_2))$. It satisfies the equation
 \begin{equation}
  \label{eq:symmetryE1E2}
 \delb_0 \sigma + [\gamma_1+\gamma_2,\sigma]=0.
 \end{equation}
 Decompose $\sigma=\sigma_0+\sigma_1$ according to the Dolbeault orthogonal decomposition, with $\delb_0\sigma_0=0$ and $\sigma_1\in \ker(\delb_0)^\perp$. By Proposition \ref{prop:holomorphicisparallel}, $\partial_0 \sigma_0=0$. Then we apply $\delb_0^*=-i\Lambda_{\om_B}\partial_0$ to \eqref{eq:symmetryE1E2} and, using $\delb_0^*(\gamma_1+\gamma_2)=0$ we obtain
 $$
 \delb_0^*\delb_0 \sigma_1+ \delb_0^*[\gamma_1+\gamma_2,\sigma_1]=0.
 $$
 Pairing with $\sigma_1$, we deduce
 $$
 \begin{array}{ccc}
 \vert\vert \delb_0 \sigma_1 \vert\vert_{L^2(\om_B)}^2 & = & -\langle [\gamma_1+\gamma_2,\sigma_1], \delb_0 \sigma_1 \rangle\\
                                        &  \leq  & c \vert\vert \gamma_1 + \gamma_2 \vert\vert_\infty \vert\vert\sigma_1 \vert\vert_{L^2(\om_B)} \vert\vert\delb_0 \sigma_1 \vert\vert_{L^2(\om_B)}
 \end{array}
 $$
 for some constant $c>0$. Using $\sigma_1\in \ker(\delb_0)^\perp$ together with H\"ormander's estimate, there is a constant $c'>0$ independent of $\sigma_1$ such that 
 $$
 \vert\vert\sigma_1 \vert\vert_{L^2(\om_B)} \leq c'\vert\vert\delb_0 \sigma_1 \vert\vert_{L^2(\om_B)}
 $$
 and thus another $c''>0$ satisfying
 $$
 \vert\vert \delb_0 \sigma_1 \vert\vert_{L^2(\om_B)}\leq c'' \vert\vert \gamma_1 + \gamma_2 \vert\vert_\infty \vert\vert\delb_0 \sigma_1 \vert\vert_{L^2(\om_B)} .
 $$
 Then, for $\vert\vert \gamma_1 + \gamma_2 \vert\vert_\infty$ small enough, $\sigma_1=0$, $\sigma=\sigma_0$ and the result follows.
\end{proof}
\begin{corollary}
 \label{cor:symmetrypreserveGI}
 For $\epsilon >0$ as in Proposition \ref{prop:comparisonsymmetries}, assuming that $\vert\vert \gamma_i\vert\vert_{\infty} < \epsilon$ for $i\in \lbrace 1, 2\rbrace$, we have $\aut(E_1\oplus E_2)=\C\Id_{E_1}\oplus\C\Id_{E_2}$.
\end{corollary}
\begin{proof}
First, a reformulation of Proposition \ref{prop:comparisonsymmetries} gives 
 $ \aut(E_1\oplus E_2)\subset \g$. Then, if $\sigma\in \aut(E_1)$,  $\sigma\in \g$.  By the description of $\g$ in Lemma \ref{lem:generalcokernel}, and recalling that $E_1=\cF_j$, we see that $\Aut(E_1)\subset \C^*\Id_{\cG_1}\times \ldots\times \C^*\Id_{\cG_j}$. Given the action of $\Aut(E_1)$ on $\gamma_1$ (as in Equation \eqref{eq:Autactionongamma}) and the fact that $\gamma_{i,i+1}\neq 0$ for $1\leq i \leq j-1$, we deduce that $\Aut(E_1)=\C^* \Id_{E_1}$. The same argument on $E_2$ gives the result.
\end{proof}
From Proposition \ref{prop:comparisonsymmetries} we can prove the following technical lemma.
\begin{lemma}
\label{lem:decompstab}
Let $E$ be a semistable bundle on $B$ which satisfies $(H1)-(H3)$ and is asymptotically stable with respect to subbundles induced from a Jordan--H\"older filtration. Fix $i_1\in[\![ 1, l-1 ]\!]$ such that
\begin{enumerate}
 \item[(i)] For all $j\in [\![ 1, l-1 ]\!]$, $\mu_\infty(\cF_j)\leq \mu_\infty(\cF_{i_1})$
 \item[(ii)] The rank $r_1$ of $\cF_{i_1}$ satisfies
 $$
 r_1=\min\lbrace \rank(\cF_j),\:j\in [\![ 1, l-1 ]\!]\:\mathrm{ satisfies } \: (i)\rbrace.
 $$
\end{enumerate}
That is, $S:=\cF_{i_1}$ has a maximal adiabatic slope amongst strict subbundles $\cF_j\subset E$ and minimal rank amongst those bundles with the same adiabatic slope. Set $Q:=E/S$. Then $S$ and $Q$ are simple, semistable, satisfy $(H1)-(H3)$, and are asymptotically stable with respect to subbundles induced from a Jordan--H\"older filtration.
\end{lemma}
\begin{proof}
 To see that $S$ is semistable, suppose that  $\cF\subset S$. Then $\cF\subset E$ and thus $\mu_L(\cF)\leq \mu_L(E)=\mu_L(S)$. Moreover, the graded object of $S$ is $\bigoplus_{i=1}^{i_1}\cG_i$ and is locally free. Since any subbundle of $S$ obtained from its induced Jordan--H\"older filtration is a subbundle of $E$ induced from the given Jordan--H\"older filtration of $E$, and $S$ was chosen to have maximal asymptotic slope and minimal rank among such subbundles, $S$ is asymptotically stable with respect to subbundles from its Jordan--H\"older filtration. Finally, hypothesis $(H1)$ and $(H2)$ follow from the same assumptions on $E$ and from $S=\cF_{i_1}$.

Next, we consider $Q$. Using similar arguments as for $S$, $Q$ is semistable and satisfies $(H1)-(H3)$. We next show that $Q$ is stable with respect to subbundles coming from its Jordan--H\"older filtration. 

Stability with respect to these subbundles is equivalent to the fact that for all $\cF_j$, with $S \subsetneq \cF_j$, one has $\mu_\infty(Q) <\mu_\infty(E/\cF_j)$. Let $\cF_j\subset E$, with $S \subsetneq \cF_j$. By point $(i)$, $\mu_\infty(\cF_j)\leq\mu_\infty(S)$. Since $\mu_\infty(S)<\mu_\infty(E)$ it follows by Corollary \ref{cor:subadditivity of slopes} that
$$
\mu_\infty(S )<\mu_\infty(E)<\mu_\infty(Q)
$$
and
$$
\mu_\infty(\cF_j/S )\leq \mu_\infty(\cF_j) \leq \mu_\infty(S).
$$
Combining these inequalities we obtain
$$
\mu_\infty(\cF_j/S)<\mu_\infty(Q)
$$
and by Corollary \ref{cor:subadditivity of slopes} again:
$$
\mu_\infty(Q)<\mu_\infty(E/\cF_j)
$$
where we used $Q=E/S$, and that $E/\cF_j \cong \frac{E/S}{\cF_j/S}.$
\end{proof}
We can now prove Corollary \ref{cor:semistablecor}.
\begin{proof}[Proof of Corollary \ref{cor:semistablecor}]
 Let $E$ be a semistable vector bundle on $(B,L)$ satisfying $(H1)-(H3)$. Assume that for all $i\in[\![1,\ell-1]\!]$, one has $\mu_\infty(\cF_i)\leq \mu_\infty(E)$ with at least one equality. We want to show that $\pi^*E$ is strictly semistable on $X$ with respect to adiabatic polarisations. As in the proof of Lemma \ref{lem:decompstab}, consider $i_1$ such that the subbundle $G_1:=\cF_{i_1}\subset E$ satisfies $\mu_\infty(\cF_{i_1})=\mu_\infty(E)$ and has minimal rank amongst the bundles $\cF_i$ satisfying $\mu_\infty(\cF_i)=\mu_\infty(E)$. Then, following the same argument as in the proof of Lemma \ref{lem:decompstab}, we deduce that:
 \begin{itemize}
 \item[$(i)$] We have $\mu_\infty(G_1)=\mu_\infty(E)=\mu_\infty(E/G_1)$.
  \item[$(ii)$] $G_1$ is simple, semistable, satisfies $(H1)-(H3)$, and is asymptotically stable with respect to subbundles coming from a Jordan--H\"older filtration.
  \item[$(iii)$] For all $i$ with $G_1\subsetneq \cF_i$, $\mu_\infty(E/G_1)\leq \mu_\infty(E/\cF_i)$.
 \end{itemize}
By Theorem \ref{thm:semistablemain}, $\pi^*G_1$ is stable for adiabatic polarisations. If the inequalities in $(iii)$ are all strict, we stop at this stage, and set $F_1:=E/G_1$. Then $\pi^*F_1$ is stable with same asymptotic slope as $\pi^*G_1$. Then, as discussed before, $\pi^*E$ is a small deformation of $\pi^*G_1\oplus \pi^*F_1$, which is polystable with respect to $L_k$, for $k \gg 1$. But semistability is an open condition for flat families, and thus $\pi^*E$ is semistable (see e.g. \cite[Proposition 2.3.1]{HuLe}).

If some of the inequalities in $(iii)$ are actually equalities, iterating the argument on $F_1$ (or rather its dual), we can decompose $F_1$ as an extension
\begin{equation*}
 \label{eq:extensionsemistableargument}
 0 \to G_2\to F_1 \to F_1/G_2 \to 0
\end{equation*}
 with $G_2, F_1$ and $F_1/G_2$ satisfying $(i), (ii)$ and $(iii)$ as above. In particular, $\pi^*G_2$ is stable for adiabatic polarisations and $\mu_\infty(G_2)=\mu_\infty(F_1)=\mu_\infty(G_1)$. By induction, we see that $E$ is obtained by a sequence of extensions of this form, and thus $\pi^*E$ is a small deformation of a polystable bundle with respect to $L_k$ with $k\gg 1$. Then, $\pi^*E$ is semistable for adiabatic polarisations.
 To conclude, from $\mu_\infty(G_1)=\mu_\infty(E)$, we see that $\pi^*E$ is strictly semistable.
\end{proof}
It is interesting to notice that in the course of the proof of Corollary \ref{cor:semistablecor}, we described how to recover a Jordan--H\"older filtration for $\pi^*E$ out of a Jordan--H\"older filtration for $E$. In particular:
\begin{corollary}
 \label{cor:JHfiltrationpullback}
 Let $E$ be a semistable vector bundle on $(B,L)$ satisfying $(H1)-(H3)$. Assume that for all $i\in[\![1,\ell-1]\!]$, one has $\mu_\infty(\cF_i)\leq \mu_\infty(E)$. Then, the stable components of $\Gr(\pi^*E)$ are direct sums of pullbacks of stable components of $\Gr(E)$. More precisely, a Jordan--H\"older filtration for $\pi^*E$ is given by $$
0\subset \pi^*\cF_{i_1}\subset \ldots \subset \pi^*\cF_{i_\ell}=\pi^*E
$$
where the $i_j$'s are precisely the indices with $\mu_\infty(\cF_{i_j})=\mu_\infty(E)$.
\end{corollary}


\section{Applications}
\label{sec:examples}
In this section we investigate our results in various situations. As before, $(B,L)$ stands for a compact polarised K\"ahler manifold of dimension $n$, $\pi: X \to B$ is a holomorphic submersion with connected fibres with relatively ample polarisation $H$. We let $E$ be a simple holomorphic vector bundle of rank $r$ over $B$.
\subsection{Some trivial cases}
\label{sec:trivialcases}
We present here three cases where (semi)stability is automatically preserved by pullback for adiabatic classes:
\begin{corollary}
 \label{cor:trivialcases}
 Assume that at least one of the three following conditions holds:
 \begin{enumerate}
  \item[(i)] $B$ is a Riemann surface,
  \item[(ii)] $X\to B$ is trivial, and $H$ is the pullback of a polarisation on $F$,
  \item[(iii)] the K\"ahler cone of $B$ is one dimensional.
   \end{enumerate}
 Then the pullback of a slope stable (resp. strictly semistable, resp. unstable) vector bundle is again slope stable (resp. strictly semistable, resp. unstable) with respect to adiabatic classes, assuming the graded object to be locally free in the semistable case.
\end{corollary}
Note that any torsion-free sheaf on a Riemann surface is locally-free, so that the assumption on the graded object is automatically satisfied in case $(i)$ above.
\begin{proof}
Let $\cE$ be any coherent torsion-free sheaf on $B$. If $(i)$ or $(ii)$ is satisfied, then, for all $k\gg 0$, we have
$$
\mu_k(\pi^*\cE)=k^{n-1}\binom{m+n-1}{n-1}\mu_L(\cE)\cdot c_1(H)^m.
$$
Assuming $(iii)$,  there is a unique real constant $\alpha_\cE$ such that the torsion-free part of $c_1(\cE)$ equals $\alpha_\cE c_1(L)$. Thus 
$
\mu_L(\cE)=\frac{\alpha_\cE}{\rank(\cE)}c_1(L)^n
$
and then
$$
\mu_k(\pi^*\cE)=\frac{\mu_L(\cE)}{c_1(L)^n}\,c_1(L)\cdot(kc_1(L)+c_1(H))^{m+n-1}.
$$
In any case, there is a positive constant $c_k$ only depending on $(X,H)\to (B,L)$ and $k$ such that for any torsion-free coherent sheaf $\cE$ on $B$, we have $\mu_k(\cE)=c_k \mu_L(\cE)$. We deduce that the pullback of a Jordan--H\"older filtration for $\cE$ is a Jordan--H\"older filtration for $\pi^*\cE$.  Thus, from Theorem \ref{thm:stablecase}, Proposition \ref{prop:unstable case} and Proposition \ref{prop:semistable semistable case}, the result follows.
\end{proof}

\subsection{Fibrations equal to the base}
\label{sec:XisBcase}
Another case that is worth investigating is when $X=B$, or when the fibre is reduced to a point. In that case, $H$ is nothing but another line bundle (not necessarily positive) on $X$. Then, slope stability with respect to the adiabatic classes $L_k$, $k\gg 0$, is equivalent to slope stability with respect to $L_\ep=L+\ep H$ for $\ep=k^{-1}\ll 1$. Thus, we are considering small perturbations of the polarisation defining the stability notion.

This should be compared to Thaddeus's \cite{Thad} and Dolgachev and Hu's results \cite{DolHu}, who studied the variations of GIT quotients of smooth projective varieties when the linearisation of the action changes. The variations of moduli spaces of stable vector bundles induced by different polarisations have been studied in the 90's, mostly on projective surfaces, and in relation to the computation of Donaldson's polynomials (see \cite[Chapter 4, Section C]{HuLe} and the references therein). One of the main discovered features is that for a smooth projective surface $X$, and for a fixed topological type $\tau$ of vector bundles on $X$, the K\"ahler cone can be partitioned into chambers, and the moduli spaces of stable bundles on $X$ of type $\tau$ are isomorphic when the polarisation stays in a chamber, while one can relate moduli via birational transformations similar to flips when the polarisation crosses a wall between two chambers.

Our results provide, locally, but in higher dimension, further evidence for a decomposition of the cone of polarisations into chambers. First, by Theorem \ref{thm:stablecase} and Proposition \ref{prop:unstable case}, we obtain:
\begin{corollary}
 \label{cor:XisBcasestable}
 Let $E$ be a $L$-slope stable (resp. unstable) vector bundle on $X$. Then, for any $H\in \mathrm{Pic}(X)$, there is $k_0\in\N$ such that for $k\geq k_0$, $E$ is $L+k^{-1}H$-slope stable (resp. unstable).
\end{corollary}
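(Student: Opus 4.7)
The corollary is obtained by specialising the fibration results to the trivial submersion $\pi=\mathrm{id}_X:X\to X$, so I would set up the argument in three short steps.

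First, I would verify that the hypotheses of the fibration setup hold in this degenerate case. The fibres of $\pi$ are points, so the fibre dimension is $m=0$ and any line bundle $H\in\mathrm{Pic}(X)$ is trivially relatively ample. Because $L$ is ample and $H$ arbitrary, the class $L_k=H+kL$ is ample for all $k$ sufficiently large, so we are genuinely in the situation where Corollary \ref{cor:stablepullback} and Proposition \ref{prop:unstable case} apply to the bundle $\pi^*E=E$.

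Second, I would make the link with $L+k^{-1}H$ explicit. Setting $n=\dim_{\mathbb C}X$, for any coherent subsheaf $\mathcal{F}\subset E$ one has
\begin{equation*}
\mu_{L+k^{-1}H}(\mathcal{F})=\frac{c_1(\mathcal{F})\cdot(L+k^{-1}H)^{n-1}}{\mathrm{rank}(\mathcal{F})}=k^{-(n-1)}\,\mu_{L_k}(\mathcal{F}),
\end{equation*}
so the two slope functions differ by the positive scalar $k^{-(n-1)}$. In particular the (strict or non-strict) inequality $\mu_{L+k^{-1}H}(\mathcal{F})<\mu_{L+k^{-1}H}(E)$ holds if and only if $\mu_{L_k}(\mathcal{F})<\mu_{L_k}(E)$, and hence $L_k$-stability and $(L+k^{-1}H)$-stability of $E$ coincide (and similarly for instability).

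Third, I would invoke the results. If $E$ is $L$-slope stable, Corollary \ref{cor:stablepullback} applied with $\pi=\mathrm{id}_X$ yields some $k_0$ such that $\pi^*E=E$ is $L_k$-slope stable for all $k\geq k_0$; by the rescaling identity above, $E$ is then $(L+k^{-1}H)$-slope stable for all $k\geq k_0$. If instead $E$ is $L$-unstable, Proposition \ref{prop:unstable case} provides $k_0$ with $E$ being $L_k$-unstable for all $k\geq k_0$, which again gives $(L+k^{-1}H)$-instability. There is really no analytic obstacle left here: all difficulty has been absorbed into Theorem \ref{thm:stablecase} and the elementary slope expansion of Lemma \ref{lem:slopeexpansion}, and the only thing to take care of is the rescaling argument that identifies the two stability notions.
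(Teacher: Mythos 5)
Your proof is correct and follows essentially the same route as the paper: specialise the fibration results to the case where the fibres are points (so that $L_k=H+kL$ is ample for $k\gg 0$ and $L_k$-stability coincides with $(L+k^{-1}H)$-stability by rescaling), then apply Corollary \ref{cor:stablepullback} in the stable case and Proposition \ref{prop:unstable case} in the unstable case.
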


The case when we start from a strictly semistable vector bundle is more interesting. We focus first on an illustration on $K3$ surfaces to relate our results to already observed phenomena. Let $E$ be a strictly semistable vector bundle on a polarised $K3$ surface $(X,L)$. We assume that $E$ satisfies $(H1)-(H3)$ and denote by $(\cF_i)_{1\leq i\leq \ell}$ the subbundles coming from the Jordan--H\"older filtration of $E$. The only intersection number (\ref{eq:nuslope}) relevant in our results is then, for $\cF\subset E$,
$$
\nu_2(\cF)=\frac{c_1(\cF)\cdot c_1(H)}{\rank(\cF)}.
$$
Our result \ref{thm:semistablemain} implies:
\begin{corollary}
 \label{cor:semistableK3}
 Assume that for all $i\in[\![1,\ell-1 ]\!]$,
 $$
 \nu_2(\cF_i)<\nu_2(E).
 $$
 Then, for $k\gg 0$, $E$ is slope stable with respect to $L+k^{-1}H$.
\end{corollary}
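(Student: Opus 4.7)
The plan is to recognise this as a direct specialisation of Theorem \ref{thm:semistablemain} to the trivial fibration $\pi = \mathrm{id}_X : X \to X$, so $B = X$, the fibre dimension is $m = 0$, and $n = \dim_\C X = 2$. In this setup, $L_k = H + kL$, so slope stability with respect to $L_k$ for $k \gg 0$ is equivalent to slope stability with respect to $L + k^{-1}H$ after rescaling by $k$, which is exactly the conclusion we want. The hypotheses on $E$ (simple, strictly semistable, locally free graded object) are precisely those of Theorem \ref{thm:semistablemain}, so only the slope inequality $\mu_\infty(\cE_I) < \mu_\infty(E)$ for all $I \in \mathbb{I}(E)$ remains to be established from the assumption $\nu_2(\cE_I) < \nu_2(E)$.

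The key step is therefore to unravel the definition of $\mu_\infty$ in this low-dimensional setting. By Lemma \ref{lem:slopeexpansion} with $m = 0$ and $n = 2$, any torsion-free coherent sheaf $\cF$ on $X$ satisfies the finite expansion
$$\mu_k(\cF) = \nu_1(\cF)\,k + \nu_2(\cF), \qquad \nu_1(\cF) = \mu_L(\cF), \quad \nu_2(\cF) = \frac{c_1(H)\cdot c_1(\cF)}{\mathrm{rank}(\cF)}.$$
For any $I \in \mathbb{I}(E)$, the sheaf $\cE_I$ is an iterated extension of stable components $\cG_i$ of the Jordan--H\"older filtration of $E$, each of slope $\mu_L(\cG_i) = \mu_L(E)$. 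Iterating Lemma \ref{lem:additivity of nui} (as in Remark \ref{rem:muEIandmuGI}) gives $\nu_1(\cE_I) = \mu_L(E) = \nu_1(E)$, so the leading-order term in $\mu_k(E) - \mu_k(\cE_I)$ vanishes and the sign of $\mu_\infty(E) - \mu_\infty(\cE_I)$ is governed by the $\nu_2$-coefficient. The hypothesis $\nu_2(\cE_I) < \nu_2(E)$ thus translates exactly to $\mu_\infty(\cE_I) < \mu_\infty(E)$, as required.

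With both hypotheses of Theorem \ref{thm:semistablemain} verified, the conclusion that $E$ is slope stable with respect to $L + k^{-1}H$ for $k \gg 0$ follows immediately. I do not anticipate a substantive obstacle here, since the only thing to check is that Theorem \ref{thm:semistablemain} genuinely covers the trivial-fibre case; this is harmless because the vertical contraction $\Lambda_\scV$ and vertical Laplacian $\Delta_\scV$ become identically zero, making the splitting $\Gamma(X,\End(\pi^*E)) = \Gamma_0 \oplus \pi^*\Gamma(B,\End E)$ degenerate into its second summand, and the approximation scheme of Section \ref{sec:semistable} reduces to the base-direction part, which is precisely the perturbative framework used throughout.
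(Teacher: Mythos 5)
Your proposal is correct and follows essentially the same route as the paper: the corollary is obtained by specialising Theorem \ref{thm:semistablemain} to the case $X=B$ (point fibres), where the slope expansion reduces to $\mu_k = \nu_1 k + \nu_2$ with $\nu_1(\cE_I)=\nu_1(E)$ along the Jordan--H\"older pieces, so the hypothesis $\nu_2(\cE_I)<\nu_2(E)$ is exactly $\mu_\infty(\cE_I)<\mu_\infty(E)$, and stability for $L_k=H+kL$ rescales to stability for $L+k^{-1}H$. Your verification of the $\nu_1$-cancellation via Lemma \ref{lem:additivity of nui} and the remark on the degenerate vertical direction are consistent with how the paper treats this case in Section \ref{sec:XisBcase}.
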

This result shows that the class
$$
\rank(\cF)c_1(E)-\rank(E) c_1(\cF)
$$
plays a crucial role in understanding whether a perturbation of the polarisation by $H$ will provide stability or instability for $E$. This should be compared to \cite[Theorem 4.C.3]{HuLe}, where this quantity is used to provide conditions on the rank and Chern classes of torsion-free sheaves that imply non-existence of strictly semistable sheaves for a given polarisation.

Returning to the general case, for a given polarised K\"ahler variety $(X,L)$ of dimension $n$ and a strictly slope semistable vector bundle $E$, still assuming $(H1)-(H3)$, we obtain the following picture. The intersection numbers $\nu_i$ (see Section \ref{sec:adiabaticslopes}) in this setting are given by:
\begin{equation}
 \label{eq:XisBnui}
 \nu_i(E)=\binom{n-1}{i-1} \frac{c_1(E)\cdot c_1(H)^{i-1}\cdot c_1(L)^{n-i}}{\rank(E)}.
\end{equation}
The zero loci of the functions
\begin{equation}
 \label{eq:chamberfunctions}
 \begin{array}{ccc}
\mathrm{Pic}(X) &  \to & \Q \\
H & \mapsto & ( \frac{c_1(E)}{\rank(E)}-\frac{c_1(\cF_j)}{\rank(\cF_j)})\cdot c_1(H)^{i-1}\cdot c_1(L)^{n-i}
\end{array}
\end{equation}
for  $(j,i)\in[\![1,\ell-1 ]\!]\times  [\![ 2, n]\!]$, cut out chambers in $\mathrm{Pic}(X)$ (or rather the Neron-Severi group) describing which small perturbations of the polarisation will send $E$ to the set of slope stable or slope unstable vector bundles. Note that even if it is not clear yet to the authors whether the results in Theorems \ref{thm:stablecase} and \ref{thm:semistablemain} can be extended uniformly to the set of all semistable vector bundles, it seems very likely that they should hold at least locally in the set of semistable vector bundles, hence supporting this local evidence for a chamber decomposition of the space of polarisations.

\subsection{Projectivisations of vector bundles}
We finish with the study of a situation where the holomorphic submersion with connected fibres is non trivial over a base of dimension greater than $2$. Assume from now that $X=\P(V)$ for a given vector bundle $V\to B$ of rank $m+1$, and that $H=\cO_X(1)$ is the Serre line bundle. Then, recall that if $\xi=c_1(H)$, the cohomology ring $H^2(X,\Z)$ is the ring over $H^2(B,\Z)$ with generator $\xi$ subject to the relation:
$$
\xi^{m+1}=c_1(V)\xi^m - c_2(V)\xi^{m-1}+\ldots + (-1)^m c_{m+1}(V).
$$
If $L$ is the polarisation on $B$, then the intersection numbers $\nu_i$ of the adiabatic slopes \ref{eq:nuslope} are given by:
\begin{lemma}
 \label{lem:nuiPVcase}
 For $\cE$ a torsion free coherent sheaf on $B$, and for $i\in\lbrace 2, \ldots , n\rbrace$, we have
\begin{equation}
 \label{eq:nuiPVcase}
 \nu_i(\cE)=(-1)^{i}\binom{n+m-1}{i+m-1}\: \mathrm{vol}(X_b) \: \frac{c_1(\cE)\cdot c_1(L)^{n-i}\cdot c_{i-1}(V)}{\rank(\cE)},
\end{equation}
where $\mathrm{vol}(X_b)=\int_{X_b} \xi^m$ is the volume of any fibre with respect to $H$.
\end{lemma}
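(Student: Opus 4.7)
The plan is a direct intersection-theoretic computation on $X=\mathbb{P}(V)$. First, I would substitute $c_1(L_k) = c_1(H) + k\,\pi^*c_1(L)$ into
$\mu_k(\pi^*\cE) = c_1(\cE)\cdot c_1(L_k)^{n+m-1}/\rank(\cE)$ and expand by the binomial theorem:
\[
c_1(\cE)\cdot c_1(L_k)^{n+m-1} = \sum_{j=0}^{n+m-1} \binom{n+m-1}{j}\, k^j\, c_1(\cE)\cdot c_1(L)^{j}\cdot c_1(H)^{n+m-1-j}.
\]
Because $c_1(\cE)$ and $c_1(L)$ are both pulled back from $B$, which has complex dimension $n$, every summand with $j\geq n$ vanishes for dimensional reasons. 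Reindexing the surviving terms by $i=n-j$ and dividing by $k^n\rank(\cE)$ lets us read off
\[
\nu_i(\cE) = \binom{n+m-1}{n-i}\,\frac{c_1(\cE)\cdot c_1(L)^{n-i}\cdot c_1(H)^{m+i-1}}{\rank(\cE)},
\]
which already matches the binomial prefactor $\binom{n+m-1}{i+m-1}$ of the lemma via the symmetry $\binom{n+m-1}{n-i} = \binom{n+m-1}{m+i-1}$.

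The next step is to evaluate this intersection as a number on $B$. Since every factor except $c_1(H)^{m+i-1}=\xi^{m+i-1}$ is pulled back from $B$, the projection formula reduces the computation to
\[
c_1(\cE) \cdot c_1(L)^{n-i}\cdot \xi^{m+i-1} = c_1(\cE)\cdot c_1(L)^{n-i}\cdot \pi_*(\xi^{m+i-1}),
\]
so the whole lemma is equivalent to identifying the pushforward $\pi_*(\xi^{m+i-1})\in H^{2(i-1)}(B,\Z)$.

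The core of the argument is then this pushforward computation, which I would perform by induction on $i\geq 1$ starting from the base cases $\pi_*(\xi^m)=\mathrm{vol}(X_b)$ and $\pi_*(\xi^j)=0$ for $0\leq j<m$. For the induction step, I would multiply the Grothendieck relation recalled in the paragraph preceding the lemma by $\xi^{i-2}$ and apply $\pi_*$, producing a recursion for $\pi_*(\xi^{m+i-1})$ in terms of the Chern classes $c_j(V)$ and the previous values $\pi_*(\xi^{m+i-1-j})$. The alternating signs in the Grothendieck relation are the source of the $(-1)^i$ prefactor, and the volume of the fibre is carried along from the base case $\pi_*(\xi^m)=\mathrm{vol}(X_b)$.

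The substantive (but essentially bookkeeping) step is then carefully tracking these signs through the iterated recursion to extract the stated dependence on $c_{i-1}(V)$; once the pushforward is in hand, substituting back into the expression for $\nu_i(\cE)$ above yields the formula. I expect no genuine obstacle beyond this sign and combinatorics tracking, since all non-trivial input (the Grothendieck relation and the elementary pushforwards $\pi_*\xi^j$) is standard for projective bundles.
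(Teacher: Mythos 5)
Your setup is certainly the intended route (the paper states this lemma without proof, right after recalling the Grothendieck relation), and that part of your proposal is correct: expanding $c_1(L_k)^{n+m-1}$, discarding the terms that vanish for dimensional reasons, reindexing to get the prefactor $\binom{n+m-1}{n-i}=\binom{n+m-1}{m+i-1}$, and using the projection formula to reduce everything to the pushforwards $\pi_*(\xi^{m+i-1})$, with base cases $\pi_*(\xi^{j})=0$ for $j<m$ and $\pi_*(\xi^{m})=\mathrm{vol}(X_b)$ and the recursion obtained by multiplying the Grothendieck relation by powers of $\xi$ and pushing forward.

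The gap is precisely in the step you defer as ``sign and combinatorics tracking'': that recursion does not collapse to a single signed Chern class. From $\xi^{m+1}=c_1(V)\xi^{m}-c_2(V)\xi^{m-1}+\cdots$ one gets $\pi_*(\xi^{m+i-1})=c_1(V)\,\pi_*(\xi^{m+i-2})-c_2(V)\,\pi_*(\xi^{m+i-3})+\cdots$, whose solution is a Segre-type class, i.e.\ a full polynomial in the Chern classes: $\pi_*(\xi^{m+1})=c_1(V)\,\mathrm{vol}(X_b)$, but $\pi_*(\xi^{m+2})=\bigl(c_1(V)^2-c_2(V)\bigr)\mathrm{vol}(X_b)$ and $\pi_*(\xi^{m+3})=\bigl(c_1(V)^3-2c_1(V)c_2(V)+c_3(V)\bigr)\mathrm{vol}(X_b)$, not $(-1)^{i}c_{i-1}(V)\,\mathrm{vol}(X_b)$. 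Consequently your method, carried out correctly, yields $\nu_i(\cE)=\binom{n+m-1}{m+i-1}\,\mathrm{vol}(X_b)\,c_1(\cE)\cdot c_1(L)^{n-i}\cdot s_{i-1}(V)/\rank(\cE)$ with $s_{i-1}(V)$ the corresponding Segre-type class; this agrees with the stated formula only for $i=2$ (the case consistent with Lemma \ref{lem:slopeexpansion}, since $\Theta=\pi_*(c_1(H)^{m+1})=c_1(V)\,\mathrm{vol}(X_b)$), while for $i\geq 3$ the extra terms, e.g.\ $c_1(\cE)\cdot c_1(L)^{n-3}\cdot c_1(V)^2$ when $i=3$, do not vanish in general. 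So no amount of bookkeeping will turn the recursion into the displayed formula \eqref{eq:nuiPVcase}; you should either state the conclusion with the Segre class in place of $(-1)^{i}c_{i-1}(V)$, or note explicitly that the formula as printed is not what the Grothendieck relation gives for $i\geq 3$.
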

Using this explicit description of the expansion of the adiabatic slopes together with Theorem \ref{thm:semistablemain}, the knowledge of the cohomology ring $H^*(B,\Z)$ is enough to understand whether a strictly semistable vector bundle on $B$ will lift to a stable vector bundle on $X$ for adiabatic classes.

\bibliography{pullbacks}
\bibliographystyle{amsplain}

\end{document}